
\documentclass{amsart}

\usepackage{diagrams}
\usepackage[all]{xy}
\usepackage{amssymb}
\usepackage{amsthm}
\usepackage[headings]{fullpage}
\usepackage[colorlinks]{hyperref}


\newcommand{\QQ}{\mathbb Q}
\renewcommand{\AA}{\mathbb A}
\newcommand{\ZZ}{\mathbb Z}
\newcommand{\NN}{\mathbb N}
\newcommand{\FF}{\mathbb F}
\newcommand{\BB}{\mathbb B}
\newcommand{\LL}{\mathcal{L}}
\newcommand{\CC}{\mathbb C}
\newcommand{\EE}{\mathbb E}
\newcommand{\Qp}{\QQ_p}
\newcommand{\Zp}{\ZZ_p}
\newcommand{\calH}{\mathcal{H}}

\newcommand{\calN}{\mathcal{N}}

\newcommand{\calO}{\mathcal{O}}
\newcommand{\J}{\mathfrak{J}}

\newcommand{\DD}{\mathbb D}

\newcommand{\vp}{\varphi}
\newcommand{\Dcris}{\DD_\mathrm{cris}}
\newcommand{\lp}{\log^+(1+\pi)}
\newcommand{\lm}{\log^-(1+\pi)}
\newcommand{\Brig}{\BB_{\mathrm{rig},\Qp}^+}
\newcommand{\Gn}{G_{\gamma}^{(n)}}
\newcommand{\Gnn}{G_{\gamma}^{(n+1)}}
\DeclareMathOperator{\rmCol}{Col}
\newcommand{\bfCol}{\mathbf{Col}}

\newcommand{\Cbar}{\bar{\mathcal{C}}}
\newcommand{\EA}{\calO_E\otimes\AA_{\Qp}^+}
\newcommand{\EB}{E\otimes\BB_{\mathrm{rig},\Qp}^+}

\newcommand{\Hh}{\mathbb{H}}
\newcommand{\Qpn}{\mathbb{Q}_{p,n}}
\newcommand{\kato}{\mathbf{z}^\mathrm{Kato}}
\DeclareMathOperator{\Char}{Char}
\DeclareMathOperator{\Sel}{Sel}

\newcommand{\HG}{\mathcal{H}(G_\infty)}

\DeclareMathOperator{\Iw}{Iw}

\DeclareMathOperator{\Gal}{Gal}

\DeclareMathOperator{\image}{Im}

\DeclareMathOperator{\Tr}{Tr}

\DeclareMathOperator{\cris}{cris}
\DeclareMathOperator{\dR}{dR}
\DeclareMathOperator{\Rep}{Rep}
\DeclareMathOperator{\Fil}{Fil}

\DeclareMathOperator{\temp}{temp}
\DeclareMathOperator{\rig}{rig}
\newcommand{\OO}{\mathcal{O}}
\newcommand{\la}{\mathrm{la}}

\DeclareMathOperator{\Step}{Step}
\DeclareMathOperator{\PStep}{PStep}
\DeclareMathOperator{\ord}{ord}
\DeclareMathOperator{\pr}{pr}
\DeclareMathOperator{\uCol}{\underline{\mathrm{Col}}}
\DeclareMathOperator{\upr}{\underline{\mathrm{pr}}}

\newtheorem{theorem}{Theorem}[section]
\newtheorem{proposition}[theorem]{Proposition}
\newtheorem{lemma}[theorem]{Lemma}
\newtheorem{corollary}[theorem]{Corollary}
\newtheorem{remark}[theorem]{Remark}
\newtheorem{definition}[theorem]{Definition}
\newtheorem{note}[theorem]{Note}
\newtheorem{notation}[theorem]{Notation}
\newtheorem{convention}[theorem]{Convention}

\newtheorem{conjecture}[theorem]{Conjecture}


\begin{document}

\title{Wach Modules and Iwasawa Theory for Modular Forms}

\author{Antonio Lei}
\address[Lei]{Department of Pure Mathematics and Mathematical Statistics \\
  University of Cambridge \\
  Cambridge CB3 0WB, UK
}
\curraddr{School of Mathematical Sciences\\
Monash University\\
Clayton, VIC 3800\\
Australia}
\email{antonio.lei@monash.edu}
\thanks{The first author is supported by Trinity College Cambridge and an ARC DP1092496 grant.}

\author{David Loeffler}
\address[Loeffler]{Warwick Mathematics Institute \\
  Zeeman Building \\
  University of Warwick \\
  Coventry CV4 7AL, UK
}
\email{d.loeffler.01@cantab.net}
\thanks{The second author is supported by EPSRC Postdoctoral Fellowship EP/F04304X/1}

\author{Sarah Livia Zerbes}
\address[Zerbes]{Department of Mathematics\\
  Harrison Building\\
  University of Exeter\\
  Exeter EX4 4QF, UK
}
\email{s.zerbes@exeter.ac.uk}

\thanks{The third author is supported by EPSRC Postdoctoral Fellowship EP/F043007/1.}

\date{16th October 2010}

\begin{abstract}
We define a family of Coleman maps for positive crystalline $p$-adic
representations of the absolute Galois group of $\Qp$ using the theory of Wach
modules. Let $f=\sum a_nq^n$ be a normalized new eigenform and $p$ an odd prime
at which $f$ is either good ordinary or supersingular. By applying our theory to the
$p$-adic representation associated to $f$, we
define Coleman maps $\uCol_i$ for $i=1,2$ with values in $\overline{\QQ}_p \otimes_{\ZZ_p} \Lambda$, where $\Lambda$ is 
the Iwasawa algebra of $\ZZ_p^\times$. Applying these maps to the Kato zeta elements gives a decomposition of the (generally unbounded) $p$-adic $L$-functions of
$f$ into linear combinations of two power series of bounded coefficients,
generalizing works of Pollack (in the case $a_p=0$) and Sprung (when $f$
corresponds to a supersingular elliptic curve). Using ideas of Kobayashi
for elliptic curves which are supersingular at $p$, we associate to each of
these power series a $\Lambda$-cotorsion Selmer group. This allows us to
formulate a ``main conjecture". Under some technical conditions, we prove one
inclusion of the ``main conjecture" and show that the reverse inclusion is equivalent to Kato's
main conjecture.
\end{abstract}

\subjclass[2000]{11R23,11F80,11S31,12H25}

\maketitle

\tableofcontents
 
 \section{Introduction}
 
  \subsection{Background}
  
 Let $E$ be an elliptic curve defined over $\QQ$ which has good ordinary reduction at the prime $p$. In~\cite{MSD74}, Mazur and Swinnerton-Dyer constructed a $p$-adic $L$-function, $\tilde{L}_{p,E}$, which interpolates complex $L$-values of $E$. Let $\QQ_\infty=\QQ(\mu_{p^\infty})$. If $G_\infty$ denotes the Galois group of $\QQ_\infty$ over $\QQ$, then $\tilde{L}_{p,E}$ is an element of $\Lambda_{\QQ_p}(G_\infty)=\QQ\otimes\ZZ_p[[G_\infty]]$. It is conjectured that $\tilde{L}_{p,E}$ is in fact an element of the Iwasawa algebra $\Lambda(G_\infty)=\ZZ_p[[G_\infty]]$. 
 
 Recall that the $p$-Selmer group of $E$ over any finite extension $F$ of $\QQ$ is defined as
 \[ \Sel_p(E/ F)=\ker\left(H^1(F,E_{p^\infty}) \rTo \prod_v \frac{H^1(F_v,E_{p^\infty})}{E(F_v)\otimes \QQ_p / \ZZ_p}\right),\]
 where the product is taken over all places of $F$. If we let $\Sel_p(E / \QQ_\infty)=\varinjlim_n \Sel_p(E/ \QQ(\mu_{p^n}))$, then $\Sel_p(E/\QQ_\infty)$ is equipped with an action of $G_\infty$ which extends to an action of the Iwasawa algebra. It is not difficult to show that the Pontryagin dual $\Sel_p(E/\QQ_\infty)^\vee$ is finitely generated over $\Lambda(G_\infty)$, and a theorem of Kato-Rohrlich (conjectured by Mazur) states that it is in fact $\Lambda(G_\infty)$-torsion. We can therefore associate to it a characteristic ideal for each $\Delta$-isotypical component, where $\Delta$ is the torsion subgroup of $G_\infty$, and the main conjecture of cyclotomic Iwasawa theory for $E$ predicts that this ideal is generated by the corresponding isotypical component of $\tilde{L}_{p,E}$.

 The construction of $p$-adic $L$-functions has been generalized to more general primes and modular forms in \cite{amicevelu75,vishik}. If $f=\sum a_nq^n$ is a normalized new eigenform of weight $k \ge 2$, level $N$ and character $\epsilon$, $p\nmid N$, then there exists a $p$-adic $L$-function $\tilde{L}_{p,\alpha}$, for any root $\alpha$ of $X^2-a_pX+\epsilon(p)p^{k-1}$ such that $v_p(\alpha)<k-1$, interpolating complex $L$-values of $f$. Perrin-Riou \cite{perrinriou95} and Kato \cite{kato93} have established theories of $p$-adic $L$-functions for a wide class of $p$-adic Galois representations and formulated respective Iwasawa main conjectures. When the representation corresponds to a modular form, these main conjectures have been reformulated by Kato \cite{kato04} using the theory of Euler systems. If $f$ is good ordinary at $p$ (in other words, $p\nmid N$ and $a_p$ is a $p$-adic unit) and $\alpha$ is the unique unit root, then $\tilde{L}_{p,\alpha}$ is an element of $\Lambda_{\QQ_p}(G_\infty)$. At a $\Delta$-isotypical component, the main conjecture is equivalent to asserting that $\tilde{L}_{p,\alpha}$ generates the characteristic ideal of $\Sel_p(f/\QQ_\infty)^\vee$. In \textit{op.cit.}, Kato has shown that $\tilde{L}_{p,\alpha}$ is contained in the characteristic ideal of $\Sel_p(f/\QQ_\infty)^\vee$ under some technical assumptions; his proof relies on the construction of certain zeta elements (which we will refer to as \emph{Kato zeta elements}).

 When $f$ is supersingular at $p$ (by which we mean $p\nmid N$ and $a_p$ is not a $p$-adic unit), two problems arise: on the one hand, the $p$-adic $L$-functions of Amice--V\'elu and Vishik are no longer elements of $\Lambda(G_\infty)$, but they lie in the algebra $\HG$ of distributions on $G_\infty$, and on the other hand, $\Sel_p(f/\QQ_\infty)^\vee$ is no longer $\Lambda(G_\infty)$-torsion. Perrin-Riou's (and hence Kato's) main conjecture can therefore not be translated into a statement relating $\tilde{L}_{p,\alpha}$ and $\Sel_p(f/\QQ_\infty)$ as in the ordinary case. When $a_p = 0$, a remedy was made possible by the works of Pollack \cite{pollack03}. If $\alpha_1$ and $\alpha_2$ are the roots of $X^2+\epsilon(p)p^{k-1}$, Pollack showed that there is a decomposition 
\[
\tilde{L}_{p,\alpha_i}=\log_{p,k}^+\tilde{L}_p^+ + \alpha_i\log_{p,k}^-\tilde{L}_p^-
\]
for $i=1,2$, where $\tilde{L}_{p}^\pm\in\Lambda_{\QQ_p}(G_\infty)$ and $\log_{p,k}^\pm$ are some explicit elements of $\HG$ which only depend on $k$. When $f$ corresponds to an elliptic curve $E/\QQ$ (and $p > 2$), the $\tilde{L}_p^\pm$ are in fact elements of $\Lambda(G_\infty)$. In~\cite{kobayashi03}, Kobayashi formulates a main conjecture giving an arithmetic interpretation of these new $p$-adic $L$-functions. In analogy to the ordinary reduction case, he defines even and odd Selmer groups $\Sel_p^\pm(E/\QQ_\infty)$ by modifying the local conditions at $p$ in the definition of the usual Selmer group. Let $T_pE$ be the Tate module of $E$. Kobayashi shows that $\Sel_p^\pm(E/\QQ_\infty)$ is $\Lambda(G_\infty)$-cotorsion by constructing the so-called plus and minus Coleman maps
\[
\rmCol^\pm:H^1_{\Iw}(\Qp,T_pE)\rightarrow\Lambda(G_\infty),
\]
which depend on the structure of the formal group attached to $E$. (Here $H^1_{\Iw}(\Qp,T_pE)$ is the Iwasawa cohomology, defined as $\varprojlim_n H^1(\Qp(\mu_{p^n}), T_p E)$; see \S \ref{sect-fontaineisom} below.) Kobayashi's modified main conjecture then asserts that in each $\Delta$-isotypical component, the functions $\tilde{L}_p^\pm$ generate the respective characteristic ideals of $\Sel_p^\pm(E/\QQ_\infty)^\vee$. This main conjecture is in fact equivalent to \cite[Conjecture~12.10]{kato04} (to which we refer as \emph{Kato's main conjecture} from now on). Using the fact that the maps $\rmCol^\pm$ send the localization of the Kato zeta elements to $\tilde{L}_p^\pm$, Kobayashi shows that the elements $\tilde{L}_p^\pm$ are contained in the characteristic ideals of $\Sel_p^\pm(E/\QQ_\infty)^\vee$ (possibly after inverting $p$ if $p$ is one of the finitely many primes for which the $p$-adic Galois representation of $E$ is not surjective), establishing half of the main conjecture. (When the elliptic curve has complex multiplication, the full conjecture has been proved by Pollack and Rubin \cite{pollackrubin04}.)

 Sprung~\cite{sprung09} has extended the results of Kobayashi to elliptic curves with supersingular reduction at $p$ and $a_p\ne0$ (which forces $p$ to be $2$ or $3$). He constructs Coleman maps
 \[ \rmCol^\vartheta,\rmCol^\upsilon: H^1_{\Iw}(\QQ_p,T_pE) \rTo \Lambda(G_\infty)\]
 and defines $\tilde{L}_{p}^\vartheta, \tilde{L}_{p}^\upsilon \in\Lambda(G_\infty)$ by applying these Coleman maps to the Kato zeta element. Analogously to the case $a_p=0$ discussed above, he defines two Selmer groups $\Sel_p^\vartheta(E/\QQ_\infty)$ and $\Sel_p^\upsilon(E/\QQ_\infty)$ to formulate the corresponding main conjectures. Moreover, he constructs a matrix $M\in M_2(\HG)$ whose entries are functions of logarithmic growth depending only on $a_p$ such that
 \[
 \begin{pmatrix}\tilde{L}_{p,\alpha}\\ \tilde{L}_{p,\beta} \end{pmatrix}=M\begin{pmatrix}\tilde{L}_{p}^\vartheta\\ \tilde{L}_{p}^\upsilon \end{pmatrix}
 \]
 generalizing Pollack's results.

 Generalizing Kobayashi's work in a different direction, the first author has shown in \cite{lei09} that the definition of the maps $\rmCol^\pm$ can be extended to general modular forms with $a_p = 0$, using $p$-adic Hodge theory in place of formal groups. For a normalized new eigenform $f$, there exists a $p$-adic representation $V_f$ of $G_\QQ=\Gal(\overline{\QQ}/\QQ)$ attached to $f$, as constructed by Deligne \cite{deligne69}. When $a_p=0$, one can then construct $\pm$-Coleman maps
 \[
 \rmCol^\pm:H^1_{\Iw}(\Qp,V_f)\rightarrow\Lambda_{\QQ_p}(G_\infty),
 \]
 using the structure of $\DD_{\cris}(V_f)$ and Perrin-Riou's exponential map (see Section 2 in~\cite{lei09}). Generalizing Kobayashi's construction, one can use $\rmCol^\pm$ to define $\pm$-Selmer groups, which again turn out to be $\Lambda(G_\infty)$-cotorsion and whose characteristic ideals at each $\Delta$-isotypical component contain Pollack's $p$-adic $L$-functions. Analogous to the work of Pollack and Rubin for elliptic curves, one can show that equality holds for forms of CM type; see \cite{lei09} for details. 

  \subsection{Statement of the main results}\label{mainresults}

 Looking at all these results raises some natural questions: Is there a uniform explanation for Sprung's logarithmic matrix $M$ and Pollack's $\pm$-logarithms? Can one generalize the construction of the two Coleman series to more general modular forms which are supersingular at $p$?
  
 In this paper, we approach these questions using methods from the theory of $(\vp,G_\infty)$-modules. As shown by Fontaine (unpublished -- for a reference see~\cite{cherbonniercolmez99}), for any $\ZZ_p$-linear representation $T$ of $G_{\QQ_p}$ there is a canonical isomorphism $h^1_{\QQ_p,\Iw}:H^1_{\Iw}(\QQ_p,T)\cong\DD(T)^{\psi=1}$, where $\DD(T)$ denotes the $(\vp,G_\infty)$-module\footnote{More familiarly known as a $(\vp, \Gamma)$-module -- our $G_\infty$ is denoted by $\Gamma$ in Fontaine's work, while we use $\Gamma$ for its torsion-free part.} of $T$ and $\psi$ is a certain left inverse of $\vp$. Recall that $\DD(T)$ is a module over the $p$-adic completion $\AA_{\QQ_p}$ of the power series ring $\ZZ_p[[\pi]][\pi^{-1}]$. Also, $\Lambda(G_\infty)$ can be identified with the additive group $\ZZ_p[[\pi]]^{\psi=0}$ via the Mellin transform (c.f. Section~\ref{appendix}). It seems therefore natural to expect that by carefully choosing a basis of $\DD(T)$, it should be possible to define the two Coleman maps as certain maps on the coefficients of an element $x\in\DD(T)^{\psi=1}$. Such a construction would generalize the classical case  $T=\ZZ_p(1)$: in this case, the Coleman map $H^1_{\Iw}(\QQ_p,\ZZ_p(1))\cong\AA_{\QQ_p}^{\psi=1}\rightarrow \ZZ_p[[\pi]][\pi^{-1}]^{\psi=0}$ is just the map $\vp-1$.
 
 Here, we develop this idea using Berger's theory of Wach modules~\cite{berger03}, which is a refined version of $(\vp,G_\infty)$-modules for crystalline representations over unramified base fields originally studied by Wach in \cite{wach96}. The Wach module $\NN(V)$ of a crystalline $G_{\Qp}$-representation $V$ is a certain subspace of the $(\vp,G_\infty)$-module $\DD(V)$ which is a finitely-generated module over the simpler ring
$\BB^+_{\QQ_p}=\ZZ_p[[\pi]]\otimes_{\ZZ_p}\QQ_p$. If $V$ is a crystalline representation of $G_{\QQ_p}$ with non-negative Hodge-Tate weights, and $V$ has no quotient isomorphic to $\QQ_p$, then Berger has shown in~\cite{berger03} that $\DD(V)^{\psi=1}=\NN(V)^{\psi=1}$. 
 Let $\vp^*\NN(V)$ be the $\BB^+_{\QQ_p}$-submodule of $\DD(V)$ generated by the image of $\vp$. For any such representation, $1-\vp$ gives a map
 \[ 1-\vp: \NN(V)^{\psi=1} \rTo (\vp^*\NN(V))^{\psi=0}.\]

 Our first main result relates this map to Perrin-Riou's theory. Suppose that $V_f$ is the $p$-adic representation associated to a modular form $f$ with $p$ a good prime for $f$, i.e. $p$ does not divide the level of $f$ (we assume here for notational simplicity that the coefficient field of the modular form is $\QQ$, so $V$ is a $2$-dimensional $\QQ_p$-vector space). Let $V=V_f(k-1)$, then $V$ is a crystalline representation with Hodge-Tate weights $0,k-1$. We fix $\bar{\nu}_1,\bar{\nu}_2$ a basis of $\DD_{\cris}(V_f)$ in Section~\ref{Colemanmf}. It lifts to a basis $n_1,n_2$ of $\NN(V_f)$. Note that $\pi^{1-k}n_1\otimes e_{k-1}, \pi^{1-k}n_2\otimes e_{k-1}$ then gives a basis of $\NN(V)$. Let $M=(m_{ij})\in M_2\big(\vp(\BB^+_{\rig,\QQ_p})\big)$ be such that 
 \[ \begin{pmatrix} \vp(\pi^{1-k}n_1\otimes e_{k-1}) \\ \vp(\pi^{1-k}n_2\otimes e_{k-1}) \end{pmatrix} = M  \begin{pmatrix} \bar{\nu}_1\otimes t^{1-k}e_{k-1} \\ \bar{\nu}_2\otimes t^{1-k}e_{k-1} \end{pmatrix} .\]

 \begin{proposition}[see Proposition~\ref{samemaps}] For $i=1,2$ we have a commutative diagram
 \begin{diagram}
  \NN(V)^{\psi=1} & \rTo^{h^1_{\QQ_p,\Iw}} &  H^1_{\Iw}(\QQ_p,V)\\
               \dTo_{1-\vp}      &                               &  \\
    (\vp^*\NN(V))^{\psi=0} &  &   \\
            \dTo_{M}         &                               &    \dTo_{\mathcal{L}_{1,\bar{\nu}_i\otimes(1+\pi)}} \\
    \big((\BB^+_{\rig,\QQ_p})^{\psi=0}\big)^{\oplus 2}  &           &  \\
            \dTo_{\pr_i}         &                               &    \\
    (\BB^+_{\rig,\QQ_p})^{\psi=0}  &  \rTo^{\mathfrak{M}^{-1}}          & \HG
 \end{diagram}
 \end{proposition}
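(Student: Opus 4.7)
The plan is to reduce the commutativity of the diagram to an explicit formula, essentially due to Berger, which computes the Perrin-Riou regulator $\mathcal{L}_{1, v \otimes (1+\pi)}$ directly on the Wach module. Once that formula is in place, the matrix $M$ simply records a change of basis between the natural basis of $\vp^*\NN(V)$ and that of $\DD_{\cris}(V)$ (suitably twisted by $e_{k-1}$), so the remainder of the argument is bookkeeping.

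More concretely, I would proceed as follows. Since $V = V_f(k-1)$ is crystalline with non-negative Hodge--Tate weights $\{0, k-1\}$ and has no quotient isomorphic to $\QQ_p$, Berger's theorem identifies $\NN(V)^{\psi=1}$ with $\DD(V)^{\psi=1}$, so $h^1_{\QQ_p,\Iw}$ restricts to an isomorphism onto $H^1_{\Iw}(\QQ_p, V)$. Under the same hypotheses one has the key equality $\NN(V)^{\psi=0} = (\vp^*\NN(V))^{\psi=0}$, which ensures that $1-\vp$ really does land in $(\vp^*\NN(V))^{\psi=0}$. The comparison isomorphism provides a $\vp$-equivariant embedding $\vp^*\NN(V) \hookrightarrow \DD_{\cris}(V) \otimes_{\QQ_p} \BB^+_{\rig,\QQ_p}[1/t]$, so any $x \in \vp^*\NN(V)$ can be uniquely written as $\sum_j g_j \cdot \bar{\nu}_j \otimes t^{1-k} e_{k-1}$ with $g_j \in \BB^+_{\rig,\QQ_p}$.

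The key technical step is then the explicit formula: if $z \in \NN(V)^{\psi=1}$ and $(1-\vp)z = \sum_j g_j \cdot \bar{\nu}_j \otimes t^{1-k}e_{k-1}$ in $\DD_{\cris}(V) \otimes \BB^+_{\rig,\QQ_p}$, then
\[
\mathcal{L}_{1, \bar{\nu}_i \otimes (1+\pi)}\bigl(h^1_{\QQ_p,\Iw}(z)\bigr) = \mathfrak{M}^{-1}(g_i).
\]
Granting this, commutativity is immediate: by the definition of $M$, the composition $\pr_i \circ M$ applied to $(1-\vp)z$ expressed in the basis $\{\vp(\pi^{1-k} n_j \otimes e_{k-1})\}_j$ of $\vp^*\NN(V)$ precisely extracts the coefficient $g_i$ in the basis $\{\bar{\nu}_j \otimes t^{1-k} e_{k-1}\}_j$ of $\DD_{\cris}(V)$, and then $\mathfrak{M}^{-1}$ matches the right-hand side of the formula.

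The main obstacle is establishing the explicit formula itself. Perrin-Riou's regulator is defined through her ``big logarithm'' on $\DD_{\cris}(V) \otimes \HG$ via an interpolation property involving the Bloch--Kato exponentials at each finite level, whereas the left-hand column of the diagram lives entirely in the Wach module. Bridging the two requires unwinding Berger's construction that expresses $\mathcal{L}_V$ in terms of $(1-\vp)$ on $\NN(V)^{\psi=1}$, and then carefully matching normalizations: the twist by $e_{k-1}$, the factor $\pi^{1-k}$ built into the Wach basis, the distinguished element $(1+\pi) \in \BB^+_{\QQ_p}$, and the conventions for the Mellin transform $\mathfrak{M}$ all conspire to produce the clean statement above, and verifying these compatibilities is where the real work lies.
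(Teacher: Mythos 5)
Your outline identifies the right strategy -- this is indeed how the paper argues, via an explicit formula relating $\mathcal{L}_{1,\nu_i\otimes(1+\pi)}$ to the coefficients of $(1-\vp)z$ in the $\Dcris$-basis -- but as written it contains a genuine gap: the ``key technical step'' is never proved, and it is not a known black box one can simply cite; it is essentially the whole content of Proposition~\ref{samemaps}. Your final paragraph concedes that ``verifying these compatibilities is where the real work lies,'' but that work is exactly what a proof must supply. In the paper it consists of: a reduction by $p$-adic interpolation to computing $\partial^j(\cdot)(0)$ for $j\gg 0$ (using the twisting compatibility of the Perrin-Riou pairing, Lemma~\ref{twist}, and Th\'eor\`eme~B(ii) of Perrin-Riou); the explicit value of $\Omega_{V,h}((1+\pi)\otimes v)$ at level $0$ (Proposition~\ref{PRvalues} and Lemma~\ref{omegaformula}); the duality between $\exp$ and $\exp^*$; Berger's formula $\exp^*\circ h^1_{\Qp,V}=(1-p^{-1}\vp^{-1})\partial_V$; and the adjointness of $1-\vp$ and $1-p^{-1}\vp^{-1}$ under the crystalline pairing. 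None of these steps appears in your proposal, so what remains is only the trivial bookkeeping that $\pr_i\circ M$ reads off the $\Dcris$-coordinates of $(1-\vp)z$, which is just the definition of $M$.

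Two smaller points. First, your ``clean statement'' $\mathcal{L}_{1,\bar\nu_i\otimes(1+\pi)}(h^1_{\QQ_p,\Iw}(z))=\mathfrak{M}^{-1}(g_i)$ glosses over the duality pairing that actually governs the answer: with the normalization $[\nu_1,\bar\nu_2]=-[\nu_2,\bar\nu_1]=1$, the regulator attached to $\nu_i$ extracts the coefficient along the vector \emph{dual} to $\nu_i$, so one gets $\mathcal{L}_{1,\nu_1\otimes(1+\pi)}=\mathfrak{M}^{-1}(g_2)$ and $\mathcal{L}_{1,\nu_2\otimes(1+\pi)}=-\mathfrak{M}^{-1}(g_1)$, as recorded in equations \eqref{eqnu1}--\eqref{eqnu2}; the index swap and sign are invisible in your formulation (the introduction's phrasing is loose in the same way, but a proof must get this right, since the swap is what later produces the correct matrix $\mathcal{M}$ in the decomposition of the $p$-adic $L$-functions). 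Second, the assertion $\NN(V)^{\psi=0}=(\vp^*\NN(V))^{\psi=0}$ is stated without justification and is stronger than what you need; the containment $(1-\vp)\NN(V)^{\psi=1}\subset(\vp^*\NN(V))^{\psi=0}$ follows directly from the explicit expression \eqref{1-vp} (or from Berger's appendix), and that is the route the paper takes.
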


 Here, $\mathcal{L}_{1,\bar{\nu}_i\otimes(1+\pi)}$ is a certain $\Lambda_{\Qp}$-module homomorphism whose definition is given in equation \eqref{lmap} below, defined using Perrin-Riou's exponential map and the Perrin-Riou pairing $H^1_{\Iw}(V) \times H^1_{\Iw}(V^*(1)) \to \Lambda_{\Qp}$. Also, $\mathfrak{M}$ is the inverse Mellin transform (see \eqref{mellin}), $\pr_i$ is the projection map onto the $i$-th component, and for an element $x\in(\vp^*\NN(V))^{\psi=0}$, $M.x$ is defined as follows: if  $x=x_1\vp(\pi^{1-k}n_1\otimes e_{k-1})+x_2\vp(\pi^{1-k}n_2\otimes e_{k-1})$ for some $x_i\in (\BB^+_{\QQ_p})^{\psi=0}$, then $M.x=M\begin{pmatrix} x_1 \\ x_2\end{pmatrix}$. 
 
 By applying this diagram to Kato's zeta element ${\kato}\in H^1_{\Iw}(\QQ_p,V)$, we deduce that there exist $\mathcal{M}\in M_2\big(\overline{\QQ}_p\otimes_{\QQ_p}\vp(\BB^+_{\rig,\QQ_p})\big)$ and $L_{p,1},L_{p,2}\in (\BB^+_{\QQ_p})^{\psi=0}$ (c.f. Section~\ref{decomposition}), depending only on the basis $n_1,n_2$, such that we have a decomposition
 \begin{equation}\label{wrongdecomposition} 
  \begin{pmatrix} \mathfrak{M}(\tilde{L}_{p,\alpha})\\ \mathfrak{M}(\tilde{L}_{p,\beta}) \end{pmatrix} = \mathcal{M} \begin{pmatrix} L_{p,1}\\ L_{p,2}\end{pmatrix}.
 \end{equation}
 In order to interpret this decomposition in terms of measures, we need to study the structure of $(\vp^*\NN(V))^{\psi=0}$ as a $\Lambda(G_\infty)$-module. The following result was proven independently by Berger (Theorem~\ref{Lambda}, for general Wach modules) and ourselves (Theorem~\ref{lambdabasis}, for the Wach module of the representation arising from a supersingular modular form).
 
 \begin{theorem}
  Let $\calN$ be a Wach module of rank $d$. Then $(\vp^*\calN)^{\psi=0}$ is a free $\Lambda_{\QQ_p}(G_\infty)$-module of rank $d$. Moreover, there exists a basis $n_1,\dots,n_d$ of $\calN$ such that $(1+\pi)\vp(n_1),\dots,(1+\pi)\vp(n_d)$ is a $\Lambda_{\QQ_p}(G_\infty)$-basis of $(\vp^*\NN(V))^{\psi=0}$.
 \end{theorem}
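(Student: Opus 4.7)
The plan is to combine the Mellin transform isomorphism $\mathfrak{M}: \Lambda_{\QQ_p}(G_\infty)\xrightarrow{\sim}(\BB^+_{\QQ_p})^{\psi=0}$ from Section~\ref{appendix}, under which $(1+\pi)$ corresponds to the identity measure, with a careful analysis of the $G_\infty$-action on a suitable basis of $\vp^*\calN$. For any $\BB^+_{\QQ_p}$-basis $n_1,\dots,n_d$ of $\calN$, the elements $\vp(n_1),\dots,\vp(n_d)$ form a $\BB^+_{\QQ_p}$-basis of $\vp^*\calN$. Applying the identity $\psi(x\vp(n))=\psi(x)\cdot n$ together with the $\BB^+_{\QQ_p}$-linear independence of the $n_i$, one obtains the decomposition
\[
(\vp^*\calN)^{\psi=0} \;=\; \bigoplus_{i=1}^d (\BB^+_{\QQ_p})^{\psi=0}\cdot\vp(n_i)
\]
as $\QQ_p$-vector spaces. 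Since $\psi(1+\pi)=0$, each $(1+\pi)\vp(n_i)$ lies in $(\vp^*\calN)^{\psi=0}$, and Mellin identifies the right-hand side with $\Lambda_{\QQ_p}(G_\infty)^d$ as $\QQ_p$-vector spaces, yielding the correct ``size'' for freeness of rank $d$.

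The crucial issue is that the $G_\infty$-action on $(\vp^*\calN)^{\psi=0}$ does not respect this direct sum. If $A(\sigma)=(a_{ij}(\sigma))\in GL_d(\BB^+_{\QQ_p})$ denotes the matrix of $\sigma\in G_\infty$ on $\calN$ in the basis $(n_i)$, then
\[
\sigma\Bigl(\sum_i x_i\vp(n_i)\Bigr) \;=\; \sum_{j}\Bigl(\sum_i\sigma(x_i)\vp(a_{ij}(\sigma))\Bigr)\vp(n_j),
\]
so the action on coordinates is the natural one on $(\BB^+_{\QQ_p})^{\psi=0}$ twisted by the matrix $\vp(A(\sigma))^T$. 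The defining property of a Wach module --- triviality of the $G_\infty$-action on $\calN/\pi\calN$ --- gives $A(\sigma)\equiv I\pmod{\pi}$, so this twist is a $1$-cocycle taking values in $I+\vp(\pi)M_d(\BB^+_{\QQ_p})$ and is ``close to trivial'' in the $\pi$-adic topology.

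The heart of the argument is then to exhibit a change of basis $n_i\mapsto n_i'$ for which the elements $(1+\pi)\vp(n_i')$ form a genuine $\Lambda_{\QQ_p}(G_\infty)$-basis. I would construct the change-of-basis matrix $M\in GL_d(\BB^+_{\QQ_p})$ iteratively modulo successive powers of $\pi$, at each stage exploiting that the twist cocycle is congruent to $I$ modulo $\vp(\pi)$ to solve the next step of the cocycle equation; the process converges by $\pi$-adic completeness of $\BB^+_{\QQ_p}$. Linear independence of the resulting $(1+\pi)\vp(n_i')$ over $\Lambda_{\QQ_p}(G_\infty)$ then follows from the $\BB^+_{\QQ_p}$-linear independence of the $\vp(n_i')$ combined with the injectivity of $\mathfrak{M}$.

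The main obstacle is precisely this last step, the production of the correct basis: abstractly it amounts to a cocycle-vanishing statement in $H^1(G_\infty, GL_d(1+\pi\BB^+_{\QQ_p}))$ for the specific class of Wach-type cocycles. Berger's approach (Theorem~\ref{Lambda}) handles this uniformly for arbitrary Wach modules, whereas our Theorem~\ref{lambdabasis} exploits the explicit description of the Wach module of the Galois representation attached to a supersingular modular form, given in Section~\ref{Colemanmf}, to write down the desired basis directly.
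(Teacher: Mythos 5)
Your setup is fine: the coordinate decomposition of $(\vp^*\calN)^{\psi=0}$, the identification of the twisted semilinear $G_\infty$-action via the matrix $\vp(A(\sigma))^T\equiv I \bmod \vp(\pi)$, and the role of the Mellin transform all match the paper. But the proof has a genuine gap at exactly the point you yourself flag, and the strategy you sketch for closing it cannot work. You propose to build a change of basis trivializing the twist cocycle, i.e.\ to produce $n_i'$ with the elements $\vp(n_i')$ fixed by $G_\infty$ (this is what your final linear-independence step, "coordinate-wise independence plus injectivity of $\mathfrak{M}$", silently requires). Such a basis does not exist in general: already for the rank-one Wach module of $\QQ_p(-1)$, a $G_\infty$-fixed generator of $\vp^*\calN$ would force an element of $\BB^+_{\QQ_p}$ on which $G_\infty$ acts through $\chi$, i.e.\ a $\BB^+_{\QQ_p}$-multiple of $t/\pi$, which lies in $\Brig$ but not in $\BB^+_{\QQ_p}$. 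Concretely, your iteration does solve the cocycle equation modulo each power of $\pi$ (the obstruction at step $k$ lives in $H^1(G_\infty,\QQ_p(k))=0$ for $k\ge 1$), but the correction at step $k$ carries a factor $(\chi(\gamma)^k-1)^{-1}$, whose $p$-adic valuation is unbounded as $k$ runs through multiples of large powers of $p$; the resulting product has unbounded coefficients and so converges at best in $\QQ_p[[\pi]]$ or $\Brig$, not in $\BB^+_{\QQ_p}=\QQ_p\otimes\ZZ_p[[\pi]]$. So "$\pi$-adic completeness of $\BB^+_{\QQ_p}$" does not rescue the limit, and the theorem cannot be reduced to vanishing of a class in $H^1(G_\infty, GL_d(1+\pi\BB^+_{\QQ_p}))$ --- that class is typically nonzero even though the theorem is true.

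The paper's proof (Berger's) establishes freeness without any trivialization. One makes a single adjustment of the basis so that $(1-\gamma)n_i\in\pi^2\NN(T)$ (Lemma~\ref{lem3}), and then runs a successive approximation inside the module itself: the key input is Lemma~\ref{lem2}, which says the Mellin transform carries $p_k\Lambda(G_\infty)$ onto $\vp(\pi)^k(\AA^+_{\QQ_p})^{\psi=0}$, where $p_k=(1-\gamma)(1-\chi(\gamma)^{-1}\gamma)\cdots(1-\chi(\gamma)^{1-k}\gamma)$; applying these operators raises the $\vp(\pi)$-adic order of the error (Lemma~\ref{lem4} and Proposition~\ref{induction}), so every element of $(\vp^*\NN(T))^{\psi=0}$ is a convergent sum of elements of the $\Lambda$-span of the $(1+\pi)\vp(n_i)$. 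Freeness is then not deduced from coordinate-wise independence but from a counting argument: the surjection $\Lambda(G_\infty)^{\oplus d}/p_k \to (\vp^*\NN(T))^{\psi=0}/\vp(\pi)^k$ is a map of free $\ZZ_p$-modules of equal rank, hence injective, and $\bigcap_k p_k\Lambda(G_\infty)=0$. (The paper's second proof, Theorem~\ref{lambdabasis}, is in the same spirit but uses the explicit Berger--Li--Zhu basis of Section~\ref{ss} and the filtration by the ideals $\mathfrak{I}_r$, not a cocycle trivialization either.) To repair your write-up you would need to replace the cocycle-vanishing step by an argument of this approximation-plus-counting type.
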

 
 When $V$ is the $p$-adic representation associated to a modular form with $v_p(a_p)>\lfloor\frac{k-2}{p-1}\rfloor$, then there is an explicit choice of the $\BB^+_{\QQ_p}$-basis of $\NN(V)$ which was constructed in~\cite{bergerlizhu04} (c.f. Section~\ref{ss}). We show (Theorem \ref{lambdabasis}) that this basis $(n_1, n_2)$ has the additional property that $(1 + \pi)\vp(n_1), (1 + \pi)\vp(n_2)$ is a $\Lambda_{\QQ_p}(G_\infty)$-basis of $(\vp^*\NN(V))^{\psi=0}$. Hence we may define the \emph{Iwasawa transform}
 \[ \J: (\vp^*\NN(V))^{\psi=0} \rTo \Lambda_{\QQ_p}(G_\infty)^{\oplus 2} \]
 to be the induced isomorphism of $\Lambda_{\QQ_p}(G_\infty)$-modules associated to this basis. This map has the following property: if $a_p=0$, then $\J$ fits into the commutative diagram
 \begin{diagram}
  \NN(V)^{\psi=1} & \rTo^{h^1_{\QQ_p,\Iw}} & H^1_{\Iw}(\QQ_p,V) \\
  \dTo^{1-\vp} &                    & \dTo^{(\rmCol^\pm)} \\
  (\vp^*\NN(V))^{\psi=0} & \rTo^{\J}       & \Lambda_{\QQ_p}(G_\infty)^{\oplus 2}
 \end{diagram}
 where $\rmCol^\pm$ are the Coleman maps constructed in \cite{kobayashi03} and \cite{lei09}. In other words, if $i=1,2$ and we define $\uCol_i: \NN(V)^{\psi=1}\rightarrow \Lambda_{\QQ_p}(G_\infty)$ to be the composition of $\J\circ (1-\vp)$ with the projection of $\Lambda_{\QQ_p}(G_\infty)^{\oplus 2}$ onto the $i$-th component, then we recover the constructions in \textit{op.cit.} In Sections~\ref{leiswork} and~\ref{elliptic}, we use this new description of the Coleman maps to give alternative proofs of their main properties. 
 
 When $v_p(a_p)>\lfloor\frac{k-2}{p-1}\rfloor$, we define the maps $\uCol_i$ in the same manner, and it it follows from Proposition~\ref{commutativediagram} that the following diagram is commutative:
   \[
    \xymatrix{
    \NN(V)^{\psi=1} \ar[r]^{h^1_{\QQ_p,\Iw}} \ar[d]_{1-\varphi}        & H^1_{\Iw}(\QQ_p,V) \ar[d]_{(\uCol_1,\uCol_2)} \ar@/^1.3cm/[ddd]^{{\mathcal{L}_{1,\bar{\nu}_i\otimes(1+\pi)}}}\\
    (\vp^*\NN(V))^{\psi=0} \ar[r]^{\mathfrak{I}} \ar[d]_{M} & \Lambda_{\QQ_p}(G_\infty)^{\oplus 2} \ar[d]_{\underline{M}} \\
    \big((\BB^+_{\rig,\QQ_p})^{\psi=0}\big)^{\oplus 2} \ar[r]^{\mathfrak{M}^{-1}} \ar[d]_{\pr_i} & \calH(G_\infty)^{\oplus 2} \ar[d]_{\upr_i} \\
    (\BB^+_{\rig,\QQ_p})^{\psi=0}  \ar[r]^{\mathfrak{M}^{-1}}         & \calH(G_\infty).
    }
   \]
 Here, the map $\upr_i$ is the projection onto the $i$-th component in $\HG^{\oplus 2}$.
 In particular, this diagram allows us to translate~\eqref{wrongdecomposition} in terms of $\Lambda_{\QQ_p}(G_\infty)$: 
  
 \begin{theorem}[see Theorem~\ref{Ldecomposition}] 
  For $i=1,2$, define $\tilde{L}_{p,i}=\uCol_i(\kato)$. There exists a $2\times 2$-matrix $\underline{\mathcal{M}}\in M_2(\overline{\QQ}_p\otimes_{\QQ_p}\HG)$ depending only on $k$ and $a_p$ such that
  \begin{equation}
  \begin{pmatrix}\tilde{L}_{p,\alpha}\\ \tilde{L}_{p,\beta} \end{pmatrix}=\underline{\mathcal{M}}\begin{pmatrix}\tilde{L}_{p,1}\\ \tilde{L}_{p,2} \end{pmatrix}\label{supersingulardecomposition-intro}
  \end{equation}   
 \end{theorem}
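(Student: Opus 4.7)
The plan is to extract the identity from the large commutative diagram displayed immediately before the theorem, applied to Kato's zeta element $\kato$, after relating the images $\mathcal{L}_{1,\bar{\nu}_i\otimes(1+\pi)}(\kato)$ (for the fixed basis $(\bar{\nu}_1,\bar{\nu}_2)$ of $\Dcris(V_f)$) to the Amice--Vélu--Vishik $p$-adic $L$-functions $\tilde{L}_{p,\alpha},\tilde{L}_{p,\beta}$ attached to the two roots of the Hecke polynomial at $p$.

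First I would feed $\kato$ into the top-right corner of the diagram and chase it down the two columns. By the very definition of $\uCol_i$ we have $(\uCol_1(\kato),\uCol_2(\kato))=(\tilde{L}_{p,1},\tilde{L}_{p,2})$. The curved arrow along the right of the diagram sends $\kato$ to $\mathcal{L}_{1,\bar{\nu}_i\otimes(1+\pi)}(\kato)$ for $i=1,2$, and by commutativity of the lower square this gives the identity
\[
\begin{pmatrix}\mathcal{L}_{1,\bar{\nu}_1\otimes(1+\pi)}(\kato)\\ \mathcal{L}_{1,\bar{\nu}_2\otimes(1+\pi)}(\kato)\end{pmatrix}
 = \underline{M}\begin{pmatrix}\tilde{L}_{p,1}\\ \tilde{L}_{p,2}\end{pmatrix}
\]
in $\HG^{\oplus 2}$, where $\underline{M}\in M_2(\HG)$ is the matrix of the $\HG$-linear map induced by $M$ under the Mellin transform $\mathfrak{M}$ and the isomorphism $\mathfrak{I}$.

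Next I would relate the left-hand side to $(\tilde{L}_{p,\alpha},\tilde{L}_{p,\beta})^T$. Equation~\eqref{wrongdecomposition}, combined with Proposition~\ref{samemaps} applied to $\kato$, says precisely that $\tilde{L}_{p,\alpha}$ (resp.\ $\tilde{L}_{p,\beta}$) is obtained as $\mathcal{L}_{1,v\otimes(1+\pi)}(\kato)$ when $v$ runs over the Frobenius eigenvectors of $\Dcris(V_f)$ corresponding to $\alpha$ (resp.\ $\beta$). Since $v\mapsto \mathcal{L}_{1,v\otimes(1+\pi)}(\kato)$ is $\overline{\QQ}_p$-linear in $v$, the change of basis from $(\bar{\nu}_1,\bar{\nu}_2)$ to the Frobenius eigenbasis is implemented by a matrix $A\in\mathrm{GL}_2(\overline{\QQ}_p)$, and
\[
\begin{pmatrix}\tilde{L}_{p,\alpha}\\ \tilde{L}_{p,\beta}\end{pmatrix}
 = A\begin{pmatrix}\mathcal{L}_{1,\bar{\nu}_1\otimes(1+\pi)}(\kato)\\ \mathcal{L}_{1,\bar{\nu}_2\otimes(1+\pi)}(\kato)\end{pmatrix}.
\]
Combining the last two displays yields the claimed decomposition with $\underline{\mathcal{M}}:=A\cdot\underline{M}$.

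Finally I would check that $\underline{\mathcal{M}}$ depends only on $k$ and $a_p$. The entries of $A$ are rational functions of the Frobenius eigenvalues $\alpha,\beta$, hence of $a_p$ and $\epsilon(p)p^{k-1}$; and $\underline{M}$ is read off from the matrix $M$ of $\vp$ in the Berger--Li--Zhu basis $(n_1,n_2)$ of $\NN(V_f)$ constructed in Section~\ref{ss}, which by \textit{loc.\ cit.} is determined solely by the data $(k,a_p)$. The main obstacle is really only bookkeeping: one has to track the various normalisations of $\mathcal{L}_{1,-}$, $\mathfrak{I}$, $M$, $\mathfrak{M}$ and the Tate twist $e_{k-1}$ through the diagram; once commutativity of the two sub-squares is in hand (the top one from Proposition~\ref{samemaps}, the bottom two from the construction of $\underline{M}$ as the Mellin-transform image of $M$), the theorem follows by a direct diagram chase and no further non-trivial input is needed.
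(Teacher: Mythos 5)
Your proposal is correct and follows essentially the same route as the paper: the paper's proof is precisely the identity \eqref{prpairing} (Proposition~\ref{samemaps}/Corollary~\ref{commutativediagram} applied to $\kato$, giving $\begin{pmatrix}\uCol_1(\kato)&\uCol_2(\kato)\end{pmatrix}\underline{M}=\begin{pmatrix}-\LL_{1,\nu_2\otimes(1+\pi)}(\kato)&\LL_{1,\nu_1\otimes(1+\pi)}(\kato)\end{pmatrix}$) combined with the change of basis to the Frobenius eigenvectors $\eta_\alpha=\alpha^{-1}\nu_1-\nu_2$, $\eta_\beta=\beta^{-1}\nu_1-\nu_2$, whose Perrin-Riou pairings with $\kato$ equal $\tilde{L}_{p,\alpha},\tilde{L}_{p,\beta}$ -- note this last input comes from Kato's Theorem 16.6 (which also fixes the normalisation $[\eta_\alpha,\bar{\nu}_1]=[\eta_\beta,\bar{\nu}_1]=1$, needed for your matrix $A$ to be a well-defined rational function of $\alpha,\beta$), rather than from \eqref{wrongdecomposition} as you cite. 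With that attribution corrected, your $\underline{\mathcal{M}}=A\cdot\underline{M}$ is the paper's matrix up to the sign/ordering bookkeeping you already flag.
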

 
 We show in Proposition~\ref{pmtransform} that this decomposition reduces to the decompositions of $\tilde{L}_{p,\alpha},\tilde{L}_{p,\beta}$ given by Pollack when $a_p = 0$.
 
 Assume now that $V_f$ is the $p$-adic representation associated to a modular form $f$ which is good ordinary at $p$, and let $V=V_f(k-1)$. By choosing a suitable basis for $\DD_{\cris}(V_f)$ (c.f Section~\ref{ordinary}) and applying Theorem~\ref{Lambda} to $(\vp^*\NN(V))^{\psi=0}$, we can prodeed analogously to the supersingular case discussed above to construct Coleman maps $\uCol_i:\NN(V)^{\psi=1}\rightarrow \Lambda_{\QQ_p}(G_\infty)$. Let $\alpha$ and $\beta$ be the unit and non-unit eigenvalues of the Frobenius respectively. The Kato zeta element gives rise to two $p$-adic $L$-functions $\tilde{L}_{p,\alpha}$ and $\tilde{L}_{p,\beta}$, where $\tilde{L}_{p,\beta}$ conjecturally agrees with the critical-slope $p$-adic $L$-function constructed by Pollack and Stevens in~\cite{pollackstevens09} when $V_f$ is not locally split at $p$. The analogue of \eqref{supersingulardecomposition-intro} becomes
\begin{equation}\label{ordinarydecomposition}
\begin{pmatrix}\tilde{L}_{p,\alpha}\\ \tilde{L}_{p,\beta} \end{pmatrix}=\begin{pmatrix}0& \bar{u}\\  -\alpha\log_{p,k} & * \end{pmatrix}\begin{pmatrix}\tilde{L}_{p,1}\\ \tilde{L}_{p,2} \end{pmatrix}.
\end{equation}
for some $\bar{u}\in\Lambda_E(G_\infty)^\times$ (c.f.~\eqref{orddecomp}).
Note that a similar decomposition can be obtained from works of Perrin-Riou for elliptic curves with good ordinary reduction at $p$ (see \cite[Section~1.4]{perrinriou93}). The decomposition \eqref{ordinarydecomposition} allows us to show that $\tilde{L}_{p,1}, \tilde{L}_{p,2}\ne0$ under some technical assumptions.

 As in the cases studied in~\cite{kobayashi03} and \cite{lei09}, we can use the maps $\uCol_i$ to construct Selmer groups $\Sel_p^i(f/\QQ_\infty)$ (see Definition \ref{definitionselmer}), and we prove the following results. Define assumptions
 
 (A) (when $f$ is supersingular at $p$) $k\ge3$ or $a_p=0$;
 
 (A') (when $f$ is good ordinary at $p$) $k\geq 3$ and $V_f$ is not locally split at $p$.
 
\begin{theorem}[see Theorem~\ref{torsionSelmer}]
 \label{signedSelmergroups} Under assumption (A) (if $f$ is supersingular at $p$) or assumption (A') (if $f$ is good ordinary at $p$), the group $\Sel_p^i(f/\QQ_\infty)$ is $\Lambda_{\calO_E}(G_\infty)$-cotorsion for $i=1,2$. Moreover, there exist some $n_i\ge0$ such that
 \[
 \varpi^{n_i}\tilde{L}_{p,i}^\eta\in\Char_{\Lambda_{\calO_E}(\Gamma)}(\Sel_p^i(f/\QQ_\infty)^{\vee,\eta})
 \]
 where $\eta$ is any character on $\Delta$ when $i=1$ and it is the trivial character when $i=2$.
\end{theorem}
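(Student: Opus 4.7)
The plan is to follow the blueprint of Kobayashi \cite{kobayashi03} and the first author \cite{lei09}: apply the Coleman maps $\uCol_i$ to the Kato zeta element $\kato$, obtaining by construction the elements $\tilde{L}_{p,i}$, and then transfer the known half of Kato's main conjecture to the signed Selmer groups via a global duality argument.

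First I would establish non-vanishing of $\tilde{L}_{p,i}$ for $i = 1, 2$ under the stated hypotheses. In the supersingular case under (A), the matrix $\underline{\mathcal{M}}$ in \eqref{supersingulardecomposition-intro} has nonzero determinant, which is visible from the explicit Berger--Li--Zhu form of the matrix $M$, so Rohrlich's nonvanishing of $\tilde{L}_{p,\alpha}$ and $\tilde{L}_{p,\beta}$ forces both $\tilde{L}_{p,i} \ne 0$. In the ordinary case under (A'), the matrix in \eqref{ordinarydecomposition} immediately yields $\tilde{L}_{p,2} = \bar{u}^{-1} \tilde{L}_{p,\alpha} \ne 0$ (Rohrlich), while the nonvanishing of $\tilde{L}_{p,1}$ requires the nonvanishing of the critical-slope $L$-function $\tilde{L}_{p,\beta}$, guaranteed by the non-local-splitness of $V_f$ combined with $k \ge 3$ via \cite{pollackstevens09}.

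Next, using the definition of $\Sel_p^i$ via the local condition $\ker(\uCol_i)$ at $p$, a Poitou--Tate duality argument produces a short exact sequence relating the $\eta$-isotypical piece $\Sel_p^i(f/\QQ_\infty)^{\vee,\eta}$ to the cokernel of $\uCol_i \circ \mathrm{loc}_p$ applied to a suitable global submodule of $H^1_{\Iw}(\QQ, T)^\eta$, and to the Pontryagin dual $X^{\mathrm{Kato},\eta}_\infty$ of the usual Kato Selmer group. By Kato's theorem (\cite{kato04}), the Kato zeta element generates a submodule of $H^1_{\Iw}(\QQ, T)^\eta$ whose cokernel has characteristic ideal controlling $\Char(X^{\mathrm{Kato},\eta}_\infty)$ up to a power of $\varpi$. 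Pushing through $\uCol_i$ and using $\uCol_i(\kato) = \tilde{L}_{p,i}$, then taking characteristic ideals of the exact sequence, we obtain the desired inclusion $\varpi^{n_i}\tilde{L}_{p,i}^\eta \in \Char(\Sel_p^i(f/\QQ_\infty)^{\vee,\eta})$. Cotorsionness of $\Sel_p^i(f/\QQ_\infty)^\vee$ then follows a posteriori because its characteristic ideal contains the nonzero element $\tilde{L}_{p,i}^\eta$.

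The main obstacle is controlling the cokernel of the local Coleman map $\uCol_i$ on each $\eta$-isotypical component of $H^1_{\Iw}(\QQ_p, T)$; this is what generates the $\varpi^{n_i}$ factor, and it is also the source of the asymmetry between $i = 1$ and $i = 2$. Since $\uCol_2$ is built using the basis vector corresponding to the non-unit Frobenius eigenvalue, its image has finite cokernel only on the trivial isotypical component, whereas $\uCol_1$ behaves uniformly across all characters $\eta$. Verifying this uniform behaviour will require a careful analysis of the images of the two Coleman maps, going back to the explicit Berger--Li--Zhu basis of $\NN(V)$.
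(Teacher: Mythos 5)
Your overall skeleton does coincide with the paper's proof of Theorem~\ref{torsionSelmer}: nonvanishing of the relevant component of $\tilde{L}_{p,i}$, the Poitou--Tate sequence \eqref{poitoutate}, torsionness of $\Hh^2$ and the divisibility in Theorem~\ref{kato}(d), assembled as in Kobayashi's Theorem~7.3 via the exact sequence \eqref{equivalentMC}. However, your key input is flawed. In the supersingular case, invertibility of $\underline{\mathcal{M}}$ together with $\tilde{L}_{p,\alpha},\tilde{L}_{p,\beta}\ne 0$ only shows that $\tilde{L}_{p,1}$ and $\tilde{L}_{p,2}$ are not \emph{both} zero; it does not exclude, say, $\tilde{L}_{p,1}=0$ with $\tilde{L}_{p,\alpha}=(\alpha^{-1}\underline{m}_{22}+\underline{m}_{21})\tilde{L}_{p,2}$ and $\tilde{L}_{p,\beta}=(\beta^{-1}\underline{m}_{22}+\underline{m}_{21})\tilde{L}_{p,2}$ both nonzero. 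Moreover the theorem needs nonvanishing \emph{isotypical component by component} ($\tilde{L}_{p,1}^\eta\ne0$ for every $\eta$ on $\Delta$, and $\tilde{L}_{p,2}^\eta\ne 0$ for trivial $\eta$), which a determinant argument cannot see. The paper gets this by evaluating at characters of conductor dividing $p$, where $M\equiv A_\vp^T \bmod \vp(\pi)$ yields the explicit interpolation formulas of Proposition~\ref{nonzero} (transferred to $\tilde{L}_{p,i}$ by Corollary~\ref{sameprop}), and then invoking Shimura's nonvanishing of $L(f_{\eta^{-1}},1)$ for $k\ge 3$ (Pollack's results when $k=2$, $a_p=0$) --- not Rohrlich. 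The same mechanism handles the ordinary case: under (A') one uses Kato's interpolation for $\tilde{L}_{p,\beta}$ at conductor-$p$ characters, not a general nonvanishing of the critical-slope $L$-function (which is not known, and the comparison with Pollack--Stevens is conjectural and unused).

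Your diagnosis of the asymmetry between $i=1$ and $i=2$, and of the factor $\varpi^{n_i}$, is also off target. The restriction to the trivial character for $i=2$ comes from the fact that $\eta(L_{p,2})=0$ for primitive $\eta$ modulo $p$, so nonvanishing of $\tilde{L}_{p,2}^\eta$ can only be established for trivial $\eta$; it has nothing to do with the cokernel of $\uCol_2$ on nontrivial isotypical components. Likewise $\varpi^{n_i}$ arises from integrality comparisons (lattice/zeta-element normalizations) exactly as in Kobayashi's Theorem~7.3 when passing to \eqref{equivalentMC}; no computation of $\image(\uCol_i)$ is required for Theorem~\ref{torsionSelmer}, and indeed the Berger--Li--Zhu analysis of the images (under assumptions (B)--(D)) is only used later, to sharpen the reformulation of the main conjecture, not to prove this theorem. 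As written, your proposed "careful analysis of the images" would both add unnecessary hypotheses and fail to produce the component-wise nonvanishing that the argument actually hinges on.
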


\begin{corollary}[see Corollary~\ref{Katoequivalence}]
  Let $\eta$ be a character on $\Delta$ as in Theorem~\ref{signedSelmergroups}. If either assumption (A) or assumption (A') is satisfied, and the image of $\Gal(\overline{\QQ} / \QQ_\infty)$ in $\operatorname{GL}(V_f)$ contains a conjugate of $\operatorname{SL}_2(\ZZ_p)$, then Kato's main conjecture is equivalent to
 \[\Char_{\Lambda_{\calO_E}(\Gamma)}(\Sel_p^i(f/\QQ_\infty)^{\vee,\eta})=\Char_{\Lambda_{\calO_E}(\Gamma)}(\image(\uCol_i)^\eta/(\tilde{L}_{p,i}^\eta) ).\]
\end{corollary}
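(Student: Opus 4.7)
The strategy is to use global Poitou--Tate duality to compare $\Sel_p^i(f/\QQ_\infty)^\vee$ with $H^2_{\Iw}(\QQ, T_f(k-1))$, and then reformulate Kato's main conjecture in the same terms so the two identities of characteristic ideals become visibly equivalent. Set $T = T_f(k-1)$. Starting from Definition~\ref{definitionselmer}, which defines $\Sel_p^i(f/\QQ_\infty)$ using the local condition $\ker(\uCol_i) \subseteq H^1_{\Iw}(\Qp, T)$ at $p$, the nine-term Poitou--Tate exact sequence in Iwasawa cohomology specializes to a four-term exact sequence of finitely generated $\Lambda_{\calO_E}(G_\infty)$-modules
\[ H^1_{\Iw}(\QQ, T) \xrightarrow{\uCol_i \circ \mathrm{loc}_p} \image(\uCol_i) \to \Sel_p^i(f/\QQ_\infty)^\vee \to H^2_{\Iw}(\QQ, T) \to 0, \]
in which the image of the Kato zeta element $\kato$ under the first map is $\tilde{L}_{p,i}$, by the very definition of $\tilde{L}_{p,i}$.

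Under the hypothesis that $\image(\Gal(\overline{\QQ}/\QQ_\infty) \to \operatorname{GL}(V_f))$ contains a conjugate of $\operatorname{SL}_2(\ZZ_p)$, Kato's result \cite[Theorem~12.4]{kato04} shows that $H^1_{\Iw}(\QQ, T)^\eta$ is a free $\Lambda_{\calO_E}(\Gamma)$-module of rank one for every character $\eta$ of $\Delta$. Taking $\eta$-components of the displayed sequence and computing characteristic ideals of the resulting short exact sequences of torsion modules (using multiplicativity of characteristic ideals, together with the torsion property guaranteed by Theorem~\ref{torsionSelmer} and its ordinary analogue), one obtains
\[ \Char\bigl(\Sel_p^i(f/\QQ_\infty)^{\vee,\eta}\bigr) \cdot \Char\bigl((H^1_{\Iw}(\QQ, T)/\Lambda\cdot \kato)^\eta\bigr) = \Char\!\left(\image(\uCol_i)^\eta/(\tilde{L}_{p,i}^\eta)\right) \cdot \Char\bigl(H^2_{\Iw}(\QQ, T)^\eta\bigr). \]
Kato's main conjecture \cite[Conjecture~12.10]{kato04} is precisely the assertion that $\Char((H^1_{\Iw}(\QQ, T)/\Lambda\cdot\kato)^\eta) = \Char(H^2_{\Iw}(\QQ, T)^\eta)$, so the two sides of the previous identity hold simultaneously; this gives the desired equivalence.

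The main obstacle lies in verifying the four-term Poitou--Tate sequence with the precise local conditions used to define $\Sel_p^i$. In particular, one needs to identify, via local Tate duality, the orthogonal complement of $\ker(\uCol_i)$ with an appropriate module of the dual representation so that $\image(\uCol_i)$ itself appears as the middle term, and one must verify that contributions from primes away from $p$ either vanish on the chosen $\eta$-components or cancel on both sides of the characteristic-ideal identity. A secondary technical difficulty is controlling the $\varpi^{n_i}$-torsion discrepancy arising in Theorem~\ref{torsionSelmer}, since the corollary claims an exact equality of ideals rather than one holding up to a power of $\varpi$; the $\operatorname{SL}_2$ hypothesis is presumably invoked precisely to ensure the relevant Iwasawa cohomology groups are $\varpi$-torsion-free, eliminating this error term and producing the clean equivalence stated in the corollary.
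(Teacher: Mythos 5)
Your proposal is correct and is essentially the paper's own argument: your displayed four-term sequence is exactly the Poitou--Tate sequence \eqref{poitoutate} (quoted in the paper from Kobayashi and Lei), which after the torsion and injectivity statements in the proof of Theorem~\ref{torsionSelmer} becomes \eqref{equivalentMC}, and the equivalence with Conjecture~\ref{katoMC} then follows, just as you say, from multiplicativity of characteristic ideals together with Theorem~\ref{kato}. The only small correction concerns the role of the $\operatorname{SL}_2$ hypothesis (assumption (E)): it enters via Theorem~\ref{kato}(e), giving $\ZZ(T_f)\subset\Hh^1(T_f)$ and that $\Hh^1(T_f)$ is a free $\Lambda_{\calO_E}(G_\infty)$-module of rank one, which is what makes the integral statement (and the removal of the $\varpi^{n_i}$ ambiguity) possible, rather than any $\varpi$-torsion-freeness of the Iwasawa cohomology groups per se.
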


 Note that in the ordinary case, $\tilde{L}_{p, 2}$ agrees with the usual $p$-adic $L$-function of $f$ up to a unit in $\Lambda_E(G_\infty)$. It will be shown in a forthcoming paper of the first and third authors \cite{leizerbes2} that the corresponding Selmer group is the usual $\Sel_p(f/\QQ_\infty)$; whereas the first Coleman map gives a new $p$-adic $L$-function $\tilde{L}_{p,1}$ and a new Selmer group.

  
  \subsection{Notation}
 
   Throughout this paper, let $p$ be an odd prime. Fix embeddings of $\overline{\QQ}$ into $\overline{\QQ}_p$, and into $\mathbb{C}$. For $n\ge0$, write $\Qpn=\Qp(\mu_{p^n})$ (resp. $\QQ_n=\QQ(\mu_{p^n})$) for the extension of $\Qp$ (resp. $\QQ$) obtained by adjoining the $p^n$-th roots of unity. Let $G_n$ denote its Galois group. Let $\QQ_{p,\infty}=\bigcup\Qpn$, and write $G_\infty$ for the Galois group of $\QQ_{p,\infty}$ over $\Qp$. We identify $G_\infty$ with the Galois group of $\QQ_\infty=\bigcup_{n\geq 1}\QQ_n$ over $\QQ$. Then $G_\infty\cong\Delta\times\Gamma$ where $\Delta$ is a finite group of order $p-1$ and $\Gamma\cong\Zp$, the Galois group of $\QQ_{p,\infty}$ over $\Qp(\mu_{p})$. We fix a topological generator $\gamma$ of $\Gamma$ and write $\chi$ for the cyclotomic character of $G_\infty$. Let $G_{\QQ_p}=\Gal(\overline{\QQ}_p/\QQ_p)$ and $H_{\QQ_p}=\Gal(\overline{\QQ}_p/\QQ_{p,\infty})$, where $\overline{\QQ}_p$ denotes an algebraic closure of $\QQ_p$.
  
   Given a finite extension $K$ of $\Qp$ with ring of integers $\calO_K$, $\Lambda_{\calO_K}(G_\infty)$ (respectively $\Lambda_{\calO_K}(\Gamma)$) denotes the Iwasawa algebra of $G_\infty$ (respectively $\Gamma$) over $\calO_K$. We further write $\Lambda_K(G_\infty)=\Lambda_{\calO_K}(G_\infty)\otimes\QQ$ and $\Lambda_K(\Gamma)=\Lambda_{\calO_K}(\Gamma)\otimes\QQ$.

   Given a module $M$ over $\Lambda_{\calO_K}(G_\infty)$ (respectively $\Lambda_K(G_\infty)$) and a character $\eta:\Delta\rightarrow\Zp^\times$, $M^\eta$ denotes the $\eta$-isotypical component of $M$. For any $m\in M$, we write $m^\eta$ for the projection of $m$ into $M^\eta$.


\section{Representations of \texorpdfstring{$G_{\QQ_p}$}{G(Qp)}}

 In this section we review some aspects of the theory of $p$-adic representations of $G_{\QQ_p}$. Most of our account is reproduced from~\cite{berger04} and~\cite[\S 2]{bergerlizhu04}. Let $E$ be a finite extension of $\QQ_p$ with ring of integers $\calO_E$. An $E$-linear representation of $G_{\QQ_p}$ is a finite dimensional $E$-vector space $V$ with a continuous $E$-linear action of $G_{\QQ_p}$. We similarly have the notion of an $\OO_E$-linear representation of $G_{\QQ_p}$, which is a finitely-generated (not necessarily free) $\calO_E$-module with a continuous $\calO_E$-linear action of $G_{\QQ_p}$. Define $\Rep_E(G_{\QQ_p})$ (respectively $\Rep_{\calO_E}(G_{\QQ_p})$) to be the categoy of $E$-linear (respectively $\calO_E$-linear) representations of $G_{\QQ_p}$.

 \subsection{\texorpdfstring{$p$}{p}-adic Hodge theory}
 
  In this section, we recall the definitions of some of Fountain's rings of periods. Let $\CC_p$ be the completion of $\overline{\QQ}_p$ for the $p$-adic topology, endowed with the usual valuation $v_p$ normalized such that $v_p(p) = 1$. Let 
  \[ \tilde{\EE}=\varprojlim_{x\mapsto x^p}\CC_p =\big\{ (x^{(0)},x^{(1)},\dots): (x^{(i+1)})^p=x^{(i)}\big\}, \]
  and let $\tilde{\EE}^+$ be the set of $x\in\tilde{\EE}$ such that $x^{(0)}\in\calO_{\CC_p}$. 
  We can equip $\tilde{\EE}$ naturally with the structure of an algebraically closed field of characteristic $p$: if $x=(x^{(i)})$ and $y=(y^{(i)})$, define $x+y$ and $xy$ by
  \begin{align*}
   (x+y)^{(i)} &:= \lim_{j\rightarrow+\infty}\big(x^{(i+j)}+y^{(i+j)}\big)^{p^j} \\ (xy)^{(i)} &:= x^{(i)}y^{(i)}.
  \end{align*}

  Define a complete valuation on $\tilde{\EE}$ by $v_{\tilde{\EE}}(x)=v_p(x^{(0)})$ if $x=(x^{(i)})\in\tilde{\EE}$. Let $\tilde{\AA}^+=W(\tilde{\EE}^+)$ be the ring of Witt vectors of $\tilde{\EE}^+$, and let $\tilde{\BB}^+=\tilde{\AA}^+[p^{-1}]$. An element $x\in\tilde{\BB}^+$ can then be written uniquely in the form 
  \[ x=\sum_{i\gg -\infty}p^i[x_i],\]
  where $x_i\in\tilde{\EE}^+$ and $[x_i]$ denotes the Teichm\"uller lift. The ring $\tilde{\BB}^+$ is equipped with the Witt vector Frobenius map $\vp$ (lifting the map $x \mapsto x^p$ on $\tilde{\EE}^+$), and with a map
  \[ \theta: \tilde{\BB}^+  \rTo \CC_p\]
  via $\theta\left( \sum_{i\gg -\infty}p^i[x_i]\right)=\sum_{i\gg\infty} p^ix_i^{(0)}$. Fix an element $\varepsilon=(\varepsilon^{(n)})\in\tilde{\EE}^+$ with $\varepsilon^{(0)}=1$ and $\varepsilon^{(1)}\neq 1$. Let $\pi=[\varepsilon]-1$, $\pi_1=[\vp^{-1}(\varepsilon)]-1$ and $\omega=\frac{\pi}{\pi_1}$.
   
   The ring $\BB^+_{\dR}$ is defined as $\BB^+_{\dR}=\varprojlim \tilde{\BB}^+/\ker(\theta)^n$. It is a discrete valuation ring, and its maximal ideal is generated by $t=\log([\varepsilon])$. Define $\BB_{\dR}=\BB^+_{\dR}[t^{-1}]$ to be the fraction field of $\BB_{\dR}^+$, which is equipped with an action of $G_{\QQ_p}$ and a filtration defined by $\Fil^i\BB_{\dR}=t^i\BB^+_{\dR}$. 
   
   Define the ring $\BB_{\cris}^+$ as
   \[ \BB_{\cris}^+=\big\{\sum_{n\geq 0}a_n\frac{\omega^n}{n!}\hspace{3ex}\text{where $a_n\in\tilde{\BB}^+$ is a sequence converging to $0$}\big\},\]
   and $\BB_{\cris}=\BB^+_{\cris}[t^{-1}]$. The ring $\BB_{\cris}$ injects canonically into $\BB_{\dR}$, and it is endowed with the induced Galois action and filtration, as well with a continuous Frobenius $\vp$ which extends the map $\vp:\tilde{\BB}^+\rightarrow \tilde{\BB}^+$. If $V$ is a $\QQ_p$-linear representation of $G_{\QQ_p}$, then $\DD_{\cris}(V)=(V\otimes\BB_{\cris})^{G_{\QQ_p}}$ is a filtered $\vp$-module of dimension $\leq\dim_{\QQ_p}(V)$. We define $V$ to be crystalline if equality holds. 
   
   If $V$ is a $\QQ_p$-linear representation of $G_{\QQ_p}$, say that $V$ is Hodge-Tate, with Hodge-Tate weights $h_1,\dots,h_d$, if we have a decomposition $\CC_p\otimes_{\QQ_p}V\cong \oplus_{i=1}^d\CC_p(h_i)$. Say that $V$ is positive if its Hodge-Tate weights are negative. It is easy to see that a crystalline representation $V$ is Hodge-Tate, and that its Hodge-Tate weights are those integers $h$ such that $\Fil^{-h}\DD_{\cris}(V)\neq \Fil^{1-h}\DD_{\cris}(V)$. 
   
   If $V$ is an $E$-linear representation of $G_{\QQ_p}$, then we define its Hodge-Tate weights to be the weights of the underlying $\QQ_p$-vector space, and we say that $V$ is crystalline if and only if the underlying $\QQ_p$-linear representation is crystalline. In this case, $\DD_{\cris}(V)$ is an $E$-vector space, and the filtration and Frobenius are $E$-linear.


  \subsection{Crystalline representations and Wach modules}\label{crysrepsandwachmods}

    Let $\tilde{\AA}=W(\tilde{\EE})$, and let $\AA_{\QQ_p}$ be the completion of $\ZZ_p[[\pi]][\pi^{-1}]$ in $\tilde{\AA}$ in the $p$-adic topology, so $\AA_{\QQ_p}$ is a complete discrete valuation ring with residue field $\FF_p((\varepsilon-1))$. Let $\BB$ be the completion of the maximal unramified extension of $\BB_{\QQ_p}=\AA_{\QQ_p}[p^{-1}]$ in $\tilde{\BB}$, and define $\AA=\BB\cap\tilde{\AA}$ and $\BB^+=\BB\cap\tilde{\BB}^+$. These rings are endowed with an action of $G_{\QQ_p}$ and of the Frobenius operator $\vp$. One can show that $(\BB^+)^{H_{\QQ_p}}=\ZZ_p[[\pi]][p^{-1}]$, which we denote by $\BB^+_{\QQ_p}$.
    
    We define a left inverse $\psi:\BB\rightarrow \BB$ by $x\rightarrow \vp^{-1}\left(p^{-1}\Tr_{\BB/\vp(\BB)}(x)\right)$. If $x=f(\pi)\in \BB_{\QQ_p}$, then the value of $\psi(x)$ can also be calculated by 
    \[ \vp\circ\psi(x)=\frac{1}{p}\sum_{\zeta^p=1}f(\zeta(\pi+1)-1).\]
    Since the residual extension $\tilde{\EE} / \vp(\tilde{\EE})$ is inseparable of degree $p$, $\psi$ preserves $\AA$ and $\AA_{\QQ_p}$.

    An \'etale $(\vp,G_\infty)$-module over $\AA_{\QQ_p}$ is a finitely generated $\AA_{\QQ_p}$-module $M$, with semi-linear $\vp$ and a continuous action of $G_\infty$ commuting with each other, such that $\vp(M)$ generates $M$ as an $\AA_{\QQ_p}$-module. In~\cite{fontaine90}, Fontaine constructs a functor $T\rightarrow \DD(T)$ which associates to every $\ZZ_p$-linear representation of $G_{\QQ_p}$ an \'etale
    $(\vp,G_\infty)$-module over $\AA_{\QQ_p}$. Moreover, he shows that this functor is an equivalence of categories. By inverting $p$, one also gets an equivalence of categories between the category of $\QQ_p$-linear $p$-adic representations and the category of \'etale $(\vp,G_\infty)$-modules over $\BB_{\QQ_p}$. The left inverse $\psi$ of $\vp$ extends to the $(\vp,G_\infty)$-module.
    
    If $E$ is a finite extension of $\QQ_p$, we extend the Frobenius and the action of $G_\infty$ to $E\otimes\BB_{\QQ_p}$ by $E$-linearity. We then get an equivalence of categories from the category of $E$-linear (or $\calO_E$-linear) representations to the category of \'etale $(\vp,G_\infty)$-modules over $E\otimes\BB_{\QQ_p}$ (resp. over $E\otimes\AA_{\QQ_p}$). 
    
    If $V$ is a crystalline representation, we can say more about the $(\vp,G_\infty)$-module. Let $\AA_{\QQ_p}^+=\ZZ_p[[\pi]]$ and $\BB^+_{\QQ_p}=\AA_{\QQ_p}^+[p^{-1}]$ as above. The following result is shown in~\cite[\S II.1 and \S III.4]{berger03} and~\cite[\S 2]{bergerlizhu04}: If $V$ is an $E$-linear representation, then $V$ is crystalline with Hodge-Tate weights in $[a,b]$ if and only if there exists a (necessarily unique) $E\otimes_{\QQ_p}\BB^+_{\QQ_p}$-module $\NN(V)$ contained in $\DD(V)$ such that the following conditions are satisfied:
    \begin{enumerate}
     \item $\NN(V)$ is free of rank $d=\dim_E(V)$ over $E\otimes_{\QQ_p}\BB^+_{\QQ_p}$;
     \item the action of $G_\infty$ preserves $\NN(V)$ and is trivial on $\NN(V)/\pi\NN(V)$;
     \item $\vp(\pi^b\NN(V))\subset \pi^b\NN(V)$ and $\pi^b\NN(V)/ \vp^*(\pi^b\NN(V))$ is killed by $q^{b-a}$ where $q=\frac{\vp(\pi)}{\pi}$. (If $M$ is a $R$-module  equipped with a Frobenius $\vp$ where $R$ is any ring, then $\vp^*(M)$ denotes the $R$-module generated by $\vp(M)$.) 
    \end{enumerate}

    If $V$ is crystalline and positive, then we can take $b=0$ above, so $\vp$ preserves $\NN(V)$. In this case, if we endow $\NN(V)$ with the filtration $\Fil^i\NN(V)=\{x\in\NN(V)\mid \vp(x)\in q^i\NN(V)\}$, then $\NN(V)/\pi\NN(V)$ is a filtered $E$-linear $\vp$-module, and as shown in~\cite[\S III.4]{berger03} we have an isomorphism $\NN(V)/\pi\NN(V)\cong\DD_{\cris}(V)$. 
    
    If $T$ is a $G_{\QQ_p}$-stable lattice in $V$, then $\NN(T)=\NN(V)\cap\DD(T)$ is an $\calO_E\otimes_{\ZZ_p}\AA_{\QQ_p}^+$-lattice in $\NN(V)$, and by~\cite[\S III.4]{berger03} the functor $T\rightarrow \NN(T)$ gives a bijection between the $G_{\QQ_p}$-stable lattices $T$ in $V$ and the $\calO_E\otimes_{\ZZ_p}\AA_{\QQ_p}^+$-lattices in $\NN(V)$ satisfying 

    \begin{enumerate}
     \item $\NN(T)$ is free of rank $d=\dim_E(V)$ over $\calO_E\otimes_{\ZZ_p}\AA_{\QQ_p}^+$;
     \item the action of $G_\infty$ preserves $\NN(T)$;
     \item $\vp(\pi^b\NN(T))\subset \pi^b\NN(T)$ and $\pi^b\NN(T)/ \vp^*(\pi^b\NN(T))$ is killed by $q^{b-a}$ where $q=\frac{\vp(\pi)}{\pi}$.
    \end{enumerate}
    
    Let $\BB_{\rig,\QQ_p}^+$ be the set of $f(\pi)\in\QQ_p[[\pi]]$ such that $f(X)$ converges for all $X$ in the open unit disc in $\CC_p$. Note that $t\in\BB_{\rig,\QQ_p}^+$. If $V$ is a positive representation of $G_{\QQ_p}$, then as shown in~\cite[\S I.5]{berger03}, we can recover $\DD_{\cris}(V)$ from $\NN(V)$ as $\DD_{\cris}(V)=\big(\BB_{\rig,\QQ_p}^+\otimes_{\BB^+_{\QQ_p}}\NN(V)\big)^{G_\infty}$. Moreover, the inclusion $\DD_{\cris}(V)\subset \BB_{\rig,\QQ_p}^+\otimes_{\BB^+_{\QQ_p}}\NN(V)$ gives rise to an isomorphism
    \[ \iota: \BB^+_{\rig,\QQ_p}[t^{-1}]\otimes_{\QQ_p} \DD_{\cris}(V)\cong \BB_{\rig,\QQ_p}^+[t^{-1}]\otimes_{\BB^+_{\QQ_p}}\NN(V).\]
    In~\cite[proposition 2.12]{berger02}, Berger shows that for all $n\geq 0$ there is an injective map $\vp^{-n}(\BB^+_{\rig,\QQ_p})\rightarrow \BB^+_{\dR}$, which is compatible with the natural map $\vp^{-n}(\tilde{\BB}^+)\rightarrow \BB^+_{\dR}$. It is characterized by the fact that it sends $\pi$ to $\varepsilon^{(n)}\exp(t/ p^n)-1$. Define a derivation $\partial:\BB_{\rig,\QQ_p}^+\rightarrow \BB_{\rig,\QQ_p}^+$ by $\partial=(1+\pi)\frac{d}{d\pi}$. Under the map $ \BB^+_{\rig,\QQ_p}\rightarrow \QQ_p[[t]]$ given by $\pi \mapsto \exp(t) - 1$, $\partial$ corresponds to the derivation $\frac{d}{dt}$. 
    
    If $z\in \Qpn((t))\otimes_{\QQ_p}\DD_{\cris}(V)$, denote the constant coefficient of $z$ by $\partial_V(z)\in\Qpn\otimes_{\QQ_p}\DD_{\cris}(V)$.
    
   \subsection{Iwasawa cohomology and the Fontaine isomorphism}\label{sect-fontaineisom}

    If $T\in \Rep_{\calO_E}(G_{\QQ_p})$, define     
    \[ H^1_{\Iw}(\QQ_p,T)=\varprojlim_n H^1(\QQ_{p,n},T),\]
    where the inverse limit is taken with respect to the corestriction maps. As shown by Fontaine (unpublished -- for a reference see~\cite[Section II]{cherbonniercolmez99}), for any $T \in \Rep_{\calO_E}(G_{\QQ_p})$, there is a canonical isomorphism of $\Lambda_{\calO_E}(G_\infty)$-modules
    \begin{equation}\label{fontaineisom}
     h^1_{\Qp,\Iw}:\DD(T)^{\psi=1} \rTo^\cong H^1_{\Iw}(\Qp,T).
    \end{equation}

    Similarly, for $ V \in \Rep_E (G_{\QQ_p})$, define $H^1_{\Iw}(\QQ_p,V)=H^1_{\Iw}(\QQ_p,T)\otimes_{\ZZ_p}\QQ_p$, where $T$ is any $G_{\QQ_p}$-invariant lattice of $V$; this is independent of the choice of $T$, and $h^1_{\Qp,\Iw}$ extends to an isomorphism of $\Lambda_E(G_\infty)$-modules $\DD(V)^{\psi=1}\cong H^1_{\Iw}(\Qp,V)$.


\section{The Coleman maps}

  \subsection{Positive crystalline representations}\label{positive}
    
   In this subsection, we shall define $d$ Coleman maps for a $d$-dimensional positive crystalline representation $V$, depending on a choice of basis of the Wach module $\NN(T)$ for a lattice $T$ in $V$.

   Let $E$ be a finite extension of $\Qp$. Let $V$ be a positive crystalline $d$-dimensional $E$-linear representation of $G_{\Qp}$ with Hodge-Tate weights $-r_d\le -r_{d-1}\le\cdots\le-r_1\le0$. We assume that $V$ has no quotient isomorphic to $E(-r_d)$ and fix an $\calO_E$-lattice $T$ in $V$ which is stable under $G_{\Qp}$. Write $\NN(T)$ for its Wach module, which is a free $\EA$-module of rank $d$, whereas $\NN(V)=\NN(T)\otimes\Qp$ is a free $E\otimes\BB_{\Qp}^+$-module of rank $d$. Choose an $\EA$-basis $n_1,\ldots,n_d$ of $\NN(T)$ and write $P$ for the matrix of $\vp$ with respect to this basis. Then
    \[
    \begin{pmatrix}\vp(n_1)\\ \vdots \\ \vp(n_d) \end{pmatrix}=P^T\begin{pmatrix}n_1\\ \vdots \\ n_d \end{pmatrix}
    \]
    where $A^T$ denotes the transpose of $A$ if $A$ is a square matrix. Moreover, by \cite[section 3]{bergerbreuil10}, the determinant of $P$ is $q^{r_1+\cdots+r_d}$ up to a unit, where $q=\frac{\vp(\pi)}{\pi}$ as above.

    Let $m=\sum_{i=1}^d r_i$. Then, for $x\in\DD(T(m))^{\psi=1}$, we have $x\in\NN(T(m))^{\psi=1}$ by \cite[appendix A]{berger03}. But $\NN(T(m))=\pi^{-m}\NN(T)\otimes e_m$, where $e_m$ is a vector space basis of $\ZZ_p(m)$. 
    Hence, there exist unique $x_1,\ldots,x_d\in\EA$ such that
    \begin{equation}\label{expressingx}
     x=\pi^{-m}\begin{pmatrix}x_1&\cdots& x_d \end{pmatrix}\begin{pmatrix}n_1\\ \vdots \\ n_d \end{pmatrix}\otimes e_m.
    \end{equation}

    \begin{lemma}\label{kernelofpsi}
     For any $x\in\DD(T(m))^{\psi=1}$, the entries of the row vector
     \[
     {\bfCol}(x):=\begin{pmatrix}x_1&\cdots& x_d \end{pmatrix}q^{m}(P^T)^{-1}-\begin{pmatrix}\vp(x_1)&\cdots& \vp(x_d) \end{pmatrix}\
     \]
     are elements of $(\EA)^{\psi=0}$.
    \end{lemma}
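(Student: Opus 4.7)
The plan is to apply $\psi$ to the identity $\psi(x) = x$ after rewriting $x$ as a linear combination of $\vp(v_j)$, where $v_j := \pi^{-m}n_j \otimes e_m$ is the natural $\EA$-basis of $\NN(T(m))$. First I would verify integrality: by the Berger--Breuil result cited just above the lemma, $\det P = q^m \cdot u$ for some unit $u \in (\EA)^\times$, so the adjugate formula gives $q^m(P^T)^{-1} = u^{-1}\,\mathrm{adj}(P^T) \in M_d(\EA)$. Hence both $\mathbf{x}\, q^m (P^T)^{-1}$ and $\vp(\mathbf{x})$ have entries in $\EA$, and so does their difference $\bfCol(x)$.

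Next I would rewrite $x$ in terms of $\vp(v_j)$. Using $\vp(\pi^{-m}) = q^{-m}\pi^{-m}$ together with the matrix $P$, one computes $\vp(v_i) = q^{-m}\sum_j (P^T)_{ij} v_j$, i.e.\ $\vp(\mathbf{v}) = q^{-m}P^T \mathbf{v}$, which inverts to $\mathbf{v} = q^m(P^T)^{-1}\vp(\mathbf{v})$. Substituting this into \eqref{expressingx} yields
\[ x = \mathbf{x}\, q^m (P^T)^{-1}\, \vp(\mathbf{v}) = \sum_{j=1}^d c_j\, \vp(v_j), \qquad c_j := \bigl(\mathbf{x}\, q^m (P^T)^{-1}\bigr)_j. \]

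The key step is now to apply $\psi$ and exploit the fundamental identity $\psi(a\vp(b)) = \psi(a)\, b$, which follows from $\psi\circ\vp = \id$ together with the decomposition $\BB = \bigoplus_{i=0}^{p-1}(1+\pi)^i\vp(\BB)$ and the vanishing $\psi((1+\pi)^i) = 0$ for $1 \le i \le p-1$. This gives $\psi(x) = \sum_j \psi(c_j)\, v_j$. Since $\psi(x) = x = \sum_j x_j v_j$ by hypothesis and $\{v_j\}$ is an $\EA$-basis of $\NN(T(m))$, we must have $\psi(c_j) = x_j$ for each $j$. Combined with $\psi(\vp(x_j)) = x_j$, this yields
\[ \psi\bigl(\bfCol(x)_j\bigr) = \psi(c_j) - \psi(\vp(x_j)) = x_j - x_j = 0, \]
which is what we wanted.

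The mildly subtle point is the integrality check in the first paragraph, which depends on the explicit form of $\det P$ up to a unit of $\EA$; once this is in hand the rest is a formal manipulation with $\vp$ and $\psi$, hinging on the observation that the matrix $q^m(P^T)^{-1}$ is precisely the change-of-basis from $\{v_j\}$ to $\{\vp(v_j)\}$.
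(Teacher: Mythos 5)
Your proposal is correct and follows essentially the same route as the paper: the integrality via $\det P = q^m$ up to a unit, the rewriting $x = \mathbf{x}\,q^m(P^T)^{-1}\vp(\pi^{-m}\mathbf{n}\otimes e_m)$, and then applying $\psi$ with the projection formula $\psi(a\vp(b)) = \psi(a)b$ to deduce $\psi\bigl(\mathbf{x}\,q^m(P^T)^{-1}\bigr) = \mathbf{x}$ from $\psi(x)=x$. Your write-up merely makes explicit (via the basis $v_j = \pi^{-m}n_j\otimes e_m$ and the decomposition of $\BB$ over $\vp(\BB)$) steps the paper leaves implicit.
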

    \begin{proof}
     Recall that the determinant of $P$ is $q^m$ up to a unit in $\EA$, so the entries of $\bfCol(x)$ are indeed elements of $\EA$. Since $\vp(\pi)=\pi q$, (\ref{expressingx}) implies that
     \[
      x=\begin{pmatrix}x_1&\cdots& x_d \end{pmatrix}q^{m}(P^T)^{-1}\vp(\pi^{-m})\begin{pmatrix}\vp(n_1)\\ \vdots \\ \vp(n_d) \end{pmatrix}\otimes e_m.
     \]
     Hence,
     \[
      \psi(x)=\psi\left(\begin{pmatrix}x_1&\cdots& x_d \end{pmatrix}q^{m}(P^T)^{-1}\right)\pi^{-m}\begin{pmatrix}n_1\\ \vdots \\ n_d \end{pmatrix}\otimes e_m.
     \]
     Therefore, $\psi(x)=x$ implies that
     \[
      \psi\left(\begin{pmatrix}x_1&\cdots& x_d \end{pmatrix}q^{m}(P^T)^{-1}\right)=\begin{pmatrix}x_1&\cdots& x_d \end{pmatrix}.
     \]
     Hence the result.
    \end{proof}
    
    \begin{definition}
     For $1\le i\le d$, we define the $i$-th Coleman map $\rmCol_i:\DD(T(m))^{\psi=1}\rightarrow(\EA)^{\psi=0}$ by sending $x$ to the $i$-th component of $\bfCol(x)$.
    \end{definition}

    \begin{lemma}
     Let $n_1,\ldots,n_d$ and $n_1',\ldots,n_d'$ be two bases of $\NN(T)$ with $\begin{pmatrix}n_1\\\vdots\\n_d\end{pmatrix}=M'' \begin{pmatrix}n_1'\\\vdots\\n_d'\end{pmatrix}$. Then, the Coleman maps defined by these two bases, $\bfCol$ and $\bfCol'$ are related by $\bfCol(x)\vp(M'')=\bfCol'(x)$ for all $x\in\DD(T(m))^{\psi=1}$.
    \end{lemma}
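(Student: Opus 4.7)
The plan is to do a direct computation comparing the two definitions of $\bfCol$ once we understand how the two ingredients entering the definition (the coordinate vector of $x$ and the Frobenius matrix) transform under a change of basis.

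First, I would express $x$ in both bases and compare. By \eqref{expressingx} applied with the basis $n_1',\ldots,n_d'$, there are unique $x_1',\ldots,x_d'\in\EA$ with
\[ x=\pi^{-m}\begin{pmatrix}x_1'&\cdots& x_d' \end{pmatrix}\begin{pmatrix}n_1'\\ \vdots \\ n_d' \end{pmatrix}\otimes e_m. \]
Substituting $\begin{pmatrix}n_1\\\vdots\\n_d\end{pmatrix}=M'' \begin{pmatrix}n_1'\\\vdots\\n_d'\end{pmatrix}$ into the original expression for $x$ and using the uniqueness of the $x_i'$, we obtain the row-vector identity $\begin{pmatrix}x_1'&\cdots& x_d' \end{pmatrix}=\begin{pmatrix}x_1&\cdots& x_d \end{pmatrix}M''$.

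Next I would derive the corresponding change-of-basis formula for the Frobenius matrix. Let $P'$ denote the matrix of $\vp$ with respect to $n_1',\dots,n_d'$. Applying $\vp$ to $\begin{pmatrix}n_1\\\vdots\\n_d\end{pmatrix}=M'' \begin{pmatrix}n_1'\\\vdots\\n_d'\end{pmatrix}$ and using $\vp\begin{pmatrix}n_1'\\\vdots\\n_d'\end{pmatrix}=(P')^T\begin{pmatrix}n_1'\\\vdots\\n_d'\end{pmatrix}$ yields $P^T M''=\vp(M'')(P')^T$, whence
\[ q^m\bigl((P')^T\bigr)^{-1}=(M'')^{-1}\cdot q^m(P^T)^{-1}\cdot\vp(M''), \]
using that $q^m$ is a scalar.

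Finally I would substitute these two relations into the definition of $\bfCol'(x)$. This gives
\[ \bfCol'(x)=\begin{pmatrix}x_1&\cdots& x_d \end{pmatrix}M''\cdot (M'')^{-1}q^m(P^T)^{-1}\vp(M'')-\vp\bigl(\begin{pmatrix}x_1&\cdots& x_d \end{pmatrix}M''\bigr), \]
and since $\vp$ is a ring homomorphism the second term equals $\begin{pmatrix}\vp(x_1)&\cdots& \vp(x_d) \end{pmatrix}\vp(M'')$. Factoring $\vp(M'')$ on the right then gives $\bfCol'(x)=\bfCol(x)\vp(M'')$, as required. The whole argument is purely formal bookkeeping; there is no genuine obstacle, the only point to watch being that the identity $\det P=q^m\cdot\mathrm{unit}$ (which ensured $\bfCol$ takes values in $\EA$ in Lemma~\ref{kernelofpsi}) is preserved under change of basis, which is automatic since $M''$ is invertible over $\EA$.
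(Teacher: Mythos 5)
Your proposal is correct and follows essentially the same route as the paper's own proof: the coordinate change $\begin{pmatrix}x_1'&\cdots&x_d'\end{pmatrix}=\begin{pmatrix}x_1&\cdots&x_d\end{pmatrix}M''$, the relation $P^TM''=\vp(M'')(P')^T$, and substitution into the definition of $\bfCol'$. The only difference is cosmetic: you invert the Frobenius relation to rewrite $q^m((P')^T)^{-1}$ directly, whereas the paper uses the same identity to replace $M''(P'^T)^{-1}$ by $(P^T)^{-1}\vp(M'')$ mid-computation.
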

    \begin{proof}
     For any $x\in\DD(T(m))^{\psi=1}$, write $x=x_1n_1+\cdots x_dn_d=x_1'n_1'+\cdots x_d'n_d'$. Then,
     \[
     \begin{pmatrix}x_1'&\cdots&x_d'\end{pmatrix}=\begin{pmatrix}x_1&\cdots&x_d\end{pmatrix}M''
     \]
     Let $P$ and $P'$ be the matrices of $\vp$ with respect to $n_1,\ldots,n_d$ and $n_1',\ldots,n_d'$ respectively. Then $P^TM''=\vp(M'')P'^T$. Therefore,
     \begin{align*}
      \bfCol'(x)&=\begin{pmatrix}x_1'&\cdots&x_d'\end{pmatrix}q^m(P'^T)^{-1}-\begin{pmatrix}\vp(x_1')&\cdots&\vp(x_d')\end{pmatrix}\\
      &=\begin{pmatrix}x_1&\cdots&x_d\end{pmatrix}q^mM''(P'^T)^{-1}-\begin{pmatrix}\vp(x_1)&\cdots&\vp(x_d)\end{pmatrix}\vp(M'')\\
      &=\begin{pmatrix}x_1&\cdots&x_d\end{pmatrix}q^m(P^T)^{-1}\vp(M'')-\begin{pmatrix}\vp(x_1)&\cdots&\vp(x_d)\end{pmatrix}\vp(M'').
     \end{align*}
     Hence the lemma.
    \end{proof}

    It is clear that we can extend $\rmCol_i$ to a map from $\DD(V(m))^{\psi=1}$ to $(E\otimes\BB_{\Qp}^+)^{\psi=0}$. By an abuse of notation, we will write this map as $\rmCol_i$ as well. We now relate $\bfCol(x)$ to $(1-\vp)(x)$. By writing down $\vp(x)$, we have the following:
    \begin{equation}
     (1-\vp)(x) = \bfCol(x)\cdot\vp(\pi)^{-m}P^T\begin{pmatrix}n_1\\ \vdots \\ n_d \end{pmatrix}\otimes e_m\label{1-vp} 
    \end{equation}
    
    \begin{remark}
     We see from~\eqref{1-vp} that for any $x$ as above, $(1-\vp)x\in(\vp^*\NN(T(m)))^{\psi=0}$.
    \end{remark}
    
     Note that the maps $\rmCol_i$ are \emph{not} $\Lambda(G_\infty)$-homomorphisms under the canonical action of $G_\infty$ on $(\BB^+_{\QQ_p})^{\psi=0}$ because $G_\infty$ acts non-trivially on the basis $\{ n_i\}_{1\leq i\leq d}$ of $\NN(V)$. We deal with this problem using Theorem~\ref{Lambda} below. Its proof is due to Laurent Berger; we quote it with his permission. In the case when $V$ is the $p$-adic representation associated to a modular form with $v_p(a_p)\geq \lfloor \frac{k-2}{p-1}\rfloor$, we have independently found a proof of this result which uses the basis of $\NN(V)$ constructed in~\cite{bergerlizhu04}. It is more explicit than Berger's proof, and we give it in Section~\ref{ss} since it will be needed to analyse the images of the Coleman maps. For notational simplicity, we take $E=\Qp$ for the time being. Conceptually, there is no difficulty in extending the result to an $E$-linear representation. 
    
   \begin{theorem}\label{Lambda}
    Let $V$ be a crystalline $p$-adic representation of $G_{\QQ_p}$ of dimension $d$, and let $T$ be a $G_{\QQ_p}$-stable lattice in $V$. Then $(\vp^*\NN(T))^{\psi=0}$ is a free $\Lambda(G_\infty)$-module of rank $d$. Moreover, if $n_1^0,\dots,n_d^0$ is a basis of $\NN(T)$, then there exists a basis $n_1,\dots,n_d$ such that $n_i\equiv n_i^0\mod\pi$ for all $i$ and $(1+\pi)\vp(n_1),\dots,(1+\pi)\vp(n_d)$ forms a $\Lambda(G_\infty)$-basis of $(\vp^*\NN(T))^{\psi=0}$.
   \end{theorem}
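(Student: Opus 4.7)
The plan is to exploit the Mellin transform, which identifies $\Lambda(G_\infty)$ with $\AA_{\QQ_p}^{+,\psi=0}$ carrying $1\mapsto (1+\pi)$, and to produce the desired basis by $\pi$-adic successive approximation starting from $n_i^0$. As a warm-up, note that for any basis $n_1,\ldots,n_d$ of $\NN(T)$, the elements $\vp(n_i)$ form a free $\AA_{\QQ_p}^+$-basis of $\vp^*\NN(T)$, and the projection formula $\psi(a\vp(x)) = \psi(a)\cdot x$ yields the $\AA_{\QQ_p}^+$-semilinear decomposition
\[
(\vp^*\NN(T))^{\psi=0} \;=\; \bigoplus_{i=1}^d \AA_{\QQ_p}^{+,\psi=0}\,\vp(n_i),
\]
and each $(1+\pi)\vp(n_i)$ lies in $(\vp^*\NN(T))^{\psi=0}$. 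If the $G_\infty$-action on $\NN(T)$ were trivial this decomposition would be $\Lambda(G_\infty)$-linear, so $\{(1+\pi)\vp(n_i)\}$ would automatically be the desired $\Lambda(G_\infty)$-basis. The subtlety is that the $G_\infty$-action on $\NN(T)$ is only trivial modulo $\pi$.

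The key observation is that this is, in a precise sense, the only obstruction. For $\sigma \in G_\infty$ write $\sigma(n_i^0) = n_i^0 + \pi z_i^\sigma$ with $z_i^\sigma \in \NN(T)$; then $\vp(\sigma n_i^0) = \vp(n_i^0) + \vp(\pi)\vp(z_i^\sigma)$ and, since $\vp(\pi)=\pi q$, the error lies in $\pi\cdot\vp^*\NN(T)$. Consequently, modulo $\pi$, the $\Lambda(G_\infty)$-action on $(1+\pi)\vp(n_i^0)$ inside $(\vp^*\NN(T))^{\psi=0}$ agrees with the ``straight'' action coming from the Mellin transform on $(1+\pi)\in\AA_{\QQ_p}^{+,\psi=0}$. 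The plan is then to construct $n_i$ iteratively: set $n_i^{(0)} = n_i^0$, and at each stage $k$ correct $n_i^{(k+1)} = n_i^{(k)} + \pi^{k+1} w_i^{(k)}$ with $w_i^{(k)}\in \NN(T)$ chosen so that the $\Lambda(G_\infty)$-linear map
\[
\Phi_k \colon \Lambda(G_\infty)^d \;\longrightarrow\; (\vp^*\NN(T))^{\psi=0}, \qquad e_i\mapsto (1+\pi)\vp(n_i^{(k)}),
\]
becomes an isomorphism modulo $\pi^{k+1}\cdot\vp^*\NN(T)$. The $\pi$-adic limit $n_i := \lim_k n_i^{(k)}$ then satisfies $n_i \equiv n_i^0 \pmod{\pi}$ and gives the basis of the theorem.

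The hard part will be verifying that the corrections $w_i^{(k)}$ can always be found and that the limit map $\Phi := \lim_k \Phi_k$ is both surjective and injective. Surjectivity is the heart of the matter: at each stage the defect is controlled by the discrepancy between $\vp(\sigma n_i^{(k)})$ and $\vp(n_i^{(k)})$, which by construction lies in a high power of $\pi$, and one solves for $w_i^{(k)}$ using the free $\AA_{\QQ_p}^+$-module structure of $\vp^*\NN(T)$ together with the rank-one freeness of $\AA_{\QQ_p}^{+,\psi=0}$ over $\Lambda(G_\infty)$. Injectivity can be deduced by reducing a putative relation $\sum \lambda_i\cdot(1+\pi)\vp(n_i) = 0$ modulo $\pi$: there it becomes a relation among $\{\lambda_i\cdot(1+\pi)\}$ in $\AA_{\QQ_p}^{+,\psi=0}$, forcing each $\lambda_i=0$ by the freeness of $\AA_{\QQ_p}^{+,\psi=0}$ as a $\Lambda(G_\infty)$-module, after which one bootstraps to the next power of $\pi$.
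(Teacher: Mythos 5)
Your framing is sound as far as it goes --- the Mellin identification, the semilinear decomposition $(\vp^*\NN(T))^{\psi=0}=\bigoplus_i(\AA^+_{\QQ_p})^{\psi=0}\vp(n_i)$, and the diagnosis that the only obstruction is the nontriviality of the $G_\infty$-action modulo $\pi$ --- but the proposal stops exactly where the proof begins, and the mechanism you offer for surjectivity is not the right one. At stage $k$ the defect is an \emph{element-dependent} error: for each $y$ you get $y-\Phi_k(\lambda)\in\pi^{k+1}\vp^*\NN(T)$, and a single modification of the $d$ generators by $\pi^{k+1}w_i^{(k)}$ cannot absorb a whole module of such errors. What must be proved is that these errors already lie in the $\Lambda(G_\infty)$-span of the \emph{fixed} generators modulo a higher power, and this needs two inputs your sketch never supplies: (i) a description of which $\lambda\in\Lambda(G_\infty)$ have $\mathfrak{M}(\lambda)$ divisible by $\vp(\pi)^k$ --- these are exactly $p_k\Lambda(G_\infty)$ with $p_k=(1-\gamma)(1-\chi(\gamma)^{-1}\gamma)\cdots(1-\chi(\gamma)^{1-k}\gamma)$, Lemma~\ref{lem2}; and (ii) control of the cross terms $(1-g)\vp(n_i)$ created when such $\lambda$ act, which is why the paper corrects the basis \emph{once} (Lemma~\ref{lem3}, forcing $(1-\gamma)n_i\in\pi^2\NN(T)$) rather than infinitely often. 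Even with these inputs the induction step is delicate: writing $b_i=(1-\chi(\gamma)^{1-k}\gamma)c_i$ and unwinding leaves a remainder divisible by $p\vp(\pi)^k$ but \emph{not} by $\vp(\pi)^{k+1}$, and one must iterate and pass to a $p$-adic limit to kill it (Proposition~\ref{induction}). None of this appears in your plan; the sentence ``the hard part will be verifying that the corrections can always be found'' is precisely the theorem.

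Your injectivity argument also fails at its first step: reducing $\sum_i\lambda_i\cdot\bigl((1+\pi)\vp(n_i)\bigr)=0$ modulo $\pi\vp^*\NN(T)$ yields a relation over $\AA^+_{\QQ_p}/\pi\cong\ZZ_p$, hence only that each $\mathfrak{M}(\lambda_i)$ has vanishing constant term (so $\lambda_i$ lies in an augmentation-type ideal), not that $\lambda_i=0$; the freeness of $(\AA^+_{\QQ_p})^{\psi=0}$ over $\Lambda(G_\infty)$ does not apply to the reduced relation, and the proposed ``bootstrap'' re-encounters the twisted action, i.e.\ the same difficulty as surjectivity. The paper avoids this entirely: once generation is known, the induced map $\Lambda(G_\infty)^{\oplus d}/p_k\to(\vp^*\NN(T))^{\psi=0}/\vp(\pi)^k$ is a surjection of free $\ZZ_p$-modules of the same finite rank $dk(p-1)$, hence an isomorphism, and $\bigcap_{k\geq 0}p_k\Lambda(G_\infty)^{\oplus d}=0$ gives injectivity. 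Relatedly, you should work with the $\vp(\pi)$-adic filtration rather than the $\pi$-adic one: it is the filtration matched by Lemma~\ref{lem2}, and the action errors naturally land there, since $(1-g)n_i\in\pi\NN(T)$ gives $(1-g)\vp(n_i)=\vp\bigl((1-g)n_i\bigr)\in\vp(\pi)\vp^*\NN(T)$.
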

   
   Note that in this theorem we do not assume that $V$ is positive. The proof of this result requires several preliminary lemmas. We assume without loss of generality that $\chi(\gamma)=1+p$. For $k\geq 0$, define
   \[ p_k=(1-\gamma)(1-\chi(\gamma)^{-1}\gamma)\dots(1-\chi(\gamma)^{1-k}\gamma),\]
   which is an element of $\Lambda(\Gamma)$. 
   
   \begin{lemma}\label{lem1}
    If $a\in\ZZ_p$ and $x\in\NN(T)$ and $f\in\AA^+_{\QQ_p}$ and $g\in G_\infty$, then 
    \[ (1-ag)(fx)=((1-ag)f)x+ag(f)((1-g)x).\]
   \end{lemma}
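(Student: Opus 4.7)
The plan is to verify the identity by direct expansion of both sides, using only the fact that $g \in G_\infty$ acts on $\NN(T)$ semilinearly over $\AA^+_{\QQ_p}$, i.e.\ $g(fx) = g(f) \cdot g(x)$ for $f \in \AA^+_{\QQ_p}$ and $x \in \NN(T)$. This semilinearity is part of the defining data of the Wach module recalled in Section~\ref{crysrepsandwachmods}.

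Starting from the left-hand side, I would write
\[
(1-ag)(fx) = fx - a \cdot g(fx) = fx - a\, g(f)\, g(x),
\]
using semilinearity in the second equality (and $\ZZ_p$-linearity of the operator $1 - ag$). Expanding the right-hand side similarly,
\[
\bigl((1-ag)f\bigr)x + ag(f)\bigl((1-g)x\bigr) = \bigl(f - a\,g(f)\bigr)x + ag(f)\bigl(x - g(x)\bigr) = fx - a\,g(f)\,g(x),
\]
where the $a\,g(f)\,x$ terms cancel. Comparing the two expressions gives the identity.

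I expect no real obstacle here: the lemma is a purely formal identity — essentially the ``Leibniz-type'' relation $(1-ag)(fx) = ((1-ag)f)x + ag(f)(1-g)x$ that one writes down whenever $g$ acts as a ring automorphism on a module over a ring it also acts on. Its role in what follows will presumably be to let one split off the $\Lambda(G_\infty)$-action on $\NN(T)$ from the coefficient action on $\AA^+_{\QQ_p}$, which is needed to analyze $(\vp^*\NN(T))^{\psi=0}$ as a $\Lambda(G_\infty)$-module in the proof of Theorem~\ref{Lambda}.
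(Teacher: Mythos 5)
Your expansion is correct and is exactly the computation behind the paper's one-word proof (``Immediate''): both sides reduce to $fx - a\,g(f)\,g(x)$ using the semilinearity of the $G_\infty$-action on $\NN(T)$. Nothing further is needed.
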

   \begin{proof}
    Immediate.
   \end{proof}

   \begin{lemma}\label{lem2}
    The map $\mathfrak{M}:\Lambda(G_\infty)\rightarrow (\AA^+_{\QQ_p})^{\psi=0}$ given by $f\rightarrow f(1+\pi)$ is an isomorphism of $\Lambda(G_\infty)$-modules, which takes $p_k\Lambda(G_\infty)$ to $\vp(\pi)^k(\AA^+_{\QQ_p})^{\psi=0}$.
   \end{lemma}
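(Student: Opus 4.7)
The plan is to handle the two assertions in turn. The $\Lambda(G_\infty)$-linearity of $\mathfrak{M}$ is tautological from the formula $\mathfrak{M}(\lambda \mu) = (\lambda\mu)(1+\pi) = \lambda \cdot \mathfrak{M}(\mu)$, so for the first part it only remains to check bijectivity. I would identify $G_\infty$ with $\ZZ_p^\times$ via the cyclotomic character, under which $\mathfrak{M}$ becomes the restriction to $\Lambda(\ZZ_p^\times)$ of the classical Amice isomorphism $\Lambda(\ZZ_p) \cong \AA^+_{\QQ_p}$ sending $[a]$ to $(1+\pi)^a$. Using the decomposition $\AA^+_{\QQ_p} = \bigoplus_{i=0}^{p-1}(1+\pi)^i \vp(\AA^+_{\QQ_p})$ that matches $\ZZ_p = \bigsqcup_i (i + p\ZZ_p)$, combined with the easy computation $\psi((1+\pi)^i \vp(f)) = 0$ for $1 \le i \le p-1$ and $= f$ for $i = 0$, one identifies $(\AA^+_{\QQ_p})^{\psi=0}$ with the Amice image of $\Lambda(\ZZ_p^\times) = \Lambda(G_\infty)$.

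For the second assertion I prove both inclusions. For $\mathfrak{M}(p_k\Lambda(G_\infty)) \subseteq \vp(\pi)^k (\AA^+_{\QQ_p})^{\psi=0}$ I induct on $k$, with the key identity $\gamma(\vp(\pi)) = (1+\vp(\pi))^{\chi(\gamma)} - 1 = \chi(\gamma) \vp(\pi) \cdot u$ for some $u \in 1 + \vp(\pi)\AA^+_{\QQ_p}$. Assuming inductively that $p_k(1+\pi) = \vp(\pi)^k y_k$ with $y_k \in (\AA^+_{\QQ_p})^{\psi=0}$, applying $(1 - \chi(\gamma)^{-k}\gamma)$ yields
\[ p_{k+1}(1+\pi) = \vp(\pi)^k\bigl((1-\gamma)(y_k) + (1-u^k)\gamma(y_k)\bigr). \]
The second summand visibly lies in $\vp(\pi) \AA^+_{\QQ_p}$ since $u \equiv 1 \pmod{\vp(\pi)}$. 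The first summand is the main obstacle: here the key lemma is that $(1-\gamma)$ carries $(\AA^+_{\QQ_p})^{\psi=0}$ into $\vp(\pi)\AA^+_{\QQ_p}$, which I would establish by direct computation on the basis $\{(1+\pi)^i \vp(g) : 1 \le i \le p-1\}$ of $(\AA^+_{\QQ_p})^{\psi=0}$: one finds $(1-\gamma)((1+\pi)^i\vp(g)) = (1+\pi)^i \vp(g - (1+\pi)^i\gamma(g))$, and $g - (1+\pi)^i\gamma(g)$ vanishes at $\pi = 0$ (the values $g(0)$ cancel), so $\vp$ of it acquires a factor of $\vp(\pi)$. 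Combining, $p_{k+1}(1+\pi) = \vp(\pi)^{k+1}\tilde y$ for some $\tilde y \in \AA^+_{\QQ_p}$; since $p_{k+1}(1+\pi) \in (\AA^+_{\QQ_p})^{\psi=0}$ and $\psi(\vp(\pi)^{k+1}\tilde y) = \pi^{k+1}\psi(\tilde y)$, we conclude $\tilde y \in (\AA^+_{\QQ_p})^{\psi=0}$, closing the induction.

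For the reverse inclusion I use a rank argument. Up to a unit in $\Lambda(\Gamma) \cong \ZZ_p[[T]]$ (with $T = \gamma - 1$), $p_k$ is the distinguished polynomial $\prod_{j=0}^{k-1}(T - (\chi(\gamma)^j - 1))$ of degree $k$, with all roots in $p\ZZ_p$, so $\Lambda(\Gamma)/(p_k)$ is $\ZZ_p$-free of rank $k$; tensoring with $\Lambda(\Delta)$ (itself $\ZZ_p$-free of rank $p-1$) shows $\Lambda(G_\infty)/p_k\Lambda(G_\infty)$ is $\ZZ_p$-free of rank $k(p-1)$. On the other side, since $(\AA^+_{\QQ_p})^{\psi=0}$ is free of rank $p-1$ over $\vp(\AA^+_{\QQ_p}) = \ZZ_p[[\vp(\pi)]]$, the quotient $(\AA^+_{\QQ_p})^{\psi=0}/\vp(\pi)^k(\AA^+_{\QQ_p})^{\psi=0}$ is also $\ZZ_p$-free of rank $k(p-1)$. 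The first inclusion together with the Part 1 isomorphism then yields a surjection between two free $\ZZ_p$-modules of equal finite rank, which is necessarily an isomorphism, forcing $\mathfrak{M}(p_k\Lambda(G_\infty)) = \vp(\pi)^k(\AA^+_{\QQ_p})^{\psi=0}$.
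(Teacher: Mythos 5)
Your proposal is correct and follows essentially the same route as the paper: establish the inclusion $\mathfrak{M}(p_k\Lambda(G_\infty))\subseteq\vp(\pi)^k(\AA^+_{\QQ_p})^{\psi=0}$, then note that the induced surjection $\Lambda(G_\infty)/p_k\twoheadrightarrow(\AA^+_{\QQ_p})^{\psi=0}/\vp(\pi)^k$ is between free $\ZZ_p$-modules of rank $k(p-1)$ and hence an isomorphism, the first assertion being the standard Amice/Mellin identification. The only difference is one of detail: your induction using the decomposition $(\AA^+_{\QQ_p})^{\psi=0}=\bigoplus_{i=1}^{p-1}(1+\pi)^i\vp(\AA^+_{\QQ_p})$ spells out the divisibility step that the paper dispatches with the single remark $\gamma(\pi)=\chi(\gamma)\pi+O(\pi^2)$.
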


   \begin{proof}
    The first assertion is standard (we recall the relevant theory in section \ref{appendix} below). Note that $\gamma(\pi)=\chi(\gamma)\pi + O(\pi^2)$, which implies that the image of $p_k\Lambda(G_\infty)$ is contained in $\vp(\pi)^k(\AA^+_{\QQ_p})^{\psi=0}$. 
    Hence the surjection $\Lambda(G_\infty) \twoheadrightarrow (\AA^+_{\QQ_p})^{\psi=0}$ gives a surjection $\Lambda(G_\infty) / p_k \twoheadrightarrow (\AA^+_{\QQ_p})^{\psi=0}/\vp(\pi)^k$. Since both are free $\ZZ_p$-modules of rank $k(p-1)$, this must be an isomorphism.
   \end{proof}
   
   \begin{remark} Following the terminology of \cite[\S II.6]{berger03}, we refer to the inverse of $\mathfrak{M}$ as the \emph{Mellin transform}. 
   \end{remark}

   Let $n_1^0,\dots,n_d^0$ be a basis of $\NN(T)$. Since the action of $G_\infty$ on $\NN(T)$ is trivial modulo $\pi$, we have $(1-g)n_i^0\in \pi\NN(T)$ for all $1\leq i\leq d$ and for all $g\in G_\infty$. 
    
   \begin{lemma}\label{lem3}
    Let $g$ be a topological generator of $G_\infty$, and write $(1-g)n_i^0=\pi m_i$ for some $m_i\in\NN(T)$. If we put $n_i=n_i^0-\frac{\pi m_i}{1-\chi(g)}$, then $n_1,\dots, n_d$ is a basis of $\NN(T)$, and $(1-\gamma)n_i\in \pi^2\NN(T)$.  
   \end{lemma}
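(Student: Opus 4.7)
The plan splits into two steps: first verify that $\{n_i\}$ is a basis of $\NN(T)$, then compute $(1-\gamma)n_i$ to second order in $\pi$.

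For the first step I will observe that $n_i \equiv n_i^0 \pmod{\pi\NN(T)}$, so the change-of-basis matrix from $\{n_i^0\}$ to $\{n_i\}$ lies in $I_d + \pi M_d(\EA)$ (provided the scalar $\tfrac{1}{1-\chi(g)}$ is integral). Its determinant is then a unit in $\EA$, hence $\{n_i\}$ is a basis.

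For the second step, the strategy is to apply Lemma \ref{lem1} with $a=1$, $f=\pi$, $x=m_i$ to get
\[
(1-\gamma)(\pi m_i) = ((1-\gamma)\pi)\,m_i + \gamma(\pi)\,(1-\gamma)m_i,
\]
and then to exploit two standard facts about the $\Gamma$-action. First, since $\gamma(1+\pi) = (1+\pi)^{\chi(\gamma)}$, binomial expansion gives $\gamma(\pi) \equiv \chi(\gamma)\pi \pmod{\pi^2}$, so $(1-\gamma)\pi \equiv (1-\chi(\gamma))\pi \pmod{\pi^2}$ and in particular $\gamma(\pi) \in \pi\AA^+_{\QQ_p}$. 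Second, because $G_\infty$ acts trivially on $\NN(T)/\pi\NN(T)$, we have $(1-\gamma)m_i \in \pi\NN(T)$. Together these place $\gamma(\pi)(1-\gamma)m_i$ in $\pi^2\NN(T)$ and give $((1-\gamma)\pi)\,m_i \equiv (1-\chi(\gamma))\pi m_i \pmod{\pi^2\NN(T)}$. Assembling, and taking $g=\gamma$ so that $(1-\chi(\gamma))/(1-\chi(g))=1$,
\[
(1-\gamma)n_i \;=\; \pi m_i - \pi m_i + (\text{element of }\pi^2\NN(T)) \;\in\; \pi^2\NN(T),
\]
which is the desired conclusion.

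The hard part will be the bookkeeping around the denominator $1-\chi(g)$: for $g=\gamma$ with $\chi(\gamma)=1+p$ this is not a $p$-adic unit, so strictly the construction of $n_i$ should be interpreted in $\NN(V)$ rather than in $\NN(T)$, or $g$ should be reinterpreted so that $1-\chi(g) \in \calO_E^\times$. The essential content is the exact first-order cancellation displayed above, forced by the choice of the scaling factor $1/(1-\chi(g))$ in the definition of $n_i$; whichever integrality convention one adopts, the rest of the computation is unchanged.
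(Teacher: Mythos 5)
Your second-order computation is the same as the paper's (expand $g(\pi)=\chi(g)\pi+O(\pi^2)$, use that $G_\infty$ acts trivially on $\NN(T)/\pi\NN(T)$, and observe that the scaling $1/(1-\chi(g))$ is chosen to make the first-order terms cancel), and your basis argument via a change-of-basis matrix in $I_d+\pi M_d(\AA^+_{\QQ_p})$ is fine. But the ``hard part'' you flag is not a real difficulty: it comes from misreading the hypothesis. In the lemma $g$ is a topological generator of $G_\infty$, not of $\Gamma$. Since $p$ is odd, $G_\infty\cong\ZZ_p^\times$ is procyclic, so such a $g$ exists and $\chi(g)$ reduces to a generator of $(\ZZ/p\ZZ)^\times$; in particular $\chi(g)\not\equiv 1\bmod p$, so $1-\chi(g)\in\ZZ_p^\times$ and $n_i=n_i^0-\frac{\pi m_i}{1-\chi(g)}$ lies in $\NN(T)$ with no denominators. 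This is exactly the first sentence of the paper's proof. Your proposed remedies would be harmful rather than neutral: taking $g=\gamma$ (so $1-\chi(\gamma)=-p$) or passing to $\NN(V)$ loses the integral statement, and the lemma is used precisely integrally in Lemma~\ref{lem4}, Proposition~\ref{induction} and Theorem~\ref{Lambda}, whose conclusion concerns the $\Lambda(G_\infty)$-module $(\vp^*\NN(T))^{\psi=0}$ before inverting $p$.

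There is a second, related gap. Once $g$ is (correctly) a generator of $G_\infty$ distinct from $\gamma$, your cancellation only shows $(1-g)n_i\in\pi^2\NN(T)$; the identity $(1-\chi(\gamma))/(1-\chi(g))=1$ that you invoke to get the statement for $\gamma$ is false, and $(1-\gamma)n_i^0$ is not $\pi m_i$. You still need the deduction from $g$ to $\gamma$: since $h(\pi)\in\pi\AA^+_{\QQ_p}$ for all $h\in G_\infty$, the submodule $\pi^2\NN(T)$ is $G_\infty$-stable, so the telescoping identity $(1-g^n)=\sum_{j=0}^{n-1}g^j(1-g)$ gives $(1-g^n)n_i\in\pi^2\NN(T)$ for all $n$, and by continuity of the action this passes to every element of the closure of $\langle g\rangle$, which is all of $G_\infty$; in particular $(1-\gamma)n_i\in\pi^2\NN(T)$. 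This step is missing from your write-up (it only becomes vacuous under the incorrect identification $g=\gamma$), and it is also what makes the lemma usable for arbitrary elements of $\Lambda(G_\infty)$ later on.
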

   \begin{proof}
    Note that since $p\neq 2$ and $g$ is a topological generator of $G_\infty$, $1-\chi(g)\in\ZZ_p^\times$, so $n_i\in\NN(T)$ for all $i$, and they are obviously a basis. Since $g(\pi)=\chi(g)\pi +\calO(\pi^2)$, this basis is designed such that $(1-g)n_i\in \pi^2\NN(T)$, and this implies that $(1-g)n_i\in\pi^2 \NN(T)$. 
   \end{proof}
   
   Let $\calN$ be the $\Lambda(G_\infty)$-submodule of $(\vp^*(\NN(T))^{\psi=0}$ generated by $(1+\pi)\vp(n_1),\dots,(1+\pi)\vp(n_d)$. 
   
   \begin{lemma}\label{lem4}
    Let $y\in(\vp^*\NN(T))^{\psi=0}$. Then there exist $\mathfrak{n}\in \calN$ and $z\in (\vp^*(\NN(T))^{\psi=0}$ such that $y=\mathfrak{n}+\vp(\pi)z$. 
   \end{lemma}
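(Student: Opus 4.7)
The plan is to construct $\mathfrak{n}$ directly by expressing $y$ in the $\AA^+_{\QQ_p}$-basis $\vp(n_1),\ldots,\vp(n_d)$ of $\vp^*\NN(T)$ and inverting the Mellin isomorphism of Lemma~\ref{lem2} coefficient-by-coefficient. Write $y=\sum_i a_i\vp(n_i)$ uniquely with $a_i\in\AA^+_{\QQ_p}$. The $\psi$-semilinearity identity $\psi\bigl(\vp(f)x\bigr)=f\psi(x)$, together with the standard decomposition $\AA^+_{\QQ_p}=\bigoplus_{k=0}^{p-1}\vp(\AA^+_{\QQ_p})(1+\pi)^k$ and the vanishing $\psi\bigl((1+\pi)^k\vp(m)\bigr)=0$ for $1\le k\le p-1$, yields the formula $\psi\bigl(f\vp(m)\bigr)=\psi(f)\cdot m$ for all $f\in\AA^+_{\QQ_p}$ and $m\in\NN(T)$. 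Applied to $y$ this gives $\psi(y)=\sum_i\psi(a_i)n_i$, so the hypothesis $\psi(y)=0$ together with the $\AA^+_{\QQ_p}$-freeness of $\NN(T)$ forces each $a_i\in(\AA^+_{\QQ_p})^{\psi=0}$. By Lemma~\ref{lem2} there is then a unique $\lambda_i\in\Lambda(G_\infty)$ with $\lambda_i\cdot(1+\pi)=a_i$, and I set
\[ \mathfrak{n}=\sum_i\lambda_i\cdot\bigl((1+\pi)\vp(n_i)\bigr)\in\calN. \]

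The heart of the argument is the congruence $\mathfrak{n}\equiv y\pmod{\vp(\pi)\vp^*\NN(T)}$. For any $g\in G_\infty$, using that $g$ commutes with $\vp$ and acts trivially on $\NN(T)/\pi\NN(T)$,
\[ g\cdot\bigl((1+\pi)\vp(n_i)\bigr)=g(1+\pi)\vp(n_i)+g(1+\pi)\vp\bigl((g-1)n_i\bigr), \]
and the second summand lies in $\vp(\pi)\vp^*\NN(T)$ because $(g-1)n_i\in\pi\NN(T)$. Note that only the general mod-$\pi$ triviality of the $G_\infty$-action is used here, not the stronger $\pi^2$-refinement of Lemma~\ref{lem3}. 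Extending by $\ZZ_p$-linearity and continuity---using that $\vp(\pi)\vp^*\NN(T)$ is closed in the $(p,\pi)$-adic topology and that the $\Lambda(G_\infty)$-action is continuous---gives, for every $\lambda\in\Lambda(G_\infty)$,
\[ \lambda\cdot\bigl((1+\pi)\vp(n_i)\bigr)\equiv\bigl(\lambda\cdot(1+\pi)\bigr)\vp(n_i)\pmod{\vp(\pi)\vp^*\NN(T)}. \]
Summing over $i$ yields $\mathfrak{n}\equiv\sum_i a_i\vp(n_i)=y$ modulo $\vp(\pi)\vp^*\NN(T)$.

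Finally, since $\vp(\pi)$ is a non-zero-divisor on the free $\AA^+_{\QQ_p}$-module $\vp^*\NN(T)$, we can write $y-\mathfrak{n}=\vp(\pi)z$ for a unique $z\in\vp^*\NN(T)$. Both $y$ and $\mathfrak{n}$ lie in $(\vp^*\NN(T))^{\psi=0}$---for $\mathfrak{n}$, because $\psi(1+\pi)=0$ gives $(1+\pi)\vp(n_i)\in(\vp^*\NN(T))^{\psi=0}$, a space stable under $G_\infty$---so $\psi(\vp(\pi)z)=0$. Combined with $\psi\bigl(\vp(\pi)z\bigr)=\pi\psi(z)$ and the fact that $\pi$ is a non-zero-divisor on $\NN(T)$, this forces $\psi(z)=0$, completing the proof. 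The only subtle point I foresee is the continuity extension in the middle paragraph, but this should be routine once the $p$-adic topologies on $\Lambda(G_\infty)$ and $\vp^*\NN(T)$ are fixed.
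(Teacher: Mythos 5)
Your proof is correct and follows essentially the same route as the paper: write $y=\sum_i a_i\vp(n_i)$, observe that the coefficients lie in $(\AA^+_{\QQ_p})^{\psi=0}$, pull them back through the Mellin isomorphism of Lemma~\ref{lem2}, and set $\mathfrak{n}=\sum_i\lambda_i\bigl((1+\pi)\vp(n_i)\bigr)$. The only difference is that the paper invokes the refined basis of Lemma~\ref{lem3} to place $y-\mathfrak{n}$ in $\vp(\pi)^2(\vp^*\NN(T))^{\psi=0}$ (slightly more than the lemma asserts), whereas you use only the mod-$\pi$ triviality of the $G_\infty$-action and obtain exactly the stated mod-$\vp(\pi)$ conclusion, which suffices since the base case of Proposition~\ref{induction} needs no more.
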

   \begin{proof}
    Write $y=\sum_{i=1}^d y_i\vp(n_i)$ with $y_i\in (\AA^+_{\QQ_p})^{\psi=0}$. By Lemma~\ref{lem2} we can write $y_i=b_i(1+\pi)$ for some $b_i\in \Lambda(G_\infty)$, and Lemma~\ref{lem3} implies that $b_in_i\equiv n_i\mod\pi^2\NN(T)$. Therefore, we have
    \[ \sum_{i=1}^d b_i\big( (1+\pi)\vp(n_i)\big)- \sum_{i=1}^d y_i\vp(n_i)\in \vp(\pi)^2(\vp^*\NN(T))^{\psi=0},\]
    which is slightly better than the lemma.  
   \end{proof}
   
   Lemma~\ref{lem4} can be generalized to all $k\geq 0$:
   
   \begin{proposition}\label{induction}
    Let $k\geq 0$ and $y\in\vp(\pi)^k(\vp^*\NN(T))^{\psi=0}$. Then there exists $\mathfrak{n}\in p_k\calN$ and $z\in (\vp^*\NN(T))^{\psi=0}$ such that $y=\mathfrak{n}+ \vp(\pi)^{k+1}z$.
   \end{proposition}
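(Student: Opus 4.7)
The plan is to induct on $k$, with the base case $k = 0$ being precisely Lemma~\ref{lem4} (since $p_0 = 1$).

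For the inductive step, given $y \in \vp(\pi)^{k+1}(\vp^*\NN(T))^{\psi=0}$, I write $y = \vp(\pi)^{k+1} w$ for some $w \in (\vp^*\NN(T))^{\psi=0}$ and apply Lemma~\ref{lem4} to decompose $w = \mathfrak{m} + \vp(\pi) z'$ with $\mathfrak{m} \in \calN$ and $z' \in (\vp^*\NN(T))^{\psi=0}$. This gives $y = \vp(\pi)^{k+1}\mathfrak{m} + \vp(\pi)^{k+2} z'$, so the proposition reduces to the inclusion
\[
 (\star_{k+1}):\quad \vp(\pi)^{k+1}\calN \;\subseteq\; p_{k+1}\calN + \vp(\pi)^{k+2}(\vp^*\NN(T))^{\psi=0}.
\]

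The crux is to establish $(\star_{k+1})$ on each generator $X_i := (1+\pi)\vp(n_i)$ of $\calN$. By Lemma~\ref{lem2} there exists $u_{k+1} \in \Lambda(G_\infty)$ with $\mathfrak{M}(p_{k+1} u_{k+1}) = \vp(\pi)^{k+1}(1+\pi)$. Expanding $p_{k+1} u_{k+1} \cdot X_i$ by iterated application of Lemma~\ref{lem1} (one step per factor of $p_{k+1}$), I separate a ``main'' contribution $\mathfrak{M}(p_{k+1} u_{k+1}) \vp(n_i) = \vp(\pi)^{k+1} X_i$ from ``correction'' contributions, each of which contains at least one factor of $\vp((1-\gamma)n_i)$; by Lemma~\ref{lem3}, $(1-\gamma)n_i \in \pi^2 \NN(T)$, so this factor contributes $\vp(\pi)^2$ on the module side. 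A secondary induction over the $2^{k+1}$-branch expansion tree shows that each correction branch trades one Mellin-side $\vp(\pi)$-factor (from Lemma~\ref{lem2}) for a module-side $\vp(\pi)^2$-factor (from Lemma~\ref{lem3}), for a net surplus of $\vp(\pi)$ over the main branch, placing every correction inside $\vp(\pi)^{k+2}(\vp^*\NN(T))^{\psi=0}$. Rearranging yields $\vp(\pi)^{k+1} X_i \in p_{k+1}\calN + \vp(\pi)^{k+2}(\vp^*\NN(T))^{\psi=0}$. To extend from generators to arbitrary $\mathfrak{m} = \sum_i a_i \cdot X_i \in \calN$, I use that $G_\infty$ is abelian (so $g \cdot p_{k+1}\calN = p_{k+1}\calN$ for every $g \in G_\infty$ since $g$ commutes with $p_{k+1} \in \Lambda(\Gamma)$ in $\Lambda(G_\infty)$), that $g(\vp(\pi))/\vp(\pi)$ is a unit in $\AA^+_{\QQ_p}$ (so the $\vp(\pi)$-adic filtration is $G_\infty$-stable and the identity $\vp(\pi)^{k+1} g(X_i) = (\vp(\pi)/g(\vp(\pi)))^{k+1} \cdot g(\vp(\pi)^{k+1} X_i)$ holds), and the subsidiary observation that $p_{k+1}\calN \subseteq \vp(\pi)^{k+1}(\vp^*\NN(T))^{\psi=0}$ (itself obtained from the same Lemma~\ref{lem1}-expansion applied to general $p_{k+1} b \cdot X_j$).

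The main technical obstacle is the secondary induction on the correction expansion. One must verify, branch by branch in the $2^{k+1}$-leaf tree produced by Lemma~\ref{lem1}, that every branch containing at least one correction step acquires $\vp(\pi)$-valuation $\geq k+2$; this in turn requires carefully combining the $\vp(\pi)^{k+1}$-growth on the Mellin side (Lemma~\ref{lem2}, via the chosen $u_{k+1}$) with the extra $\vp(\pi)$ contributed per correction step (from Lemma~\ref{lem3} applied to the newly-produced $\vp((1-\gamma)n_i)$-factors), and noting that the semi-linear $G_\infty$-action on the remaining scalar coefficients never decreases $\vp(\pi)$-valuation.
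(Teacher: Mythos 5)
Your base case, and the reduction of the inductive step to the inclusion $(\star_{k+1})\colon \vp(\pi)^{k+1}\calN\subseteq p_{k+1}\calN+\vp(\pi)^{k+2}(\vp^*\NN(T))^{\psi=0}$, are fine, and for $k+1=1$ your expansion does work (it is essentially the proof of Lemma~\ref{lem4}). The gap is the branch-by-branch valuation claim for $k+1\geq 2$: it is not true that every correction term lies in $\vp(\pi)^{k+2}(\vp^*\NN(T))^{\psi=0}$, i.e.\ that $\vp(\pi)^{k+1}X_i-p_{k+1}u_{k+1}\cdot X_i\in\vp(\pi)^{k+2}(\vp^*\NN(T))^{\psi=0}$. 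The bookkeeping fails because Lemma~\ref{lem2} controls only the Mellin transforms of the \emph{full} products $p_j$, with their specific twisted factors: a single factor $(1-\chi(\gamma)^{-j}\gamma)$ with $j\neq 0$ produces no zero at the $p$-th roots of unity (at $\pi=\zeta-1$ it just multiplies the value by the nonzero scalar $1-\chi(\gamma)^{-j}$), so the partial products appearing as scalar coefficients of your correction branches, such as $(1-\chi(\gamma)^{-1}\gamma)u_{k+1}$, need not be divisible by any power of $\vp(\pi)$; the "trade one Mellin-side $\vp(\pi)$ for a module-side $\vp(\pi)^2$" count therefore breaks down. Concretely, for $k+1=2$ one has $p_2u_2=\mathfrak{M}^{-1}\big(\vp(\pi)^2(1+\pi)\big)=1-2\gamma_1+\gamma_2$ with $\chi(\gamma_1)=1+p$, $\chi(\gamma_2)=1+2p$, so the total correction is $\sum_h c_h\,h(1+\pi)\,\vp\big((h-1)n_i\big)$ with $h\in\{1,\gamma_1,\gamma_2\}$, $c=(1,-2,1)$. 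Writing (say in a rank-one test case such as $\NN(\ZZ_p(1))$ with a basis as in Lemma~\ref{lem3}) $(h-1)n_i\equiv -Q(\chi(h))\pi^2 n_i \bmod \pi^3\NN(T)$, the correction divided by $\vp(\pi)^2$ takes the value $\zeta\big(Q(1)-2Q(1+p)+Q(1+2p)\big)$ at $\pi=\zeta-1$; this second finite difference of the quadratic $Q$ equals a nonzero multiple of $p^2$, so the total correction is \emph{not} in $\vp(\pi)^3(\vp^*\NN(T))^{\psi=0}$. The deficit is small $p$-adically, not $\vp(\pi)$-adically.

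This is precisely the difficulty the paper's argument is built to absorb: after peeling off one factor of $p_k$ via Lemma~\ref{lem1} and invoking the inductive hypothesis, one does not land in $p_k\calN+\vp(\pi)^{k+1}(\vp^*\NN(T))^{\psi=0}$ on the nose, but only up to an extra term $py_1$ with $y_1\in\vp(\pi)^k(\vp^*\NN(T))^{\psi=0}$, and the proof must then iterate the construction and pass to a $p$-adic limit to kill these $p$-divisible errors. Your sketch has no mechanism for them, and the claim it rests on is false as stated. (A secondary issue: in passing from the generators $X_i$ to a general $\mathfrak{m}=\sum_i a_iX_i$, multiplying a decomposition by the unit $(\vp(\pi)/g(\vp(\pi)))^{k+1}$ does not preserve $p_{k+1}\calN$, since $\calN$ is only a $\Lambda(G_\infty)$-module and not an $\AA_{\QQ_p}^+$-module; but the main obstruction is the one above.)
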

   \begin{proof}
    The case $k=0$ is just Lemma~\ref{lem4}. Assume that $k\geq 1$, and that the result is true for $k-1$. If $y=\sum_{i=1}^d y_i\vp(n_i)$ with $y_i\in \vp(\pi)^{k}(\AA^+_{\QQ_p})^{\psi=0}$, then we can write $y_i=b_i(1+\pi)$ with $b_i\in p_k\Lambda(G_\infty)$ by Lemma~\ref{lem2}. By the definition of $p_k$, we can write $b_i=(1-a\gamma)c_i$ with $a=\chi(\gamma)^{1-k}$ for some $c_i\in\Lambda(G_\infty)$. Moreover, $p_{k-1}|c_i$ for all $i$. Let $x_i=c_i(1+\pi)$, then 
    \begin{align*}
     \sum_{i=1}^d y_i\vp(n_i) &= \sum_{i=1}^d((1-a\gamma)x_i)\vp(n_i)\\
     &= (1-a\gamma)\left(\sum_{i=1}^d x_i\vp(n_i)\right)-a\sum_{i=1}^d\gamma(x_i)((1-\gamma)\vp(n_i))
    \end{align*}    
    by Lemma~\ref{lem1}. Let $z_0:= \sum_{i=1}^d\gamma(x_i)((1-\gamma)\vp(n_i))$. By Lemma~\ref{lem3} and the fact that $p_{k-1}|c_i$ (so $\vp(\pi)^{k-1}|x_i$), we have $z_0\in \vp(\pi)^{k+1}(\vp^*\NN(T))^{\psi=0}$.
    
    Consider the element $\sum_{i=1}^d x_i\vp(n_i)$ where $x_i=c_i(1+\pi)$ is divisible by $\vp(\pi)^{k-1}$ by Lemma~\ref{lem2} as $p_{k-1}|c_i$. Therefore, by induction, we can write $\sum_{i=1}^d x_i\vp(n_i)$ as $x+\vp(\pi)^kw$ with $x\in p_{k-1}\calN$ and $w\in (\vp^*\NN(T))^{\psi=0}$. If we set
    \begin{align*}
\mathfrak{n}&=(1-a\gamma)(x),\\
     \vp(\pi)^{k+1}z & = z_0+(1-\chi(\gamma)^{-k}\gamma)(\vp(\pi)^kw)\hspace{3ex}\text{and} \\
     py_1& = \big(\chi(\gamma)^{1-k}-\chi(\gamma)^{-k}\big)\gamma(\vp(\pi)^kw),
    \end{align*}
    then $y=\mathfrak{n}+\vp(\pi)^{k+1}z+py_1$ with $\mathfrak{n}\in p_k\calN$, $z\in (\vp^*\NN(T))^{\psi=0}$ and $y_1\in \vp(\pi)^k(\vp^*\NN(T))^{\psi=0}$.
    
    Iterating this gives us $y_j\in (\vp^*\NN(T))^{\psi=0}$ and converging sequences $\mathfrak{n}_j\in \calN$ and $z_n\in (\vp^*\NN(T))^{\psi=0}$ such that 
    \[ y= \mathfrak{n}_j+\vp(\pi)^{k+1}z_j+p^jy_j.\]
    The proposition follows by taking $\mathfrak{n}$ and $z$ to be the limits of $\mathfrak{n}_j$ and $z_j$, respectively. 
   \end{proof}
   
   \begin{proof}[Proof of Theorem~\ref{Lambda}] If $y\in (\vp^*\NN(T))^{\psi=0}$, the iterating Proposition~\ref{induction} shows that for all $k\geq 0$ we can write
   \[ y=\mathfrak{n}_0+\mathfrak{n}_1+\dots+\mathfrak{n}_k+\vp(\pi)^{k+1}z\]
   with $\mathfrak{n}_j\in p_j\calN$. Passing to the limit over $k$ shows that $y=\sum_{i\geq 0}\mathfrak{n}_i\in\calN$, which shows that $(1+\pi)\vp(n_1),\dots,(1+\pi)\vp(n_d)$ form a generating set of the $\Lambda(G_\infty)$-module $(\vp^*\NN(T))^{\psi=0}$. 
   
   Finally, the map $\Lambda(G_\infty)^{\oplus d}\slash p_k\Lambda(G_\infty)^{\oplus d}\rightarrow (\vp^*\NN(T))^{\psi=0}\slash \vp(\pi)^k(\vp^*\NN(T))^{\psi=0}$ is a surjective map between two $\ZZ_p$-modules of equal rank, so that it is injective, and therefore the kernel of $\Lambda(G_\infty)^{\oplus d}\rightarrow (\vp^*\NN(T))^{\psi=0}$ is equal to $\bigcap_{k\geq 0}p_k\Lambda(G_\infty)^d=0$. This finishes the proof.\end{proof}
   
   We now resume our assumption that $V$ is a positive crystalline $E$-linear representation of $G_{\QQ_p}$, with Hodge--Tate weights $-r_i$ such that $\sum_i r_i = m$, and $T \subset V$ an $\calO_E$-lattice, as above. Applying theorem \ref{Lambda} to the representation $V(m)$, we find that for any basis $n_1^0, \dots, n_d^0$ of $\NN(T)$, there is a basis $n_1, \dots, n_d$ of $\NN(T)$ with $n_i = n_i^0 \bmod \pi$ such that the vectors $(1 + \pi) \vp(\pi^{-m} n_i \otimes e_m)$ are a basis of $(\vp^* \NN(T(m))^{\psi = 0}$ as a $\Lambda_{\calO_E}$-module. With respect to such a basis $n_1, \dots, n_d$, we make the following definitions:

   \begin{definition}
    Define the \emph{Iwasawa transform} to be the $\Lambda_{\calO_E}(G_\infty)$-equivariant isomorphism
    \[ \J: (\vp^*\NN(T(m)))^{\psi=0}\rTo \Lambda_{\calO_E}(G_\infty)^{\oplus d}\]
    determined by sending $(1+\pi)\vp(n_i\otimes\pi^{-m}e_m)$ to $(0,\dots,0,1,0,\dots,0)$, where the $1$ is the $i$-th entry. 
   \end{definition}
   
   \begin{definition} 
    Define $\uCol:\NN(T(m))^{\psi=1}\rightarrow \Lambda_{\calO_E}(G_\infty)^{\oplus d}$ as $\J\circ (1-\vp)$, and for $1\leq i\leq d$, let $\uCol_i:\NN(T(m))^{\psi=1}\rightarrow \Lambda_{\calO_E}(G_\infty)$ be the composition of $\uCol$ with the projection onto the $i$-th component. 
   \end{definition}
   
   \begin{note}
    For all $1\leq i\leq d$, the map $\uCol_i$ is $\Lambda_{\calO_E}(G_\infty)$-equivariant. 
   \end{note}
   
   \subsection{Comparison with \texorpdfstring{$\Dcris$}{Dcris}}

   We now give an alternative formula for the Coleman maps of the previous subsection using the comparison isomorphisms between the Wach module $\NN(V)$ and $\Dcris(V)$.

   Recall from section \ref{crysrepsandwachmods} that for any positive crystalline representation $V$ we have a canonical isomorphism $\NN(V)/\pi\NN(V)\cong\Dcris(V)$ (\cite[\S~III.4]{berger03}).

   \begin{lemma}\label{choice}
    Let $V$ be a positive crystalline $E$-linear representation of $G_{\Qp}$. Given any basis $\nu_1,\dots,\nu_d$ of $\Dcris(V)$ over $E$, we can lift it to a basis of $n_1,\dots,n_d$ of $\NN(V)$ over $E \otimes \BB_{\Qp}^+$. Moreover, we may assume that $(1+\pi)\vp(\pi^{-m} n_1 \otimes e_m),\dots,(1+\pi)\vp(\pi^{-m} n_d \otimes e_m)$ is a $\Lambda_{E}(G_\infty)$-basis of $(\vp^*\NN(V(m)))^{\psi=0}$.
   \end{lemma}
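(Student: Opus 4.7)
The plan is to handle the two assertions separately. For the first, I would observe that the identification $\NN(V)/\pi\NN(V)\cong \Dcris(V)$ reduces the lifting problem to a linear-algebraic one: if $m_1,\dots,m_d$ is any basis of $\NN(V)$ and $\bar m_j$ denotes its image in $\Dcris(V)$, write $\nu_i = \sum_j c_{ij}\bar m_j$ with $c_{ij}\in E$ and set $n_i^0 := \sum_j c_{ij} m_j\in \NN(V)$. Since the matrix $(c_{ij})$ lies in $\operatorname{GL}_d(E)\subset \operatorname{GL}_d(E\otimes \BB^+_{\QQ_p})$, the elements $n_1^0,\dots,n_d^0$ form a basis of $\NN(V)$ lifting $\nu_1,\dots,\nu_d$.

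For the second assertion, my plan is to modify this initial lift using Theorem~\ref{Lambda}. Applied to the twist $V(m)$ (which is again crystalline) in its $E$-linear form, the theorem produces, from any chosen basis of $\NN(V(m))$, a new basis congruent to it modulo $\pi$ whose images under the map $x \mapsto (1+\pi)\vp(x)$ form a $\Lambda_E(G_\infty)$-basis of $(\vp^*\NN(V(m)))^{\psi=0}$. I would take as initial basis of $\NN(V(m))=\pi^{-m}\NN(V)\otimes e_m$ the vectors $\pi^{-m}n_i^0\otimes e_m$; the perturbed basis produced by the theorem, transported along the canonical $E\otimes\BB^+_{\QQ_p}$-isomorphism $n\mapsto \pi^{-m}n\otimes e_m$ from $\NN(V)$ onto $\NN(V(m))$, corresponds to a new basis $n_1,\dots,n_d$ of $\NN(V)$. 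The congruence in $\NN(V(m))$ translates to $n_i\equiv n_i^0\pmod{\pi}$ in $\NN(V)$; reducing modulo $\pi$ and using $n_i^0\equiv \nu_i$, the new $n_i$ still lifts $\nu_i$, while by construction $(1+\pi)\vp(\pi^{-m}n_i\otimes e_m)$ furnishes a $\Lambda_E(G_\infty)$-basis of $(\vp^*\NN(V(m)))^{\psi=0}$, as required.

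The main technical point is the invocation of Theorem~\ref{Lambda} in the $E$-linear setting on a twist, since the theorem is stated over a $\ZZ_p$-lattice. I would either appeal to the paper's remark that the proof (via Proposition~\ref{induction}) extends verbatim after replacing $\Lambda(G_\infty)$ by $\Lambda_{\calO_E}(G_\infty)$ and then inverting the uniformizer of $E$, or equivalently choose a $G_{\QQ_p}$-stable $\calO_E$-lattice $T\subset V$, after rescaling so that the $n_i^0$ form an $\calO_E\otimes\AA^+_{\QQ_p}$-basis of $\NN(T)$, and then apply the integral version of the theorem directly to $T(m)$. The rest of the argument is bookkeeping through the canonical identification $\NN(V(m))\cong \NN(V)$.
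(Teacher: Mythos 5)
Your overall strategy is essentially the paper's: combine Theorem~\ref{Lambda} (applied to the twist by $m$) with the comparison isomorphism $\NN(V)/\pi\NN(V)\cong\Dcris(V)$, using a matrix in $GL_d(E)$ to match the prescribed basis $\nu_1,\dots,\nu_d$. The paper simply performs the two steps in the opposite order: it first applies the \emph{integral} Theorem~\ref{Lambda} to an $\EA$-basis of $\NN(T)$ for a Galois-stable lattice $T\subset V$, obtaining $\bar n_1,\dots,\bar n_d$ with $(1+\pi)\vp(\pi^{-m}\bar n_i\otimes e_m)$ a $\Lambda_{\calO_E}(G_\infty)$-basis (hence a $\Lambda_E(G_\infty)$-basis), and only afterwards adjusts by the matrix $A^{-1}\in GL_d(E)$ carrying $\bar n_i \bmod \pi$ to $\nu_i$, noting that a constant matrix over $E$ is invertible in $\Lambda_E(G_\infty)$ and so preserves the basis property.

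The one place your write-up is not quite right is the handling of what you call the main technical point. The alternative you label ``equivalently'' — rescaling your lift $n_i^0$ so that it becomes an $\calO_E\otimes\AA^+_{\QQ_p}$-basis of $\NN(T)$ — is not available in general: the $\calO_E\otimes\AA^+_{\QQ_p}$-span of the rescaled $n_i^0$ is a lattice in $\NN(V)$, but it need not be stable under $G_\infty$ nor satisfy the required $\vp$-divisibility condition, so it need not be $\NN(T)$ for any $G_{\QQ_p}$-stable lattice $T\subset V$. Likewise, the $E$-linear, arbitrary-basis version of Theorem~\ref{Lambda} that your order of operations requires does not follow merely by ``inverting the uniformizer'': after inverting one only gets the statement for bases coming from $\NN(T(m))$, and the proof itself cannot be run rationally, since the convergence in Proposition~\ref{induction} (the terms $p^j y_j$) genuinely uses integrality. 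What is true, and suffices, is that the integral theorem implies the rational one by the same scalar-matrix observation the paper uses: write your lift $\pi^{-m}n_i^0\otimes e_m$ in terms of an integral perturbed basis $\bar n_j$ furnished by the integral theorem; the transition matrix $C$ is invertible over $E\otimes\BB^+_{\Qp}$, so its value $C|_{\pi=0}$ lies in $GL_d(E)\subset GL_d(\Lambda_E(G_\infty))$, and replacing your lift by $C|_{\pi=0}$ applied to the $\bar n_j$ preserves both the reduction modulo $\pi$ (hence the lifting of the $\nu_i$) and the $\Lambda_E(G_\infty)$-basis property. With that one-line repair your argument closes up and coincides with the paper's proof.
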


   \begin{proof}
    Let $T$ be a $G_{\QQ_p}$-stable $\OO_E$-lattice in $V$. By theorem \ref{Lambda} above, we may choose a $\calO_E\otimes\AA_{\Qp}^+$-basis $\bar{n}_1,\ldots,\bar{n}_d$ of $\NN(T)$ such that $(1 + \pi)\vp(\pi^{-m} \bar{n}_i \otimes e_m)$ is a $\Lambda_{\calO_E}(G_\infty)$-basis of $(\vp^* \NN(T(m)))^{\psi = 0}$. Hence these elements are also a $\Lambda_E(G_\infty)$-basis of $(\vp^* \NN(V(m)))^{\psi = 0}$.

    By the comparison isomorphism, the elements $\{\bar{\nu}_i:=\bar{n}_i\mod\pi:i=1,\ldots,d\}$ give a basis of $\Dcris(V)$ over $E$. Let $A\in GL_d(E)$ be the change of basis matrix from $\nu_1,\ldots,\nu_d$ to $\bar{\nu}_1,\ldots,\bar{\nu}_d$. On applying $A^{-1}$ to $\bar{n}_1,\ldots,\bar{n}_d$, we obtain a basis $n_1,\ldots,n_d$ of $\NN(V)$ lifting $\nu_1, \dots, \nu_d$. Now it is clear that $(1+\pi)\vp(\pi^{-m} n_1 \otimes e_m),\dots,(1+\pi)\vp(\pi^{-m} n_d \otimes e_m)$ is a $\Lambda_E(G_\infty)$-basis of $(\vp^* \NN(V(m)))^{\psi = 0}$, since it differs from the original basis by the scalar matrix $A^{-1}$, which is clearly invertible in $\Lambda_E(G_\infty)$.
   \end{proof}

    With respect to such a basis $n_1, \dots, n_d$ of $\NN(V)$, we can clearly define an Iwasawa transform and Coleman map as above but with $E$-coefficients,
    \begin{align*}
     \J &: (\vp^*\NN(V(m)))^{\psi=0}\rTo^\cong \Lambda_E(G_\infty)^{\oplus d}\\
     \uCol &: \NN(V(m))^{\psi=1}\rTo \Lambda_{E}(G_\infty)^{\oplus d},
    \end{align*}
    which are homomorphisms of $\Lambda_E(G_\infty)$-modules.

    \begin{remark}\label{integrality}
     If $T$ is an $\calO_E$-lattice in $V$ stable under $G_{\Qp}$ and the $\calO_E$-lattice in $\Dcris(V)$ spanned by $\nu_1, \dots, \nu_d$ is the reduction of $\NN(T)$, then we can define the Coleman maps integrally, as in the previous section. In section \ref{ss} below we will work with a specific basis $\nu_i$ for which such a lattice $T$ can be explicitly constructed.
    \end{remark}

    Now, let $\nu_1,\ldots,\nu_d$ be a basis of $\Dcris(V)$ over $E$, and $n_1, \dots, n_d$ a basis of $\NN(V)$ lifting $\nu_1, \dots, \nu_d$ as in lemma \ref{choice}. We write $A_\vp$ for the matrix of $\vp$ on $\Dcris(V)$ with respect to the basis $\nu_1, \dots, \nu_d$. Again by \cite[section 3]{bergerbreuil10}, $\EB$ is a B\'{e}zout ring and
    \begin{equation}\label{elementary}
    [(\EB)\otimes_{E\otimes\BB_{\Qp}^+}\NN(V):(\EB)\otimes_{E}\Dcris(V)]=\left[\left(\frac{t}{\pi}\right)^{r_1};\cdots;\left(\frac{t}{\pi}\right)^{r_d}\right].
    \end{equation}
    In other words, there exists $\EB$-bases $w_1,\ldots,w_d$ and $v_1,\ldots,v_d$ for $(\EB)\otimes_{E\otimes\BB_{\Qp}^+}\NN(V)$ and $(\EB)\otimes_{E}\Dcris(V)$ respectively such that $v_i=(t/\pi)^{r_i}w_i$ for $i=1,\ldots,d$. Therefore, the change of basis matrix $M'\in M_d(E\otimes\Brig)$ with
    \begin{equation}\label{Mprime}
    \begin{pmatrix}\nu_1\\ \vdots \\ \nu_d \end{pmatrix}=M'\begin{pmatrix}n_1\\ \vdots \\ n_d \end{pmatrix},
    \end{equation}
    has determinant $(t/\pi)^m$ up to a unit in $E\otimes\Brig$. Moreover, since $n_1, \dots, n_d$ lifts $\nu_1, \dots, \nu_d$, we have $M'|_{\pi=0}=I$, the identity matrix. The compatibility of the action of $\vp$ implies that
    \begin{equation}\label{relatingphi}
      \vp(M')P^T=A_\vp^TM',
    \end{equation}
    where $P$ is the matrix of $\vp$ on $\NN(V)$ with respect to the basis $n_1, \dots, n_d$ as in the previous subsection. We can now rewrite \eqref{expressingx}:
    \begin{equation}\label{xoverDcris}
     x=\begin{pmatrix}x_1&\cdots& x_d \end{pmatrix}\cdot\left(\frac{t}{\pi}\right)^mM'^{-1}\begin{pmatrix}\nu_1\\ \vdots \\ \nu_d \end{pmatrix}\otimes t^{-m}e_m
    \end{equation}  
    with $(t/\pi)^mM'^{-1}\in M_d(E\otimes\Brig)$ and $\nu_i\otimes t^{-m}e_m$, $i=1,\ldots,d$ a basis of $\Dcris(V(m))$.

   Rewriting \eqref{1-vp} using this, we see that
   \begin{equation} 
   (1-\vp)(x) = \bfCol(x)\cdot\left(\frac{t}{\pi q}\right)^mP^TM'^{-1}\begin{pmatrix}\nu_1\\ \vdots \\ \nu_d \end{pmatrix}\otimes t^{-m}e_m.\label{xDcris}
   \end{equation}


   \subsection{Supersingular modular forms}\label{Colemanmf}
   
    We now apply the theory of Coleman maps developed above to the Galois representations attached to modular forms.

    Let $f=\sum a_nq^n$ be a normalized new eigenform of weight $k$ and character $\epsilon$. Let $p$ be an odd prime which does not divide the level of $f$. For simplicity, we will always assume that $\epsilon(p)=1$. In particular $a_p=\bar{a}_p$. We write $E=\QQ_p(a_n:n\ge1)$, which is the completion of the coefficient field $F$ of $f$ at the prime above $p$ determined our choice of embeddings. Then, by Deligne \cite{deligne69}, we can associate to $f$ a 2-dimensional $E$-linear representation $V_f$ of $G_{\QQ}$. Moreover, when restricted to $G_{\Qp}$, $V_f$ is crystalline and its de Rham filtration is given by
    \begin{equation}\label{filtration}
     \Dcris^i(V_f)=
     \left\{
     \begin{array}{ll}
      E\nu_1\oplus E\nu_2         & \text{if $i\le0$}\\
      E\nu_1                     & \text{if $1\le i\le k-1$}\\
      0                          & \text{if $i\ge k$}
     \end{array}\right.
    \end{equation}
    for some basis $\nu_1$, $\nu_2$ over $E$. We further assume that $v_p(a_p)\ne0$, i.e. $f$ is supersingular at $p$. Then $\nu_1$ is not an eigenvector of $\vp$ by \cite[Theorem 16.6]{kato04} and we may choose $\nu_2=p^{1-k}\vp(\nu_1)$ so that the matrix $A_\vp$ of $\vp$ with respect to the basis $\nu_1$, $\nu_2$ is given by
    \[
    \begin{pmatrix} 0 & -1 \\ p^{k-1}& a_p \end{pmatrix}
    \]
    since $\vp^2-a_p\vp+p^{k-1}=0$ (c.f. \cite{scholl90}). We call such a basis a `good basis' for $\Dcris(V_f)$.

    Let $\bar{\nu}_1$ and $\bar{\nu}_2$ be a `good basis' of $\Dcris(V_{\bar{f}})$. Then, the matrix of $\vp$ with respect to this basis is equal to $A_\vp$ also since $a_p=\bar{a}_p$.

    Note that $V_{\bar{f}}$ has Hodge-Tate weights $0$ and $-k+1$, so it is positive. Fix a basis $n_1,n_2$ of $\NN(V_{\bar{f}})$ satisfying the conditions in Lemma~\ref{choice}, so $\begin{pmatrix}\bar{\nu}_1\\\bar{\nu}_2\end{pmatrix}=M'\begin{pmatrix}n_1\\n_2\end{pmatrix}$ with $M'|_{\pi=0}=I$. We obtain two pairs of Coleman maps associated to $f$:
    \begin{align*}
    \rmCol_{i}:\DD(V_{\bar{f}}(k-1))^{\psi = 1} & \rTo (E \otimes \BB_{\QQ_p}^+)^{\psi=0}, \\
    \uCol_{i}:\DD(V_{\bar{f}}(k-1))^{\psi = 1} & \rTo \Lambda_{E}(G_\infty), 
    \end{align*}
    for $i=1,2$.

    Recall the isomorphism \eqref{fontaineisom} above:
    \[
     h^1_{\Qp,\Iw}:\DD(V_{\bar{f}}(k-1))^{\psi=1}\cong H^1_{\Iw}(\Qp,V_{\bar{f}}(k-1)).
    \]    
    We can therefore consider the localization of Kato's zeta element $\kato$ from \cite{kato04} (see section \ref{katozeta} below), which \textit{a priori} is an element of $H^1_{\Iw}(\Qp, V_{\bar{f}}(k-1))$, as an element of $\DD(V_{\bar{f}}(k-1))^{\psi=1}$. We can now define two pairs of $p$-adic $L$-functions:
    
    \begin{definition}\label{padicl}
     For $i=1,2$, define $L_{p,i}=\rmCol_i(\kato) \in (E\otimes\BB_{\Qp}^+)^{\psi=0}$ and $\tilde{L}_{p,i}=\uCol_i(\kato)\in \Lambda_E(G_\infty)$ where $\kato$ is the localization of the Kato zeta element.
    \end{definition}

    The reason why we consider $V_{\bar{f}}$ instead of $V_f$ will become apparent in section \ref{construction} below. In addition, below is a list of assumptions which we will need later when we prove different results.
    \begin{itemize}
     \item\textbf{Assumption (A)}: $k\ge3$ or $a_p=0$.

     \item\textbf{Assumption (B)}: $a_p$ is not of the form $p^j+p^{k-2-j}$ for some integer $1\le j\le k-3$.

     \item\textbf{Assumption (C)}: $v_p(a_p)>\lfloor(k-2)/(p-1)\rfloor$.

     \item\textbf{Assumption (D)}: $p\ge k-1$.
    \end{itemize}

   
   \subsection{Relation to the Perrin-Riou pairing}\label{construction}
   
    Let $\alpha$ and $\beta$ be the roots of the quadratic $X^2-a_pX+p^{k-1}$. By the work of Amice--V\'elu and Vishik cited in the introduction, we can associate to $\alpha$ and $\beta$ $p$-adic $L$-functions $L_{p,\alpha}$ and $L_{p,\beta}$ respectively; see \cite[\S 11]{MTT} for an account of the construction. We will relate them to $L_{p,i}$, $i=1,2$, as defined above. We first prove some preliminary results on general crystalline representations.
    
    Let $\gamma$ be a topological generator of $\Gamma$. Define 
    \[ \HG=\big\{ f(\gamma-1)\mid f(X)\in\QQ_p[\Delta][[X]]\hspace{1ex}\text{such that $f$ converges for all $X\in\CC_p$ with $| X | < 1$} \big\}.\]
     We can identify $\HG$ with $(\BB_{\rig,\QQ_p}^+)^{\psi=0}$ via the map
    \begin{equation}\begin{aligned}
     \mathfrak{M}: \HG & \rTo (\BB_{\rig,\QQ_p}^+)^{\psi=0} \\
     f(\gamma-1) & \rMapsto f(\gamma-1)(\pi+1),
    \end{aligned}\label{mellin}\end{equation}
    where any $g\in G_\infty$ acts on $\pi$ by $(\pi+1)^{\chi(g)}-1$. As shown in~\cite[B.2.8]{perrinriou00}, this map is a bijection, extending the isomorphism $\Lambda(G_\infty) \to (\AA^+_{\Qp})^{\psi = 0}$ of Lemma \ref{lem2}. For $r\geq 1$, define
    \[ \calH_r^{\temp}=\left\{ \sum_{\sigma\in\Delta}\sum_{n\geq 0} c_{n,\sigma}\sigma X^n: \lim_{n\rightarrow +\infty}\frac{| c_{n,\sigma}|_p}{n^r}=0\right\}.\]
    Let $\calH^{\temp}=\bigcup_{r\geq 1}\calH^{\temp}_r$, and define $\calH^{\temp}(G_\infty)=\{ f(\gamma-1)\mid f(X)\in\calH^{\temp}\}$.

    Let $V$ be any crystalline $E$-linear representation of $G_{\QQ_p}$, and let $h$ be a positive integer such that $\Fil^{-h}\DD_{\cris}(V)=\DD_{\cris}(V)$. .
    Denote by 
    \[ \Omega_{V,h}: \big( \calH^{\temp}(G_\infty)\otimes \DD_{\cris}(V)\big)^{\Sigma=0}\rTo \calH^{\temp}(G_\infty)\otimes_{\Lambda_{\QQ_p}}H^1_{\Iw}(\Qp,V)\]   
    Perrin-Riou's exponential map as constructed in~\cite{perrinriou94}. Here, 
    \[ \Sigma:\Brig\otimes_{\Qp}\DD_{\cris}(V)\rightarrow \bigoplus_{k=0}^h \big(\DD_{\cris}(V)/ (1-p^k\varphi)\big)(k)\] 
    is the map sending $f\in \Brig\otimes_{\Qp}\DD_{\cris}(V)$ to the class of $\oplus\partial^k(f)(0)$, where $\partial=(1+\pi)\frac{d}{d\pi}$ is the derivation on $\BB_{\rig,\QQ_p}^+$ defined in \S \ref{crysrepsandwachmods}. 
    Since $\Omega_{V, h}$ is a homomorphism of $\calH^{\temp}(G_\infty)$-modules, we can extend scalars to get 
    \begin{equation}\label{Omega}
     \Omega_{V,h}: \big( (\Brig)^{\psi=0}\otimes \DD_{\cris}(V)\big)^{\Sigma=0}\rTo     
     \HG\otimes_{\Lambda_{\QQ_p}}H^1_{\Iw}(\Qp,V), 
    \end{equation}
    where we identify $(\Brig)^{\psi=0}$ with $\HG$ via $\mathfrak{M}$. 
    
    \begin{remark}
     We will only apply \eqref{Omega} to elements in which lie in the image of $\calH^{\temp}(G_\infty)\otimes \DD_{\cris}(V)$ under $\mathfrak{M}$, so we can refer to~\cite{perrinriou94} for the properties of $\Omega_{V,h}$. The reason for extending scalars to $\HG$ is that we want to be able to use Berger's description of the exponential map in~\cite[\S II.5]{berger03}.
    \end{remark}
    
    Recall that we have chosen a $p$-power compatible system $\varepsilon^{(n)}$, $n\geq 0$, of $p$-power roots of unity. 
    
   \begin{proposition}\label{PRvalues}
    Assume that $V$ is a crystalline representation of $G_{\QQ_p}$. Let $h\geq 1$ such that $\Dcris^{-h}(V)=\Dcris(V)$ and $p^{-j}$ is not an eigenvalue of $\vp$ on $\Dcris(V)$ for $j\in\ZZ$ with $0\le j\le h$. Then, for all $v\in\DD_{\cris}(V)$, the projection to the $n$-th local cohomology $H^1(\QQ_{p,n},V)$ of $\frac{1}{(h-1)!}\Omega_{V,h}((1+\pi)\otimes v)$ is given by
    \begin{equation}
     \left\{
      \begin{array}{ll}
        p^{-n}\exp_{F_n,V}\left(\sum_{m=0}^{n-1}\varepsilon^{(n-m)}\otimes\vp^{m-n}(v)+(1-\vp)^{-1}(v)\right)    & \text{if $n\geq 1$}\\
            \exp_{{\QQ_p},V}\left(\left(1-\frac{\vp^{-1}}{p}\right)(1-\vp)^{-1}(v)\right)           & \text{if $n=0$.}
      \end{array}\right.
    \end{equation}
   \end{proposition}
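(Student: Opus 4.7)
The plan is to reduce the statement to Berger's explicit description of Perrin--Riou's exponential map given in \cite[\S II.5]{berger03}. In that framework, for $x \in ((\Brig)^{\psi=0}\otimes\Dcris(V))^{\Sigma = 0}$, the image $\Omega_{V,h}(x)$ is computed by (i) producing a preimage $y$ satisfying $(1-\vp)y = x$ in a suitable completion of $\Brig\otimes \Dcris(V)$ (potentially with mild poles at $t = 0$), (ii) pushing $y$ at level $n$ via the natural embedding $\vp^{-n}\colon \vp^n(\Brig)\hookrightarrow \BB^+_{\dR}$, which sends $\pi$ to $\varepsilon^{(n)}\exp(t/p^n) - 1$, and (iii) applying the Bloch--Kato exponential $\exp_{F_n, V}$ to the resulting constant term $\partial_V(\vp^{-n}y)$, with a normalising factor of $(h-1)!/p^n$ when $n\geq 1$.

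Before invoking this recipe one has to verify that $(1+\pi)\otimes v$ lies in the kernel of $\Sigma$. Since $\partial(1+\pi) = 1+\pi$, we get $\partial^k((1+\pi)\otimes v)|_{\pi = 0} = v$ for every $k$, and the required vanishing in $\Dcris(V)/(1 - p^k \vp)$ follows from the eigenvalue hypothesis: $p^{-k}$ is not an eigenvalue of $\vp$, so $1 - p^k \vp$ is invertible on $\Dcris(V)$. The case $k = 0$ in particular furnishes the element $w := (1-\vp)^{-1}(v) \in \Dcris(V)$ that will appear as the ``constant contribution'' in the final formula.

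The technical core then consists in building $y$ explicitly and reading off its level-$n$ projection. A formal inverse of $1 - \vp$ applied to $(1+\pi)\otimes v$ produces a telescoping expression of the shape $-\sum_{m\geq 1} \vp^{-m}(1+\pi)\otimes \vp^{-m}(v)$, which must be adjusted by an additive correction $1 \otimes w$ to absorb the non-trivial Taylor data $\partial^k((1+\pi)\otimes v)|_{\pi=0} = v$. Applying $\vp^{-n}$ and extracting the constant term of the $t$-adic expansion of each summand via $\vp^{-n}(1+\pi) = \varepsilon^{(n)}\exp(t/p^n)$, one sees that at each fixed level only the contributions coming from the $n$-th layer of roots of unity survive, producing the finite sum $\sum_{m=0}^{n-1}\varepsilon^{(n-m)}\otimes \vp^{m-n}(v)$ after the re-indexing $m \mapsto n-m$, plus the constant piece $w$. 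Multiplying by $(h-1)!/p^n$ and applying $\exp_{F_n, V}$ yields the asserted formula for $n \geq 1$.

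The case $n = 0$ is slightly more subtle: here no pro-cyclotomic telescoping survives, and the constant term of $y$ reduces to $w$, but the formula acquires the extra factor $(1 - \vp^{-1}/p)$. This factor is the standard Euler-type adjustment that arises when descending from $\QQ_{p,\infty}$ to $\Qp$ itself --- the corestriction loses the telescoping cancellations of the higher layers and picks up the local Euler factor at $p$. I expect the principal obstacle to be the rigorous justification of the formal manipulations: the series defining $y$ does not converge in $\Brig \otimes \Dcris(V)$, and the correct route is to invoke Berger's realisation of $\Omega_{V,h}$ via the operator $\partial^{-h}$ on an appropriate larger ring, which rewrites the divergent telescoping as a convergent expression once the Taylor correction has been absorbed. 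Once the level-$n$ formula has been established in that framework, matching the combinatorics to the asserted expression is essentially bookkeeping.
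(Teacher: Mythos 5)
Your overall strategy is the same as the paper's (reduce to the Perrin-Riou/Berger recipe: exhibit $G$ with $(1-\vp)G=(1+\pi)\otimes v$, evaluate at level $n$, apply $\exp_{\QQ_{p,n},V}$ with the factor $(h-1)!/p^n$, and corestrict for $n=0$), but the step that carries all the content — the construction of an actual convergent solution $G$ and its evaluation at $\varepsilon^{(n)}-1$ — is missing, and the substitute you sketch would not give the stated formula. Your proposed preimage $-\sum_{m\geq 1}\vp^{-m}\big((1+\pi)\otimes v\big)$, adjusted only by $1\otimes(1-\vp)^{-1}(v)$, does not converge in $\Brig\otimes\Dcris(V)$, and even treated formally its level-$n$ constant term involves $\varepsilon^{(n+m)}\otimes\vp^{-n-m}(v)$ for \emph{all} $m\geq 1$, i.e.\ contributions from every layer deeper than $n$; the assertion that ``only the $n$-th layer survives'' has no justification for that series and cannot produce the finite sum $\sum_{m=0}^{n-1}\varepsilon^{(n-m)}\otimes\vp^{m-n}(v)$. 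The paper's proof instead sets $g=(1+\pi)\otimes v$, forms $\tilde g=g-\sum_{j=0}^{h}\frac{1}{j!}\log_p^j\,\Delta_j(g)$ (here $\Delta_j(g)=v$ for all $j$), uses Perrin-Riou's convergence result for $\sum_{m\geq 0}\vp^m(\tilde g)$, and takes $G=\sum_{m\geq 0}\vp^m(\tilde g)+\sum_{j=0}^{h}\frac{1}{j!}\log_p^j v_j$ with $v_j=(1-p^j\vp)^{-1}(v)$; this is where the hypothesis on the eigenvalues is needed for \emph{every} $0\le j\le h$, not just $j=0$ as in your argument. The finite sum then appears because $\vp^m(\tilde g)(\varepsilon^{(n)}-1)=(\varepsilon^{(n-m)}-1)\otimes\vp^m(v)$ for $m<n$ and vanishes for $m\geq n$, while all $\log_p^j$ terms with $j\geq 1$ die at $\varepsilon^{(n)}-1$, leaving exactly the constant $(1-\vp)^{-1}(v)$.

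The case $n=0$ is also not something you can dismiss as ``the standard Euler-type adjustment'': the factor $\big(1-\frac{\vp^{-1}}{p}\big)$ has to be derived, and in the paper it comes from the explicit corestriction computation
\[
\Tr_{\QQ_{p,1}/\QQ_p}\Big(\tfrac{1}{p}\vp^{-1}G(\varepsilon^{(1)}-1)\Big)
=\tfrac{1}{p}\big(-1\otimes\vp^{-1}(v)+(p-1)(1-\vp)^{-1}(v)\big)
=\Big(1-\tfrac{\vp^{-1}}{p}\Big)(1-\vp)^{-1}(v),
\]
using $\Tr_{\QQ_{p,1}/\QQ_p}(\varepsilon^{(1)})=-1$. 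To repair your write-up you should replace the divergent backward telescoping by the log-corrected forward series above (or genuinely carry out the $\partial^{-h}$ construction you allude to, which amounts to the same bookkeeping) and then perform the evaluation and trace computations explicitly.
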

   \begin{proof}
    Let $g\in(\Brig)^{\psi=0}\otimes_{\QQ_p}\Dcris(V)$. We write $\Delta_j(g)=\partial^j(g)(0)$ and
    \[
     \tilde{g}=g-\sum_{j=0}^{h}\frac{1}{j!}\log_p^j\Delta_j(g).
     \]
     By \cite[section 2.2]{perrinriou94}, the sum $\sum_{n=0}^{\infty}\vp^n(\tilde{g})$ converges. A solution to $(1-\vp)G=g$ with $G\in \big(\BB^+_{\rig,{\QQ_p}}\otimes_{\Qp}\DD_{\cris}\big)^{\psi=1}$ is given by
     \[
      G=\sum_{n=0}^{\infty}\vp^{n}(\tilde{g})+\sum_{j=0}^h\frac{1}{j!}\log_p^jv_j
     \]  
     where $v_j\in\Dcris(V)$ is such that $\Delta_j(g)=(1-p^j\vp)v_j$. Now, take $g=(1+\pi)\otimes v$, so $\Delta_j(g)=v$ for all $j$. Let $n$ be a positive integer, then     
     \begin{equation}
      \vp^{m}(\tilde{g})(\varepsilon^{(n)}-1)=
      \left\{
      \begin{array}{ll}
      (\varepsilon^{(n-m)}-1)\otimes\vp^m(v)         & \text{if $m<n$}\\
      0                          & \text{otherwise.}
      \end{array}\right.
     \end{equation}
    Therefore, we have
    \begin{align*}
     G(\varepsilon^{(n)}-1) &= \sum_{m=0}^{n-1}(\varepsilon^{(n-m)}-1)\otimes\vp^m(v)+(1-\vp)^{-1}(v)\\
     &= \sum_{m=0}^{n-1}\varepsilon^{(n-m)}\otimes\vp^m(v)+(1-\vp)^{-1}\vp^n(v)
    \end{align*}
    Hence, by the main result in~\cite{perrinriou94}, the $n$-th component of $\frac{1}{(h-1)!}\Omega_{V,h}((1+\pi)\otimes v)$ is given by the image of
    \begin{equation}
     p^{-n}\vp^{-n}G(\varepsilon^{(n)}-1)=\frac{1}{p^n}\left(\sum_{m=0}^{n-1}\varepsilon^{(n-m)}\otimes\vp^{m-n}(v)+(1-\vp)^{-1}(v)\right)
    \end{equation}
    under the map $\exp_{\QQ_{p,n},V}$. For the $0$-th level, it is given by the image of
    \begin{align*}
     \Tr_{\QQ_{p,1}/ {\QQ_p}}\left(\frac{1}{p}\vp^{-1}G(\varepsilon^{(1)}-1)\right)
        &= \frac{1}{p}\Tr_{\QQ_{p,1}/ \Qp}(\varepsilon^{(1)}\otimes\vp^{-1}(v)+(1-\vp)^{-1}(v))\\
     &= \frac{1}{p}\left(-1\otimes\vp^{-1}(v)+(p-1)(1-\vp)^{-1}(v)\right)\\
     &= \left(1-\frac{\vp^{-1}}{p}\right)(1-\vp)^{-1}(v).
    \end{align*}
    under the map $\exp_{{\QQ_p},V}$, so we are done.
   \end{proof}
    
    Define the Perrin-Riou pairing $\langle\hspace{1ex},\hspace{1ex}\rangle_V$ by
    \begin{eqnarray*}
     \langle\hspace{1ex},\hspace{1ex}\rangle_V  :  H^1_{\Iw}(\Qp,V)\times H^1_{\Iw}(\Qp,V^*(1)) & \rTo & \Lambda_{E}(G_\infty),\\
     \langle (x_n),(y_n)\rangle_V & = &\varprojlim \Sigma_{\tau\in G_{\Qp}/ G_{\Qp}^{p^n}}(\tau(x_n)\cup y_n)\tau.
    \end{eqnarray*}
    
    \begin{remark}
     In \cite{perrinriou94}, the pairing is defined by
     \[\langle (x_n),(y_n)\rangle_V = \varprojlim \Sigma_{\tau\in G_{\Qp}/ G_{\Qp}^{p^n}}(\tau^{-1}(x_n)\cup y_n)\tau.\]
     We use the different convention so that the map $\mathcal{L}_{h,z}$ defined in~\eqref{lmap} below is a $\Lambda(G_\infty)$-homomorphism. 
    \end{remark}
    
    We can extend the pairing $\langle\hspace{1ex},\hspace{1ex}\rangle_V$ to 
    \[ \langle\hspace{1ex},\hspace{1ex}\rangle_V: \Big( \HG \otimes_{\Lambda_{\QQ_p}}H^1_{\Iw}(\Qp,V)\Big)\times \Big(\HG \otimes_{\Lambda_{\QQ_p}}H^1_{\Iw}(\Qp,V^*(1))\Big) \rTo \HG.\]
   Any $z\in \big( (\Brig)^{\psi=0}\otimes \DD_{\cris}(V)\big)^{\Sigma=0}$ therefore defines a map 
   \begin{equation}\begin{aligned}
     \mathcal{L}_{h,z}: H^1_{\Iw}(\Qp,V^*(1)) & \rTo \HG,\\
           (y_n)_{n\geq 0}& \rMapsto \big\langle \Omega_{h,V}(z), (y_n)\big\rangle_V.
   \end{aligned} \label{lmap}\end{equation}
   As recalled in section \ref{sect-fontaineisom} above, for any $p$-adic representation $V$ of $G_{\QQ_p}$ we have a canonical isomorphism
    \[ h^1_{\Qp,\Iw}:\DD(V)^{\psi=1}\cong H^1_{\Iw}(\Qp,V).\]
   
    \begin{lemma}\label{twist}
     For all $j\in\ZZ$ and for all $y\in\DD(V)^{\psi=1}$ and $y'\in\DD(V^*(1))^{\psi=1}$, we have
     \[ \partial^j\langle h^1_{\Qp,\Iw}(y),h^1_{\Qp,\Iw}(y')\rangle_V = \langle h^1_{\Qp,\Iw}(y\otimes e_j), h^1_{\Qp,\Iw}(y'\otimes e_{-j})\rangle_{V(j)}.\]
    \end{lemma}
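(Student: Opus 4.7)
The plan is to reduce the identity to the statement that, under the Mellin transform, the operator $\partial^j$ on $(\BB^+_{\rig,\QQ_p})^{\psi=0}$ corresponds to the twisting operator $\Tw_j: \Lambda_E(G_\infty)\to\Lambda_E(G_\infty)$, $g\mapsto \chi(g)^j g$, combined with the fact that twisting the input classes by $e_j$ and $e_{-j}$ has exactly the effect of applying $\Tw_j$ to the Perrin--Riou pairing.

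First, I would record that Fontaine's isomorphism $h^1_{\QQ_p,\Iw}$ is compatible with Tate twists: the identification $\DD(V(j))=\DD(V)\otimes e_j$ at the level of $(\vp,G_\infty)$-modules intertwines $\psi$ with $\psi$ (since $e_j$ is $\psi$-fixed modulo the scalar which is killed on passing to Iwasawa cohomology in the usual way), and the image of $y\otimes e_j$ under $h^1_{\QQ_p,\Iw}$ is precisely $h^1_{\QQ_p,\Iw}(y)\otimes e_j\in H^1_{\Iw}(\QQ_p,V(j))$. This is standard and follows from the functoriality of Fontaine's construction.

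Next, I would compute how the Perrin--Riou pairing transforms under twisting. For $(x_n)\in H^1_{\Iw}(\QQ_p,V)$ and $(x'_n)\in H^1_{\Iw}(\QQ_p,V^*(1))$, using the canonical identification $V(j)\otimes V^*(1-j)\cong V\otimes V^*(1)$ under which $e_j\otimes e_{-j}\mapsto 1$, the cup product satisfies
\[
\tau(x_n\otimes e_j)\cup (x'_n\otimes e_{-j})=\chi(\tau)^j\bigl(\tau(x_n)\cup x'_n\bigr).
\]
Summing over $\tau\in G_{\QQ_p}/G_{\QQ_p}^{p^n}$ and passing to the inverse limit gives
\[
\bigl\langle h^1_{\QQ_p,\Iw}(y\otimes e_j),\,h^1_{\QQ_p,\Iw}(y'\otimes e_{-j})\bigr\rangle_{V(j)}
=\Tw_j\!\bigl(\langle h^1_{\QQ_p,\Iw}(y),h^1_{\QQ_p,\Iw}(y')\rangle_V\bigr).
\]

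It then remains to show $\mathfrak{M}\circ\Tw_j=\partial^j\circ\mathfrak{M}$, so that the twist on the $\Lambda$-side is precisely $\partial^j$ after identifying $\Lambda_E(G_\infty)$ with its image in $(\BB^+_{\rig,\QQ_p})^{\psi=0}\otimes E$. For $g\in G_\infty$ we have $\mathfrak{M}(g)=(1+\pi)^{\chi(g)}$ and
\[
\partial^j\bigl((1+\pi)^{\chi(g)}\bigr)=\chi(g)^j(1+\pi)^{\chi(g)}=\mathfrak{M}(\chi(g)^j g)=\mathfrak{M}(\Tw_j(g)),
\]
so the claim holds on the dense subset of group-like elements and therefore on all of $\Lambda_E(G_\infty)$ by continuity. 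Combining this with the previous step yields the identity.

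The main point to be careful about is the sign/normalisation convention in the Perrin--Riou pairing (the paper uses the convention $\tau(x_n)\cup y_n$ rather than $\tau^{-1}(x_n)\cup y_n$, as flagged in the remark before the lemma); this choice is exactly what makes $\Tw_j$ appear rather than $\Tw_{-j}$, and must be tracked through to match the sign of $e_j$ on the $V$-side versus $e_{-j}$ on the $V^*(1)$-side. Once this is set up correctly, the proof is a direct calculation with no genuine obstacle.
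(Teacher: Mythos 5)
Your plan is correct, but note that the paper does not prove this lemma at all: it simply cites Perrin-Riou (\cite[Lemme ii), \S 3.6]{perrinriou94}), so what you have written is in effect a self-contained proof of that cited result rather than a parallel to an argument in the text. The three ingredients you isolate are exactly the right ones: compatibility of $h^1_{\Qp,\Iw}$ with Tate twists ($y\mapsto y\otimes e_j$ on $\DD(\cdot)^{\psi=1}$ corresponding to the twisting isomorphism on Iwasawa cohomology), the $\Tw_j$-semilinearity of the Perrin-Riou pairing, and the identity $\partial^j\circ\mathfrak{M}=\mathfrak{M}\circ\Tw_j$ checked on group-like elements $(1+\pi)^{\chi(g)}$; with the paper's convention $\tau(x_n)\cup y_n$ (rather than $\tau^{-1}(x_n)\cup y_n$) this indeed produces $\Tw_j$, matching $\partial^j$ on the left-hand side, so your sign bookkeeping is right. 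The one step you should tighten if you write this out in full is the displayed finite-level formula $\tau(x_n\otimes e_j)\cup(x'_n\otimes e_{-j})=\chi(\tau)^j(\tau(x_n)\cup x'_n)$: since $\chi^j$ is nontrivial on $G_{\QQ_{p,n}}$, the naive twist $x_n\otimes e_j$ of a finite-level class is not a cocycle, so this manipulation is only heuristic as stated. The clean way to make it rigorous is to perform the twist at the Iwasawa level, e.g.\ via the description $H^1_{\Iw}(\QQ_p,T)\cong H^1(\QQ_p,\Lambda\otimes T)$ (or by twisting with a fixed compatible system $(\varepsilon^{(n)})$ of $p$-power roots of unity), where the isomorphism $\Lambda\otimes T\cong\Lambda\otimes T(j)$ is literally $\Tw_j$-semilinear and the semilinearity of the $\Lambda$-valued cup-product pairing follows formally; your first step (twist-compatibility of Fontaine's isomorphism) is then the statement that this Iwasawa-level twist corresponds to $y\mapsto y\otimes e_j$ on $\DD(V)^{\psi=1}$. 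With that adjustment the argument is complete and recovers exactly the content of the lemma the paper quotes.
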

    \begin{proof}
     See Lemme ii) Section 3.6 in~\cite{perrinriou94}.
    \end{proof}
   
    We now return to the setting in Section~\ref{Colemanmf}. We will apply Perrin-Riou's theory that we recalled above to the crystalline representation $V_f(1)$. In particular, $V_f(1)^*(1)\cong V_{\bar{f}}(k-1)$. By (\ref{filtration}), we can take $h=1$. Note that $\varphi$ acts on $\DD_{\cris}(V_f(1))$ by $\begin{pmatrix} 0 & -p^{-1} \\ p^{k-2} & p^{-1}a_p \end{pmatrix}$ with respect to a `good basis' $\nu_i\otimes t^{-1}e_1$, $i=1,2$ as chosen in Section~\ref{Colemanmf}. But $a_p\neq p+p^{k-2}$ by the Weil bound, so both $1-\varphi$ and $1-p\varphi$ are isomorphisms on $\DD_{\cris}(V_f(1))$ and $\Sigma=0$. Let $\bar{\nu}_1,\bar{\nu}_2$ be a `good basis' for $\Dcris(V_{\bar{f}})$. We can identify $\DD_{\cris}(V_f)$ with $\DD_{\cris}(V_f(1))$ (resp. $\DD_{\cris}(V_{\bar{f}})$ with $\Dcris(V_{\bar{f}}(k-1))$) via $\nu_i\mapsto \nu_i\otimes e_1t^{-1}$ (resp. $\bar{\nu}_i\mapsto \bar{\nu}_i\otimes e_{k-1}t^{1-k}$). Under these identifications, the natural pairing 
    \begin{equation}\label{crispairing}
     [\hspace{1ex},\hspace{1ex}]: \DD_{\cris}(V_f(1))\times \DD_{\cris}(V_{\bar{f}}(k-1))\rightarrow \DD_{\cris}(E(1))=E\cdot e_1t^{-1}
    \end{equation}
    satisfies $[\nu_1,\bar{\nu}_1]=0$. By applying $\vp$, we have $[\nu_2,\bar{\nu}_2]=0$, too. We also have $[\nu_1,\bar{\nu}_2]=-[\nu_2,\bar{\nu}_1]\ne0$. Without loss of generality, we may assume that $[\nu_1,\bar{\nu}_2]=-[\nu_2,\bar{\nu}_1]=1$
    
    Let $x\in\DD(V_{\bar{f}}(k-1))^{\psi=1}$. It follows from the construction of the Coleman maps in Section~\ref{Colemanmf} that if we let
    \begin{equation}\label{M}
     M=\begin{pmatrix}m_{11}&m_{12}\\m_{21}&m_{22}\end{pmatrix}=\left(\frac{t}{\pi q}\right)^{k-1}P^TM'^{-1},
    \end{equation}
    then, by (\ref{xDcris}), $(1-\vp)(x)$ can be written as
    \begin{equation}\label{formulaforphi}
     (1-\vp)(x)=(y_1m_{11}+y_2m_{21})\bar{\nu}_1\otimes t^{1-k}e_{k-1}+(y_1m_{12}+y_2m_{22})\bar{\nu}_2\otimes t^{1-k}e_{k-1},
    \end{equation}
    where $y_i=\rmCol_i(x)$ for $i=1,2$.

    \begin{proposition}\label{samemaps}
     On identifying with their images under $\mathfrak{M}$, we have 
     \begin{eqnarray}
      \langle\Omega_{V_f(1),1}((1+\pi)\otimes\nu_1),x\rangle_{V_f(1)}&=&y_1m_{12}+y_2m_{22},\label{eqnu1}\\
      -\langle\Omega_{V_f(1),1}((1+\pi)\otimes\nu_2),x\rangle_{V_f(1)}&=&y_1m_{11}+y_2m_{21}. \label{eqnu2}
     \end{eqnarray}
    \end{proposition}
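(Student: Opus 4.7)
The plan is to reduce both sides of \eqref{eqnu1} and \eqref{eqnu2} to an explicit pairing computation on Wach modules, using Berger's description of Perrin-Riou's exponential in \cite[\S II.5]{berger03}.

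First, since the Weil bound gives $a_p\ne p+p^{k-2}$, both $1-\vp$ and $1-p\vp$ act invertibly on $\Dcris(V_f(1))$ and the compatibility condition $\Sigma=0$ is automatic on the relevant elements. Berger's theorem then produces elements $w_1, w_2 \in \NN(V_f(1))^{\psi=1}$ such that $h^1_{\QQ_p,\Iw}(w_i)=\Omega_{V_f(1),1}((1+\pi)\otimes \nu_i)$, where $w_i$ is characterized by the condition that $(1-\vp)(w_i)$ corresponds to $(1+\pi)\otimes \nu_i$ under the comparison map $\iota$; the normalizing factor $(h-1)!=0!=1$ is trivial since $h=1$.

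Next, I would translate the Perrin-Riou pairing via Fontaine's isomorphism into a pairing on the Wach modules: the induced pairing $\NN(V_f(1))\times \NN(V_{\bar f}(k-1))\to \BB^+_{\QQ_p}\cdot e_1$ is determined by the pairing \eqref{crispairing} on $\Dcris$, which has been normalized so that $[\nu_i,\bar\nu_i]=0$ and $[\nu_1,\bar\nu_2]=-[\nu_2,\bar\nu_1]=1$. Combined with Step~1, the left hand side of \eqref{eqnu1} becomes the Wach-module pairing of $w_i$ against $x$. Since $(1-\vp)w_i=(1+\pi)\otimes \nu_i$, the $\vp$-equivariance of the pairing lets us move $1-\vp$ to the other factor and reduce to the pairing of $(1+\pi)\otimes \nu_i$ against $(1-\vp)(x)$. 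Substituting the explicit expansion \eqref{formulaforphi} and using the orthogonality $[\nu_i,\bar\nu_i]=0$ kills one summand; the surviving coefficient is exactly the one asserted in the proposition, with the sign in \eqref{eqnu2} coming from $[\nu_2,\bar\nu_1]=-1$.

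The main difficulty is Step~1: one must carefully match Berger's construction of the exponential, which is given on the level of $\Dcris$ via a specific solution to $(1-\vp)G=(1+\pi)\otimes v$ in $\BB^+_{\rig,\QQ_p}\otimes \Dcris(V_f(1))$, with our implicit use of $\Omega_{V,h}$, keeping track of the normalization by $(h-1)!$ and verifying that $w_i$ genuinely lies in the Wach module $\NN(V_f(1))$ and not merely in the larger ring $\BB^+_{\rig,\QQ_p}\otimes \Dcris(V_f(1))$. Once this identification is clean, the remainder of the argument is bookkeeping with the dual bases $\{\nu_i\}$ and $\{\bar\nu_j\}$.
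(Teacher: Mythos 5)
Your reduction rests on two claims that are false as stated, and repairing them is exactly where the content of the proposition lies. First, there is no $w_i\in\NN(V_f(1))^{\psi=1}$ with both $(1-\vp)w_i=(1+\pi)\otimes\nu_i$ and $h^1_{\Qp,\Iw}(w_i)=\Omega_{V_f(1),1}((1+\pi)\otimes\nu_i)$. The solution of $(1-\vp)G_i=(1+\pi)\otimes\nu_i$ lives in $\Brig\otimes_{\Qp}\Dcris(V_f(1))$ and in general not in the Wach module; more seriously, Berger's description of $\Omega_{V,h}$ (see \cite[\S II.5]{berger03}) is not $h^1_{\Qp,\Iw}\circ(1-\vp)^{-1}$: for $h=1$ it involves an extra differential operator (the factor $\ell_0=\frac{\log\gamma}{\log_p\chi(\gamma)}$, acting as multiplication by $t$ composed with $\partial$), which cannot be dropped. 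You can see the inconsistency without any reference to normalizations: by Proposition \ref{PRvalues}, the level-$0$ component of $\Omega_{V_f(1),1}((1+\pi)\otimes\nu_i)$ lies in the image of $\exp_{\Qp,V_f(1)}$ and is therefore killed by $\exp^*_{\Qp,V_f(1)}$; but if $w_i\in\DD(V_f(1))^{\psi=1}$ satisfied $(1-\vp)w_i=(1+\pi)\otimes\nu_i$, then $\partial_{V_f(1)}(w_i)=(1-\vp)^{-1}\nu_i\neq 0$, and the formula $\exp^*_{\Qp,V}\big(h^1_{\Qp,V}(w)\big)=(1-p^{-1}\vp^{-1})\partial_V(w)$ of \cite[Theorem II.6]{berger03} would give a nonzero dual exponential, since $1-p^{-1}\vp^{-1}$ is invertible on $\Dcris(V_f(1))$ (same condition as the invertibility of $1-p\vp$ used in the text). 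Second, the pairing is not ``$\vp$-equivariant'' in the way you use it: $\vp$ acts on $\Dcris(E(1))$ by $p^{-1}$, so the adjoint of $1-\vp$ under $[\hspace{1ex},\hspace{1ex}]$ is $1-\frac{\vp^{-1}}{p}$, not $1-\vp$; you cannot simply move $1-\vp$ across the pairing onto $x$.

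Beyond these two points, the object your argument hinges on --- a $\Lambda_E(G_\infty)$- or $\HG$-valued pairing on $\psi=1$ elements of Wach modules that computes $\langle\hspace{1ex},\hspace{1ex}\rangle_{V_f(1)}$ and is ``determined by'' the crystalline pairing, with the adjunction you need --- is never constructed; proving such a formula is essentially the explicit reciprocity law, i.e.\ the proposition itself, so as written the argument is circular at its key step. The proof in the paper avoids this by $p$-adic interpolation: it suffices to compare $\partial^j(\cdot)(0)$ for all $j\gg 0$; after twisting (Lemma \ref{twist} and the twist-compatibility of $\Omega$), each such value is computed at level $0$ using Proposition \ref{PRvalues} and Lemma \ref{omegaformula}, the duality between $\exp$ and $\exp^*$, Berger's formula quoted above, and the adjunction of $1-\vp$ with $1-\frac{\vp^{-1}}{p}$; only at the very end do \eqref{formulaforphi} and the relations $[\nu_i,\bar{\nu}_i]=0$, $[\nu_2,\bar{\nu}_1]=-1$ enter, which is the part of your sketch that is correct. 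It is worth noting that your two errors point at the right final formula --- inserting the missing operator and using the correct adjoint must be done together, and in the correct computation they offset --- but neither step is justified as you have written it.
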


   The rest of this section is devoted to proving this result. We follow closely Berger's proof of Perrin-Riou's explicit reciprocity law in \cite{berger03}. We first make the following definition: let $V\in \Rep_{\Qp}(G_{\QQ_p})$. For an element $x\in H^1_{\Iw}(\QQ_p,V)$, define $h^1_{\QQ_p,V}(x)$ to be the image of $x$ under the projection map $H^1_{\Iw}(\QQ_p,V)\rightarrow H^1(\QQ_p,V)$.

   Recall also the map $\partial_V$ defined in subsection \ref{crysrepsandwachmods}: for $z\in \Qpn((t))\otimes_{\QQ_p}\DD_{\cris}(V_f(1+j))$, we denote the constant coefficient of $z$ by $\partial_{V_f(1+j)}(z)\in\Qpn\otimes_{\QQ_p}\DD_{\cris}(V_f(1+j))$. 
   
   \begin{lemma}\label{omegaformula}
    Let $i\in\{1,2\}$, and choose $\mathfrak{y}_i\in\big(\BB^+_{\rig,\QQ_p}\otimes_{\QQ_p}\DD_{\cris}(V_f(1))\big)^{\psi=1}$ such that $(1-\varphi)\mathfrak{y}_i=(1+\pi)\otimes \nu_i$. Then
    \begin{multline*}
     h^1_{\QQ_p,V_f(1+j)}\Omega_{V_f(1+j),1+j}\big(\partial^{-j}(1+\pi)\otimes \nu_i\otimes t^{-j}e_j\big)=\\ j! \exp_{\QQ_p,V_f(1+j)}\Big(\big(1-\frac{\varphi^{-1}}{p}\big)\partial_{V_f(1+j)}\big(\partial^{-j}\mathfrak{y}_i\otimes t^{-j}e_j\big)\Big).
    \end{multline*}
   \end{lemma}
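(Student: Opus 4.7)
My plan is to adapt the $n=0$ case of the proof of Proposition~\ref{PRvalues} to handle the twisted input $\partial^{-j}(1+\pi)\otimes \nu_i\otimes t^{-j}e_j$. First I will extract from that proof the general identity: for any primitive $G \in \BB^+_{\rig,\QQ_p}\otimes \DD_{\cris}(V)$ of an element $g \in \big((\BB^+_{\rig,\QQ_p})^{\psi=0}\otimes \DD_{\cris}(V)\big)^{\Sigma=0}$ under $1-\varphi$, one has
\[
 h^1_{\QQ_p,V}\,\Omega_{V,h}(g) \;=\; (h-1)!\,\exp_{\QQ_p,V}\!\Big(\bigl(1-\tfrac{\varphi^{-1}}{p}\bigr)\,\partial_V(G)\Big).
\]
This drops out of the proof of Proposition~\ref{PRvalues} with no essential change: the step $p^{-1}\Tr_{\QQ_{p,1}/\QQ_p}\!\bigl(\varphi^{-1}G(\varepsilon^{(1)}-1)\bigr)$ defining the bottom-level projection only sees $G$ through its value at $\pi = \varepsilon^{(1)}-1$, whose $\Tr_{\QQ_{p,1}/\QQ_p}$ isolates the constant coefficient, so the identification of the argument of $\exp$ with $(1-\varphi^{-1}/p)\partial_V(G)$ does not depend on the specific form $g = (1+\pi)\otimes v$ used in the proposition.

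With this in hand, the lemma reduces to constructing a primitive of $g := \partial^{-j}(1+\pi)\otimes \nu_i \otimes t^{-j}e_j$ whose constant coefficient is exactly $\partial_{V_f(1+j)}(\partial^{-j}\mathfrak{y}_i \otimes t^{-j}e_j)$. The natural candidate is
\[
 G := \partial^{-j}\mathfrak{y}_i \otimes t^{-j}e_j,
\]
with the ambiguity in $\partial^{-j}$ modulo $\ker\partial^j$ pinned down compatibly with the $\partial^{-j}(1+\pi)$ appearing in $g$. The relation $(1-\varphi)G = g$ then follows by combining: (i) the defining equation $(1-\varphi)\mathfrak{y}_i = (1+\pi)\otimes \nu_i$; (ii) the intertwining $\varphi\circ\partial = p\cdot\partial\circ\varphi$ on $\BB^+_{\rig,\QQ_p}$, which inverts to give a suitable commutation between $1-\varphi$ and $\partial^{-j}$; and (iii) the triviality $\varphi(t^{-j}e_j) = t^{-j}e_j$. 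Feeding the pair $(g,G)$ into the displayed formula with $V = V_f(1+j)$ and $h = 1+j$ then produces the lemma, the factor $(h-1)! = j!$ matching the $j!$ on the right hand side.

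The main obstacle will be the bookkeeping around $\partial^{-j}$: since $\partial$ has a kernel on $\BB^+_{\rig,\QQ_p}$, the operator $\partial^{-j}$ is defined only modulo polynomials of degree $<j$, and the inverses applied to $1+\pi$ and to $\mathfrak{y}_i$ must be chosen in a single coherent way so that $(1-\varphi)G = g$ holds on the nose rather than merely modulo $\ker\partial^j$; this may require interpreting $G$ inside an appropriate completion of $\BB^+_{\rig,\QQ_p}\otimes \DD_{\cris}(V_f(1+j))$. A secondary but important check is that the $\Sigma = 0$ hypothesis for $g$ is satisfied, which will follow from the assumption (standing in Section~\ref{construction}) that $a_p \ne p + p^{k-2}$, so that both $1-\varphi$ and $1-p\varphi$ are invertible on $\DD_{\cris}(V_f(1))$; this invertibility propagates to the relevant eigenvalues on $\DD_{\cris}(V_f(1+j))$.
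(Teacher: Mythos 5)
Your route is essentially the paper's: reduce to the $n=0$ case of Proposition~\ref{PRvalues} and then identify the argument of $\exp$ by a constant-term computation. In fact the detour through a generalized version of Proposition~\ref{PRvalues} is not needed: since $\partial(1+\pi)=1+\pi$, the element $\partial^{-j}(1+\pi)\otimes\nu_i\otimes t^{-j}e_j$ is already of the form $(1+\pi)\otimes v$ with $v=\nu_i\otimes t^{-j}e_j\in\Dcris(V_f(1+j))$, so the proposition applies verbatim with $h=1+j$, and all that remains is the identity $\partial_{V_f(1+j)}\big(\partial^{-j}\mathfrak{y}_i\otimes t^{-j}e_j\big)=(1-\vp)^{-1}(\nu_i\otimes t^{-j}e_j)$, which is equivalent to your check that $(1-\vp)G=g$ for $G=\partial^{-j}\mathfrak{y}_i\otimes t^{-j}e_j$. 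Your remarks about normalizing $\partial^{-j}$ (the natural choice being $\partial^{-j}(1+\pi)=1+\pi$) and about $\Sigma=0$ are consistent with what the paper does implicitly.

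However, two of the three identities you propose to combine are stated incorrectly, and the errors do not cancel, so the verification of $(1-\vp)G=g$ would fail if carried out exactly as written. On $\Brig$ one has $\partial\circ\vp=p\,\vp\circ\partial$ (not $\vp\circ\partial=p\,\partial\circ\vp$), whence $\vp\circ\partial^{-j}=p^{j}\,\partial^{-j}\circ\vp$; and $\vp(t^{-j}e_j)=p^{-j}t^{-j}e_j$ (not $t^{-j}e_j$), since $\vp(t)=pt$, so the Frobenius on $\Dcris(\QQ_p(j))$ is multiplication by $p^{-j}$. With the correct versions the factors $p^{j}$ and $p^{-j}$ cancel, giving $\vp(G)=\partial^{-j}(\vp\,\mathfrak{y}_i)\otimes t^{-j}e_j$ and hence $(1-\vp)G=\partial^{-j}\big((1+\pi)\otimes\nu_i\big)\otimes t^{-j}e_j=g$, which is exactly the computation in the paper's proof (phrased there as $(1-\vp)\partial_{V_f(1+j)}(\partial^{-j}\mathfrak{y}_i\otimes t^{-j}e_j)=\nu_i\otimes t^{-j}e_j$ together with the invertibility of $1-\vp$ on $\Dcris(V_f(1))$). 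With your stated versions one instead picks up a spurious factor $p^{-j}$ on the Frobenius term and obtains a relation of the form $(1-p^{-j}\vp)$ rather than $(1-\vp)$; correcting the two identities repairs the argument completely.
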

   \begin{proof}
    By Proposition~\ref{PRvalues}, we need to prove that $\partial_{V_f(1+j)}\big(\partial^{-j}\mathfrak{y}_i\otimes t^{-j}e_j\big)=(1-\vp)^{-1}(\nu_i\otimes t^{-j}e_j)$. 
    Note that $\varphi$ commutes with $\partial_{V_f(1+j)}$ and $\varphi\circ\partial^{-j}=p^j\partial^{-j}\circ\varphi$, so 
    \[ (1-\vp)\partial_{V_f(1+j)}\big(\partial^{-j}\mathfrak{y}_i\otimes t^{-j}e_j\big)=\partial_{V_f(1+j)}\big(\partial^{-j}(1+\pi)\otimes \nu_i\otimes t^{-j}e_j\big).\]
    Note that $\partial(1+\pi)=1+\pi$, so $\partial_{V_f(1+j)}\big(\partial^{-j}(1+\pi)\otimes \nu_i\otimes t^{-j}e_j\big)=\nu_i\otimes t^{-j}e_j$. Also, as observed above, $1-\vp$ is invertible on $\DD_{\cris}(V_f(1))$, which proves the result.     
   \end{proof}
   
   We can now prove Proposition~\ref{samemaps}. We will only prove~\eqref{eqnu2} here; the proof of~\eqref{eqnu1} is analogous. 
   
   \begin{proof}
    For $i=1,2$, let $\mathfrak{y}_i\in\big(\BB^+_{\rig,\QQ_p}\otimes_{\QQ_p}\DD_{\cris}(V_f(1))\big)^{\psi=1}$ such that $(1-\varphi)\mathfrak{y}_i=(1+\pi)\otimes \nu_i$. By $p$-adic interpolation it is sufficient to show that 
   \[ \partial^j\big(\big\langle \Omega_{V_f(1),1}((1+\pi)\otimes \nu_2),h^1_{\QQ_p,\Iw}(x)\big\rangle_{V_f(1)}\big)(0)=\partial^j\big(y_1m_{12}+y_2m_{22}\big)(0) \]
   for all $j\gg 0$. We have
   \begin{align}
    & \partial^j\big(\big\langle \Omega_{V_f(1),1}((1+\pi)\otimes \nu_2),x\big\rangle_{V_f(1)}\big)  = \big\langle \Omega_{V_f(1),1}\big((1+\pi)\otimes \nu_2\big)\otimes e_j,h^1_{\Iw,V_{\bar{f}}(k-1-j)}(x\otimes e_{-j})\big\rangle_{V_f(1+j)} \notag \\
    & = (-1)^j\big\langle \Omega_{V_f(1+j),1+j}\big(\partial^{-j}(1+\pi)\otimes \nu_2\otimes t^{-j}e_j\big),h^1_{\Iw,V_{\bar{f}}(k-1-j)}(x\otimes e_{-j})\big\rangle_{V_f(1+j)} \label{line1} 
   \end{align}
   by Lemma~\ref{twist} and the properties of $\Omega$ (c.f. p. 119, Th\'eor\`eme (B)(ii) in~\cite{perrinriou94}). Hence
   \begin{align}
    & \partial^j\big(\big\langle \Omega_{V_f(1),1}((1+\pi)\otimes \nu_2),x\big\rangle_{V_f(1)}\big)(0) \notag \\
    & = (-1)^j\big\langle h^1_{\Qp,V_f(1+j)}\Omega_{V_f(1+j),1+j}\big(\partial^{-j}(1+\pi)\otimes \nu_2\otimes t^{-j}e_j\big),h^1_{\Qp,V_{\bar{f}}(k-1-j)}(x\otimes e_{-j})\big\rangle_{V_f(1+j)} \label{line2}\\
    & = j!\Big\langle\exp_{\Qp,V_f(1+j)}\Big(\Big(1-\frac{\varphi^{-1}}{p}\Big)\partial_{V_f(1+j)}\big(\partial^{-j}\mathfrak{y}_2\otimes t^{-j}e_j\big)\Big),h^1_{\Qp,V_{\bar{f}}(k-1-j)}(x\otimes e_{-j})\Big\rangle_{V_f(1+j)} \label{line3} \\
    & = j! \Big[ \Big(1-\frac{\varphi^{-1}}{p}\Big)\partial_{V_f(1+j)}\big(\partial^{-j}\mathfrak{y}_2\otimes t^{-j}e_j\big),\exp^*_{\Qp,V^*(1+j)}h^1_{\Qp,V_{\bar{f}}(k-1-j)}(x\otimes e_{-j})\Big]_{V_f(1+j)} \label{line4} \\
    & = j! \Big[ \Big( 1-\frac{\varphi^{-1}}{p}\Big)\partial_{V_f(1+j)}\big(\partial^{-j}\mathfrak{y}_2\otimes t^{-j}e_j\big),\Big(1-\frac{\varphi^{-1}}{p}\Big)\partial_{V_{\bar{f}}(k-1-j)}(x\otimes e_{-j})\Big]_{V_f(1+j)} \label{line5} \\
    & = j! \big[\partial_{V_f(1+j)}\big(\partial^{-j}(1+\pi)\otimes\nu_2\otimes t^{-j}e_j\big), \partial_{V_{\bar{f}}(k-1-j)}((1-\varphi)x\otimes e_{-j})\big]_{V_f(1+j)} \label{line6}
   \end{align}
   The equalities can be explained as follows:
   \begin{itemize}
    \item  the first equality is immediate from~\eqref{line1} and the construction of $\langle\hspace{1ex},\hspace{1ex}\rangle_V$;
    \item  the implication \eqref{line2}$\Rightarrow$~\eqref{line3} follows from Lemma~\ref{omegaformula};
    \item the implication \eqref{line3}$\Rightarrow$~\eqref{line4} is the duality between $\exp_{F,V_f(1+j)}$ and $\exp^*_{F,V_f(1+j)}$;
    \item the implication \eqref{line4}$\Rightarrow$~\eqref{line5} follows from~\cite[Theorem II.6]{berger03}, and
    \item \eqref{line6} follows from ~\eqref{line5} since $1-\varphi$ is the adjoint of $1-\frac{\varphi^{-1}}{p}$ under the pairing $[\hspace{1ex},\hspace{1ex}]$.
   \end{itemize}
   Now $\partial(1+\pi)=1+\pi$, which implies that $\partial^{-j}(1+\pi)\otimes\nu_2\otimes t^{-j}e_j=(1+\pi)\otimes\nu_2\otimes t^{-j}e_j$ and hence
   \[ \partial_{V_f(1+j)}\big(\partial^{-j}(1+\pi)\otimes\nu_2\otimes t^{-j}e_j\big)=\nu_2\otimes t^{-j}e_j.\]
   By~\eqref{formulaforphi}, we can write
   \[ (1-\vp)x=(y_1m_{11}+y_2m_{21})\bar{\nu}_1+(y_1m_{12}+y_2m_{22})\bar{\nu}_2.\]
   Recall that by construction, we have $[\nu_2,\bar{\nu}_1]=-1$ and $[\nu_i,\bar{\nu}_i]=0$ for $i=1,2$. It follows that if we write $-(y_1m_{11}+y_2m_{21})=\Sigma_{i\geq 0}c_it^i$ with $c_i\in \QQ_p$, then \eqref{line6} is equal to $j!c_j$. Since also $-\partial^j\big(y_1m_{11}+y_2m_{21})\big)(0)=j!c_j$, this finishes the proof of~\eqref{eqnu2}. 
  \end{proof}
  
  We can summarize the results of this section by the following corollary:
  
  \begin{corollary}\label{commutativediagram}
   We have a commutative diagram
   \[
    \xymatrix{
    \NN(V)^{\psi=1} \ar[r]^{h^1_{\QQ_p,\Iw}} \ar[d]_{1-\varphi}        & H^1_{\Iw}(\QQ_p,V) \ar[d]_{(\uCol_1,\uCol_2)} \ar@/^1.3cm/[ddd]^{\mathcal{L}_{1,\bar{\nu}_i\otimes(1+\pi)}}\\
    (\vp^*\NN(V))^{\psi=0} \ar[r]^{\J} \ar[d]_{M} & \Lambda_{\QQ_p}(G_\infty)^{\oplus 2} \ar[d]_{\underline{M}} \\
    \big((\BB^+_{\rig,\QQ_p})^{\psi=0}\big)^{\oplus 2} \ar[r]^{\mathfrak{M}^{-1}} \ar[d]_{\pr_i} & \calH(G_\infty)^{\oplus 2} \ar[d]_{\upr_i} \\
    (\BB^+_{\rig,\QQ_p})^{\psi=0}  \ar[r]^{\mathfrak{M}^{-1}}         & \calH(G_\infty).
    }
   \]
   Here, $\pr_i$ and $\underline{\pr}_i$ denote the projection maps onto the respective $i$-th components, and for an element $x\in(\vp^*\NN(V))^{\psi=0}$, $M.x$ is defined as follows: if $x=x_1\vp(\pi^{1-k}n_1\otimes e_{k-1})+x_2\vp(\pi^{1-k}n_2\otimes e_{k-1})$ with $x_1,x_2\in(\BB^+_{\QQ_p})^{\psi=0}$, then $M.x=M\begin{pmatrix} x_1 \\ x_2\end{pmatrix}$. 
  \end{corollary}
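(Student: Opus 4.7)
The plan is to verify commutativity by breaking the diagram into its three squares (plus the curved arrow on the right), and observing that each piece is essentially tautological given the work already in place.

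For the \emph{top square}, by the very definition of the Coleman maps in Section~\ref{positive}, the composition $\uCol := \J \circ (1-\vp)$ acts on $\NN(V)^{\psi=1}$, and the induced action on $H^1_\Iw(\QQ_p, V)$ is obtained by transport of structure through the Fontaine isomorphism $h^1_{\QQ_p,\Iw}$. Thus $(\uCol_1, \uCol_2) \circ h^1_{\QQ_p,\Iw} = \J \circ (1-\vp)$ on the nose, and commutativity is immediate.

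For the \emph{middle and bottom squares}, the vertical maps $\underline{M}$ and $\underline{\pr}_i$ on the right are \emph{defined} as the transforms of $M$ and $\pr_i$ along the inverse Mellin isomorphism $\mathfrak{M}^{-1}: (\BB^+_{\rig,\QQ_p})^{\psi=0} \to \HG$ of~\eqref{mellin} (extending the isomorphism of Lemma~\ref{lem2} after inverting $p$ and passing to growth-bounded distributions). Since $M$ has entries in $\vp(\BB^+_{\rig,\QQ_p})$ and acts by multiplication against a vector of elements of $(\BB^+_{\QQ_p})^{\psi=0}$, and since $\mathfrak{M}$ is $\Lambda$-equivariant, both squares commute by construction of $\underline{M}$ and $\underline{\pr}_i$; the only point to check is that the image of $\J$ translates into the entrywise Mellin transform of the two coordinates of an element of $((\BB^+_{\rig,\QQ_p})^{\psi=0})^{\oplus 2}$, which is immediate from the definition of $\J$ as sending the basis $(1+\pi)\vp(\pi^{1-k} n_i \otimes e_{k-1})$ to the standard basis.

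For the \emph{curved arrow}, we must show that the composition from $H^1_\Iw(\QQ_p, V)$ all the way down to $\HG$ along the right-hand column agrees with $\mathcal{L}_{1, \bar{\nu}_i \otimes (1+\pi)}$. This is precisely the content of Proposition~\ref{samemaps}, rewritten: identifying $x \in \NN(V)^{\psi=1}$ with $h^1_{\QQ_p,\Iw}(x) \in H^1_\Iw(\QQ_p, V)$, we have $\rmCol_i(x) = y_i$, and Proposition~\ref{samemaps} together with~\eqref{formulaforphi} identifies $y_1 m_{1j} + y_2 m_{2j}$ (up to sign for $j=1$) with $\pm\langle \Omega_{V_f(1),1}((1+\pi)\otimes \nu_j), x\rangle_{V_f(1)} = \mathcal{L}_{1,\bar{\nu}_j \otimes(1+\pi)}(x)$, after applying $\mathfrak{M}^{-1}$ to pass from $(\BB^+_{\rig,\QQ_p})^{\psi=0}$ to $\HG$.

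The main obstacle, such as it is, is purely bookkeeping: keeping the sign conventions in~\eqref{eqnu1}--\eqref{eqnu2} consistent with the ordering of projections $\pr_1, \pr_2$ and the labeling of rows of $M$, and checking that the extension of the Mellin transform from $\Lambda_{\QQ_p}(G_\infty) \cong (\AA^+_{\QQ_p})^{\psi=0}$ to $\HG \cong (\BB^+_{\rig,\QQ_p})^{\psi=0}$ is compatible with the $\Lambda$-actions on both sides (which it is, by~\cite[B.2.8]{perrinriou00}). Once these identifications are pinned down, the corollary follows by pasting together the three commuting squares with the identification of the right-hand column composition furnished by Proposition~\ref{samemaps}.
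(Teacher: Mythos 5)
Your treatment of the top square (definitional, via $\uCol=\J\circ(1-\vp)$ transported through $h^1_{\QQ_p,\Iw}$) and your appeal to Proposition~\ref{samemaps} together with \eqref{formulaforphi} for identifying the composite down the left-hand column with the Perrin--Riou pairing are correct, and this is indeed the paper's (implicit) argument. The gap is in the middle square. You justify it by asserting that ``the image of $\J$ translates into the entrywise Mellin transform of the two coordinates,'' i.e.\ that if $x=x_1\vp(\pi^{1-k}n_1\otimes e_{k-1})+x_2\vp(\pi^{1-k}n_2\otimes e_{k-1})$ and $\J(x)=(z_1,z_2)$, then $x_i=\mathfrak{M}(z_i)$. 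This is false: $z_i$ acts on $(1+\pi)\vp(\pi^{1-k}n_i\otimes e_{k-1})$ through the $G_\infty$-action on the Wach module, and $G_\infty$ acts nontrivially on $\vp(\pi^{1-k}n_i\otimes e_{k-1})$ (trivially only modulo $\vp(\pi)$, cf.\ Lemma~\ref{cong}); this is exactly why Theorem~\ref{Lambda} is not a triviality. If your identification were correct it would give $\mathfrak{M}\circ\uCol_i=\rmCol_i$ on the nose, which is impossible: $\uCol_i$ and $\mathfrak{M}$ are $\Lambda_{\QQ_p}(G_\infty)$-equivariant, while the paper explicitly notes that $\rmCol_i$ is not; the two only agree modulo $\vp(\pi)$ (Corollary~\ref{sameprop}). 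So the middle square is not ``commutative by construction'': $\underline{M}$ in the diagram is the concrete matrix $\mathfrak{M}^{-1}\left[(1+\pi)M\right]$, not a map defined by transport of structure, and declaring it to be the transported map would beg the question.

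The correct bridge, which is what the paper carries out immediately after the corollary to obtain \eqref{prpairing}, is to expand \emph{both} coordinate decompositions of the same element in the $G_\infty$-invariant basis $\bar{\nu}_j\otimes t^{1-k}e_{k-1}$ of $\Dcris(V)$: using $\vp(\pi^{1-k}n_i\otimes e_{k-1})=\sum_j m_{ij}\,\bar{\nu}_j\otimes t^{1-k}e_{k-1}$ and the invariance of $\bar{\nu}_j\otimes t^{1-k}e_{k-1}$, the equality $\sum_i x_i\vp(\pi^{1-k}n_i\otimes e_{k-1})=\sum_i z_i\cdot\big[(1+\pi)\vp(\pi^{1-k}n_i\otimes e_{k-1})\big]$ gives, coefficient by coefficient, $\sum_i x_i m_{ij}=\sum_i z_i\cdot\big[(1+\pi)m_{ij}\big]$ in $(\Brig)^{\psi=0}$; applying the $\HG$-equivariant bijection $\mathfrak{M}^{-1}$ then yields $\mathfrak{M}^{-1}\big((x_1,x_2)M\big)=\J(x)\,\underline{M}$ entrywise, which is the middle square (and, composed with Proposition~\ref{samemaps}, the agreement of the right-hand column with the curved arrow). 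Once this step is supplied your outline goes through; the remaining discrepancies in your write-up (row versus column conventions for $M$, and the sign/index bookkeeping between $\nu_1,\nu_2$ and the projections) are also present in the paper's own statement and are harmless.
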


    
   \subsection{Bounded \texorpdfstring{$p$-adic $L$-functions}{p-adic L-functions}}

     We now establish some basic properties of $L_{p,i}$ and $\tilde{L}_{p,i}$.

    \subsubsection{Decomposition of \texorpdfstring{$p$-adic $L$-functions}{p-adic L-functions}}\label{decomposition}

   Recall that $\alpha$ and $\beta$ are the roots of the quadratic $X^2-a_pX+p^{k-1}$. By \cite[Theorem 16.6]{kato04}, there exist eigenvectors $\eta_{\alpha}$ and $\eta_{\beta}$ of $\vp$ in $E(\alpha)\otimes_E\Dcris(V_f)$ with eigenvalues $\alpha$ and $\beta$ respectively such that $[\eta_\alpha,\bar{\nu}_1]=[\eta_\beta,\bar{\nu}_1]=1$ and we have
    \begin{eqnarray*}
     \big\langle\Omega_{V_f(1),1}((1+\pi)\otimes\eta_{\alpha}), \kato \big\rangle_{V_f(1)}&=&\tilde{L}_{p,\alpha},\\
     \big\langle\Omega_{V_f(1),1}((1+\pi)\otimes\eta_{\beta}), \kato \big\rangle_{V_f(1)}&=&\tilde{L}_{p,\beta}.
    \end{eqnarray*}
    It can be verified that
    \begin{align*}
     \eta_{\alpha}&=\alpha^{-1}\nu_1-\nu_2,\\
     \eta_{\beta}&=\beta^{-1}\nu_1-\nu_2.
    \end{align*}
    Therefore, by Definition~\ref{padicl} and Proposition~\ref{samemaps}, we have
    \begin{eqnarray*}
     \mathfrak{M}(\tilde{L}_{p,\alpha})&=&(\alpha^{-1} m_{12}+m_{11})L_{p,1}+(\alpha^{-1}m_{22}+m_{21})L_{p,2},\\
     \mathfrak{M}(\tilde{L}_{p,\beta})&=&(\beta^{-1}m_{12}+m_{11})L_{p,1}+(\beta^{-1}m_{22}+ m_{21})L_{p,2};
    \end{eqnarray*}
    in the notation of Section~\ref{mainresults}, we have
    \[ \mathcal{M}= \begin{pmatrix} \alpha^{-1} m_{12}+m_{11} & \alpha^{-1}m_{22}+m_{21} \\ \beta^{-1}m_{12}+m_{11} & \beta^{-1}m_{22}+ m_{21}\end{pmatrix}.\]
    The functions $L_{p,1}$ and $L_{p,2}$ can therefore be written as
    \begin{eqnarray}
     L_{p,1}&=&\frac{(\beta^{-1}m_{22}+ m_{21})\mathfrak{M}(\tilde{L}_{p,\alpha})-(\alpha^{-1}m_{22}+ m_{21})\mathfrak{M}(\tilde{L}_{p,\beta})}{(\beta^{-1}-\alpha^{-1})\det(M)}\label{l1}\\
     L_{p,2}&=&\frac{(\beta^{-1}m_{12}+ m_{11})\mathfrak{M}(\tilde{L}_{p,\alpha})-(\alpha^{-1}m_{12}+ m_{11})\mathfrak{M}(\tilde{L}_{p,\beta})}{(\alpha^{-1}-\beta^{-1})\det(M)}\label{l2}
    \end{eqnarray}

    Let $x\in\DD(V_{\bar{f}}(k-1))^{\psi=1}$. On the one hand, 
    \[
    (1-\vp)x=\uCol_1(x)\cdot[(1+\pi)\vp(\pi^{1-k} n_1 \otimes e_{k-1})]+\uCol_2(x)\cdot[(1+\pi)\vp(\pi^{1-k} n_2 \otimes e_{k-1})].
    \]
    On the other hand, Proposition~\ref{samemaps} says that
    \[
    (1-\vp)x=\mathfrak{M}\circ\LL_{1,\nu_1\otimes(1+\pi)}(x)\bar{\nu}_2\otimes t^{1-k}e_{k-1}-\mathfrak{M}\circ\LL_{1,\nu_2\otimes(1+\pi)}(x)\bar{\nu}_1\otimes t^{1-k}e_{k-1}.
    \]
    Therefore, we have
    \[
    \begin{pmatrix}\uCol_1(x)&\uCol_2(x)\end{pmatrix}\cdot\left[(1+\pi)M\right]=\begin{pmatrix}-\mathfrak{M}\circ\LL_{1,\nu_2\otimes(1+\pi)}(x)&\mathfrak{M}\circ\LL_{1,\nu_1\otimes(1+\pi)}(x)\end{pmatrix}.
    \]
    Let $\underline{M}=(\underline{m}_{ij})=\mathfrak{M}^{-1}[(1+\pi)M]$, then as elements of $\HG$
    \begin{equation}\label{prpairing}
     \begin{pmatrix}\uCol_1(x)&\uCol_2(x)\end{pmatrix}\underline{M}=\begin{pmatrix}-\LL_{1,\nu_2\otimes(1+\pi)}(x)&\LL_{1,\nu_1\otimes(1+\pi)}(x)\end{pmatrix}.
    \end{equation}

    Therefore, by exactly the same calculation as above, we have the following theorem:
    
    \begin{theorem}\label{Ldecomposition}
     Define     
     \[ \underline{\mathcal{M}}= \begin{pmatrix} \alpha^{-1} \underline{m}_{12}+\underline{m}_{11} & \alpha^{-1}\underline{m}_{22}+\underline{m}_{21} \\ \beta^{-1}\underline{m}_{12}+\underline{m}_{11} & \beta^{-1}\underline{m}_{22}+ \underline{m}_{21}\end{pmatrix}.\]
     Then we have the decomposition
     \begin{equation}\label{supersingulardecomposition}
      \begin{pmatrix}\tilde{L}_{p,\alpha}\\ \tilde{L}_{p,\beta} \end{pmatrix}=\underline{\mathcal{M}}\begin{pmatrix}\tilde{L}_{p,1}\\ \tilde{L}_{p,2} \end{pmatrix}
     \end{equation}   
    \end{theorem}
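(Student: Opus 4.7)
The plan is to run the argument already developed for the unbounded decomposition (equations~\eqref{l1}--\eqref{l2}), but one level up, working directly in $\overline{\QQ}_p\otimes_{\QQ_p}\calH(G_\infty)$ via equation~\eqref{prpairing} rather than through the Mellin transform.

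First I would rewrite $\tilde L_{p,\alpha}$ and $\tilde L_{p,\beta}$ in terms of the maps $\mathcal L_{1,\nu_i\otimes(1+\pi)}$. Since $\Omega_{V_f(1),1}$ and the Perrin-Riou pairing $\langle-,\kato\rangle_{V_f(1)}$ are both $\overline{\QQ}_p$-linear in their first argument, the identities $\eta_\alpha=\alpha^{-1}\nu_1-\nu_2$ and $\eta_\beta=\beta^{-1}\nu_1-\nu_2$ (recorded just before~\eqref{l1}) yield
\begin{align*}
\tilde L_{p,\alpha} &= \alpha^{-1}\mathcal L_{1,\nu_1\otimes(1+\pi)}(\kato)-\mathcal L_{1,\nu_2\otimes(1+\pi)}(\kato),\\
\tilde L_{p,\beta} &= \beta^{-1}\mathcal L_{1,\nu_1\otimes(1+\pi)}(\kato)-\mathcal L_{1,\nu_2\otimes(1+\pi)}(\kato).
\end{align*}

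Next I would apply equation~\eqref{prpairing} with $x=\kato$ and $\tilde L_{p,i}=\uCol_i(\kato)$. Reading off the two components, this gives
\begin{align*}
\mathcal L_{1,\nu_1\otimes(1+\pi)}(\kato) &= \underline m_{12}\tilde L_{p,1}+\underline m_{22}\tilde L_{p,2},\\
-\mathcal L_{1,\nu_2\otimes(1+\pi)}(\kato) &= \underline m_{11}\tilde L_{p,1}+\underline m_{21}\tilde L_{p,2}.
\end{align*}
Substituting these into the previous display and collecting the coefficients of $\tilde L_{p,1}$ and $\tilde L_{p,2}$ yields precisely the rows of the matrix $\underline{\mathcal M}$ defined in the statement, so that $(\tilde L_{p,\alpha},\tilde L_{p,\beta})^T=\underline{\mathcal M}(\tilde L_{p,1},\tilde L_{p,2})^T$.

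There is essentially no hard step; the whole argument is a book-keeping exercise, and the serious input (namely Proposition~\ref{samemaps} and the commutative diagram of Corollary~\ref{commutativediagram}, which together underlie~\eqref{prpairing}) has already been done. The one point to check carefully is that scalars may be extended from $E$ to $\overline{\QQ}_p$, so that equation~\eqref{prpairing}---originally written with coefficients in $\Lambda_{\QQ_p}(G_\infty)$ and $\calH(G_\infty)$---remains valid when one feeds in the vectors $\nu_1,\nu_2\in\Dcris(V_f)$ against the $\overline{\QQ}_p$-linear combinations involving $\alpha^{-1},\beta^{-1}$. This is automatic since all the maps in Corollary~\ref{commutativediagram} are $\Lambda(G_\infty)$-equivariant and hence extend by linearity to $\overline{\QQ}_p\otimes_{\QQ_p}\calH(G_\infty)$. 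The main conceptual observation is simply that this theorem is the ``bounded'' shadow of~\eqref{wrongdecomposition}: the matrix $\underline{\mathcal M}$ is obtained from $\mathcal M$ by replacing each entry $m_{ij}$ with its Mellin preimage $\underline m_{ij}=\mathfrak M^{-1}\big((1+\pi)m_{ij}\big)$, which is exactly the price paid for promoting the identity from $(\BB^+_{\rig,\QQ_p})^{\psi=0}$-valued to $\calH(G_\infty)$-valued.
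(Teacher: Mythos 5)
Your proposal is correct and follows essentially the same route as the paper: the paper also combines the eigenvector identities $\eta_\alpha=\alpha^{-1}\nu_1-\nu_2$, $\eta_\beta=\beta^{-1}\nu_1-\nu_2$ with equation~\eqref{prpairing} applied to $x=\kato$ (this is what the paper means by ``exactly the same calculation as above''). Your component-by-component reading of~\eqref{prpairing} and the remark on extending scalars to $\overline{\QQ}_p\otimes_{\QQ_p}\calH(G_\infty)$ match the intended argument.
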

    
    Again, in the notation of Section~\ref{mainresults}, we have
    \[ \underline{\mathcal{M}}= \begin{pmatrix} \alpha^{-1} \underline{m}_{12}+\underline{m}_{11} & \alpha^{-1}\underline{m}_{22}+\underline{m}_{21} \\ \beta^{-1}\underline{m}_{12}+\underline{m}_{11} & \beta^{-1}\underline{m}_{22}+ \underline{m}_{21}\end{pmatrix}.\]


  \subsubsection{Interpolating properties}
     We calculate the values of our new $p$-adic $L$-functions at characters modulo $p$. We first state a lemma concerning such characters. 
    
    \begin{lemma}\label{divide}
     If $A\in(\BB_{\rig,\Qp}^+)^{\psi=0}$ is divisible by $\vp(\pi)$, then $\mathfrak{M}^{-1}(A)$ is zero when evaluated at any character with conductor $p$.
    \end{lemma}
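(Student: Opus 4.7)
The plan is to reduce the vanishing statement to the elementary fact that $\vp(\pi) = (1+\pi)^p - 1$ vanishes whenever $1+\pi$ is a $p$-th root of unity.

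A character $\eta$ of $G_\infty$ with conductor $p$ is a non-trivial character of $\Delta$ extended trivially along $\Gamma$. For such $\eta$ and any $\mu \in \HG$ with $A = \mathfrak{M}(\mu)$, I will first derive the evaluation formula
\[
\eta(\mu) \;=\; \frac{1}{g(\eta^{-1})} \sum_{c \in (\ZZ/p)^\times} \eta^{-1}(c)\, A(\zeta_p^c - 1),
\]
where $\zeta_p := \varepsilon^{(1)}$ and $g(\eta^{-1}) := \sum_{c}\eta^{-1}(c)\zeta_p^c$ is the standard Gauss sum. To obtain this, I will expand $A(\pi) = \int_{G_\infty}(1+\pi)^{\chi(\sigma)}\, d\mu(\sigma)$, which is just the definition of $\mathfrak{M}$ given in \eqref{mellin}, then specialise at $\pi = \zeta_p^c - 1$ (noting that $\chi(\sigma) \bmod p$ depends only on the $\Delta$-component of $\sigma$, since $\chi(\Gamma)\subset 1+p\ZZ_p$), and finally apply the Gauss-sum identity $\sum_{c}\eta^{-1}(c)\zeta_p^{ac} = \eta(a)\, g(\eta^{-1})$ valid for $(a,p)=1$. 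This is a mild variant of formulas in \cite[\S B.2]{perrinriou00}.

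With this formula in hand, the conclusion is immediate: for $\pi = \zeta_p^c - 1$ with $c \in (\ZZ/p)^\times$, one has $1+\pi = \zeta_p^c$, so $\vp(\pi) = \zeta_p^{cp} - 1 = 0$. Hence if $A = \vp(\pi)\cdot B$ for some $B \in \BB_{\rig,\QQ_p}^+$, then $A(\zeta_p^c - 1) = 0$ for every $c \in (\ZZ/p)^\times$, and the displayed formula yields $\eta(\mathfrak{M}^{-1}(A)) = 0$.

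No serious obstacle is anticipated: the only mildly technical step is establishing the evaluation formula, which is entirely standard, and the vanishing of $\vp(\pi)$ at $p$-th roots of unity is immediate from its definition.
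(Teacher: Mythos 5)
Your argument is correct, and it is a more direct route than the one the paper takes. The paper disposes of this lemma in one line by declaring it a special case of Theorem~\ref{david} (the general compatibility theorem of $p$-adic Fourier theory proved in \S\ref{appendix}, whose $n=1$ case gives the equivalence between divisibility of $F_\mu^+$ by $\Phi_1(1+\pi)$ and vanishing of $F_\mu^\times$ at all Dirichlet characters mod $p$, the proof there going through the step-function spaces $\PStep_n$ and their bases). You instead prove exactly the implication needed, by the Gauss-sum inversion formula $\eta(\mu)=g(\eta^{-1})^{-1}\sum_{c\in(\ZZ/p)^\times}\eta^{-1}(c)A(\zeta_p^c-1)$, together with the observation that $\vp(\pi)=(1+\pi)^p-1$ vanishes at $\pi=\zeta_p^c-1$; all the ingredients check out (evaluation of the Amice transform at $\zeta_p^c-1$ is $\mu(x\mapsto\zeta_p^{cx})$ by the cited property, $\chi(\sigma)\bmod p$ only sees the $\Delta$-part since $\chi(\Gamma)\subset 1+p\ZZ_p$, and $g(\eta^{-1})\neq 0$ because $\eta$ is nontrivial mod $p$). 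What your approach buys is a short, self-contained proof that avoids invoking the later section; what it does not give is the full equivalence (divisibility if and only if vanishing, and the passage between $\Phi_{n-1}(1+X)$-divisibility on the $\HG$ side and $\Phi_n(1+\pi)$-divisibility on the $\Brig$ side) that Theorem~\ref{david} provides and which the paper uses again later, e.g.\ in Lemma~\ref{amicetransforms}. One could also note that your formula (or simply evaluating at $\pi=0$) shows the same vanishing at the trivial character, but the lemma as stated does not require this.
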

    \begin{proof}
     This is a special case of Theorem~\ref{david} as proved below.
    \end{proof}

    \begin{notation}
     For any element $x\in\CC_p\otimes(\Brig)^{\psi=0}$ and $\eta$ a character on $G_\infty$, we write $\eta(x)$ for $\eta(\mathfrak{M}^{-1}(x))$.
    \end{notation}

    \begin{proposition}\label{nonzero}
     Let $\eta$ be a primitive character modulo $p$, then
     \begin{eqnarray*}
      \eta(L_{p,1})&=&\frac{\tau(\eta)}{p^{k-1}}\cdot\frac{L(f_{\eta^{-1}},1)}{\Omega_f^{\eta(-1)}},\\
      \eta(L_{p,2})&=&0.
     \end{eqnarray*}
     Similarly, if $\eta$ is the trivial character, then
     \begin{eqnarray*}
      \eta(L_{p,1})&=&\frac{a_p-p^{k-2}-1}{p^{k-1}}\cdot\frac{L(f,1)}{\Omega_f^{+}},\\
      \eta(L_{p,2})&=&\left(\frac{1}{p}-1\right)\cdot\frac{L(f,1)}{\Omega_f^{+}}.
     \end{eqnarray*}
    \end{proposition}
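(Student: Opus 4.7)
The strategy is to evaluate the decomposition formulas~\eqref{l1} and~\eqref{l2} at $\eta$, combining (i) the Amice--V\'elu--Vishik interpolation formulas for $\tilde L_{p,\alpha}, \tilde L_{p,\beta}$ (see~\cite[\S 11]{MTT}), (ii) an explicit computation of $M$ modulo $\pi$, and (iii) the fact that each entry $m_{ij}$ lies in $\vp(\Brig)$.

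First I would compute $M|_{\pi=0}$. Starting from $M = (t/\pi q)^{k-1} P^T (M')^{-1}$, using that $P \equiv A_\vp^T \bmod \pi$ (since the Wach basis $n_i$ lifts $\bar\nu_i$), $M' \equiv I \bmod \pi$, $q|_{\pi=0} = p$, and $(t/\pi)|_{\pi=0} = 1$, one obtains
\[ M|_{\pi=0} \;=\; p^{1-k}\begin{pmatrix} 0 & p^{k-1} \\ -1 & a_p \end{pmatrix}. \]
Moreover, since $m_{ij} \in \vp(\Brig)$, one has $m_{ij}(\zeta-1) = m_{ij}(0)$ for every $p$-th root of unity $\zeta$. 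A short Fourier-theoretic calculation based on the Gauss sum identity $\tau(\eta)\tau(\eta^{-1}) = \eta(-1)p$ then shows that the $\eta$-value of $(1+\pi)m_{ij}$ (regarded as an element of $(\Brig)^{\psi=0}$ via Lemma~\ref{lem2}) is the constant $m_{ij}(0)$, both for $\eta$ trivial and for $\eta$ primitive of conductor $p$.

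Next I would invoke the interpolation formulas: for $\eta$ primitive of conductor $p$,
\[ \eta(\tilde L_{p,\alpha}) = \frac{p}{\alpha\,\tau(\eta^{-1})} \cdot \frac{L(f_{\eta^{-1}},1)}{\Omega_f^{\eta(-1)}}, \]
with the analogous formula for $\beta$, and for $\eta$ trivial,
\[ \tilde L_{p,\alpha}(\mathrm{triv}) = (1 - \alpha^{-1})(1 - p^{k-2}/\alpha) \cdot \frac{L(f,1)}{\Omega_f^+}, \]
and similarly for $\beta$. Substituting these values together with the $m_{ij}(0)$ into~\eqref{l1}--\eqref{l2} and simplifying using $\alpha\beta = p^{k-1}$ and $\alpha+\beta = a_p$ produces the stated formulas for $\eta(L_{p,1})$ and $\eta(L_{p,2})$. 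For primitive $\eta$, the vanishing $\eta(L_{p,2})=0$ drops out as the cancellation $\beta^{-1}\eta(\tilde L_{p,\alpha}) = \alpha^{-1}\eta(\tilde L_{p,\beta})$ in the numerator of~\eqref{l2}, both quantities equalling $pL^\eta/(\alpha\beta\tau(\eta^{-1}))$.

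The main technical obstacle is that $\mathfrak{M}^{-1}$ is only a $\Lambda$-module isomorphism, not a ring homomorphism, so one cannot naively apply $\eta$ term by term to the products and quotient in~\eqref{l1}--\eqref{l2}. The cleanest resolution is to transport the identities to $\HG$ via Corollary~\ref{commutativediagram}, where $\eta$ acts as a genuine ring homomorphism; equivalently, one can unwind the Fourier inversion $\eta(A) = \tau(\eta^{-1})^{-1}\sum_{a}\eta^{-1}(a) A(\varepsilon^{(1)a}-1)$ directly, and the constancy $m_{ij}(\zeta-1) = m_{ij}(0)$ then allows the $m_{ij}$ to factor out of the sum, reducing everything to the interpolation of $\tilde L_{p,\alpha}$ and $\tilde L_{p,\beta}$.
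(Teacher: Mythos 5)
Your strategy is the same as the paper's: reduce $M$ modulo $\vp(\pi)$ using $M=(t/\pi q)^{k-1}\vp(M'^{-1})A_\vp^T$ and $M'|_{\pi=0}=I$, use the constancy of the $m_{ij}$ at the points $\zeta-1$, $\zeta^p=1$ (this is exactly the content of Lemma~\ref{divide}, i.e.\ Theorem~\ref{david}, which is how the paper justifies evaluating the congruence at characters of conductor dividing $p$ despite $\mathfrak{M}^{-1}$ not being a ring homomorphism), substitute the Amice--V\'elu/MTT interpolation values of $\tilde{L}_{p,\alpha},\tilde{L}_{p,\beta}$ into \eqref{l1}--\eqref{l2}, and simplify with $\alpha+\beta=a_p$, $\alpha\beta=p^{k-1}$. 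Your Fourier-inversion remark in the last paragraph is the same mechanism the paper uses, so that point is fine.

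The one place you diverge is the value of $M$ modulo $\vp(\pi)$, and there your write-up is internally inconsistent. You compute $M|_{\pi=0}=p^{1-k}A_\vp^T$, whereas the paper asserts $M|_{\pi=\zeta-1}=A_\vp^T$; since $t/(\pi q)=\tfrac1p\,\vp(t/\pi)$ takes the value $1/p$ at every $\zeta-1$ with $\zeta^p=1$, your evaluation is the careful one. But it is then not true that substituting it into \eqref{l1}--\eqref{l2} ``produces the stated formulas'': with $m_{11}\equiv 0$, $m_{12}\equiv 1$, $m_{21}\equiv -p^{1-k}$, $m_{22}\equiv p^{1-k}a_p$ and $\det M\equiv p^{1-k}$, the same cancellations give the right-hand sides of the Proposition multiplied by $p^{k-1}$ (for instance, at the trivial character one gets $p^{k-1}\left(\tfrac1p-1\right)L(f,1)/\Omega_f^{+}$ for $\eta(L_{p,2})$), while the constants displayed in the Proposition are exactly those obtained from the paper's congruence $M\equiv A_\vp^T\bmod\vp(\pi)$, $\det M\equiv p^{k-1}$. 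So as written your final step does not follow from your own value of $M|_{\pi=0}$: you must either reconcile the factor $(t/\pi q)^{k-1}(\zeta-1)=p^{1-k}$ with the paper's normalization, or accept constants differing from the statement by $p^{k-1}$. This is only a power of $p$; it does not affect the vanishing $\eta(L_{p,2})=0$ for primitive $\eta$ (your cancellation argument there is normalization-independent) nor the nonvanishing consequences the Proposition is used for, but it does affect the literal constants claimed, so it needs to be addressed rather than asserted away.
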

    \begin{proof}
     Since 
     \[ M=(t/\pi q)^{k-1}P^TM'^{-1}=(t/\pi q)^{k-1}\vp(M'^{-1})A_\vp^T  \]
     and $M'|_{\pi=0} = I$, we have $M|_{\pi=(\zeta-1)}=A_\vp^T$ for any $p$-th root of unity $\zeta$. In other words, we have $M\equiv A_\vp^T\mod\vp(\pi)$. Therefore, \eqref{l1} and \eqref{l2} imply that,
     \begin{eqnarray}
     L_{p,1}&\equiv&\frac{(\beta^{-1}a_p-1)\mathfrak{M}(\tilde{L}_{p,\alpha})-(\alpha^{-1}a_p-1)\mathfrak{M}(\tilde{L}_{p,\beta})}{(\beta^{-1}-\alpha^{-1})p^{k-1}}\mod\vp(\pi)\\
     L_{p,2}&\equiv&\frac{\beta^{-1}\mathfrak{M}(\tilde{L}_{p,\alpha})-\alpha^{-1}\mathfrak{M}(\tilde{L}_{p,\beta})}{(\alpha^{-1}-\beta^{-1})}\mod\vp(\pi)
    \end{eqnarray}
    Therefore, we are done by Lemma~\ref{divide} and the values of $\eta(\tilde{L}_{p,\alpha})$ and $\eta(\tilde{L}_{p,\beta})$ given in \cite{MTT} for example.
   \end{proof}

    \begin{corollary}\label{notzero}
     If $k\ge3$, then $L_{p,i}\ne0$ for $i\in\{1,2\}$. Moreover, if $\eta$ is a character of $\Delta$, then $L_{p,1}^\eta\ne0$.
    \end{corollary}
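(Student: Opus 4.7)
The plan is to extract non-vanishing from the explicit interpolation formulas already established in Proposition~\ref{nonzero}, supplemented by classical non-vanishing results for (twisted) central $L$-values of $f$.

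I begin with the isotypical statement $L_{p,1}^\eta\neq 0$. Proposition~\ref{nonzero} expresses the value of $L_{p,1}$ at $\eta$ (regarded as the character of $G_\infty$ trivial on $\Gamma$) as an explicit non-zero scalar multiple of $L(f_{\eta^{-1}},1)$. For $\eta$ trivial, the constant of proportionality contains the factor $(a_p - p^{k-2}-1)$, which is non-zero when $k\geq 3$ by the Hasse--Weil bound $|a_p|<2p^{(k-1)/2}$. Provided $L(f_{\eta^{-1}},1)\neq 0$ this immediately gives $L_{p,1}^\eta\neq 0$. In the remaining case when the central value happens to vanish, I would go beyond Proposition~\ref{nonzero} and compute $\eta\chi^j(L_{p,1})$ for some $1\leq j\leq k-2$ using formula~\eqref{l1} together with the Amice--V\'elu--Vishik interpolation of $\tilde L_{p,\alpha},\tilde L_{p,\beta}$ at such characters; a non-vanishing theorem of Rohrlich then supplies a finite-order twist within the $\eta$-component whose critical value is non-zero.

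For the first assertion, $L_{p,2}\neq 0$ is handled in the same spirit: Proposition~\ref{nonzero} gives $(1/p-1)L(f,1)/\Omega_f^+$ at the trivial character, which is non-zero if $L(f,1)\neq 0$; when $L(f,1)=0$, I would evaluate $L_{p,2}$ at $\chi^j$ for some $1\leq j\leq k-2$ via \eqref{l2}. The hypothesis $k\geq 3$ is exactly what ensures that this critical range is non-empty beyond the single point $j=0$, so a higher critical value $L(f,\chi^{-j},j+1)$ is available. Non-vanishing of $L_{p,1}$ itself then follows \emph{a fortiori} from the isotypical statement.

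The main obstacle is that Proposition~\ref{nonzero} only computes values at characters of conductor dividing $p$, so the case of vanishing central values forces one to extend the computation to characters $\eta\chi^j$ with $j>0$ and to verify that the matrix $M$ evaluated at such characters does not contribute cancelling zeros. Via \eqref{xDcris} and the formula $M=(t/\pi q)^{k-1}P^T(M')^{-1}$, this reduces to the non-singularity on $\Dcris(V_f)$ of operators of the form $1-p^j\vp$ for $0\leq j\leq k-2$, which again follows from the Weil bound on the eigenvalues $\alpha,\beta$ of $\vp$.
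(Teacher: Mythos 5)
Your main branch is the paper's argument, but the point you treat as a hypothesis that may fail is precisely the content of the proof: for $k\ge3$ the value $s=1$ is \emph{not} the central point of $L(f_{\eta^{-1}},s)$ (the centre is $s=k/2$), and $L(f_{\eta^{-1}},1)\ne 0$ holds \emph{unconditionally} by Shimura's non-vanishing theorem (\cite[Proposition~2]{shimura76}), which is exactly what the paper invokes. Hence there is no ``remaining case when the central value happens to vanish'', and the corollary follows at once from Proposition~\ref{nonzero}, once one also checks the scalar factors are non-zero: $\tau(\eta)\ne0$ and $1/p-1\ne0$ are clear, while $a_p-p^{k-2}-1\ne0$ is best deduced from supersingularity ($v_p(a_p)>0$ whereas $v_p(1+p^{k-2})=0$ for $k\ge3$), not from the Weil bound as you claim — the bound $|a_p|\le 2p^{(k-1)/2}$ does not exclude $a_p=1+p^{k-2}$ when $k=3$, or when $k=4$ and $p=3$.

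The fallback you sketch for the (vacuous) vanishing case is also not a proof as it stands, so without Shimura's theorem your argument has a genuine gap. Proposition~\ref{nonzero} rests on Lemma~\ref{divide} together with the congruence $M\equiv A_\vp^T\bmod \vp(\pi)$, which only controls evaluation at finite-order characters of conductor dividing $p$. Evaluating $\mathfrak{M}^{-1}(L_{p,1})$ at $\eta\chi^j$ with $j\ge1$ amounts to evaluating $\partial^j$ of the products $m_{i\ell}\,\mathfrak{M}(\tilde{L}_{p,\alpha})$, $m_{i\ell}\,\mathfrak{M}(\tilde{L}_{p,\beta})$ at $\zeta_p-1$, and by the Leibniz rule this requires knowledge of the derivatives of the entries of $M$ (equivalently of $M'^{-1}$) at roots of unity; the non-singularity of $1-p^j\vp$ on $\Dcris(V_f)$ does not supply this. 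Likewise, Rohrlich's theorem concerns central values of twists by Dirichlet characters of arbitrarily large $p$-power conductor, so to use it you would need interpolation formulae for $L_{p,i}$ at characters of conductor $p^n$ with $n$ large, which neither Proposition~\ref{nonzero} nor the congruence modulo $\vp(\pi)$ provides. Replacing this fallback by the citation of Shimura's theorem closes the gap and recovers the paper's one-line proof.
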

    \begin{proof}
     Since $k\ge3$, the result follows from the fact that $L(f_{\eta^{-1}},1)\ne0$ (by \cite[Proposition~2]{shimura76}). 
    \end{proof}
    
    \begin{remark}
     If $k=2$ and $a_p=0$, we will show that under the Mellin transform, $L_{p,1}$ and $L_{p,2}$ agree with Pollack's plus and minus $p$-adic $L$-functions up to a unit. Therefore, by \cite[Corollary~5.11]{pollack03}, it is in fact enough to assume that assumption (A) holds in order for Corollary~\ref{notzero} to hold.
    \end{remark}

    \begin{remark}
    We see that the interpolating properties of $L_{p,1}$ and $L_{p,2}$ at a character modulo $p$ are independent of the choice of $n_1,n_2$ as long as we have fixed a pair of `good bases' for $\Dcris(V_f)$ and $\Dcris(V_{\bar{f}})$.
    \end{remark}

   \begin{lemma}\label{cong}
    If $z\in\Lambda_E(G_\infty)$ and $f\in(E\otimes\BB_{\Qp})^{\psi=0}$, then $z\cdot(fn_i)\equiv(z\cdot f)n_i\mod\vp(\pi)$ for $i=1,2$.
   \end{lemma}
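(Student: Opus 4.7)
The plan is to reduce to the case $z = g$ for a single element $g \in G_\infty$ and then observe that, in $\BB^+_{\QQ_p}$, the ideals generated by $\pi$ and by $\vp(\pi)$ coincide.

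First, both $z \mapsto z \cdot (f n_i)$ and $z \mapsto (z \cdot f) n_i$ are $\calO_E$-linear, $p$-adically continuous maps $\Lambda_E(G_\infty) \to E \otimes (\BB^+_{\QQ_p} \otimes_{\BB^+_{\QQ_p}} \NN(V))$. Since $\calO_E[G_\infty]$ is dense in $\Lambda_{\calO_E}(G_\infty)$ and the ideal $\vp(\pi)$ is closed, it suffices to verify the congruence in the case $z = g \in G_\infty$. For such $g$ we compute directly
\[
 g \cdot (f n_i) - (g \cdot f) \cdot n_i \;=\; g(f) g(n_i) - g(f) n_i \;=\; g(f)\bigl(g(n_i) - n_i\bigr),
\]
so everything comes down to showing that $g(n_i) - n_i \in \vp(\pi)\,\NN(V)$.

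By property (2) in the characterization of Wach modules recalled in \S\ref{crysrepsandwachmods}, the action of $G_\infty$ is trivial on $\NN(V)/\pi \NN(V)$, so $g(n_i) - n_i \in \pi \NN(V)$. To upgrade this to divisibility by $\vp(\pi)$, recall that
\[
 \vp(\pi) \;=\; (1+\pi)^p - 1 \;=\; \pi\, q, \qquad q \;=\; p + \tbinom{p}{2}\pi + \dots + \pi^{p-1}.
\]
Because $q(0) = p$ is a unit in $\QQ_p$, the element $q$ is a unit in $\BB^+_{\QQ_p} = \QQ_p[[\pi]]$, hence also in $E \otimes_{\QQ_p} \BB^+_{\QQ_p}$. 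Therefore $(\pi) = (\vp(\pi))$ as ideals in $E \otimes \BB^+_{\QQ_p}$, which gives $\pi \NN(V) = \vp(\pi) \NN(V)$, and the required congruence follows.

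There is no real obstacle here; the only point to notice is the unit-ness of $q$, which makes the distinction between ``mod $\pi$'' and ``mod $\vp(\pi)$'' invisible at the level of $\BB^+_{\QQ_p}$-modules. The formulation in terms of $\vp(\pi)$ is what will be needed in the applications, where elements of $(\vp^*\NN(V))^{\psi=0}$ are expanded as sums of the form $\sum_i f_i\cdot(1+\pi)\vp(n_i)$ with $f_i \in (\BB^+_{\QQ_p})^{\psi=0}$, and one wishes to commute a $\Lambda_E(G_\infty)$-action past the basis vectors with controlled error.
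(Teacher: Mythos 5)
Your reduction to a single group element $z=g$ is fine, but the key step fails: $q=\vp(\pi)/\pi$ is \emph{not} a unit in $\BB^+_{\QQ_p}$. In this paper $\BB^+_{\QQ_p}=\ZZ_p[[\pi]][p^{-1}]$, the ring of power series with bounded coefficients, not $\QQ_p[[\pi]]$; the inverse $q^{-1}=p^{-1}(1+u)^{-1}$ with $u=(q-p)/p$ has unbounded denominators, so it does not lie in $\BB^+_{\QQ_p}$ (equivalently, $q=\Phi_1(1+\pi)$ vanishes at $\pi=\zeta_p-1$, a point of the open unit disc, so it cannot be invertible in $\BB^+_{\QQ_p}$ or even in $\BB^+_{\rig,\QQ_p}$). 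Hence $(\pi)\neq(\vp(\pi))$ as ideals, and knowing only $g(n_i)-n_i\in\pi\NN(V)$ gives the congruence mod $\pi$, which is strictly weaker than the asserted congruence mod $\vp(\pi)$. This weakening is fatal for the intended application: the lemma is combined with Lemma~\ref{divide} precisely to compare values at characters of conductor $p$, i.e.\ evaluations at $\pi=\zeta_p-1$, which a congruence mod $\pi$ alone does not control.

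The missing idea (and the content of the paper's one-line proof) is that the elements the group ring acts on here are in the image of $\vp$: in the situation where the lemma is applied, ``$n_i$'' stands for $\vp(\pi^{1-k}n_i\otimes e_{k-1})$, the vectors entering the Iwasawa transform basis $(1+\pi)\vp(\pi^{1-k}n_i\otimes e_{k-1})$. Since $\vp$ commutes with $G_\infty$ and the $G_\infty$-action on $\NN$ is trivial mod $\pi$, one has $(1-g)\vp(x)=\vp\big((1-g)x\big)\in\vp(\pi\NN)=\vp(\pi)\vp(\NN)$, so the action on $\vp(\NN)$ is trivial mod $\vp(\pi)$; feeding this into your identity $g(fn_i)-(gf)n_i=g(f)\big(g(n_i)-n_i\big)$ (with $n_i$ replaced by these $\vp$-images) and passing to limits as you did gives the stated congruence. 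So keep your reduction to $z=g$, but replace the false ``$\pi\NN=\vp(\pi)\NN$'' step by the observation that $\vp$ upgrades $\pi$-divisibility to $\vp(\pi)$-divisibility on the relevant vectors.
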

   \begin{proof}
    It follows from the fact that if $g\in G_\infty$, $g(\vp\NN(V))\subset\vp(\pi)\NN(V)$ for any $V$.
   \end{proof}

   \begin{corollary}\label{sameprop}
    Proposition~\ref{nonzero} (and hence Corollary~\ref{notzero}) still hold after replacing $L_{p,i}$ by $\tilde{L}_{p,i}$ for $i=1,2$.
   \end{corollary}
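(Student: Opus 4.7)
\textbf{Proof plan for Corollary~\ref{sameprop}.} The strategy is to show that the two $p$-adic $L$-functions are congruent modulo $\vp(\pi)$ after identifying $\Lambda_E(G_\infty)$ with $(E\otimes\AA^+_{\Qp})^{\psi=0}$ via the Mellin transform, and then invoke Lemma~\ref{divide} to deduce that they agree when evaluated at any character of conductor dividing $p$. The target congruence is
\[ L_{p,i} \equiv \mathfrak{M}(\tilde{L}_{p,i}) \pmod{\vp(\pi)} \qquad\text{in }(E\otimes\BB^+_{\Qp})^{\psi=0}, \]
and this is essentially a formal consequence of Lemma~\ref{cong} together with the definitions of $\rmCol_i$ and $\uCol_i$.

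First, I would write out $(1-\vp)(\kato)$ in two ways. By equation~\eqref{1-vp} together with $m=k-1$,
\[ (1-\vp)(\kato) = L_{p,1}\cdot \vp(\pi^{1-k}n_1\otimes e_{k-1}) + L_{p,2}\cdot \vp(\pi^{1-k}n_2\otimes e_{k-1}), \]
while by the very definition of the Coleman maps $\uCol_i$ (using that the $(1+\pi)\vp(\pi^{1-k}n_i\otimes e_{k-1})$ form a $\Lambda_E(G_\infty)$-basis of $(\vp^*\NN(V_{\bar f}(k-1)))^{\psi=0}$),
\[ (1-\vp)(\kato) = \tilde{L}_{p,1}\cdot[(1+\pi)\vp(\pi^{1-k}n_1\otimes e_{k-1})] + \tilde{L}_{p,2}\cdot[(1+\pi)\vp(\pi^{1-k}n_2\otimes e_{k-1})], \]
with the dot now denoting the $\Lambda_E(G_\infty)$-module action. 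Lemma~\ref{cong}, applied with $f=(1+\pi)$ and the basis element $\vp(\pi^{1-k}n_i\otimes e_{k-1})\in \vp\NN(V_{\bar f}(k-1))$, lets me push the $\Lambda$-action past the basis element: the $\tilde{L}_{p,i}$-contribution is congruent to $(\tilde{L}_{p,i}\cdot(1+\pi))\cdot \vp(\pi^{1-k}n_i\otimes e_{k-1}) = \mathfrak{M}(\tilde{L}_{p,i})\cdot \vp(\pi^{1-k}n_i\otimes e_{k-1})\pmod{\vp(\pi)}$. Comparing coefficients in the two expressions for $(1-\vp)(\kato)$ with respect to the basis $\{\vp(\pi^{1-k}n_i\otimes e_{k-1})\}$ yields the claimed congruence.

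Second, I would extend Lemma~\ref{divide} to the trivial character: for trivial $\eta$, $\eta(\mathfrak{M}^{-1}(A))$ is nothing but the specialization of $A$ at $\pi=0$, and $\vp(\pi)|_{\pi=0}=0$, so $\eta(\vp(\pi)A)=0$. Combined with the original Lemma~\ref{divide} for primitive characters of conductor $p$, the congruence $L_{p,i}\equiv\mathfrak{M}(\tilde{L}_{p,i})\pmod{\vp(\pi)}$ gives $\eta(L_{p,i})=\eta(\tilde{L}_{p,i})$ for every character $\eta$ of conductor dividing $p$. Plugging this into the formulas of Proposition~\ref{nonzero} gives the interpolation properties of $\tilde{L}_{p,1},\tilde{L}_{p,2}$. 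For the non-vanishing statement from Corollary~\ref{notzero}: if $k\ge 3$ and $\eta$ is a character of $\Delta$, then $\eta(\tilde{L}_{p,1})=\eta(L_{p,1})$ is non-zero by the $\eta^{-1}$-twisted $L$-value $L(f_{\eta^{-1}},1)$ (using the Mazur–Shimura non-vanishing result cited in the proof of Corollary~\ref{notzero}), so $\tilde{L}_{p,1}^\eta\ne 0$.

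The main obstacle is verifying that Lemma~\ref{cong} genuinely applies to the basis element $\vp(\pi^{1-k}n_i\otimes e_{k-1})$ of the twisted Wach module $\NN(V_{\bar f}(k-1))$, rather than to a basis element of $\NN(V_{\bar f})$ itself. This requires the statement $(g-1)(\vp\NN(V_{\bar f}(k-1)))\subset \vp(\pi)\NN(V_{\bar f}(k-1))$ for all $g\in G_\infty$, which is immediate from the Wach module condition $(g-1)\NN(V_{\bar f}(k-1))\subset \pi\NN(V_{\bar f}(k-1))$ applied inside $\vp$. Once this is in place, the rest of the argument is purely formal, and the entire corollary follows by transport of the proofs of Proposition~\ref{nonzero} and Corollary~\ref{notzero} through the congruence established above.
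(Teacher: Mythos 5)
Your argument is correct and is essentially the paper's own proof: both compare the two expressions for $(1-\vp)(\kato)$ with respect to the basis $\vp(\pi^{1-k}n_i\otimes e_{k-1})$, use Lemma~\ref{cong} to obtain $L_{p,i}\equiv\mathfrak{M}(\tilde{L}_{p,i})\bmod\vp(\pi)$, and conclude via Lemma~\ref{divide}, with your explicit treatment of the trivial character being a point the paper leaves implicit (it is covered by Theorem~\ref{david} with $n=1$). The only nitpick is that your auxiliary containment should read $(g-1)\vp\big(\NN(V_{\bar f}(k-1))\big)\subset\vp(\pi)\,\vp^*\NN(V_{\bar f}(k-1))$ rather than $\vp(\pi)\NN(V_{\bar f}(k-1))$, which is exactly what the coefficient comparison in the free module $\vp^*\NN$ requires.
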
 
   \begin{proof}
    By definitions, we have
    \[ (1-\vp)(\kato)=\begin{pmatrix}L_{p,1}&L_{p,2}\end{pmatrix}M\begin{pmatrix}\bar{\nu}_1\\\bar{\nu}_2\end{pmatrix}\otimes t^{1-k}e_{k-1}=\begin{pmatrix}\tilde{L}_{p,1}&\tilde{L}_{p,2}\end{pmatrix}\cdot\begin{pmatrix}(1+\pi)n_1\\ (1+\pi)n_2\end{pmatrix} \]
    where $\kato$ is the localization of the Kato zeta element and $M$ is as defined in \eqref{M}. This implies
    \[ \begin{pmatrix}L_{p,1}&L_{p,2}\end{pmatrix}\begin{pmatrix}n_1\\ n_2\end{pmatrix}=\begin{pmatrix}\tilde{L}_{p,1}&\tilde{L}_{p,2}\end{pmatrix}\cdot \begin{pmatrix}(1+\pi)n_1\\ (1+\pi)n_2\end{pmatrix}.\]
    Therefore, by Lemma~\ref{cong}, we have $L_{p,i}\equiv\mathfrak{M}\big(\tilde{L}_{p,i}\big)\mod\vp(\pi)$ and hence $\mathfrak{M}^{-1}(L_{p,i})$ agrees with $\tilde{L}_{p,i}$ at a character modulo $p$ by Lemma~\ref{divide}.
   \end{proof}


  \subsubsection{Infinitude of zeros}
   Let $\eta$ be a character of $\Delta$. Mazur proved that at least one of $\tilde{L}_{p,\alpha}$ and $\tilde{L}_{p,\beta}$ has infinitely many zeros if $v_p(\alpha)\ne v_p(\beta)$. This has been generalized to the case $a_p=0$ (\cite[Theorem~3.5]{pollack03}). We show that our decomposition of $\tilde{L}_{p,\alpha}$ and $\tilde{L}_{p,\beta}$ can be used to give an alternative proof to Mazur's result as well as generalize Pollack's result to the case $a_p\ne0$.

   \begin{proposition}
    If $f$ is a modular form as given in the beginning of Section~\ref{Colemanmf} and $\eta$ a character of $\Delta$, then either $\tilde{L}_{p,\alpha}^{\eta}$ or $\tilde{L}_{p,\beta}^\eta$ has infinitely many zeros.
   \end{proposition}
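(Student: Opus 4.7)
The plan is to argue by contradiction: suppose both $\tilde L_{p,\alpha}^\eta$ and $\tilde L_{p,\beta}^\eta$ have only finitely many zeros in the open unit disc.

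The first step is to exploit the decomposition of Theorem~\ref{Ldecomposition} by taking two $E(\alpha)$-linear combinations that eliminate one column of $\underline{\mathcal{M}}$ at a time. A direct computation gives
\[
\tilde L_{p,\alpha}^\eta - \tilde L_{p,\beta}^\eta = (\alpha^{-1}-\beta^{-1})\bigl(\underline m_{12}\tilde L_{p,1}^\eta + \underline m_{22}\tilde L_{p,2}^\eta\bigr),
\]
\[
\beta^{-1}\tilde L_{p,\alpha}^\eta - \alpha^{-1}\tilde L_{p,\beta}^\eta = -(\beta^{-1}-\alpha^{-1})\bigl(\underline m_{11}\tilde L_{p,1}^\eta + \underline m_{21}\tilde L_{p,2}^\eta\bigr).
\]
Since $\alpha\neq\beta$ in the supersingular regime, both right-hand sides inherit the finite-zero property from our hypothesis, and by Corollary~\ref{commutativediagram} each right-hand side is, up to a nonzero scalar, the Perrin-Riou pairing $\mathcal L_{1,\bar\nu_i\otimes(1+\pi)}(\kato)$ for $i=1,2$.

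The second step is to derive a contradiction from this finite-zero conclusion. The ingredients are that $\tilde L_{p,1}^\eta, \tilde L_{p,2}^\eta$ lie in $\Lambda_E(\Gamma)$ and hence are bounded, while the coefficients $\underline m_{ij}$ are unbounded elements of $\HG$, inherited from the factor $(t/\pi q)^{k-1}$ appearing in the matrix $M = (t/\pi q)^{k-1} P^T M'^{-1}$ (whose determinant is $(t/\vp(\pi))^{k-1}$ up to a unit, and so has infinitely many zeros in $\HG$ coming from roots of unity). Meanwhile $\tilde L_{p,\alpha}^\eta$ and $\tilde L_{p,\beta}^\eta$ are forced, by their interpolation formulas at $\chi^j\eta$ (conductor $p^n$), to have slope exactly $v_p(\alpha)$ and $v_p(\beta)$ respectively, both strictly positive. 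A Weierstrass-preparation / Newton-polygon argument in $\HG$ shows that an element with only finitely many zeros in the open unit disc must have growth rate zero, contradicting the positive slopes of $\tilde L_{p,\alpha}^\eta, \tilde L_{p,\beta}^\eta$ unless they vanish identically; but they do not, by Proposition~\ref{nonzero} and Corollary~\ref{sameprop}, which give non-vanishing via the nonzero critical $L$-values $L(f_{\eta^{-1}},1)$.

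The main obstacle will be making the slope-versus-zeros comparison rigorous in $\HG$, since an element can have positive growth but no zeros (e.g., rational functions with poles on the boundary). An alternative, directly modelled on Pollack's argument in the $a_p=0$ case \cite[Theorem~3.5]{pollack03}, is to evaluate the right-hand sides of the two displayed identities at the family $\chi^j\eta$ and use the explicit interpolation formula for $\mathcal L_{1,\bar\nu_i\otimes(1+\pi)}(\kato)$ (together with Proposition~\ref{PRvalues}): this forces $\underline m_{i1}\tilde L_{p,1}^\eta+\underline m_{i2}\tilde L_{p,2}^\eta$ to grow like $p^{n v_p(\alpha)}$, which is incompatible with having only finitely many zeros once the bounded factors $\tilde L_{p,i}^\eta$ are controlled by Corollary~\ref{sameprop}.
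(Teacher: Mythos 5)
Your overall strategy (contradiction, the decomposition of Theorem~\ref{Ldecomposition}, a growth-versus-zeros comparison, and nonvanishing via Proposition~\ref{nonzero}/Corollary~\ref{sameprop}) is the same family of argument as the paper's, but two of your key steps do not hold as stated. First, the finite-zero property is \emph{not} inherited by linear combinations: if each of $\tilde{L}_{p,\alpha}^\eta$ and $\tilde{L}_{p,\beta}^\eta$ has finitely many zeros, nothing prevents a combination such as $\tilde{L}_{p,\alpha}^\eta-\tilde{L}_{p,\beta}^\eta$ from having infinitely many. The correct intermediate move (and the one the paper makes, via \cite[Lemma~3.2]{pollack03}) is to convert the hypothesis into \emph{boundedness}: finitely many zeros together with the admissible growth of $\tilde{L}_{p,\alpha}^\eta,\tilde{L}_{p,\beta}^\eta$ forces both to be $O(1)$, and boundedness does pass to linear combinations. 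Second, your claim that the interpolation formulas force $\tilde{L}_{p,\alpha}^\eta$ and $\tilde{L}_{p,\beta}^\eta$ individually to have growth exactly $v_p(\alpha)$ and $v_p(\beta)$ is unjustified: interpolation plus Shimura's nonvanishing gives nonzero values at finite-order characters, but says nothing about the $p$-adic valuations of the algebraic $L$-values, which could grow and keep the function bounded. If this step were available, each of the two $p$-adic $L$-functions would separately have infinitely many zeros, which is strictly stronger than the proposition and is only obtained in the paper under the extra hypothesis $\alpha\notin E(\eta)$ (by Galois conjugation, in the corollary that follows). The same objection applies to your closing alternative: "grows like $p^{nv_p(\alpha)}$" cannot be extracted from the interpolation data alone, and unboundedness of the $\underline{m}_{ij}$ together with boundedness of $\tilde{L}_{p,1}^\eta,\tilde{L}_{p,2}^\eta$ gives no contradiction by itself, since cancellation can make such a combination bounded --- indeed $\tilde{L}_{p,\alpha}^\eta$ itself is exactly such a combination, and its boundedness is precisely what we cannot rule out individually.

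What closes the argument in the paper is a quantitative comparison you have not set up: by \cite[Lemmas~3.3.5 and 3.3.6]{bergerbreuil10} the entries $m_{ij}$ are $O(\log_p^m)$ with $m=\max\{v_p(\alpha),v_p(\beta)\}<k-1$, so under the (bounded) contradiction hypothesis the combination $(\beta^{-1}m_{22}+m_{21})\mathfrak{M}(\tilde{L}_{p,\alpha})-(\alpha^{-1}m_{22}+m_{21})\mathfrak{M}(\tilde{L}_{p,\beta})$ is $O(\log_p^m)$; on the other hand, by Cramer's rule as in \eqref{l1} this same combination equals $(\beta^{-1}-\alpha^{-1})\det(M)\,L_{p,1}$, where $\det(M)\sim(t/\pi q)^{k-1}\sim\log_p^{k-1}$ and $L_{p,1}$ is a bounded power series. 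Comparing growth $m<k-1$ against $k-1$ forces $L_{p,1}^\eta=0$, contradicting Corollary~\ref{notzero}. So you need (i) the Pollack lemma to pass from "finitely many zeros'' to "bounded'' for $\tilde{L}_{p,\alpha}^\eta,\tilde{L}_{p,\beta}^\eta$, (ii) the Berger--Breuil \emph{upper} bound on the $m_{ij}$, and (iii) the determinant identity solving for $L_{p,1}$, none of which appear in your write-up; the forward decomposition \eqref{supersingulardecomposition} alone is not enough.
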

   \begin{proof}
    Assume not, then \cite[Lemma~3.2]{pollack03} implies that $\tilde{L}_{p,\alpha}^\eta$ and $\tilde{L}_{p,\beta}^\eta$ are $O(1)$.

    By \cite[Lemmas~3.3.5 and 3.3.6]{bergerbreuil10}, the entries of $M$ are $O(\log_p^m)$ where $m=\max\{v_p(\alpha),v_p(\beta)\}<k-1$. Therefore, with the notation above, $m_{ij}=O(\log_p^m)$ for $i,j\in\{1,2\}$. In particular, the $\eta$-component of
    \[(\beta^{-1}m_{22}+ m_{21})\tilde{L}_{p,\alpha}-(\alpha^{-1}m_{22}+ m_{21})\tilde{L}_{p,\beta}\]
    is $O(\log_p)^{m}$. By \eqref{l1}, the quantity above is divisible by $(t/\pi q)^{k-1}\sim\log_p^{k-1}$ which forces $L_{p,1}^\eta=0$ contradicting Corollary~\ref{nonzero}.
   \end{proof}

   As with \cite[Theorem~3.5]{pollack03}, we have:
   
   \begin{corollary}
    If $\alpha\notin E(\eta)$, then both $\tilde{L}_{p,\alpha}^\eta$ and $\tilde{L}_{p,\beta}^\eta$ have infinitely many zeros.
   \end{corollary}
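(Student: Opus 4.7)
The plan is to exhibit a Galois symmetry exchanging $\tilde{L}_{p,\alpha}^\eta$ and $\tilde{L}_{p,\beta}^\eta$, and use the previous proposition.

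First I observe that $\alpha$ and $\beta$ are the two roots of the quadratic $X^2 - a_p X + p^{k-1}$ with coefficients in $E$. Since $\alpha + \beta = a_p \in E \subseteq E(\eta)$, if $\alpha \notin E(\eta)$ then also $\beta \notin E(\eta)$, and $L := E(\eta,\alpha) = E(\eta,\beta)$ is a quadratic extension of $E(\eta)$. Let $\sigma \in \Gal(L/E(\eta))$ be the nontrivial element; by construction it swaps $\alpha$ and $\beta$.

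Next I would verify that $\sigma\bigl(\tilde{L}_{p,\alpha}^\eta\bigr) = \tilde{L}_{p,\beta}^\eta$. This follows from inspecting the definitions: the eigenvectors $\eta_\alpha = \alpha^{-1}\nu_1 - \nu_2$ and $\eta_\beta = \beta^{-1}\nu_1 - \nu_2$ used in Section~\ref{decomposition} are interchanged by $\sigma$, so $\sigma$ interchanges $\tilde{L}_{p,\alpha}$ and $\tilde{L}_{p,\beta}$ as elements of $L \otimes_E \calH(G_\infty)$. Since the values of $\eta$ lie in $E(\eta)$ and are fixed by $\sigma$, the Galois action $\sigma$ commutes with the $\eta$-isotypical projector $m \mapsto \frac{1}{|\Delta|}\sum_{\delta\in\Delta}\eta(\delta)^{-1}\delta(m)$, giving the claimed identity $\sigma(\tilde{L}_{p,\alpha}^\eta) = \tilde{L}_{p,\beta}^\eta$.

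Finally I would conclude by transport of zeros: writing $\tilde{L}_{p,\alpha}^\eta = g(\gamma - 1)$ for some power series $g$ over $L$, the element $\sigma(\tilde{L}_{p,\alpha}^\eta) = \tilde{L}_{p,\beta}^\eta$ is given by $g^\sigma(\gamma-1)$, where $g^\sigma$ is obtained by applying $\sigma$ to the coefficients. Hence if $x \in \overline{\QQ}_p$ is a zero of $g$, then $\sigma(x)$ is a zero of $g^\sigma$; this gives a bijection between the zero sets of $\tilde{L}_{p,\alpha}^\eta$ and $\tilde{L}_{p,\beta}^\eta$, so in particular they have the same cardinality. By the previous proposition at least one of them has infinitely many zeros, and therefore so does the other.

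The only potential obstacle is checking that the Galois action really commutes with all of the constructions (isotypical projection, the Mellin transform, and the decomposition in Theorem~\ref{Ldecomposition}); but these are all defined over $E$ or over $E(\eta)$, both of which are pointwise fixed by $\sigma$, so this verification is routine.
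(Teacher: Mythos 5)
Your argument is correct and is essentially the proof the paper has in mind: the corollary is stated with a reference to Pollack's Theorem 3.5, whose proof is exactly this Galois-conjugation argument (the nontrivial automorphism of $E(\eta,\alpha)/E(\eta)$ swaps $\alpha$ and $\beta$, hence swaps $\eta_\alpha$ and $\eta_\beta$ and therefore $\tilde{L}_{p,\alpha}^\eta$ and $\tilde{L}_{p,\beta}^\eta$, and it transports zeros bijectively since it acts isometrically on $\overline{\QQ}_p$). Combined with the preceding proposition, this gives the statement, just as you wrote.
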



  \subsection{Good ordinary modular forms}\label{ordinary}

   We now assume that $f$ is good ordinary at $p$. We will pick different bases from the supersingular case to define our Coleman maps. Let ${\alpha}$ be the root of $X^2-{a}_pX+p^{k-1}$ which is a $p$-adic unit and ${\beta}$ is the one with $p$-adic valuation $k-1$. By a result of Deligne and Mazur-Wiles (see for example \cite[Section 17]{kato04} for an exposition), there exists a 1-dimensional $G_{\Qp}$-subrepresentation $V_{\bar{f}}'$ in $V_{\bar{f}}$. Moreover, $V_{\bar{f}}'$ has Hodge-Tate weight 0 and $\Dcris(V_{\bar{f}}')$ can be identified with the ${\alpha}$-eigenspace of $\vp$ in $\Dcris(V_{\bar{f}})$. We fix a nonzero element $\bar{\nu}_1\in\Dcris(V_{\bar{f}}')$. Then, by \eqref{elementary}, $n_1 = \bar{\nu}_1$ is a basis of $\NN(V_{\bar{f}}')$ over $E\otimes\BB_{\Qp}^+$. Let $\bar{\nu}_2$ be a nonzero $\beta$-eigenvector of $\vp$ in $\Dcris(V_{\bar{f}})$. 

   \begin{proposition}
    We may find $n_2 \in \NN(V_{\bar{f}})$ lifting $\bar{\nu}_2$ such that $n_1, n_2$ is an $E\otimes\BB_{\Qp}^+$-basis of $\NN(V_{\bar{f}})$, and $(1 + \pi)\vp(\pi^{1-k} n_1 \otimes e_{k-1}), (1 + \pi)\vp(\pi^{1-k} n_2 \otimes e_{k-1})$ is a $\Lambda_E(G_\infty)$-basis of $(\vp^* \NN(V_{\bar{f}}(k-1)))^{\psi = 0}$. 
   \end{proposition}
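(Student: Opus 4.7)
The plan is to apply Lemma~\ref{choice} to obtain a provisional basis of $\NN(V_{\bar{f}})$ lifting $(\bar{\nu}_1, \bar{\nu}_2)$, then to replace the first basis vector by the canonical element $\bar{\nu}_1 \in \NN(V_{\bar{f}}')$ via an upper-triangular change of variables, and finally to verify that the $\Lambda_E(G_\infty)$-basis property is preserved by reducing modulo $(1-\gamma)$ and applying Nakayama's lemma.

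First I would choose a $G_{\QQ_p}$-stable $\calO_E$-lattice $T_{\bar{f}} \subset V_{\bar{f}}$ compatible with $(\bar{\nu}_1, \bar{\nu}_2)$ in the sense of Remark~\ref{integrality}; this is possible since $\bar{\nu}_1$ spans the rank-one Wach submodule $\NN(V_{\bar{f}}')$, so one can pick $T_{\bar{f}}$ so that the reduction of $\NN(T_{\bar{f}})$ is $\calO_E\bar{\nu}_1 \oplus \calO_E\bar{\nu}_2$. Lemma~\ref{choice} then produces an $\calO_E \otimes \AA_{\QQ_p}^+$-basis $(m_1, m_2)$ of $\NN(T_{\bar{f}})$ lifting $(\bar{\nu}_1, \bar{\nu}_2)$ such that $Y_i := (1+\pi)\vp(\pi^{1-k} m_i \otimes e_{k-1})$ is a $\Lambda_{\calO_E}(G_\infty)$-basis of $(\vp^*\NN(T_{\bar{f}}(k-1)))^{\psi=0}$. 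Because $\bar{\nu}_1 \in \NN(V_{\bar{f}}') \subset \NN(V_{\bar{f}})$, one can expand $\bar{\nu}_1 = A m_1 + B m_2$ with $A, B \in \calO_E \otimes \AA_{\QQ_p}^+$; reducing modulo $\pi$ forces $A \equiv 1$ and $B \equiv 0$, so $A$ is a unit. Setting $n_2 := m_2$ therefore yields the required basis $(\bar{\nu}_1, n_2)$ of $\NN(V_{\bar{f}})$ lifting $(\bar{\nu}_1, \bar{\nu}_2)$.

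For the remaining claim, put $Z_1 := (1+\pi)\vp(\pi^{1-k}\bar{\nu}_1 \otimes e_{k-1}) = \vp(A) Y_1 + \vp(B) Y_2$ and $Z_2 := Y_2$. Since $\vp(A) - 1$ and $\vp(B)$ both lie in $\vp(\pi)(\calO_E \otimes \AA_{\QQ_p}^+)$, we find $Z_1 - Y_1 \in \vp(\pi) \cdot \vp^*\NN(T_{\bar{f}}(k-1))$; combining with the projection formula $\psi(\vp(\pi)z) = \pi\psi(z)$, which shows $\pi$ is not a zero-divisor on $\psi=0$ elements, gives $Z_1 - Y_1 \in \vp(\pi)(\vp^*\NN(T_{\bar{f}}(k-1)))^{\psi=0}$. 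From the proof of Theorem~\ref{Lambda} (specifically the $\ZZ_p$-isomorphism $\Lambda(G_\infty)^{\oplus d}/p_k\Lambda(G_\infty)^{\oplus d} \cong (\vp^*\NN(T))^{\psi=0}/\vp(\pi)^k(\vp^*\NN(T))^{\psi=0}$), the $\Lambda_{\calO_E}(G_\infty)$-isomorphism identifies $(1-\gamma)\Lambda_{\calO_E}(G_\infty)^{\oplus 2}$ with $\vp(\pi)(\vp^*\NN(T_{\bar{f}}(k-1)))^{\psi=0}$. Thus $(Z_1, Z_2)$ and $(Y_1, Y_2)$ share the same image in the quotient $\Lambda_{\calO_E}(G_\infty)^{\oplus 2}/(1-\gamma)\Lambda_{\calO_E}(G_\infty)^{\oplus 2}$, and since $(Y_1, Y_2)$ is a basis, Nakayama's lemma---applied one $\Delta$-isotypic component at a time over the complete local ring $\Lambda_{\calO_E}(\Gamma) = \calO_E[[\gamma-1]]$---shows that $(Z_1, Z_2)$ is also a $\Lambda_{\calO_E}(G_\infty)$-basis; tensoring with $E$ over $\calO_E$ then gives the desired $\Lambda_E(G_\infty)$-basis property.

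The main subtlety is that the change of basis from $(Y_i)$ to $(Z_i)$ inside $\vp^*\NN(V_{\bar{f}}(k-1))$ involves multiplication by $\vp(A) \in E \otimes \vp(\BB_{\QQ_p}^+)$, which is \emph{not} $\Lambda_E(G_\infty)$-linear since it fails to commute with the $G_\infty$-action on the basis vectors; however, this obstruction disappears upon reduction modulo $(1-\gamma)$, thanks to the compatibility $\vp(\pi)(\vp^*\NN)^{\psi=0} \leftrightarrow (1-\gamma)\Lambda$ already exploited in Theorem~\ref{Lambda}, reducing the verification to the Nakayama argument above.
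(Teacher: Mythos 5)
There is a genuine gap at the key step where you replace $m_1$ by $\bar{\nu}_1$ and claim that $(\bar{\nu}_1, m_2)$ is still a basis of $\NN(V_{\bar{f}})$. Writing $\bar{\nu}_1 = Am_1 + Bm_2$, what you need is that $A$ is a unit, and your justification (``$A\equiv 1 \bmod \pi$, hence $A$ is a unit'') is only valid if $A,B$ lie in the local ring $\calO_E\otimes\AA^+_{\Qp}$. But Lemma~\ref{choice} is a statement over $E\otimes\BB^+_{\Qp}$, where constant term $1$ does \emph{not} imply invertibility (e.g.\ $1-\pi/p$ is a non-unit of $\BB^+_{\Qp}$ with constant term $1$), so the rational form of your argument fails. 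The integrality you invoke rests on the assertion that one can choose a $G_{\Qp}$-stable lattice $T_{\bar{f}}$ with $\NN(T_{\bar{f}})/\pi\NN(T_{\bar{f}})=\calO_E\bar{\nu}_1\oplus\calO_E\bar{\nu}_2$. This is not what Remark~\ref{integrality} provides: there the existence of such a compatible lattice is a \emph{hypothesis}, verified in the paper only for the Berger--Li--Zhu basis in the supersingular case under assumption (C); it does not follow from the fact that $\bar{\nu}_1$ spans $\NN(V_{\bar{f}}')$, since which $\calO_E$-lattices of $\Dcris(V_{\bar{f}})$ arise as reductions of integral Wach modules is a nontrivial question, and even granting such a lattice you would still have to show that $\bar{\nu}_1$ itself (not merely its class mod $\pi$) lies in $\NN(T_{\bar{f}})$ with unit first coordinate. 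In effect you are claiming that \emph{any} lift $m_2$ of $\bar{\nu}_2$ produced by Lemma~\ref{choice} already works, and that is essentially the content of the proposition; it is exactly what needs proof. Note also that your Nakayama step is only valid integrally: over $\Lambda_E(\Gamma)$ the ideal $(\gamma-1)$ is not contained in the Jacobson radical, so that part of the argument likewise depends on the same unestablished integral structure.

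For comparison, the paper avoids this issue entirely: it uses the exact sequence $0\to \NN(V_{\bar{f}}')\to\NN(V_{\bar{f}})\to\NN(V_{\bar{f}}/V_{\bar{f}}')\to 0$, the induced short exact sequence on $(\vp^*(\cdot)(k-1))^{\psi=0}$, the easy rank-one statement that $(1+\pi)\vp(\pi^{1-k}n_1\otimes e_{k-1})$ is a basis of the sub, and then Theorem~\ref{Lambda} applied to the \emph{quotient} Wach module to construct a suitable lift $n_2$ — rather than asserting that an arbitrary lift works. If you want to salvage your approach, you would need either to prove the compatible-lattice claim or to argue directly that the image of $m_2$ generates $\NN(V_{\bar{f}})/\NN(V_{\bar{f}}')$, which is where the real difficulty lies.
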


   \begin{proof} 
    Let $N = \NN(V_{\bar{f}})$ and $N' = \NN(V_{\bar{f}}')$. Then the quotient $N'' = N / N'$ may be identified with the Wach module of the quotient $V_{\bar{f}} / V_{\bar{f}}'$, and we have an exact sequence
    \[ 0 \rTo (\vp^* N'(k-1))^{\psi = 0} \rTo (\vp^* N(k-1))^{\psi = 0} \rTo (\vp^* N''(k-1))^{\psi = 0} \rTo 0.\]
    It is clear that $(1 + \pi)\vp(n_1 \otimes \pi^{1-k} e_{k-1})$ is a basis of $(\vp^* N'(k-1))^{\psi = 0}$, and the result now follows on applying theorem \ref{Lambda} to $N''$. 
   \end{proof}

   Hence the change of basis matrix $M'$, with
   \[ \begin{pmatrix}\bar{\nu}_1\\ \bar{\nu}_2\end{pmatrix}=M'\begin{pmatrix}n_1\\ n_2\end{pmatrix}, \]
   is lower triangular, with $1,(t/\pi)^{k-1}$ on the diagonal. With respect to this basis, the Coleman maps given in Section~\ref{positive} enable us to define:

   \begin{definition}
    For $i=1,2$, define $L_{p,i}\in (E\otimes\BB_{\Qp}^+)^{\psi=0}$ to be the image of the localization of the Kato zeta element (on using the identification as given by \eqref{fontaineisom}) under $\rmCol_i$. Similarly, define $\tilde{L}_{p,i}$ to be the image of the localization of the Kato zeta element under $\uCol_i$.
   \end{definition}

   Since $\vp(n_1)={\alpha}n_1$, the matrix $P$ as defined in Section \ref{positive} is upper triangular and there exists a unit $u$ in $E\otimes\BB_{\Qp}^+$ such that
   \[
   P=\begin{pmatrix}\alpha&*\\ 0& uq^{k-1} \end{pmatrix}.
   \]
   Therefore, \eqref{xoverDcris} becomes
   \begin{equation}
    (1-\vp)(x)=\begin{pmatrix}\rmCol_1(x)&\rmCol_2(x)\end{pmatrix}\begin{pmatrix}{\alpha}(\frac{t}{\pi q})^{k-1}&0\\ * &u\end{pmatrix}\begin{pmatrix}\bar{\nu}_1\\\bar{\nu}_2\end{pmatrix}\otimes t^{1-k}e_{k-1}.\label{colandphi}
   \end{equation}
   
   \begin{lemma}
    Let $\nu_1$, $\nu_2$ be a basis of $\Dcris(V_f)$ such that $\vp(\nu_1)=\alpha\nu_1$ and $\vp(\nu_2)=\beta\nu_2$. Then 
    \[
    [\nu_i\otimes t^{-1}e_1,\bar{\nu}_i\otimes t^{1-k}e_{k-1}]=0
    \]
    for $i=1,2$ where $[\hspace{1ex},\hspace{1ex}]$ is the pairing defined in \eqref{crispairing}.
   \end{lemma}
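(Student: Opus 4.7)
The pairing $[\,\cdot\,,\,\cdot\,]$ comes from the natural duality of crystalline $G_{\QQ_p}$-representations $V_f(1) \otimes V_{\bar f}(k-1) \to E(1)$, and hence is a morphism of filtered $\vp$-modules. In particular, it is Frobenius-equivariant: for all $x \in \Dcris(V_f(1))$ and $y \in \Dcris(V_{\bar f}(k-1))$ we have $[\vp(x),\vp(y)] = \vp([x,y])$. The plan is to exploit this compatibility together with the fact that $\nu_i$ and $\bar\nu_i$ are both eigenvectors for $\vp$ with the \emph{same} eigenvalue (recall that, since $\epsilon(p)=1$, $V_f$ and $V_{\bar f}$ have Hecke polynomial $X^2 - a_p X + p^{k-1}$, so the Frobenius eigenvalues $\alpha,\beta$ are shared by $\Dcris(V_f)$ and $\Dcris(V_{\bar f})$).

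Fix $i\in\{1,2\}$ and let $\lambda_i \in \{\alpha,\beta\}$ denote the common eigenvalue, so $\vp(\nu_i)=\lambda_i\nu_i$ and $\vp(\bar\nu_i)=\lambda_i\bar\nu_i$. Using $\vp(t^{-1}e_1) = p^{-1}(t^{-1}e_1)$ and $\vp(t^{1-k}e_{k-1}) = p^{1-k}(t^{1-k}e_{k-1})$, I compute
\[ \vp(\nu_i\otimes t^{-1}e_1) = \lambda_i p^{-1}\,(\nu_i\otimes t^{-1}e_1), \qquad \vp(\bar\nu_i\otimes t^{1-k}e_{k-1}) = \lambda_i p^{1-k}\,(\bar\nu_i\otimes t^{1-k}e_{k-1}).\]
Writing $c = [\nu_i\otimes t^{-1}e_1,\bar\nu_i\otimes t^{1-k}e_{k-1}] \in \Dcris(E(1)) = E\cdot t^{-1}e_1$, the Frobenius-equivariance of the pairing, together with $\vp(c) = p^{-1}c$ on $\Dcris(E(1))$, yields
\[ \lambda_i^2\, p^{-k}\, c \;=\; p^{-1}\, c, \qquad\text{i.e.}\qquad (\lambda_i^2 - p^{k-1})\,c \;=\; 0.\]

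It remains to observe that $\lambda_i^2 \neq p^{k-1}$ in the ordinary situation. Since $\alpha\beta = p^{k-1}$ with $v_p(\alpha)=0$ and $v_p(\beta)=k-1$, we have $v_p(\alpha^2)=0$ and $v_p(\beta^2) = 2(k-1)$, both different from $v_p(p^{k-1}) = k-1$ for $k\ge 2$. Hence $\lambda_i^2 - p^{k-1}$ is a nonzero scalar, forcing $c=0$. This gives the claim for both $i=1$ and $i=2$.

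The only step that requires any care is the Frobenius-equivariance of the pairing, but this is immediate from the functoriality of $\Dcris$ applied to the natural evaluation map $V_f(1)\otimes V_{\bar f}(k-1) \to E(1)$; no serious obstacle is anticipated.
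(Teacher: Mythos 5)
Your proof is correct and is essentially the argument the paper gives: both use the $\vp$-compatibility of the pairing $[\;,\;]$, the eigenvalue computation on the Tate twists, and the fact that $\lambda_i^2\neq p^{k-1}$ (by comparing $p$-adic valuations, since $v_p(\alpha)=0$ and $v_p(\beta)=k-1$) to force the pairing value to vanish. The only cosmetic difference is that the paper phrases it as a contradiction and leaves the case $i=2$ and the valuation check implicit, whereas you spell them out.
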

   \begin{proof}
    Assume $m_1:=[\nu_1\otimes t^{-1}e_1,\bar{\nu}_1\otimes t^{1-k}e_{k-1}]\ne0$. Since $[\hspace{1ex},\hspace{1ex}]$ is compatible with $\vp$, we have
    \begin{eqnarray*}
     \vp[\nu_1\otimes t^{-1}e_1,\bar{\nu}_1\otimes t^{1-k}e_{k-1}]&=&[\vp(\nu_1\otimes t^{-1}e_1),\vp(\bar{\nu}_1\otimes t^{1-k}e_{k-1})]\\
     p^{-1}m_1&=&[\alpha p^{-1}\nu_1\otimes t^{-1}e_1,{\alpha}p^{1-k}\bar{\nu}_1\otimes t^{1-k}e_{k-1}]\\
     p^{k-1}m_1&=&\alpha^2m_1.
    \end{eqnarray*}
    Hence, $\alpha^2=p^{k-1}$, which is a contradiction. The proof for $i=2$ is similar.
   \end{proof}

   As in Section \ref{construction}, we may assume that $[\nu_1,\bar{\nu}_2]=-[\nu_2,\bar{\nu}_1]=1$ and an analogue of Proposition \ref{samemaps} says that
     \[
   \mathfrak{M}\begin{pmatrix}-\LL_{\nu_2\otimes(1+\pi)}\circ h^1_{\Iw}&\LL_{\nu_1\otimes(1+\pi)}\circ h^1_{\Iw}\end{pmatrix}=\begin{pmatrix}\rmCol_1&\rmCol_2\end{pmatrix}\begin{pmatrix}{\alpha}(\frac{t}{\pi q})^{k-1}&0\\ * &u\end{pmatrix}.
   \]
   In particular, if we apply this to the Kato zeta element, we have
   \[
   \begin{pmatrix}-\mathfrak{M}(\tilde{L}_{p,\beta})&\mathfrak{M}(\tilde{L}_{p,\alpha})\end{pmatrix}=\begin{pmatrix}L_{p,1}&L_{p,2}\end{pmatrix}\begin{pmatrix}{\alpha}(\frac{t}{\pi q})^{k-1}&0\\ * &u\end{pmatrix}
   \]
   where $\tilde{L}_{p,\beta}=\LL_{\nu_2}(\kato)$. Similarly, we have
 \begin{equation}\label{orddecomp}
   \begin{pmatrix}-\tilde{L}_{p,\beta}&\tilde{L}_{p,\alpha}\end{pmatrix}=\begin{pmatrix}\tilde{L}_{p,1}&\tilde{L}_{p,2}\end{pmatrix}\begin{pmatrix}{\alpha}\log_{p,k}&0\\ * &\tilde{u}\end{pmatrix}
   \end{equation}
where $\log_{p,k}=\prod_{j=0}^{k-2}\log_p(\chi(\gamma)^{-j}\gamma)/(\chi(\gamma)^{-j}\gamma-1)$ and $\tilde{u}\in\Lambda_E(G_\infty)^\times$.

 Therefore, as in Section~\ref{construction}, we can decompose $\tilde{L}_{p,\beta}$ into a linear combination of $\tilde{L}_{p,1}$ and $\tilde{L}_{p,2}$, whereas $\tilde{L}_{p,\alpha}=\tilde{L}_{p,2}$, up to a unit. We now say something about $\tilde{L}_{p,1}$. When $V_f$ is not locally split at $p$, $\tilde{L}_{p,\beta}$ is conjecturally equal to the critical slope $p$-adic $L$-function constructed in \cite{pollackstevens09}. We itemize this condition since we will need it again later.

\begin{itemize}
    \item \textbf{Assumption (A'):} $V_f$ is not locally split at $p$ and $k\ge3$.
   \end{itemize}

   In this case, \cite[Theorem 16.4 and 16.6]{kato04} imply that $\tilde{L}_{p,\beta}$ has the same interpolating properties as $\tilde{L}_{p,\alpha}$, namely:
   \begin{equation}
    \chi^r\eta(\tilde{L}_{p,\alpha})=\frac{c_{\eta,r}}{\beta^n}L(f_{\eta^{-1}},r+1)\qquad\text{and}\qquad\chi^r\eta(\tilde{L}_{p,\beta})=\frac{c_{\eta,r}}{\beta^n}L(f_{\eta^{-1}},r+1)
    \label{interpolate}
   \end{equation}
   where $\eta$ is a finite character of conduction $p^n>1$, $0\le r\le k-2$ and $c_{\eta,r}$ is some constant independent of $\alpha$ and $\beta$. Note that the values given by \eqref{interpolate} do not determine $\tilde{L}_{p,\beta}$ uniquely. However, they allow us to show that $\tilde{L}_{p,1},L_{p,1}\ne0$.

   \begin{proposition}
    If assumption (A') holds, then $\tilde{L}^\eta_{p,1},L_{p,1}^\eta\ne0$ for any character $\eta$ on $\Delta$.
   \end{proposition}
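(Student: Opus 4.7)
The plan is to argue by contradiction using the decomposition \eqref{orddecomp} combined with the interpolation formula \eqref{interpolate} available under assumption (A').

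Reading \eqref{orddecomp} row-by-row, since $\tilde{u}\in\Lambda_E(G_\infty)^\times$, one obtains $\tilde{L}_{p,2}=\tilde{u}^{-1}\tilde{L}_{p,\alpha}\in\Lambda_E(G_\infty)$ and $-\tilde{L}_{p,\beta}=\alpha\log_{p,k}\tilde{L}_{p,1}+(*)\tilde{L}_{p,2}$. Projecting onto the $\eta$-isotypic component and supposing $\tilde{L}_{p,1}^\eta=0$ yields
\[
\tilde{L}_{p,\beta}^\eta=-(*)^\eta\tilde{u}^{-\eta}\tilde{L}_{p,\alpha}^\eta
\]
as elements of $\HG^\eta$. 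Applying \eqref{interpolate} (read with $\alpha^n$ in the denominator for $\tilde{L}_{p,\alpha}$ and $\beta^n$ for $\tilde{L}_{p,\beta}$, the standard Vishik formulae for the $\alpha$- and $\beta$-stabilisations) together with Rohrlich's non-vanishing theorem produces Dirichlet characters $\eta'$ with $\eta'|_\Delta=\eta$ and conductor $p^n\to\infty$ along which the ratio $\chi^r\eta'(\tilde{L}_{p,\beta})/\chi^r\eta'(\tilde{L}_{p,\alpha})=(\alpha/\beta)^n$ is well-defined and has $p$-adic absolute value $p^{n(k-1)}$, growing exponentially in $n$.

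To close the contradiction one must show that $(*)^\eta\tilde{u}^{-\eta}\in\HG^\eta$ cannot exhibit such exponential growth at wild characters. This is where Perrin-Riou's tempered-distribution filtration $\HG=\bigcup_h\calH_h(G_\infty)$ enters: the bound on the order of growth at characters of conductor $p^n$ of any $\mu\in\calH_h(G_\infty)$ is of the form $p^{nh/(p-1)p^{n-2}}$ up to polynomial factors in $n$, strictly weaker than $p^{n(k-1)}$ whenever $h<k-1$. One checks that the $(2,1)$-entry of $\underline{M}=\mathfrak{M}^{-1}[(1+\pi)M]$ has order strictly less than $k-1$, the $\log_{p,k}$-factor of order exactly $k-1$ being confined to the $(1,1)$-entry $\alpha\log_{p,k}$. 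This furnishes the contradiction and proves $\tilde{L}_{p,1}^\eta\ne 0$.

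For $L_{p,1}^\eta\ne 0$, the same strategy applies to the pre-Mellin decomposition displayed just before \eqref{orddecomp}. The relation $\mathfrak{M}(\tilde{L}_{p,\alpha})=uL_{p,2}$ with $u\in(E\otimes\BB_{\QQ_p}^+)^\times$ forces $L_{p,2}=u^{-1}\mathfrak{M}(\tilde{L}_{p,\alpha})$, bounded since $\tilde{L}_{p,\alpha}\in\Lambda_E(G_\infty)$. Vanishing of $L_{p,1}^\eta$ would force $\mathfrak{M}(\tilde{L}_{p,\beta})^\eta$ to be expressible purely in terms of $\mathfrak{M}(\tilde{L}_{p,\alpha})^\eta$ multiplied by a tempered element of $(E\otimes\BB_{\rig,\QQ_p}^+)^{\psi=0}$, which after applying $\mathfrak{M}^{-1}$ leads to the same growth dichotomy. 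The main obstacle in the whole argument is verifying the order bound on $(*)$: this requires a careful analysis of the Frobenius matrix $P$ on the Wach module $\NN(V_{\bar{f}})$ relative to the sub-Wach-module $\NN(V_{\bar{f}}')$, where assumption (A') --- non-split yet crystalline --- is needed to ensure the off-diagonal extension class keeps the Perrin-Riou order of $(*)$ strictly below the generic bound $k-1$.
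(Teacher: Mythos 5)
Your strategy is genuinely different from the paper's, but it rests on a step that fails, and in fact the key claim you defer (``the $(2,1)$-entry of $\underline{M}$ has order strictly less than $k-1$'') is provably false under (A'). The more basic structural problem is this: the growth comparison must be made at characters $\chi^r\eta'$ with $\eta'$ of conductor $p^n$ and $n\to\infty$, i.e.\ characters which are nontrivial on $\Gamma$, and $\log_{p,k}=\prod_{j=0}^{k-2}\log_p(\chi(\gamma)^{-j}\gamma)/(\chi(\gamma)^{-j}\gamma-1)$ vanishes at every such character for $0\le r\le k-2$ (the $j=r$ factor is zero there). Hence the term $\alpha\log_{p,k}\tilde{L}_{p,1}$ in \eqref{orddecomp} contributes $0$ to every value at which \eqref{interpolate} applies, whether or not $\tilde{L}_{p,1}^\eta=0$, so no contradiction can be extracted from those values; this is exactly the phenomenon behind the paper's remark that \eqref{interpolate} does not determine $\tilde{L}_{p,\beta}$ uniquely. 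Worse, applying $\eta'$ (wild, conductor $p^n$, $r=0$, $k\ge3$ so $L(f_{\eta'^{-1}},1)\ne0$ by Shimura) to \eqref{orddecomp} gives $-\eta'(\tilde{L}_{p,\beta})=\eta'(\tilde{L}_{p,2})\,\eta'((*))$, and since $|\eta'(\tilde{L}_{p,\beta})|\sim p^{n(k-1)}|c_{\eta,0}L|$ while $|\eta'(\tilde{L}_{p,2})|$ is comparable to $|c_{\eta,0}L|$, the entry $(*)$ has values of size $p^{n(k-1)}$ up to bounded factors: its growth order is exactly $k-1$, so the bound you hoped to verify cannot hold (assumption (A') forces the opposite of what you assert). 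Your quoted admissibility estimate ``$p^{nh/(p-1)p^{n-2}}$'' is also misstated (it is bounded in $n$; the correct bound for an $O(\log_p^h)$ element at conductor-$p^n$ characters is roughly $p^{hn}$ up to constants), but that is secondary; even with the correct estimate the argument collapses for the reasons above. The same objections apply verbatim to your treatment of $L_{p,1}$.

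For comparison, the paper's proof avoids all growth considerations and uses only characters of conductor $p$ (where $\log_{p,k}$ does \emph{not} vanish and the off-diagonal entry does): since $M'|_{\pi=0}=I$, one has $M\equiv A_\vp^T\bmod\vp(\pi)$ with $A_\vp=\begin{pmatrix}\alpha&0\\0&\beta\end{pmatrix}$, so evaluating the analogue of Proposition~\ref{samemaps} at $\pi=\zeta-1$, $\zeta^p=1$, gives $\mathfrak{M}(\tilde{L}_{p,\beta})(\zeta-1)=\alpha L_{p,1}(\zeta-1)$. Under (A'), Kato's Theorems 16.4 and 16.6 give $\eta(\tilde{L}_{p,\beta})=\frac{\tau(\eta)}{\beta}L(f_{\eta^{-1}},1)\ne0$ for $k\ge 3$, whence $L_{p,1}^\eta\ne0$; the nonvanishing of $\tilde{L}_{p,1}^\eta$ then follows from the congruence $L_{p,i}\equiv\mathfrak{M}(\tilde{L}_{p,i})\bmod\vp(\pi)$ together with Lemma~\ref{divide}, exactly as in Corollary~\ref{sameprop}. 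If you want to salvage a proof along your lines, you must work at conductor-$p$ characters, where it is the $(*)$-term (not the $\log_{p,k}$-term) that drops out of the decomposition.
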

   \begin{proof}
    As in the proof of Proposition \ref{nonzero}, $M'|_{\pi=0}$ implies that $M|_{\pi=(\zeta-1)}=A_{\vp}^T$ for any $\zeta^p=1$, where $A_\vp=\begin{pmatrix}{\alpha}&0\\0&\beta\end{pmatrix}$ is the matrix of $\vp$ with respect to $\bar{\nu}_1$, $\bar{\nu}_2$. Therefore, $\mathfrak{M}(\tilde{L}_{p,\beta})(\zeta-1)={\alpha}L_{p,1}(\zeta-1)$. Since $V_f$ is not locally split and $k\ge3$, by the above discussion, $\eta(\tilde{L}_{p,\beta})=\frac{\tau(\eta)}{\beta}L(f_{\eta^{-1}},1)\ne0$ as in the supersingular case. Therefore, $L_{p,1}(\zeta-1)\ne0$. The statement about $\tilde{L}_{p,1}$ then follows as in Corollary~\ref{sameprop}. 
   \end{proof}
   
   We see from the proof that the interpolating properties of $\mathfrak{M}^{-1}(L_{p,1})$ and $\tilde{L}_{p,1}$ at characters modulo $p$ are the same as that of $\tilde{L}_{p,\beta}$ after multiplying a constant.

   \begin{remark}
    If $V_f$ does split locally at $p$, we can choose $n_2=\bar{\nu}_2$ and both $P$ and $M'$ would be diagonal. Therefore, we have $\tilde{L}_{p,\beta}=\mathfrak{M}^{-1}((t/\pi q)^{k-1}L_{p,1})=\log_{p,k}\tilde{L}_{p,1}$. But it is not known that whether $\tilde{L}_{p,\beta}$ is nonzero or not.
   \end{remark}


  \section{Coleman maps for the Berger--Li--Zhu basis}\label{ss}

   In this section, we will prove some results on the images of the Coleman maps under the assumption that $v_p(a_p)>\big\lfloor\frac{k-2}{p-1}\big\rfloor$, using the explicit basis of $\NN(V_{\bar f})$ written down in \cite{bergerlizhu04}. We shall also give an explicit proof that this particular basis satisfies the conclusions of theorem \ref{choice}.
  
     Write $m=\lfloor(k-2)/(p-1)\rfloor$ and define
     \[
      \lp=\prod_{n\ge0}\frac{\vp^{2n+1}(q)}{p} = \prod_{\substack{n \ge 1 \\ \text{$n$ even}}} \frac{\Phi_{n}(1 + \pi)}{p}
     \]
     and
     \[
      \qquad\text{and}\qquad\lm=\prod_{n\ge0}\frac{\vp^{2n}(q)}{p}= \prod_{\substack{n \ge 1 \\ \text{$n$ odd}}} \frac{\Phi_{n}(1 + \pi)}{p}.
     \]
     where $\Phi_n(X)$ is the $p^n$-th cyclotomic polynomial.
     Let $z_i$ be elements of $\Qp$ such that
     \[ p^{m}\left(\frac{\lm}{\lp}\right)^{k-1}=\sum_{i\ge0}z_i\pi^i, \]
     then as shown in~\cite[Proposition 3.1.1]{bergerlizhu04},
     \[
      z=\sum_{i=0}^{k-2}z_i\pi^i\in\Zp[[\pi]].
     \]
     By \cite{bergerlizhu04}, under assumption (C), i.e. $v_p(a_p)>m$, there is a lattice $T_{\bar{f}}$ in $V_{\bar{f}}$ and a basis of $\NN(T_{\bar{f}})$ such that the matrix of $\vp$ with respect to this basis, $P$, is given by
     \[
      \begin{pmatrix}0&-1\\ q^{k-1} & \delta z\end{pmatrix}
     \]
     where $\delta=a_p/p^m$. In particular, the reduction of this basis modulo $\pi$ is a ``good basis'' in the sense of \S \ref{Colemanmf}, and hence the Coleman maps may be defined integrally as in remark \ref{integrality}. By construction, for any $x\in\DD(T_{\bar{f}}(k-1))^{\psi=1}$ with 
     \[
     x=\pi^{1-k}\begin{pmatrix}x_1& x_2 \end{pmatrix}\begin{pmatrix}n_1 \\ n_2 \end{pmatrix}\otimes e_{k-1},
     \]
     we can express $\rmCol_i(x)$, $i=1,2$, in terms of $x_1$ and $x_2$:
     \begin{eqnarray}\label{explicit1}
      \rmCol_1(x)&=&x_2-\vp(x_1)+\delta zx_1,\\
      \rmCol_2(x)&=&-q^{k-1}x_1-\vp(x_2).\label{explicit2}
     \end{eqnarray}

     \begin{remark}
      The representation constructed in \cite{bergerlizhu04} is really $V_{\bar{f}}$ twisted by an unramified character. But since we assume that $\epsilon(p)=1$, it does not affect the action of $P$ and our calculations later on.
     \end{remark}
   
   
   \subsection{The image of \texorpdfstring{$\rmCol_1$}{Col1}}\label{Col1}
    
    We first give a few preliminary lemmas.
    
    \begin{lemma}\label{limit}
     For all $n\ge0$, we have
     $\vp^n(M'^{-1})(A_\vp^T)^n=\vp^{n-1}(P^T)\cdots\vp(P^T)P^TM'^{-1}.$
     Moreover, as $n\rightarrow\infty$, the quantity above tends to $0$.
    \end{lemma}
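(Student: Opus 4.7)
The identity is established by induction on $n$ from the defining relation \eqref{relatingphi}, namely $\vp(M')P^T = A_\vp^TM'$. The case $n=0$ is trivial, and the case $n=1$ amounts to rewriting \eqref{relatingphi} as $P^TM'^{-1} = \vp(M'^{-1})A_\vp^T$. For the inductive step, I would apply $\vp$ to both sides of the identity for $n$; since $A_\vp$ has entries in $E$ and is therefore fixed by $\vp$, this yields
\[
\vp^{n+1}(M'^{-1})(A_\vp^T)^n = \vp^n(P^T)\cdots\vp(P^T)\vp(M'^{-1}).
\]
Multiplying on the right by $A_\vp^T$ and substituting $\vp(M'^{-1})A_\vp^T = P^TM'^{-1}$ from the base case then yields the identity for $n+1$.

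For the convergence claim, two ingredients combine. First, since $f$ is supersingular at $p$, both eigenvalues $\alpha,\beta$ of $A_\vp$ have positive $p$-adic valuation: if one were a unit then, given $\alpha\beta = p^{k-1}$, the other would have valuation $k-1>0$, forcing $v_p(a_p) = v_p(\alpha+\beta)=0$, contradicting supersingularity. Consequently $(A_\vp^T)^n \to 0$ geometrically in $M_2(E)$. Second, I need to bound $\vp^n(M'^{-1})$ in the Fr\'echet topology on $\Brig[t^{-1}]$. Using $M'|_{\pi=0} = I$ and $\det M' = u\cdot(t/\pi)^{k-1}$ for a unit $u\in(\Brig)^\times$ with $u|_{\pi=0}=1$, one computes
\[
\vp^n(M'^{-1}) = \frac{\vp^n(\mathrm{adj}\,M')}{\vp^n(u)}\cdot\left(\frac{\pi}{t}\right)^{k-1}\cdot\left(\frac{1}{p^n}\prod_{i=1}^n\Phi_i(1+\pi)\right)^{k-1},
\]
where the first factor tends to $I$ in the Fr\'echet topology. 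On any closed subdisk $|\pi|\le r$ with $r<1$, each factor $\Phi_i(1+\pi)/p$ has sup norm equal to $1$ once $i$ is large enough that $p^{-1/\phi(p^i)}>r$ (the constant term $p$ dominates), so the product remains uniformly bounded in $n$ on that subdisk.

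Combining these estimates gives $\vp^n(M'^{-1})(A_\vp^T)^n \to 0$ in the Fr\'echet topology on $M_2(E\otimes\Brig[t^{-1}])$. The main obstacle is the second ingredient: on the closed unit disk the growth of $\vp^n(M'^{-1})$ exactly cancels the decay rate of $(A_\vp^T)^n$, so one must genuinely restrict to proper subdisks of $|\pi|<1$ in order to extract the additional convergence afforded by the stabilization of $\Phi_i(1+\pi)/p$ at sup norm $1$.
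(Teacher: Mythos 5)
Your proposal is correct and follows essentially the same route as the paper: the identity comes from \eqref{relatingphi} by induction (the paper says exactly this), and the limit from the two facts that $M'|_{\pi=0}=I$, so that $\vp^n(M')\to I$, and that the eigenvalues $\alpha,\beta$ of $A_\vp$ have positive valuation, so that $(A_\vp^T)^n\to 0$. Your analytic elaboration of the second step --- isolating the denominator $\det M'\sim (t/\pi)^{k-1}$, bounding $\prod_{i\le n}\Phi_i(1+\pi)/p$ uniformly on each closed subdisk, and locating the convergence in the Fr\'echet topology --- is a careful expansion of what the paper leaves as a one-line assertion; the only imprecision is the remark that on the closed unit disk the growth ``exactly cancels'' the decay (the crude sup-norm bound there in fact diverges whenever $\min(v_p(\alpha),v_p(\beta))<k-1$), but this does not affect your conclusion, and your point that one must work on proper closed subdisks is exactly the content implicit in the paper's terse argument.
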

    \begin{proof}
     The equality follows from (\ref{relatingphi}) and induction. For the limit, note that $M'|_{\pi=0}=I$, hence $\vp^n(M')\rightarrow I$ as  $n\rightarrow\infty$. The eigenvalues of $A_\vp$ are $\alpha$ and $\beta$. But $\alpha^n$, $\beta^n\rightarrow0$ as $n\rightarrow\infty$, so we are done.
    \end{proof}

    \begin{lemma}\label{psi}
     Let $x=\pi^{1-k}\begin{pmatrix}x_1&x_2\end{pmatrix}\begin{pmatrix}n_1\\ n_2\end{pmatrix}\otimes e_{k-1}$. Then, $\psi(x)$ is given by
     \[
     \begin{pmatrix}\psi(x_1\delta z+x_2)&-\psi(q^{k-1}x_1)\end{pmatrix}\pi^{1-k}\begin{pmatrix}n_1\\ n_2\end{pmatrix}
     \]
    \end{lemma}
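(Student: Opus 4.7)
\medskip

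The plan is to apply directly the calculation carried out in the proof of Lemma~\ref{kernelofpsi}, specialized to the explicit Frobenius matrix of the Berger--Li--Zhu basis. Writing $x=\pi^{1-k}\begin{pmatrix}x_1&x_2\end{pmatrix}\begin{pmatrix}n_1\\n_2\end{pmatrix}\otimes e_{k-1}$ and using $\vp(\pi^{1-k})=q^{1-k}\pi^{1-k}$, one obtains (exactly as in that earlier proof) the identity
\[
x=\begin{pmatrix}x_1&x_2\end{pmatrix}q^{k-1}(P^T)^{-1}\,\vp(\pi^{1-k})\begin{pmatrix}\vp(n_1)\\\vp(n_2)\end{pmatrix}\otimes e_{k-1}
=\begin{pmatrix}x_1&x_2\end{pmatrix}q^{k-1}(P^T)^{-1}\,\vp\!\begin{pmatrix}\pi^{1-k}n_1\otimes e_{k-1}\\ \pi^{1-k}n_2\otimes e_{k-1}\end{pmatrix},
\]
since $e_{k-1}$ is $\vp$-fixed in $\DD(V_{\bar f}(k-1))$.

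The key step is then to invert the explicit matrix $P=\begin{pmatrix}0&-1\\ q^{k-1}&\delta z\end{pmatrix}$. We have $P^T=\begin{pmatrix}0&q^{k-1}\\-1&\delta z\end{pmatrix}$, whose determinant is $q^{k-1}$, so
\[
q^{k-1}(P^T)^{-1}=\begin{pmatrix}\delta z&-q^{k-1}\\1&0\end{pmatrix},
\qquad\text{hence}\qquad
\begin{pmatrix}x_1&x_2\end{pmatrix}q^{k-1}(P^T)^{-1}=\begin{pmatrix}x_1\delta z+x_2&-q^{k-1}x_1\end{pmatrix}.
\]

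Finally, applying the projection formula $\psi\bigl(a\cdot\vp(b)\bigr)=\psi(a)\cdot b$ componentwise yields
\[
\psi(x)=\begin{pmatrix}\psi(x_1\delta z+x_2)&-\psi(q^{k-1}x_1)\end{pmatrix}\pi^{1-k}\begin{pmatrix}n_1\\ n_2\end{pmatrix}\otimes e_{k-1},
\]
which is the claim. There is essentially no obstacle here beyond the bookkeeping of $\pi^{1-k}$ factors and the matrix inversion; the lemma is really just a transcription of Lemma~\ref{kernelofpsi}'s general identity for the specific Berger--Li--Zhu matrix $P$, together with the projection formula for $\psi$.
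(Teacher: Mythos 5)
Your proof is correct and follows essentially the same route as the paper's: rewrite $x$ in terms of $\vp(n_1),\vp(n_2)$ via $q^{k-1}(P^T)^{-1}$ (which the paper also does, just stating the product $\begin{pmatrix}x_1\delta z+x_2&-q^{k-1}x_1\end{pmatrix}$ without displaying the inversion), and then apply the projection formula $\psi(a\,\vp(b))=\psi(a)b$.
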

    \begin{proof}
     Recall that $\vp(\pi)=\pi q$, we have
     \begin{eqnarray*}
      x&=&\pi^{1-k}\begin{pmatrix}x_1&x_2\end{pmatrix}(P^T)^{-1}\begin{pmatrix}\vp(n_1)\\ \vp(n_2)\end{pmatrix}\\
       &=&\begin{pmatrix}x_1\delta z+x_2&-q^{k-1}x_1\end{pmatrix}\vp(\pi)^{1-k}\begin{pmatrix}\vp(n_1)\\ \vp(n_2)\end{pmatrix},
     \end{eqnarray*}
     hence the result
    \end{proof}

   \begin{lemma}\label{constantterm}
    For all $n\geq1 $, the constant term of $\psi(q^n)$ is $p^{n-1}$.
   \end{lemma}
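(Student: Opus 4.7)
The plan is to compute $\psi(q^n)$ by the explicit formula
\[ \vp\circ\psi(f)(\pi)=\frac{1}{p}\sum_{\zeta^p=1}f(\zeta(1+\pi)-1) \]
given earlier in the paper, then extract the constant term by evaluating at $\pi=0$. The key observation is that $\vp$ preserves the constant term of an element of $\BB^+_{\QQ_p}$ (since $\vp(\pi)=(1+\pi)^p-1$ vanishes at $\pi=0$), so the constant term of $\psi(q^n)$ equals the constant term of $\vp\circ\psi(q^n)$.

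First, I would rewrite $q=\vp(\pi)/\pi=((1+\pi)^p-1)/\pi$ and note the substitution rule: replacing $\pi$ by $\zeta(1+\pi)-1$ sends $1+\pi$ to $\zeta(1+\pi)$, and hence sends $(1+\pi)^p-1$ to itself (using $\zeta^p=1$), while sending $\pi$ to $\zeta(1+\pi)-1$. Therefore
\[ q^n\bigl(\zeta(1+\pi)-1\bigr)=\frac{((1+\pi)^p-1)^n}{(\zeta(1+\pi)-1)^n}. \]

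Next I would specialize at $\pi=0$. For $\zeta=1$, one has $q(0)=p$ (the constant term of the cyclotomic series $q=p+\binom{p}{2}\pi+\cdots$), so the contribution is $p^n$. For $\zeta\neq 1$ a primitive $p$-th root of unity, $q(\zeta-1)=(\zeta^p-1)/(\zeta-1)=0$, so these terms vanish. Summing and dividing by $p$ gives $(\vp\circ\psi(q^n))(0)=p^{n-1}$, and invoking the observation above yields the lemma.

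The argument has no real obstacle; the only mild subtlety is justifying that one may evaluate the right-hand side of the $\vp\circ\psi$ formula termwise at $\pi=0$ despite the apparent $0/0$ one sees in $q^n(\zeta(1+\pi)-1)$ for $\zeta\neq 1$, which I would handle by noting that each summand is in fact an element of $\BB^+_{\QQ_p}$ (the pole in the denominator is cancelled by $\zeta$ being a root of $(1+\pi)^p-1$), so the evaluation at $\pi=0$ is unambiguous and gives $0$ as computed.
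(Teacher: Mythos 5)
Your proof is correct. The paper disposes of this lemma with the single word ``Induction'', so your argument is, at least formally, a different route: you compute the constant term directly from the formula $\vp\circ\psi(f)(\pi)=\frac{1}{p}\sum_{\zeta^p=1}f(\zeta(1+\pi)-1)$, using that $\vp$ fixes constant terms (as $\vp(\pi)$ vanishes at $\pi=0$), that $q(0)=p$, and that $q(\zeta-1)=\Phi_1(\zeta)=0$ for $\zeta\neq1$; this gives $\frac1p\bigl(p^n+0+\cdots+0\bigr)=p^{n-1}$ in one stroke, with no induction needed. That is a perfectly sound and arguably more transparent proof than the paper's unspecified inductive one, and your worry about a ``$0/0$'' is in fact vacuous: since $q$ is a polynomial of degree $p-1$ in $\pi$, each summand $q^n(\zeta(1+\pi)-1)$ is a polynomial in $\pi$ (with coefficients in $\ZZ_p[\zeta]$ rather than in $\QQ_p$, a harmless imprecision in your last paragraph), and at $\pi=0$ the numerator $((1+\pi)^p-1)^n$ vanishes while the denominator $(\zeta(1+\pi)-1)^n$ equals $(\zeta-1)^n\neq0$, so evaluation is unambiguous and gives $0$ as you claim.
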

   \begin{proof}
    Induction.
   \end{proof}
   
   \begin{lemma}\label{lincombination}
    If $f(\pi)\in E\otimes\BB_{\Qp}^+$, then there exist unique $a_i\in E$ for $1\leq i\leq k-1$ such that $f(\pi)=\sum_{i=1}^{k-1}a_i(\pi+1)^i\mod\pi^{k-1}$.
   \end{lemma}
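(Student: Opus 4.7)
The plan is to reduce the statement to a short linear algebra check in the quotient ring $R = E[\pi]/\pi^{k-1}$. Both the space of tuples $(a_1,\dots,a_{k-1}) \in E^{k-1}$ and the reduction $R$ are $E$-vector spaces of dimension $k-1$, so it suffices to prove either existence or uniqueness of the expansion; the other follows automatically.

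I would establish uniqueness. Suppose $\sum_{i=1}^{k-1} a_i (\pi+1)^i \equiv 0 \pmod{\pi^{k-1}}$. Performing the change of variable $y = \pi+1$ (so $\pi = y-1$), the congruence becomes
\[ g(y) := \sum_{i=1}^{k-1} a_i y^i \equiv 0 \pmod{(y-1)^{k-1}} \text{ in } E[y]. \]
Now $g(y) = y\, h(y)$ with $h(y) = \sum_{i=1}^{k-1} a_i y^{i-1}$ of degree at most $k-2$, so $g$ has degree at most $k-1$. Divisibility by the degree $k-1$ polynomial $(y-1)^{k-1}$ forces $g(y) = c (y-1)^{k-1}$ for some constant $c \in E$. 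Evaluating at $y=0$ yields $0 = c \cdot (-1)^{k-1}$, whence $c=0$, so $g=0$ and all $a_i$ vanish.

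Equivalently, and a bit more conceptually, one can observe that $1+\pi$ is a unit in $R$ (unit constant term), so multiplication by $1+\pi$ is an $E$-linear automorphism of $R$. It sends the obvious basis $1,(1+\pi),\dots,(1+\pi)^{k-2}$ to $(1+\pi),\dots,(1+\pi)^{k-1}$, which therefore is also an $E$-basis of $R$. Either route settles both existence and uniqueness.

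There is no real obstacle: the lemma is a finite-dimensional linear algebra statement and the only point of substance is recognising that $1+\pi$ is a unit modulo $\pi^{k-1}$, which makes the change of basis between $\{(1+\pi)^i\}$ and $\{\pi^j\}$ invertible.
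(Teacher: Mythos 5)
Your proof is correct. It does, however, take a slightly different route from the paper's. The paper argues by contradiction from the explicit binomial identity $(\pi+1)^k\equiv\binom{k}{1}(\pi+1)^{k-1}-\dots+(-1)^{k-1}\bmod \pi^k$ (obtained from $\pi^k=((\pi+1)-1)^k$): if $(\pi+1)^k$ were a combination of $(\pi+1)^i$ with $1\le i\le k-1$ modulo $\pi^k$, subtracting would exhibit a vanishing polynomial with nonzero constant coefficient in the basis $\{(\pi+1)^i\}_{0\le i<k}$ of polynomials of degree $\le k-1$ — so the paper's computation is really the linear-independence step, carried out via an explicit expansion (and, as written, at the index $k$ rather than $k-1$). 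You instead work directly in the Artinian quotient $E[\pi]/\pi^{k-1}$: the dimension count reduces everything to uniqueness, which you get either by the substitution $y=\pi+1$, the degree bound forcing $g(y)=c(y-1)^{k-1}$, and evaluation at $y=0$, or — most cleanly — by observing that $1+\pi$ is a unit in the quotient, so multiplication by it carries the basis $1,(1+\pi),\dots,(1+\pi)^{k-2}$ to $(1+\pi),\dots,(1+\pi)^{k-1}$. Both arguments ultimately exploit the same phenomenon (a combination of positive powers of $1+\pi$ vanishes at $\pi=-1$, while $\pi^{k-1}$ does not), but your unit-multiplication version gives existence and uniqueness in one stroke and avoids the explicit binomial bookkeeping; the only micro-step worth making explicit in your first route is that divisibility by $\pi^{k-1}$ in the power series ring, for a polynomial of degree $\le k-1$, already forces $g=c\pi^{k-1}$ with $c$ a constant, which is immediate by comparing coefficients.
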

   \begin{proof}
    Note that 
    \begin{equation}\label{expansion}
     (\pi+1)^k =\tbinom{k}{1}(\pi+1)^{k-1}-\dots+ (-1)^{k-2}\tbinom{k}{k-1}(\pi+1)+(-1)^{k-1}\mod \pi^k.    
    \end{equation}
    Suppose now that there exist $a_1,\dots,a_{k-1}\in E$ such that $(\pi+1)^k =\sum_{i=1}^{k-1}a_i(\pi+1)^i \mod \pi^k$. Subtracting this sum from~\eqref{expansion} shows that 
    \[ \big(\tbinom{k}{1}-a_{k-1}\big)(\pi+1)^{k-1}+\dots+ \big((-1)^{k-2}\tbinom{k}{k-1}-a_1\big)(\pi+1)+(-1)^{k-1} =0.\]
    But this gives a contradiction since $\{(\pi+1)^i\}_{0\leq i<k}$ is a basis of the vector space of polynomials of degree $\leq k-1$.
   \end{proof}

    \begin{proposition}\label{col1}
     Under assumption (C), the map $(\pi^{k-1}\calO_E\otimes\AA_{\Qp}^+)^{\psi=0}\subset\rmCol_1\big(\DD(T_{\bar{f}}(k-1))^{\psi=1}\big)$.
    \end{proposition}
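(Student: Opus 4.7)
My plan is to reformulate the surjectivity question onto a single scalar equation on $x_1$, and then construct a solution by successive approximation.

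\textbf{Step 1: Reduction to a scalar equation.} Suppose we seek $x=\pi^{1-k}(x_1n_1+x_2n_2)\otimes e_{k-1}\in \DD(T_{\bar f}(k-1))^{\psi=1}$ with $\rmCol_1(x)=y$. By Lemma~\ref{psi}, the condition $\psi(x)=x$ amounts to
\[
x_2=-\psi(q^{k-1}x_1)\quad\text{and}\quad x_1=\psi(\delta z x_1+x_2),
\]
while (\ref{explicit1}) says $\rmCol_1(x)=x_2-\vp(x_1)+\delta zx_1$. Substituting the first relation into $\rmCol_1(x)=y$ gives
\begin{equation*}
\vp(x_1)+\psi(q^{k-1}x_1)-\delta z\, x_1=-y. \tag{$\star$}
\end{equation*}
Applying $\psi$ to $(\star)$, using $\psi\vp=\id$ and the hypothesis $\psi(y)=0$, one recovers the second $\psi$-invariance condition automatically. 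So it suffices to produce $x_1\in\calO_E\otimes\AA_{\Qp}^+$ solving $(\star)$.

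\textbf{Step 2: A factorisation exploiting $\pi^{k-1}$.} The key observation is that for every $f\in\calO_E\otimes\AA_{\Qp}^+$,
\[
\psi\bigl(q^{k-1}\,\pi^{k-1}f\bigr)=\psi\bigl(\vp(\pi^{k-1})\,f\bigr)=\pi^{k-1}\psi(f),
\]
so $\psi(q^{k-1}\cdot)$ preserves the subspace $\pi^{k-1}(\calO_E\otimes\AA_{\Qp}^+)$. Writing $y=\pi^{k-1}y'$ and looking for $x_1$ of the form $\pi^{k-1}u$ with $u\in\calO_E\otimes\AA_{\Qp}^+$, equation $(\star)$ becomes, after cancelling $\pi^{k-1}$,
\[
\tilde L(u):=q^{k-1}\vp(u)+\psi(u)-\delta z\, u=-y'. \tag{$\star\star$}
\]

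\textbf{Step 3: Successive approximation.} Since $\psi:\calO_E\otimes\AA_{\Qp}^+\to\calO_E\otimes\AA_{\Qp}^+$ is surjective, fix a $\ZZ_p$-linear section $s$ of $\psi$ (for instance, one can take $s=\vp$, since $\psi\vp=\id$). Set $u^{(0)}=s(-y')$, so $\psi(u^{(0)})=-y'$, and define inductively $u^{(n+1)}=u^{(n)}+s(-e_n)$ where $e_n:=\tilde L(u^{(n)})+y'$. Then
\[
e_{n+1}=e_n+\tilde L\bigl(s(-e_n)\bigr)-(-e_n)=\bigl(q^{k-1}\vp-\delta z\bigr)\bigl(s(-e_n)\bigr),
\]
since the $\psi$ term cancels. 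Now $q^{k-1}\equiv p^{k-1}\pmod{\pi\,\calO_E\otimes\AA_{\Qp}^+}$, and by assumption (C) we have $v_p(\delta)\geq 1$; consequently $q^{k-1}\vp-\delta z$ carries $\calO_E\otimes\AA_{\Qp}^+$ into $p\cdot(\calO_E\otimes\AA_{\Qp}^+)$ (using also Lemma~\ref{constantterm} to control the $\vp$ part). Hence $v_p(e_{n+1})\geq v_p(e_n)+1$, so the $u^{(n)}$ form a $p$-adic Cauchy sequence with limit $u\in\calO_E\otimes\AA_{\Qp}^+$ satisfying $(\star\star)$. Setting $x_1=\pi^{k-1}u$ and $x_2=-\psi(q^{k-1}x_1)$ yields the desired $x$.

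\textbf{Main obstacle.} The delicate point is Step 3: one must check that the explicit perturbation operator $q^{k-1}\vp-\delta z$ genuinely raises $p$-adic valuation, which is where assumption (C) enters essentially — if $v_p(a_p)\leq m$, the $\delta z$ contribution at the constant term in $\pi$ would cancel the $p^{k-1}$ coming from $\vp$, and the iteration would stall. A secondary point is that the section $s$ must not introduce uncontrolled growth; taking $s=\vp$ has the virtue of landing in $\vp(\calO_E\otimes\AA_{\Qp}^+)$, on which $\psi$ is an honest inverse, making the bookkeeping clean.
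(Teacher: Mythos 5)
Your Steps 1 and 2 are a correct reduction: solving $(\star)$ for $x_1\in\calO_E\otimes\AA_{\Qp}^+$ and setting $x_2=-\psi(q^{k-1}x_1)$ does produce an element of $\DD(T_{\bar{f}}(k-1))^{\psi=1}$ with $\rmCol_1(x)=y$, and the passage to $(\star\star)$ via $\vp(\pi^{k-1})=q^{k-1}\pi^{k-1}$ is fine. The gap is in Step 3, which is where all the work lies. The claim that $q^{k-1}\vp-\delta z$ maps $\calO_E\otimes\AA_{\Qp}^+$ into $p\cdot(\calO_E\otimes\AA_{\Qp}^+)$ is false: $q\equiv\pi^{p-1}\bmod p$, so $q^{k-1}$ is not divisible by $p$ (the congruence $q^{k-1}\equiv p^{k-1}\bmod\pi$ only controls the constant term), and assumption (C) gives $v_p(\delta)=v_p(a_p)-\lfloor(k-2)/(p-1)\rfloor>0$ but not $v_p(\delta)\ge1$ (E may be ramified; e.g.\ $k=2$ and $v_p(a_p)=1/2$ is allowed). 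Concretely, $e_1=-q^{k-1}\vp^2(e_0)+\delta z\vp(e_0)$ is in general not divisible by $\varpi_E$ when $e_0$ is not, so the estimate $v_p(e_{n+1})\ge v_p(e_n)+1$ fails at the first step and your sequence $u^{(n)}$ is not $p$-adically Cauchy.

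The iteration can be rescued, but in a different topology: each application of the error operator multiplies either by $\delta$ (gaining $\varpi_E$-adic valuation) or by $q^{k-1}\in(p,\pi)^{k-1}$ (gaining $(p,\pi)$-adic valuation), so $e_n\to0$ and $\sum\vp(e_n)$ converges in the weak, i.e.\ $(\varpi_E,\pi)$-adic, topology of $\calO_E\otimes\AA_{\Qp}^+$, for which the ring is complete and $\vp$, $\psi$ and multiplication are continuous; passing to the limit then solves $(\star\star)$. You would need to rewrite Step 3 along these lines. For comparison, the paper's proof sidesteps the scalar equation entirely: given $y_1\in(\pi^{k-1}\calO_E\otimes\AA_{\Qp}^+)^{\psi=0}$ it sets $y=\pi^{1-k}y_1n_2\otimes e_{k-1}$, notes $\psi(y)=0$ (Lemma~\ref{psi} with $x_1=0$), and shows $\vp^n(y)\to0$ by rewriting $\vp^{n-1}(P^T)\cdots P^T$ as $\vp^n(M'^{-1})(A_\vp^T)^nM'$, where $(A_\vp^T)^n\to0$ because the crystalline Frobenius eigenvalues $\alpha,\beta$ are topologically nilpotent (Lemma~\ref{limit}); then $x=\sum_{n\ge0}\vp^n(y)$ lies in $\DD(T_{\bar{f}}(k-1))^{\psi=1}$ with $(1-\vp)x=y$, whence $\rmCol_1(x)=y_1$.
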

    \begin{proof}
     Recall that (\ref{1-vp}) says
     \[
     (1-\vp)x=\begin{pmatrix}y_1&y_2\end{pmatrix}\cdot(\pi q)^{1-k}P^T\begin{pmatrix}n_1\\ n_2\end{pmatrix}\otimes e_{k-1}.
     \]
     For any $y_1\in(\pi^{k-1}\calO_E\otimes\AA_{\Qp}^+)^{\psi=0}$, we have 
     \[
     y:=\begin{pmatrix}y_1&0\end{pmatrix}\cdot(\pi q)^{1-k}P^T\begin{pmatrix}n_1\\ n_2\end{pmatrix}\otimes e_{k-1}=\begin{pmatrix}0&y_1/\pi^{k-1}\end{pmatrix}\begin{pmatrix}n_1\\ n_2\end{pmatrix}\otimes e_{k-1}.
     \]
     Then,
     \begin{eqnarray*}
      \vp^n(y)&=&\begin{pmatrix}0&\vp^n(y_1/\pi^{k-1})\end{pmatrix}\vp^{n-1}(P^T)\cdots\vp(P^T)P^T\begin{pmatrix}n_1\\ n_2\end{pmatrix}\otimes e_{k-1}\\
      &=&\begin{pmatrix}0&\vp^n(y_1/\pi^{k-1})\end{pmatrix}\vp^n(M'^{-1})(A_\vp^T)^nM'\begin{pmatrix}n_1\\ n_2\end{pmatrix}\otimes e_{k-1}.
     \end{eqnarray*}
     Hence, Lemma~\ref{limit} implies that $\vp^n(y)\rightarrow0$ as $n\rightarrow\infty$ and the series $x:=\sum_{n\ge0}\vp^{n}(y)$ converges to an element of $\DD(T_{\bar{f}}(k-1))^{\psi=1}$ with $(1-\vp)x=y$. Therefore, $y_1=\rmCol_1(x)$.
     \end{proof}
     
     \begin{proposition}\label{surjectiveCol1}
      Under assumptions (B), (C) and (D), the map $\rmCol_1:\DD(V_{\bar{f}}(k-1))\rightarrow(E\otimes\BB_{\Qp}^+)^{\psi=0}$ is surjective.
     \end{proposition}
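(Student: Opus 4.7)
The plan is to combine Proposition \ref{col1}, which already places the ``deep'' part $\vp(\pi)^{k-1}(E\otimes\BB^+_{\QQ_p})^{\psi=0}$ inside the image of $\rmCol_1$, with a finite-dimensional computation covering the complementary quotient. Since $q$ has constant term $p$ and is therefore a unit in $E\otimes\BB^+_{\QQ_p}$, the ideals $(\pi^{k-1})$ and $(\vp(\pi)^{k-1})$ coincide in this ring, so Proposition \ref{col1} (after inverting $p$) shows that $\rmCol_1$ surjects onto $(\vp(\pi)^{k-1}(E\otimes\BB^+_{\QQ_p}))^{\psi=0}$. By Lemma \ref{lem2}, this subspace corresponds under the Mellin transform to $p_{k-1}\Lambda_E(G_\infty)$, and the cokernel $\Lambda_E(G_\infty)/p_{k-1}\Lambda_E(G_\infty)$ is a finite-dimensional $E$-vector space of dimension $(p-1)(k-1)$. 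It thus suffices to show that $\rmCol_1$ covers this quotient.

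Using the explicit formulas \eqref{explicit1}--\eqref{explicit2} together with Lemma \ref{psi}, the equation $\rmCol_1(x)=y_1$ reduces, after substituting the forced value $x_2=-\psi(q^{k-1}x_1)$, to finding $x_1\in E\otimes\BB^+_{\QQ_p}$ with
\[
T(x_1):=\delta z\cdot x_1 \,-\, \vp(x_1) \,-\, \psi(q^{k-1}x_1) \,=\, y_1.
\]
Applying $\psi$ to this equation recovers exactly the remaining $\psi=1$ identity on $x$, which is therefore automatic whenever $\psi(y_1)=0$. So the question becomes: does $T$ hit every element of $(E\otimes\BB^+_{\QQ_p})^{\psi=0}$ modulo $\vp(\pi)^{k-1}$? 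By Lemma \ref{lincombination}, such residue classes are represented by linear combinations of $(1+\pi)^i$ for $i=1,\dots,k-1$. For constants $x_1=c\in E$, a direct computation using Lemma \ref{constantterm} (which gives the constant term $p^{k-2}$ of $\psi(q^{k-1})$) together with the identity $\delta z_0=a_p$ gives $T(c)\equiv c(a_p-1-p^{k-2})\bmod\pi$; since Assumption (C) forces $v_p(a_p)\geq 1$, this coefficient is a unit in $\calO_E$, and so $T$ covers the ``constant'' direction. Extending the computation to $x_1$ proportional to $(1+\pi)^i$ for higher $i$ yields a matrix representation of $T$ modulo $\vp(\pi)^{k-1}$ in the basis $\{(1+\pi)^i\}_{i=1}^{k-1}$, and the desired surjectivity amounts to showing that this matrix is invertible.

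The principal obstacle is the invertibility of this matrix. The operators $\vp$ (which shifts $(1+\pi)^i\mapsto(1+\pi)^{pi}$) and $\psi$ (which averages over $p$-th roots of unity) mix the basis vectors nontrivially, so the matrix of $T$ modulo $\vp(\pi)^{k-1}$ is neither diagonal nor obviously triangular. Showing non-singularity requires an inductive argument leveraging all three assumptions together: assumption (D) ($p\geq k-1$) controls the Frobenius overflow past the $\vp(\pi)^{k-1}$ cutoff and keeps the combinatorial shifts tractable; assumption (B) rules out the degenerate Frobenius eigenvalue relations $a_p=p^j+p^{k-2-j}$ that would cause intermediate isotypic pieces of the matrix to degenerate; and assumption (C) guarantees that the explicit Berger--Li--Zhu basis underlying the formulas \eqref{explicit1}--\eqref{explicit2} is valid throughout.
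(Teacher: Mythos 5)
Your set-up follows the paper's own strategy: Proposition~\ref{col1} takes care of $(\pi^{k-1}E\otimes\BB^+_{\Qp})^{\psi=0}$, the substitution $x_2=-\psi(q^{k-1}x_1)$ turns $\rmCol_1(x)$ into $T(x_1)=\delta z\,x_1-\vp(x_1)-\psi(q^{k-1}x_1)$, and your observation that applying $\psi$ to an \emph{exact} identity $T(x_1)=y_1$ with $\psi(y_1)=0$ recovers the remaining $\psi=1$ condition is correct. But the argument stops exactly where the real work begins: you never establish that the finite-dimensional system is solvable, saying only that ``showing non-singularity requires an inductive argument leveraging all three assumptions.'' That invertibility \emph{is} the content of the proposition; without it nothing has been proved. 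The paper resolves it by taking $x_1$ (and the auxiliary $y'$) in the span of $\{(1+\pi)^i\}_{1\le i\le k-1}$, using assumption (D) to bound the degrees of $\psi(x_1\delta z)$ and $\psi^2(q^{k-1}x_1)$, which makes the system \eqref{findingx} relating the coefficient vectors $(\alpha_i)$ and $(\beta_i)$ upper triangular with nonzero diagonal entries (contrary to your assertion that the matrix is ``neither diagonal nor obviously triangular''), and then computing the leading term $\rmCol_1(x')\equiv(a_p-p^j-p^{k-2-j})\pi^j\bmod\pi^{j+1}$, which is nonzero by assumption (B); induction on the order $j=0,\dots,k-2$ then gives surjectivity.

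Two further points in your reduction are off. First, $q=\vp(\pi)/\pi$ is \emph{not} a unit in $E\otimes\BB^+_{\Qp}$ (it vanishes at $\pi=\zeta_p-1$), so the ideals $(\pi^{k-1})$ and $(\vp(\pi)^{k-1})$ do not coincide; the quotient you must cover is the one modulo $\pi^{k-1}$ described by Lemma~\ref{lincombination}, not $\Lambda_E(G_\infty)/p_{k-1}\Lambda_E(G_\infty)$. This is harmless only because Proposition~\ref{col1} already supplies the larger ideal $(\pi^{k-1}\cdot)^{\psi=0}$. Second, and more seriously, your ``automatic $\psi=1$'' remark applies only to exact solutions of $T(x_1)=y_1$: if you merely solve the congruence $T(x_1)\equiv y_1\bmod\vp(\pi)^{k-1}$, the resulting $x$ need not lie in $\DD(V_{\bar{f}}(k-1))^{\psi=1}$, so surjectivity of a matrix modulo $\vp(\pi)^{k-1}$ does not by itself produce elements of the source of $\rmCol_1$, nor does it guarantee that the leftover term is again in the kernel of $\psi$. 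This is precisely why the paper writes its candidate as a convergent tail $\sum_{n\ge0}\vp^n(\cdot)$ plus a polynomial correction and imposes the exact conditions \eqref{findingx}; the triangularity argument is what makes that exact system solvable, and it is the step your proposal leaves open.
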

     \begin{proof}
     By Proposition~\ref{col1}, if $y_1\in(\pi^{k-1}E\otimes\BB_{\Qp}^+)^{\psi=0}$, then $y_1\in\image(\rmCol_1)$. For an arbitrary $y_1\in(E\otimes\BB_{\Qp}^+)^{\psi=0}$, by Lemma~\ref{lincombination} there exists $y'$ in the $E$-linear span of $\{(1+\pi)^i\}_{1\leq i<k}$ such that $y_1+\vp(y')$ is divisible by $\pi^{k-1}$. Hence, by the same argument as above, the sum
     \[
      \sum_{n\ge0}\vp^n\left(\begin{pmatrix}0&(y_1+\vp(y'))/\pi^{k-1}\end{pmatrix}\begin{pmatrix}n_1\\ n_2\end{pmatrix}\right)
     \]
     converges to an element $x\in\NN(V_{\bar{f}}(k-1))$. By Lemma~\ref{psi} and the fact that $\psi(y_1)=0$, we have
     \begin{eqnarray*}
      \psi(x)-x & = & \psi\left(\begin{pmatrix}0&(y_1+\vp(y'))/\pi^{k-1}\end{pmatrix}\begin{pmatrix}n_1\\ n_2\end{pmatrix}\right)\\
                & = & \pi^{1-k}\begin{pmatrix}y'&0\end{pmatrix}\begin{pmatrix}n_1\\ n_2\end{pmatrix}
     \end{eqnarray*}
     Let $x'=x+\pi^{1-k}\begin{pmatrix}x_1&x_2\end{pmatrix}\begin{pmatrix}n_1\\ n_2\end{pmatrix}$. Then
     \[
      \psi(x')-x'=\pi^{1-k}\begin{pmatrix}y'-x_1+\psi(x_1\delta z+x_2)&-x_2-\psi(q^{k-1}x_1)\end{pmatrix}\begin{pmatrix}n_1\\ n_2\end{pmatrix}.
     \]
     Hence, $x'\in\DD(V_{\bar{f}}(k-1))^{\psi=1}$ if and only if
     \begin{equation} 
     \begin{aligned}
      x_2 &= -\psi(q^{k-1}x_1)\\
      y'&= x_1-\psi(x_1\delta z)+\psi^2(q^{k-1}x_1)
     \end{aligned}\label{findingx}
     \end{equation}
     Assume that such $x_1$ exists in the $E$-linear span of $\{(1+\pi)^i\}_{1\leq i<k}$, and let $a$ be its degree in $\pi$. Since the degrees of $\delta z$ and $q^{k-1}$ are at most $k-2$ and $(p-1)(k-1)$ respectively, the degrees of $\psi(x_1\delta z)$ and $\psi^2(q^{k-1}x_1)$ are at most $(k-2+a)/p$ and $((p-1)(k-1)+a)/p^2$ respectively. But we assume that $p\ge k-1$, so the right-hand side of \eqref{findingx} has degree $\leq a$. Since $y'$ has degree at most $k-1$ and $x_1$ is in the $E$-linear span of $\{(1+\pi)^i\}_{1\leq i<k}$, both $\psi(x_1\delta z)$ and $\psi^2(q^{k-1}x_1)$ are scalar multiples of $(1+\pi)$. We write
     \begin{displaymath}
      y'=\sum_{i=1}^{k-1}\alpha_i(1+\pi)^i,\qquad x_1=\sum_{i=1}^{k-1}\beta_i(1+\pi)^i\qquad\text{and}\qquad\delta z=\sum_{i=0}^{k-2}\gamma_i(1+\pi)^i
     \end{displaymath}
     where $\alpha_i$, $\beta_i$, $\gamma_i\in E$. Then (\ref{findingx}) says that
     \begin{align*}
      \alpha_i &= \beta_i\qquad\text{for }i\ge2 \\
      \alpha_1 &= \beta_1-\sum_{i+j=p}\beta_i\gamma_{j}+\beta_{p^2-(k-1)(p-1)}
     \end{align*}
     where $\gamma_i=\beta_i=0$ if $i<0$. But $p^2-(k-1)(p-1)>1$ and $\gamma_{p-1}=0$, the matrix relating $(\alpha_i)_{1\le i\le k-1}$ and $(\beta_i)_{1\le i\le k-1}$ is upper triangular with non-zero entries on the diagonal. Therefore, there is a bijection between $(\alpha_i)_{1\le i\le k-1}\in E^{k-1}$ and $(\beta_i)_{1\le i\le k-1}\in E^{k-1}$. In other words, given any $y'$ as above, there exists a unique $x_1$ (and hence $x_2$) such that $x'\in\DD(V_{\bar{f}}(k-1))^{\psi=1}$. For any $0\le j\le k-2$, we can therefore choose $y$ (and hence $y'$) such that $x_1\equiv \pi^j\mod \pi^{j+1}$. In this case,
     \begin{align*}
      \rmCol_1(x') &= y_1+\vp(y')-\psi(q^{k-1}x_1)-\vp(x_1)+x_1\delta z\\
      &\equiv -\psi(q^{k-1}x_1)-\vp(x_1)+x_1\delta z\mod \pi^{k-1}\\
      &\equiv (-p^{k-2-j}-p^j+a_p)\pi^j\mod \pi^{j+1},
     \end{align*}
     where we deduce the last line from the previous one using Lemma~\ref{constantterm} and the observation that $\pi q=\varphi(\pi)$. Therefore, our assumption on $a_p$ implies that for all $y_1\in(E\otimes\BB_{\Qp}^+)^{\psi=0}$, there exists some $x\in\DD(V_{\bar{f}}(k-1))^{\psi=1}$ such that $\rmCol^+(x)\equiv y_1\mod \pi^{j+1}$ by induction. Hence we are done.
    \end{proof}
 
    \begin{corollary}\label{pseudo}
     Under assumptions (B), (C) and (D), the image of $\rmCol_1:\DD(T_{\bar{f}}(k-1))^{\psi=1}\rightarrow(\EA)^{\psi=0}$ is pseudo isomorphic to $(\EA)^{\psi=0}$.
    \end{corollary}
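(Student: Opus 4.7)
The plan is to deduce the corollary as an almost immediate consequence of Propositions~\ref{col1} and~\ref{surjectiveCol1}. Let $I := \image(\rmCol_1) \subseteq (\EA)^{\psi=0}$ and let $C := (\EA)^{\psi=0} / I$ be the cokernel of the inclusion. To show the inclusion is a pseudo-isomorphism of $\Lambda_{\calO_E}(G_\infty)$-modules, it suffices to prove that $C$ is pseudo-null, and since $\Lambda_{\calO_E}(G_\infty)$ has Krull dimension $2$, any finite $\Lambda_{\calO_E}(G_\infty)$-module is pseudo-null. So the goal reduces to showing that $C$ is finite.

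First I would bound $C$ using Proposition~\ref{col1}, which gives $(\pi^{k-1}\EA)^{\psi=0} \subseteq I$. Hence $C$ is a quotient of
\[ (\EA)^{\psi=0} / (\pi^{k-1}\EA)^{\psi=0}, \]
and the natural map from this quotient to $\EA / \pi^{k-1}\EA$ (induced by the inclusion $(\EA)^{\psi=0} \hookrightarrow \EA$ followed by reduction modulo $\pi^{k-1}$) is injective, its kernel being exactly $(\EA)^{\psi=0} \cap \pi^{k-1}\EA = (\pi^{k-1}\EA)^{\psi=0}$. Since $\EA = \calO_E[[\pi]]$, the module $\EA/\pi^{k-1}\EA$ is free of rank $k-1$ over $\calO_E$; hence $C$ is finitely generated over $\calO_E$.

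Next I would invoke Proposition~\ref{surjectiveCol1}, which asserts that $\rmCol_1$ becomes surjective onto $(E \otimes \BB_{\Qp}^+)^{\psi=0}$ after inverting $p$. This forces $C \otimes_{\ZZ_p} \QQ_p = 0$, so $C$ is $p$-primary torsion. A finitely generated $\calO_E$-module which is $p$-primary is automatically finite (each element is killed by some power of $p$, and by Noetherianity we may take a uniform power). Hence $C$ is finite, and therefore pseudo-null as a $\Lambda_{\calO_E}(G_\infty)$-module.

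There is really no serious obstacle here: once one has both the integral lower bound (Proposition~\ref{col1}) and the rational surjectivity (Proposition~\ref{surjectiveCol1}), the argument is a routine sandwiching. The only point to be slightly careful about is the verification that the quotient $(\EA)^{\psi=0}/(\pi^{k-1}\EA)^{\psi=0}$ is genuinely finitely generated over $\calO_E$ (rather than merely of finite rank); this is where one needs the observation that the reduction map into $\EA/\pi^{k-1}\EA$ is injective, which follows tautologically from the definition of the submodule.
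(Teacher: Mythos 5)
Your argument is correct and establishes exactly what the paper's proof establishes, namely that $\image(\rmCol_1)$ has finite index in $(\EA)^{\psi=0}$, but you get the finiteness of the cokernel by a slightly different (softer) route. The paper also starts from Proposition~\ref{col1}, so that $(\pi^{k-1}\EA)^{\psi=0}$ lies in the image, but then it reuses the explicit construction in the proof of Proposition~\ref{surjectiveCol1}: for each $0\le j\le k-2$ one produces $x_j\in\DD(T_{\bar f}(k-1))^{\psi=1}$ with $\rmCol_1(x_j)\equiv\alpha_j\pi^j\bmod\pi^{j+1}$, where $\alpha_j=a_p-p^j-p^{k-2-j}\ne0$ (this is where assumption (B) enters), so the quotient embeds into $\prod_{j=0}^{k-2}\calO_E/\alpha_j\calO_E$ and is finite, with an explicit bound on the index. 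You instead use only the \emph{statement} of Proposition~\ref{surjectiveCol1} (surjectivity after inverting $p$), together with the observation that the cokernel injects into a quotient of $\EA/\pi^{k-1}\EA\cong\calO_E^{k-1}$ and is therefore finitely generated over $\calO_E$; being rationally zero it is then finite. This sandwiching is valid (assumption (B) enters for you only through the hypotheses of Proposition~\ref{surjectiveCol1}), and it has the merit of quoting only the statements of the two propositions rather than their proofs; what it gives up is the explicit annihilator $\prod_j\alpha_j$ bounding the index, which the paper's version records. One caveat you share with the paper: since $\rmCol_1$ is not $\Lambda_{\calO_E}(G_\infty)$-equivariant for the canonical $G_\infty$-action on $(\EA)^{\psi=0}$, the phrase ``pseudo-isomorphism of $\Lambda_{\calO_E}(G_\infty)$-modules'' should be read, as both you and the paper in effect do, as the assertion that the image has finite index in $(\EA)^{\psi=0}$.
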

    \begin{proof}
     It suffices to show that the said image has finite index in $(\EA)^{\psi=0}$. The proof of Proposition~\ref{col1} shows that $(\pi^{k-1}\EA)^{\psi=0}$ lies in the image and for all $0\le j\le k-2$, there exists $x_j\in\DD(T_{\bar{f}}(k-1))^{\psi=1}$ such that $\rmCol_1(x_j)\equiv\alpha_j\pi^j\mod \pi^{j+1}$ for some $\alpha_j\ne0$. Therefore, the quotient lies inside $\prod_{j=0}^{k-2}\calO_E/\alpha_i\calO_E$, so we are done.
    \end{proof}
   

   \subsection{The image of \texorpdfstring{$\rmCol_2$}{Col2}}\label{Col2}

    We now describe the image of $\rmCol_2$. We will show that it is generated by two elements.
    
    \begin{lemma}\label{gamma}
     Let $x=\pi^{1-k}\begin{pmatrix}x_1&x_2\end{pmatrix}\begin{pmatrix}n_1\\ n_2\end{pmatrix}\otimes e_{k-1}$ and $\gamma$ a topological generator of $\Gamma$, then
     \[
     \gamma(x)=\pi^{1-k}\begin{pmatrix}\gamma(x_1)&\gamma(x_2)\end{pmatrix}G_\gamma\begin{pmatrix}n_1\\ n_2\end{pmatrix}\otimes e_{k-1}
     \]
     for some $G_\gamma\in I+\pi M(2,\Zp[[\pi]])$.
    \end{lemma}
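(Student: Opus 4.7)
The plan is to unpack the action of $\gamma$ on each factor in the expression for $x$, using the basic structural properties of Wach modules recalled in Section~\ref{crysrepsandwachmods}.

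First, since $\NN(T_{\bar{f}})$ is a Wach module and the action of $G_\infty$ preserves $\NN(T_{\bar{f}})$ and is trivial on $\NN(T_{\bar{f}})/\pi\NN(T_{\bar{f}})$, we can write
\[
\begin{pmatrix}\gamma(n_1)\\ \gamma(n_2)\end{pmatrix}=H_\gamma\begin{pmatrix}n_1\\ n_2\end{pmatrix}
\]
for a unique matrix $H_\gamma\in I+\pi M(2,\calO_E\otimes\Zp[[\pi]])$. Secondly, recall that the action of $\gamma$ on $\pi$ is given by $\gamma(\pi)=(1+\pi)^{\chi(\gamma)}-1$, so $u_\gamma:=\gamma(\pi)/\pi\in\Zp[[\pi]]^\times$ with $u_\gamma\equiv\chi(\gamma)\bmod \pi$. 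Consequently, $\gamma(\pi^{1-k})=\pi^{1-k}u_\gamma^{1-k}$. Finally, $\gamma(e_{k-1})=\chi(\gamma)^{k-1}e_{k-1}$ by definition of the cyclotomic twist.

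Combining these computations, I would write
\[
\gamma(x) = \pi^{1-k}\,c_\gamma\begin{pmatrix}\gamma(x_1) & \gamma(x_2)\end{pmatrix}H_\gamma\begin{pmatrix}n_1\\ n_2\end{pmatrix}\otimes e_{k-1},
\]
where $c_\gamma:=\chi(\gamma)^{k-1}u_\gamma^{1-k}\in\Zp[[\pi]]$. Since $u_\gamma\equiv\chi(\gamma)\bmod\pi$, we have $c_\gamma\equiv 1\bmod\pi$. Setting $G_\gamma:=c_\gamma H_\gamma$, we obtain $G_\gamma\in I+\pi M(2,\calO_E\otimes\Zp[[\pi]])$ (which reduces to the claim as stated in the case $E=\Qp$).

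There is no real obstacle here; the proof is a straightforward unwinding. The only point requiring a moment's care is checking that the scalar factor coming from $\gamma(\pi^{1-k})\otimes\gamma(e_{k-1})$ combines with $H_\gamma$ in a way that preserves integrality and the congruence to $I$ modulo $\pi$, but this follows immediately from the observation that $u_\gamma\equiv\chi(\gamma)\bmod\pi$ so that the twist $\chi(\gamma)^{k-1}u_\gamma^{1-k}$ is a unit in $\Zp[[\pi]]$ congruent to $1$ modulo $\pi$.
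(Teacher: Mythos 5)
Your proof is correct and follows essentially the same route as the paper: unwind the action of $\gamma$ on each factor, use $\gamma(e_{k-1})=\chi(\gamma)^{k-1}e_{k-1}$ and $\gamma(\pi)/\pi\equiv\chi(\gamma)\bmod\pi$, and absorb the resulting scalar (which is $\equiv 1\bmod\pi$) into the matrix of $\gamma$ on the basis $n_1,n_2$. The only difference is that the paper obtains the congruence of that matrix to the identity modulo $\pi$ by citing \cite[Proposition 3.1.3]{bergerlizhu04} for the specific Berger--Li--Zhu basis, whereas you deduce it from the general Wach-module axiom that $G_\infty$ acts trivially on $\NN/\pi\NN$ (and you correctly observe that in general the entries lie in $\calO_E\otimes\Zp[[\pi]]$); both justifications are adequate here.
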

    \begin{proof}
     By \cite[Proposition 3.1.3]{bergerlizhu04}, there exists $G_{\gamma}\in I+\pi M_2(\Zp[[\pi]])$ such that $\begin{pmatrix}\gamma(n_1)\\ \gamma(n_2)\end{pmatrix}=G_{\gamma}^T\begin{pmatrix}n_1\\ n_2\end{pmatrix}$. Therefore,
     \begin{align*}
      \gamma(x)&=\gamma(\pi)^{1-k}\begin{pmatrix}\gamma(x_1)&\gamma(x_2)\end{pmatrix}G_\gamma^T\begin{pmatrix}n_1\\ n_2\end{pmatrix}\otimes\chi(\gamma)^{k-1} e_{k-1}\\
      &=\left(\frac{(1+\pi)^{\chi(\gamma)}-1}{\chi(\gamma)}\right)^{1-k}\begin{pmatrix}\gamma(x_1)&\gamma(x_2)\end{pmatrix}G_\gamma^T\begin{pmatrix}n_1\\ n_2\end{pmatrix}\otimes e_{k-1}.
     \end{align*}
     But $\chi(\gamma)\in 1+p\Zp$, which implies $((1+\pi)^{\chi(\gamma)}-1)/\chi(\gamma)\in\pi(1+p\Zp[[\pi]])$. Hence the result.
    \end{proof}

    \begin{lemma}\label{firstcoefficient2}
     Let $x=\pi^{1-k}\begin{pmatrix}x_1&x_2\end{pmatrix}\begin{pmatrix}n_1\\ n_2\end{pmatrix}\otimes e_{k-1}\in\DD(V_{\bar{f}}(k-1))^{\psi=1}$. Write $x_i=\sum_{j\geq 0}a_{i,j}\pi^j$. Then $x_1$ has order $<k-1$ if and only if $x_2$ has order $<k-1$. If this is the case, they have the same order which we denote by $d_x$. Moreover, $a_{2,d_x}=-p^{k-2-d_x}a_{1,d_x}$.
    \end{lemma}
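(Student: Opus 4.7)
The plan is to translate the condition $\psi(x) = x$, via Lemma~\ref{psi}, into explicit coordinate relations and then analyse them using the multiplicativity $\psi(\vp(a)b) = a\psi(b)$. By Lemma~\ref{psi}, the hypothesis $\psi(x) = x$ yields in particular
\[ x_2 = -\psi(q^{k-1}x_1). \]
Since $q^{k-1} = \vp(\pi^{k-1})\cdot \pi^{1-k}$, multiplicativity of $\psi$ rewrites this cleanly as
\[ x_2 = -\pi^{k-1}\,\psi(\pi^{1-k}x_1), \]
where on the right, $\psi$ is applied to an element of the Laurent series ring $E \otimes \BB_{\QQ_p}$ (a pole at $\pi = 0$ is allowed).

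From this identity the easy half is immediate: if $v_\pi(x_1) \geq k-1$ then $\pi^{1-k}x_1 \in E\otimes\BB_{\QQ_p}^+$, so $\psi(\pi^{1-k}x_1) \in E\otimes\BB_{\QQ_p}^+$ too, giving $v_\pi(x_2) \geq k-1$. Contrapositively, $v_\pi(x_2) < k-1$ forces $v_\pi(x_1) < k-1$. The remaining implication and the equality of orders will follow from the explicit leading-coefficient computation in the next step.

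The main computation is: assuming $v_\pi(x_1) = d < k-1$ and setting $j = k-1-d \geq 1$, compute the leading Laurent term of $\psi(\pi^{1-k}x_1)$. Writing $\pi^{1-k}x_1 = a_{1,d}\pi^{-j} + O(\pi^{1-j})$, this reduces to establishing
\[ \psi(\pi^{-j}) \;=\; p^{j-1}\pi^{-j} + O(\pi^{1-j}) \qquad (j \geq 1), \]
from which we obtain $\psi(\pi^{1-k}x_1) = a_{1,d}\,p^{k-2-d}\,\pi^{d-(k-1)} + O(\pi^{d-k+2})$. Substituting back yields $x_2 = -a_{1,d}\,p^{k-2-d}\,\pi^d + O(\pi^{d+1})$, which is the desired formula and forces $v_\pi(x_2) = d$ (since $a_{1,d} \neq 0$).

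The leading-term formula for $\psi(\pi^{-j})$, which I expect to be the main technical point, is obtained by comparing most-singular parts in the defining identity $\vp\psi(f)(\pi) = p^{-1}\sum_{\zeta^p=1} f(\zeta(1+\pi)-1)$. For $\zeta \neq 1$ the substitution $\zeta(1+\pi)-1$ has nonzero constant term $\zeta-1$, so $(\zeta(1+\pi)-1)^{-j}$ is regular at $\pi = 0$ and contributes only to terms of non-negative $\pi$-order; only the $\zeta = 1$ term contributes to the pole, yielding $\vp\psi(\pi^{-j}) = p^{-1}\pi^{-j} + O(\pi^{1-j})$. Writing $\psi(\pi^{-j}) = c\pi^{-j} + O(\pi^{1-j})$ and using $\vp(\pi^{-j}) = (\pi q)^{-j} = p^{-j}\pi^{-j} + O(\pi^{1-j})$, injectivity of $\vp$ forces $c\,p^{-j} = p^{-1}$, so $c = p^{j-1}$, completing the argument.
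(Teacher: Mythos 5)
Your proposal is correct and takes essentially the same route as the paper: it extracts the relation $x_2=-\psi(q^{k-1}x_1)$ from Lemma~\ref{psi} and then compares leading coefficients. The key formula you derive, $\psi(\pi^{-j})=p^{j-1}\pi^{-j}+O(\pi^{1-j})$, is (via $\psi(\pi^{-j})=\pi^{-j}\psi(q^{j})$) precisely the content of Lemma~\ref{constantterm}, which the paper cites at this point and which you simply re-prove directly from the defining formula for $\psi$ rather than by induction.
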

    \begin{proof}
     Since $x\in\DD(V_{\bar{f}}(k-1))^{\psi=1}$, we have $x_2=-\psi(q^{k-1}x_1)$, hence the result by Lemma~\ref{constantterm}.
    \end{proof}

    \begin{proposition}\label{col2}
     Under assumptions (C) and (D), the image of $\rmCol_2:\DD(V_{\bar{f}}(k-1))\rightarrow (E\otimes\BB_{\Qp}^+)^{\psi=0}$ contains $ (\vp(\pi)^{k-1}E\otimes\BB_{\Qp}^+)^{\psi=0}$ and the quotient of the containment is a cyclic $\Lambda_E(\Gamma)$-module under the action of $\Gamma$ described in Lemma~\ref{gamma}.
    \end{proposition}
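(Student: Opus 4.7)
Plan. The strategy mirrors the proof of Proposition~\ref{col1}, now for the second coordinate of $\bfCol$. For the inclusion, take $y_2 \in (\vp(\pi)^{k-1}(E\otimes\BB^+_{\Qp}))^{\psi=0}$ and write $y_2 = \vp(\pi)^{k-1} u$; the identity $\psi(\vp(\pi)^{k-1} f) = \pi^{k-1} \psi(f)$ forces $u \in (E\otimes\BB^+_{\Qp})^{\psi=0}$. Setting $y_1 = 0$ and forming
\[ y = (0,\ y_2)(\pi q)^{1-k} P^T \begin{pmatrix} n_1 \\ n_2 \end{pmatrix} \otimes e_{k-1} = u(\delta z\, n_2 - n_1) \otimes e_{k-1}, \]
which lies in $\NN(V_{\bar f}) \otimes e_{k-1}$, I apply Lemma~\ref{psi} to the coordinates $(x_1, x_2) = (-\pi^{k-1} u,\ \pi^{k-1} u\, \delta z)$; the two $\delta z$ terms cancel and what remains of the second coordinate is $-\pi^{1-k}\psi(\vp(\pi)^{k-1} u) = -\pi^{1-k}\pi^{k-1}\psi(u)=0$, so $\psi(y) = 0$. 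Then $x := \sum_{n \geq 0} \vp^n(y)$ converges (by the same application of Lemma~\ref{limit} as in Proposition~\ref{col1}, since $(A_\vp^T)^n\to 0$ and $\vp^n(M'^{-1}) \to I$) to an element of $\DD(V_{\bar f}(k-1))^{\psi=1}$ with $(1-\vp)x = y$; hence $\rmCol_2(x) = y_2$.

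For the cyclicity of the quotient, substituting the $\psi=1$ relation $x_2 = -\psi(q^{k-1} x_1)$ into \eqref{explicit2} yields the clean formula
\[ \rmCol_2(x) = -q^{k-1} x_1 - \vp(x_2) = -(1 - \vp\psi)(q^{k-1} x_1). \]
Since $\vp\psi(\vp(\pi)^{k-1} g) = \vp(\pi)^{k-1}\vp\psi(g)$, the operator $1-\vp\psi$ descends to the quotient by $\vp(\pi)^{k-1}(\cdots)$; using that $q^{k-1}$ is a unit in $E\otimes\BB^+_{\Qp}$, we see that $\rmCol_2(x) \bmod \vp(\pi)^{k-1}$ is determined by $x_1 \bmod \pi^{k-1}$. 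The bijection established in the proof of Proposition~\ref{surjectiveCol1} (under (B), (C), (D)) shows that as $x$ varies over $\DD(V_{\bar f}(k-1))^{\psi=1}$, the reduction $x_1 \bmod \pi^{k-1}$ ranges over the $(k-1)$-dimensional $E$-subspace spanned by $\{(1+\pi)^i\}_{1\leq i<k}$, so $\image(\rmCol_2) \bmod \vp(\pi)^{k-1}(\cdots)^{\psi=0}$ has $E$-dimension at most $k-1$ — matching the $E$-rank of $\Lambda_E(\Gamma)/p_{k-1}$.

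To finish, I would exhibit an explicit cyclic generator. Take $x^{(0)} \in \DD(V_{\bar f}(k-1))^{\psi=1}$ corresponding in Proposition~\ref{surjectiveCol1} to the choice $y' = 1+\pi$, and then argue inductively (as in the final paragraph of that proof) that for each $0 \leq j \leq k-2$ some twisted $\Lambda_E(\Gamma)$-translate of $x^{(0)}$ (via Lemma~\ref{gamma}) has $\rmCol_2$-image congruent to $c_j \pi^j \bmod \pi^{j+1}$ with $c_j \neq 0$. Via Lemma~\ref{lem2} this identifies the $\Lambda_E(\Gamma)$-span of $\rmCol_2(x^{(0)})$ modulo $\vp(\pi)^{k-1}$ with the full upper bound from the previous paragraph, establishing cyclicity. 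The main obstacle is this last step: verifying the leading coefficient $c_j$ in each degree is nonzero requires the analogue of the Hensel-style computation in the proof of Proposition~\ref{surjectiveCol1} — the formulas $\psi(q^{k-1} x_1)$ and $\vp\psi(q^{k-1} x_1)$ must be expanded modulo $\pi^{j+1}$ using Lemma~\ref{constantterm}, and the non-vanishing of $c_j$ ought to reduce, via assumption~(B), to the non-vanishing of the same expression $-p^{k-2-j} - p^j + a_p$ that drove the $\rmCol_1$ surjectivity argument.
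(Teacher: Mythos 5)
Your first half (the containment) is correct and is essentially the paper's argument: set $y=(0,y_2)(\pi q)^{1-k}P^T\binom{n_1}{n_2}\otimes e_{k-1}$, check via Lemma~\ref{psi} that $\psi(y)=0$, and sum $\sum_{n\ge 0}\vp^n(y)$ using Lemma~\ref{limit}. (One small slip there: $q^{k-1}$ is \emph{not} a unit in $E\otimes\BB^+_{\Qp}$, since $q$ vanishes at $\zeta_p-1$; the step you want -- that $\rmCol_2(x)\bmod\vp(\pi)^{k-1}$ depends only on $x_1\bmod\pi^{k-1}$ -- follows instead from the displayed identity $\vp\psi(\vp(\pi)^{k-1}g)=\vp(\pi)^{k-1}\vp\psi(g)$ together with $\vp(\pi)^{k-1}=q^{k-1}\pi^{k-1}$ and the fact that $\BB^+_{\Qp}$ is a domain.)

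The second half has a genuine gap. Showing that the quotient has $E$-dimension at most $k-1$ does not prove it is a \emph{cyclic} $\Lambda_E(\Gamma)$-module; one must actually exhibit a generator, and this is precisely the step you leave unfinished ("the main obstacle is this last step"). Moreover, the route you sketch for closing it -- verifying non-vanishing leading coefficients by an explicit computation that "ought to reduce, via assumption~(B), to the non-vanishing of $-p^{k-2-j}-p^j+a_p$" -- would import assumption~(B), which is not among the hypotheses of the proposition (only (C) and (D) are assumed). The paper avoids any such non-vanishing condition by a different mechanism: by Lemma~\ref{firstcoefficient2}, for any element of $\DD(V_{\bar f}(k-1))^{\psi=1}$ of order $d_x<k-1$ the leading coefficients of $x_1$ and $x_2$ are tied by $a_{2,d_x}=-p^{k-2-d_x}a_{1,d_x}$, so at each order they vanish or not \emph{simultaneously}; the generator $f$ is then built adaptively, starting from $f_0=x_0$ and, at step $i$, examining $\prod_{j=0}^{i}(\gamma-\chi(\gamma)^j)(f_i)$ (which again lies in $\DD^{\psi=1}$, so the dichotomy applies): if its leading coefficients at order $i+1$ vanish one replaces $f_{i+1}=f_i+x_{i+1}$, otherwise one keeps $f_{i+1}=f_i$, where the $x_i$ of each order $i<k-1$ are supplied by the construction in the proof of Proposition~\ref{surjectiveCol1} (the part that does not use (B)). With $f=f_{k-2}$ one can then strip off, order by order, $\Lambda_E(\Gamma)$-multiples of $f$ from any $x$ until what remains has order $\ge k-1$ and hence maps into $(\vp(\pi)^{k-1}E\otimes\BB^+_{\Qp})^{\psi=0}$. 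You would need either to reproduce this adaptive construction (or some equivalent) or to remove the reliance on (B) from your explicit-leading-coefficient approach; as it stands the cyclicity claim is not established.
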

    \begin{proof}
     For any $y_2\in(\vp(\pi)^{k-1}E\otimes\BB_{\Qp}^+)^{\psi=0}$, we have 
     \[
     y:=\begin{pmatrix}0&y_2\end{pmatrix}\cdot(\pi q)^{1-k}P^T\begin{pmatrix}n_1\\ n_2\end{pmatrix}\otimes e_{k-1}=\vp(\pi)^{1-k}\begin{pmatrix}-y_2&y_2\delta z\end{pmatrix}\begin{pmatrix}n_1\\ n_2\end{pmatrix}\otimes e_{k-1}.
     \]
     Hence, as in the proof of Proposition \ref{col1}, $\sum_{n\ge0}\vp^n(y)$ converges which implies that $y_2$ lies in the image of $\rmCol_2$.

     Recall that if $x=\pi^{1-k}\begin{pmatrix}x_1&x_2\end{pmatrix}\begin{pmatrix}n_1\\ n_2\end{pmatrix}\otimes e_{k-1}$, then $-\rmCol_2(x)=q^{k-1}x_1+\psi(x_2)$. For $i=1,2$, write $x_i=\sum_{j\ge0}a_{i,j}\pi^j$ and
     \begin{eqnarray*}
      \Cbar(x)&=&q^{k-1}x_1-\vp(x_2)\mod\vp(\pi)^{k-1}\\
              &=&(q^{k-1}a_{1,0}+a_{2,0})+\vp(\pi)(q^{k-2}a_{1,1}+a_{2,1})+\cdots\vp(\pi)^{k-2}(qa_{1,k-2}+a_{2,k-2})\mod\vp(\pi)^{k-1}.
     \end{eqnarray*}
     We now construct a generator $f$ for $\Cbar(\DD(V)^{\psi=1})$ over $\Lambda_E(\Gamma)$ inductively. By the proof of Proposition~\ref{col1}, there exists $x_i\in\DD(V)^{\psi=1}$ of order $i$ for all $0\leq i<k-1$. Let $f_0=x_0$. For $i\ge0$, suppose that we have constructed $f_{i}$. Write
     \begin{align*}
      f_i' &=\prod_{j=0}^i\big(\gamma-\chi(\gamma)^j\big)(f_i)\\
           &=\pi^{1-k}\begin{pmatrix}f'_{i,1}&f'_{i,2}\end{pmatrix}\begin{pmatrix}n_1\\ n_2\end{pmatrix}\otimes e_{k-1}
     \end{align*}
     then it follows from Lemma \ref{gamma} that $f_i'$ is 
     of order $\geq i+1$. Let $\alpha_{i+1,1}$ and $\alpha_{i+1,2}$ be the coefficients of $\pi^{i+1}$ in the power series expansions of $f'_{i,1}$ and $f'_{i,2}$, respectively. There are two possibilities: either both $\alpha_{i+1,j}$ are non-zero, in which case we let $f_{i+1}=f_i$. Or both of them are zero, in which case we let $f_{i+1}=f_i+x_{i+1}$. 
   
     Let $f=f_{k-2}$. Then for all $0\leq i<k-1$, the order of $\prod_{j=0}^i\big(\gamma-\chi(\gamma)^j\big)(f)$ is $i$. To finish the proof, it is now sufficient to observe that by Lemma \ref{firstcoefficient2}, for all $x\in\DD(V_{\bar{f}}(k-1))^{\psi=1}$ there exist scalars $\alpha_i\in E$ for $0\leq i<k-1$ such that $x-\sum_{i=1}^{k-2}\alpha_i\prod_{j=1}^i(\gamma-\chi(\gamma)^j)f$ is of order $\geq k-1$. 
    \end{proof}

      
  \subsection{The Iwasawa transform}\label{Iwasawatransform}
  
  \begin{convention}
   For the rest of this section as well as in Sections~\ref{algorithm} and~\ref{ucol1}, we assume without loss of generality that $\chi(\gamma)=1+p$.
  \end{convention}

   \begin{lemma}\label{gammafraction*}  
    \begin{equation}\label{firstcong} 
     \frac{q}{\gamma(q)}= 1 \mod (p\pi,\pi^{p-1}).
    \end{equation}
   \end{lemma}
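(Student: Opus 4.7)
The plan is to reformulate the claim as $R := \gamma(q)/q \equiv 1 \pmod{(p\pi, \pi^{p-1})}$, where $R$ will turn out to be a unit of $\ZZ_p[[\pi]]$. The original statement $q/\gamma(q) \equiv 1$ then follows immediately by inverting $R = 1 + (R - 1)$, since the ideal $(p\pi, \pi^{p-1})$ is closed under geometric series expansion.

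The first step is to obtain an explicit formula for $R$. Setting $\omega := 1 + \pi$, one has $\pi q = \omega^p - 1$ and $\gamma(\pi) = \omega^{p+1} - 1$; applying $\gamma$ gives $\gamma(\pi q) = \omega^{p(p+1)} - 1 = (\omega^p - 1)\sum_{j=0}^p \omega^{pj}$. Since also $\gamma(\pi) = \pi \sum_{i=0}^p \omega^i$, dividing yields
\[ R \;=\; \frac{N}{D}, \qquad N := \sum_{j=0}^p (1+\pi)^{pj}, \qquad D := \sum_{i=0}^p (1+\pi)^i. \]
Because $D|_{\pi=0} = p+1 \in \ZZ_p^\times$, the series $D$ is a unit in $\ZZ_p[[\pi]]$, so $R$ genuinely lies in $\ZZ_p[[\pi]]$ and it suffices to show $N \equiv D \pmod{(p\pi, \pi^{p-1})}$.

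I plan to verify this congruence coefficient-by-coefficient. The coefficient of $\pi^m$ in $N - D$ equals $\sum_{j=1}^p\bigl(\binom{pj}{m} - \binom{j}{m}\bigr)$. The case $m = 0$ is trivially zero, and any $m \geq p - 1$ contributes a term automatically lying in $(\pi^{p-1})$. In the remaining range $1 \leq m \leq p - 2$, Lucas's theorem forces $\binom{pj}{m} \equiv 0 \pmod p$ (the units digit of $pj$ in base $p$ is zero), and the hockey-stick identity rewrites $\sum_{j=1}^p \binom{j}{m} = \binom{p+1}{m+1}$, which also vanishes modulo $p$ by Lucas applied to $p + 1 = 1 + 1 \cdot p$, since $2 \leq m + 1 \leq p - 1$. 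Hence each such coefficient is divisible by $p$ and contributes a term in $p\pi \ZZ_p[[\pi]]$, giving $N - D \in (p\pi, \pi^{p-1})$.

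Dividing by the unit $D$ then yields $R - 1 \in (p\pi, \pi^{p-1})$, and inverting as above completes the proof. The main obstacle is spotting the clean formula $R = N/D$; once that is in hand, everything reduces to the two combinatorial vanishings above. It is no accident that the exponent in the ideal is exactly $p - 1$: for $m = p - 1$ one has $\binom{p+1}{m+1} = p + 1 \not\equiv 0 \pmod p$, so the coefficient-wise vanishing breaks down precisely at this bound.
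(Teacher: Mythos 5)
Your proof is correct and rests on the same identity as the paper's: your $N/D$ is precisely the paper's quotient $\frac{1+\vp(q)+\vp^2(\pi)}{1+q+\vp(\pi)}$ (indeed $D=\gamma(\pi)/\pi = 1+q+\vp(\pi)$ and $N=\vp(D)$), so both arguments reduce to the congruence $N\equiv D \bmod (p\pi,\pi^{p-1})$ and then invert a unit. The only difference is in how that congruence is checked: the paper deduces it from $q\equiv\pi^{p-1}\bmod p$ together with the constant term $p$ of $q$ (so numerator and denominator are both $\equiv 1+p$), whereas you verify it coefficient-by-coefficient via Lucas and the hockey-stick identity --- longer, but equally valid.
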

   \begin{proof}
    We have $q=\frac{\vp(\pi)}{\pi}$, and $\gamma(1+\pi)=(1+\pi)(1+\vp(\pi))$. Hence
    \begin{equation*}
     \frac{q}{\gamma(q)} = \frac{1+q+\vp(\pi)}{ 1+\vp(q)+\vp^2(\pi)} 
    \end{equation*}
    It remains to notice that $q=\pi^{p-1}\mod p$. Moreover, the constant term of $q$ (and hence of $\vp(q)$) is $p$, and $\sum_{j=0}^{+\infty}(-p)^j$ is the multiplicative inverse of $1+p$, which implies the result. 
   \end{proof}

   \begin{corollary}\label{firstmatrix*} 
    Both $\frac{\log^+}{\gamma(\log^+)}$ and $\frac{\log^-}{\gamma(\log^-)}$ are congruent to $1\mod (p\pi,\pi^{p-1})$ (and hence in particular congruent to $1\mod (p\pi,\pi^2)$ since we assume $p\geq 3$).
   \end{corollary}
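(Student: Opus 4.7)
The plan is to express the quotient $\log^+/\gamma(\log^+)$ as an infinite product of Frobenius twists of $q/\gamma(q)$, and then apply Lemma~\ref{gammafraction*} together with the stability of the ideal $I := (p\pi, \pi^{p-1}) \subset \Zp[[\pi]]$ under $\vp$. Concretely, since $\gamma$ and $\vp$ commute on $\BB^+_{\rig,\QQ_p}$ and $\gamma$ fixes $p$, we have
\[
\frac{\lp}{\gamma(\lp)} = \prod_{n \geq 0} \frac{\vp^{2n+1}(q)/p}{\vp^{2n+1}(\gamma(q))/p} = \prod_{n \geq 0} \vp^{2n+1}\!\left(\frac{q}{\gamma(q)}\right),
\]
and analogously for $\lm/\gamma(\lm)$ with $\vp^{2n+1}$ replaced by $\vp^{2n}$. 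Thus it is enough to show that every factor $\vp^{k}(q/\gamma(q))$ lies in $1 + I$.

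Next I would verify that $\vp$ stabilizes the ideal $I$. Since $\vp(\pi) = (1+\pi)^p - 1 = p\pi + \binom{p}{2}\pi^2 + \cdots + p\pi^{p-1} + \pi^p$ lies in $\pi\Zp[[\pi]]$, we get $\vp(p\pi) = p\vp(\pi) \in p\pi\Zp[[\pi]] \subset I$ and $\vp(\pi^{p-1}) = \vp(\pi)^{p-1} \in \pi^{p-1}\Zp[[\pi]] \subset I$; as $I$ is generated by $p\pi$ and $\pi^{p-1}$, this yields $\vp(I) \subseteq I$. By Lemma~\ref{gammafraction*}, $q/\gamma(q) = 1 + a$ for some $a \in I$, and iterating gives $\vp^{k}(a) \in I$ for every $k \geq 0$. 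Hence each factor in the product above is $\equiv 1 \pmod{I}$.

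Finally, I would argue that the congruence survives passage to the infinite product. Since the quotient $\Zp[[\pi]]/I$ is a finitely generated $\Zp$-module (isomorphic to $\Zp \oplus (\Zp/p)^{p-2}$), the ideal $I$ is closed in the natural topology on $\Zp[[\pi]] \subset \BB^+_{\rig,\QQ_p}$, so the partial products (each $\equiv 1 \pmod I$) converge to an element still $\equiv 1 \pmod I$. This establishes the congruence modulo $(p\pi,\pi^{p-1})$ for both $\lp/\gamma(\lp)$ and $\lm/\gamma(\lm)$, and the parenthetical weakening to $(p\pi,\pi^2)$ follows from $p \geq 3$ which forces $p-1 \geq 2$. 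I do not foresee a substantive obstacle here: the content of the argument is entirely located in Lemma~\ref{gammafraction*}, and what remains is only to propagate this congruence through iterated Frobenius and an infinite product.
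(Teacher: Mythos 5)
Your argument is correct and is essentially the paper's own proof (which simply says the corollary is clear from Lemma~\ref{gammafraction*} and the definition of $\log^\pm$): you factor $\log^\pm/\gamma(\log^\pm)$ into Frobenius twists of $q/\gamma(q)$ and propagate the congruence, which is exactly the intended reasoning, just with the stability of $(p\pi,\pi^{p-1})$ under $\vp$ and the passage to the limit of partial products made explicit. No gap to report.
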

   \begin{proof}
    Clear from Lemma~\ref{gammafraction*} and the definition of $\log^\pm$. 
   \end{proof}
  
   Define
   \[ G_\gamma^{(k-1)}=\begin{pmatrix} \left(\frac{\log^+}{\gamma(\log^+)}\right)^{k-1} & 0 \\ 0 & \left(\frac{\log^-}{\gamma(\log^-)}\right)^{k-1}\end{pmatrix}.\] 
  
   \begin{lemma} \label{matrixofgamma}
    $G_\gamma^{(k-1)}\simeq \begin{pmatrix} 1 & 0 \\ 0 & 1 \end{pmatrix}\mod (p\pi,\pi^2)$.
   \end{lemma}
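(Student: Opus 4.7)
The plan is to deduce this directly from Corollary \ref{firstmatrix*}, which already gives that $\frac{\log^+}{\gamma(\log^+)}$ and $\frac{\log^-}{\gamma(\log^-)}$ are each congruent to $1$ modulo the ideal $(p\pi,\pi^2)$ (using $p\geq 3$). Since the matrix $G_\gamma^{(k-1)}$ is diagonal with entries equal to the $(k-1)$-th powers of these two ratios, it suffices to show that if $a\equiv 1\pmod{(p\pi,\pi^2)}$, then $a^{k-1}\equiv 1\pmod{(p\pi,\pi^2)}$.

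For this, write $a=1+x$ with $x\in(p\pi,\pi^2)$, so that
\[ a^{k-1}-1 = \sum_{j=1}^{k-1}\binom{k-1}{j}x^j. \]
Since $(p\pi,\pi^2)$ is an ideal and $x$ lies in it, every term $x^j$ with $j\geq 1$ lies in $(p\pi,\pi^2)$; multiplying by the integer coefficients $\binom{k-1}{j}$ keeps us in the ideal. Applying this to $a=\log^+/\gamma(\log^+)$ for the $(1,1)$-entry and to $a=\log^-/\gamma(\log^-)$ for the $(2,2)$-entry yields the claim.

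There is no genuine obstacle here; the content of the lemma is entirely in Corollary \ref{firstmatrix*}, and the present statement merely records the fact that the congruence is preserved under the diagonal $(k-1)$-th power.
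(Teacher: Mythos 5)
Your proof is correct and matches the paper's approach: the paper's own proof of this lemma is simply "immediate from the definition and Corollary~\ref{firstmatrix*}", and your argument (writing each diagonal entry as $(1+x)^{k-1}$ with $x$ in the ideal and expanding binomially) just makes explicit the elementary fact that a congruence to $1$ modulo an ideal is preserved under taking powers.
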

   \begin{proof}
    Immediate from the definition and Corollary~\ref{firstmatrix*}.
   \end{proof}

   Let $\varpi_E$ be a uniformizer of $E$. 
    
   \begin{proposition}\label{gammamatrix} 
    $G_\gamma\simeq \begin{pmatrix} 1 & 0 \\ 0 & 1 \end{pmatrix}\mod (\varpi_E\pi,\pi^2)$.
   \end{proposition}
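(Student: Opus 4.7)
The plan is a direct computation modulo $\pi^2$ using the commutativity of $\varphi$ and $\gamma$. Unwinding the proof of Lemma~\ref{gamma}, if we temporarily denote by $\widetilde G_\gamma$ the Berger--Li--Zhu matrix with $\begin{pmatrix}\gamma(n_1)\\\gamma(n_2)\end{pmatrix}=\widetilde G_\gamma^T\begin{pmatrix}n_1\\n_2\end{pmatrix}$, then the $G_\gamma$ of Proposition~\ref{gammamatrix} satisfies
\[ G_\gamma = \left(\frac{\chi(\gamma)\pi}{\gamma(\pi)}\right)^{k-1}\widetilde G_\gamma^T. \]
Using $\chi(\gamma)=1+p$, a Taylor expansion of $\gamma(\pi)=(1+\pi)^{1+p}-1$ shows that the scalar prefactor is $\equiv 1\pmod{(p\pi,\pi^2)}$; hence Proposition~\ref{gammamatrix} reduces to proving $\widetilde G_\gamma\equiv I\pmod{(\varpi_E\pi,\pi^2)}$.

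The relation $\varphi\gamma=\gamma\varphi$ on $\NN(T_{\bar f})$ yields $P\cdot\varphi(\widetilde G_\gamma)=\widetilde G_\gamma\cdot\gamma(P)$, with $P=\begin{pmatrix}0 & -1\\ q^{k-1} & \delta z\end{pmatrix}$ as in Section~\ref{ss}. Writing $\widetilde G_\gamma = I + H\pi + O(\pi^2)$ with $H = \begin{pmatrix}a_1 & b_1\\ c_1 & d_1\end{pmatrix}\in M_2(\mathcal{O}_E)$ and using $\varphi(\pi)\equiv p\pi\pmod{\pi^2}$, comparison of $\pi^1$-coefficients produces the Sylvester-type equation
\[ pA_\varphi H - HA_\varphi = D_P, \]
where $A_\varphi = \begin{pmatrix}0 & -1\\ p^{k-1} & a_p\end{pmatrix}$ is the Frobenius matrix on $\Dcris(V_{\bar f})$ and $D_P\in M_2(\mathcal{O}_E)$ is the $\pi^1$-coefficient of $\gamma(P)-P$.

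To compute $D_P$ I work out the first two Taylor coefficients of $z$. Writing $\lm/\lp = \prod_{n\ge 0}\varphi^{2n}(q/\varphi(q))$, using $q/\varphi(q)\equiv 1-\tfrac{(p-1)^2}{2}\pi$ and $\varphi^{2n}(\pi)\equiv p^{2n}\pi$ modulo $\pi^2$, and summing the geometric series $\sum_{n\ge 0}p^{2n}=1/(1-p^2)$, one obtains $\lm/\lp\equiv 1+\tfrac{p-1}{2(p+1)}\pi\pmod{\pi^2}$, whence $z_0=p^m$ (so $\delta z_0=a_p$) and $z_1=\tfrac{p^m(k-1)(p-1)}{2(p+1)}$. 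A short expansion of $q^{k-1}$ then yields
\[ D_P = \begin{pmatrix}0 & 0\\ \tfrac{p^k(k-1)(p-1)}{2} & \tfrac{a_p(k-1)(p-1)p}{2(p+1)}\end{pmatrix}. \]

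The $(1,1)$ and $(1,2)$ entries of the Sylvester equation give $c_1=-b_1p^{k-2}$ and $a_1=pd_1+a_pb_1$; the $(2,1)$ and $(2,2)$ entries then reduce to the $2\times 2$ system
\[ \begin{pmatrix}a_p & p+1\\ p^{k-2}(p+1) & a_p\end{pmatrix}\begin{pmatrix}b_1\\d_1\end{pmatrix} = \begin{pmatrix}p(k-1)/2\\a_p(k-1)p/(2(p+1))\end{pmatrix}. \]
Its determinant $a_p^2-p^{k-2}(p+1)^2$ is nonzero by the Ramanujan--Weil bound $|a_p|\le 2p^{(k-1)/2}$, and Cramer's numerator for $b_1$ is the exact cancellation $a_p\cdot\tfrac{p(k-1)}{2}-(p+1)\cdot\tfrac{a_p(k-1)p}{2(p+1)}=0$. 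Consequently $b_1=c_1=0$, $d_1=\tfrac{p(k-1)}{2(p+1)}$ and $a_1=pd_1$, all of which lie in $p\Zp\subseteq\varpi_E\mathcal{O}_E$. I expect the main obstacle to be the explicit derivation of $z_1$ from the infinite product for $\lm/\lp$ and the assembly of $D_P$; once these are in hand the Sylvester system solves transparently and the cancellation forcing $b_1=c_1=0$ emerges automatically (indicating, incidentally, that the stronger congruence $G_\gamma\equiv I\pmod{(p\pi,\pi^2)}$ should in fact hold).
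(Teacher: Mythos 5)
Your argument is essentially correct for $k\ge 3$ and takes a genuinely different route from the paper. One small point of identification first: in Proposition~\ref{gammamatrix} the matrix $G_\gamma$ is the Berger--Li--Zhu matrix itself (what you call $\widetilde G_\gamma$); the twisted matrix only appears in Lemma~\ref{gamma}, and the scalar $\left(\chi(\gamma)\pi/\gamma(\pi)\right)^{k-1}$ is handled separately in Proposition~\ref{gmatrix*} via \eqref{crudecongruence}. Since you prove that this prefactor is $\equiv 1$ and then prove the congruence for $\widetilde G_\gamma$, this is harmless. The paper's proof uses the recursive construction of $G_\gamma$ in \cite{bergerlizhu04}, $G_\gamma^{(l)}=G_\gamma^{(l-1)}+\pi^{l-1}H^{(l)}$ for $l\ge k$: for $k>2$ every correction term is $O(\pi^2)$, so the statement is immediate from Lemma~\ref{matrixofgamma}, and only $k=2$ requires work, done by showing $H^{(2)}\equiv 0\bmod X$ from the defining recursion. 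You bypass the recursion entirely: writing $G_\gamma=I+H\pi+O(\pi^2)$, the commutation relation $P\vp(G_\gamma)=G_\gamma\gamma(P)$ yields the Sylvester equation $pP_0H-HP_0=pP_1$, which is uniquely solvable exactly when $a_p^2\ne p^{k-2}(p+1)^2$, and this is guaranteed by the Weil bound. Your expansions $q\equiv p+\frac{p(p-1)}{2}\pi$ and $p^m(\lm/\lp)^{k-1}\equiv p^m+\frac{p^m(k-1)(p-1)}{2(p+1)}\pi\bmod\pi^2$ are correct, the reduction to the $2\times 2$ system and the Cramer cancellation check out, and the resulting $H$ (diagonal, with entries in $p\Zp$) gives the proposition, indeed the stronger congruence modulo $(p\pi,\pi^2)$. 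What your method buys is a uniform, explicit computation of $H$ valid for all weights, at the price of more calculation; the paper gets $k>2$ for free from the structure of the construction.

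There is, however, a concrete slip, and it occurs precisely in the case the paper has to treat by hand. The $(2,2)$ entry of $P$ is $\delta z$ where $z=\sum_{i=0}^{k-2}z_i\pi^i$ is the \emph{truncation} of $p^m(\lm/\lp)^{k-1}$ in degrees at most $k-2$. For $k\ge 3$ the truncation contains the degree-one term and your $D_P$ is correct, but for $k=2$ one has $z=z_0$ (and $m=0$, so $\delta z=a_p$ is constant); hence the $\pi$-coefficient of the $(2,2)$ entry of $P$ is $0$, not $\delta z_1$, and the $(2,2)$ entry of $D_P$ must be $0$ rather than $\frac{a_p(k-1)(p-1)p}{2(p+1)}$. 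The method survives the correction: for $k=2$ the reduced system is
\[ a_pb_1+(p+1)d_1=\frac{p}{2},\qquad (p+1)b_1+a_pd_1=0,\]
whose determinant $a_p^2-(p+1)^2$ is a $p$-adic unit under assumption (C), giving $d_1=\frac{p(p+1)}{2\left((p+1)^2-a_p^2\right)}\in p\OO_E$, $b_1=-\frac{a_pd_1}{p+1}\in\varpi_E\OO_E$, $c_1=-b_1$ and $a_1=pd_1+a_pb_1\in p\OO_E$, so $H\equiv 0\bmod\varpi_E$ still holds. But as written, your $D_P$ and the claimed values of $b_1,c_1,d_1,a_1$ are wrong when $k=2$, so you should state and compute that case separately.
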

   \begin{proof}
    We first review the construction of $G_\gamma$ as in \cite[\S 3.1]{bergerlizhu04}. For $l\ge k$, we define recursively
    \[
     G_\gamma^{(l)}=G_\gamma^{(l-1)}+\pi^{l-1}H^{(l)}
    \]
    for some $H^{(l)}\in M(2,\Zp[[X]])$ where $X=a_p/p^m$ and $m=\lfloor\frac{k-2}{p-1}\rfloor$. Note that $X\in \mathfrak{m}_E$ by assumption (C). The matrix $G_\gamma$ is then given by the limit of $G_\gamma^{(l)}$ as $l\rightarrow\infty$. Therefore, when $k>2$, the result is immediate from Lemma~\ref{matrixofgamma}.
  
    When $k=2$, it suffices to show that $H^{(2)}\equiv0\mod \varpi_E$. By construction (see \cite[Lemma~3.1.2 and Proposition~3.1.3]{bergerlizhu04}), $H^{(2)}$ satisfies the following:
    \begin{equation}\label{H2}
      H^{(2)}- P_0 H^{(2)} \big(pP_0\big)^{-1}= -R^{(1)}\mod \pi
    \end{equation}
    for some matrix $R^{(1)}\in X M(2,\Zp[[\pi,X]])$ and $P_0=\begin{pmatrix} 0 & 1 \\ p & a_p \end{pmatrix}$. If we write $H=\begin{pmatrix} h_{11} & h_{12}\\ h_{21} & h_{22} \end{pmatrix}$, then \eqref{H2} says that
    \[ \begin{pmatrix} h_{11} & h_{12}+h_{21} \\ h_{21} & h_{22} \end{pmatrix} \equiv0\mod X,\]
    and hence we are done since $\varpi_E|X$.
   \end{proof}
 
   Let $n_i'=\vp(n_i\otimes \pi^{1-k}e_{k-1})$ for $i=1,2$. Let $T=T_{\bar{f}}(k-1)$ and $V=V_{\bar{f}}(k-1)$. (In fact, the proof works for $T=T_{\bar{f}}(m)$ for any integer $m$.) Recall that $\chi(\gamma)=1+p$.
 
   \begin{proposition}\label{gmatrix*} 
    We have $\gamma\big[(1+\pi)n_i'\big]=(1+\vp(\pi))(1+\pi)n_i'\mod (\varpi_E\vp(\pi),\vp(\pi)^2)$ for $i=1,2$.
   \end{proposition}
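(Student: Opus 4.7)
The plan is to split $\gamma[(1+\pi)n_i'] = \gamma(1+\pi)\cdot \gamma(n_i')$ and handle each factor. Since $\chi(\gamma)=1+p$, the standard identity $(1+\pi)^p = 1+\vp(\pi)$ gives the exact equality $\gamma(1+\pi) = (1+\pi)^{1+p} = (1+\pi)(1+\vp(\pi))$. Because this scalar is a unit, the statement reduces to the single congruence $\gamma(n_i')\equiv n_i' \mod (\varpi_E\vp(\pi),\vp(\pi)^2)$. To compute $\gamma(n_i')$, I would exploit that $\gamma$ commutes with $\vp$: writing $m_i = \pi^{1-k}n_i\otimes e_{k-1}$ so that $n_i' = \vp(m_i)$, we get $\gamma(n_i') = \vp(\gamma(m_i))$.

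Applying the formula derived in the proof of Lemma~\ref{gamma} to the basis element $m_i$ yields
\[
\gamma(m_i) = g \cdot \sum_j (G_\gamma)_{ji}\, m_j, \qquad g := \left(\frac{\gamma(\pi)}{\chi(\gamma)\pi}\right)^{1-k},
\]
and hence $\gamma(n_i') = \vp(g)\sum_j \vp\big((G_\gamma)_{ji}\big) n_j'$. It therefore suffices to check (i) $\vp(g)\equiv 1$ and (ii) $\vp\big((G_\gamma)_{ji}\big)\equiv \delta_{ji}$, both modulo $(\varpi_E\vp(\pi),\vp(\pi)^2)$. Claim (ii) is immediate: applying $\vp$, which is $\calO_E$-linear on coefficients and sends $\pi$ to $\vp(\pi)$, to Proposition~\ref{gammamatrix} gives the desired congruence. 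For (i), I plan an explicit $O(\pi^2)$ expansion: from $\gamma(\pi)/\pi = 1+q+\vp(\pi)$ together with $q \equiv p+\binom{p}{2}\pi$ and $\vp(\pi)\equiv p\pi$ mod $\pi^2$, one finds $\gamma(\pi)/\pi \equiv 1 + p + \tfrac{p(p+1)}{2}\pi$; dividing by $\chi(\gamma)=1+p$ gives $1+\tfrac{p}{2}\pi$, and raising to the $(1-k)$-th power yields $g \equiv 1 + (1-k)\tfrac{p}{2}\pi \mod \pi^2$. Since $p$ is odd, $\tfrac{p}{2}\in p\ZZ_p$, so $g\equiv 1 \mod (p\pi,\pi^2)$, and applying $\vp$ delivers (i).

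The only nontrivial step is verifying that the linear coefficient of $g \mod \pi^2$ is $p$-divisible: a non-$p$-divisible coefficient would violate (i) already at the level of $\vp(\pi)$. This divisibility is a clean consequence of the cancellation $\binom{p}{2}+p = p(p+1)/2$, but must be carried out carefully. Once (i) and (ii) are established, multiplying the congruence $\gamma(n_i')\equiv n_i'$ through by the unit $(1+\pi)(1+\vp(\pi))$ recovers the claimed identity.
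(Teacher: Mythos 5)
Your proof is correct and follows essentially the same route as the paper's: both reduce to the exact identity $\gamma(1+\pi)=(1+\pi)(1+\vp(\pi))$, the congruence $\vp(G_\gamma)\equiv I \bmod (\varpi_E\vp(\pi),\vp(\pi)^2)$ coming from Proposition~\ref{gammamatrix}, and the scalar congruence $\chi(\gamma)\pi/\gamma(\pi)\equiv 1 \bmod (p\pi,\pi^2)$, whose verification via $q\equiv p+\binom{p}{2}\pi$ and $\binom{p}{2}+p=p(p+1)/2\in p\ZZ_p$ is exactly the content of the paper's computation. The only (cosmetic) difference is that you obtain the action on $n_i'=\vp(\pi^{1-k}n_i\otimes e_{k-1})$ by applying $\vp$ to the $\gamma$-action on $\pi^{1-k}n_i\otimes e_{k-1}$, whereas the paper uses the intertwining relation $\gamma(P^T)G_\gamma^T=\vp(G_\gamma^T)P^T$.
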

   \begin{proof}
    We know that $ \begin{pmatrix} n_1' \\ n_2' \end{pmatrix}= P^T\begin{pmatrix} n_1 \\ n_2 \end{pmatrix}\otimes \vp(\pi)^{1-k}e_{k-1}$. Since the actions of $\gamma$ and $\vp$ commute, we have $\gamma(P^T)G_{\gamma}^T=\varphi(G_\gamma^T)P^T$, which implies
    \[
     \begin{pmatrix} \gamma n_1' \\ \gamma n_2' \end{pmatrix}=
    \chi(\gamma)^{k-1}\varphi\left(\frac{\pi}{\gamma(\pi)}\right)^{k-1}\varphi(G_\gamma^T) \begin{pmatrix} n_1' \\ n_2' \end{pmatrix}.
    \]
    Now 
    \begin{align}
     \chi(\gamma)\frac{\pi}{\gamma(\pi)} & = \frac{\chi(\gamma)}{1+q+\vp(\pi)} \notag \\
     & \equiv 1\mod (p\pi,\pi^2) \label{crudecongruence}
    \end{align}
    where the congruence comes from the fact that the constant term of $q$ is $p$, and hence the constant term of $\frac{1}{1+q+\vp(\pi)}$ is $\sum_{j=0}^{+\infty} (-p)^j$, which is equal to $\chi(\gamma)^{-1}$. Hence $\varphi\left(\frac{\chi(\gamma)\pi}{\gamma(\pi)}\right)\equiv 1\mod (p\vp(\pi),\varphi(\pi)^2)$. Moreover, $\gamma(1+\pi)=(1+\pi)^{\chi(\gamma)}=(1+\pi)(1+\varphi(\pi))$. Hence
    \begin{equation*} 
     \gamma\big[(1+\pi)n_1'\big] \equiv(1+\vp(\pi))(1+\pi)n_1'\mod (\varpi_E\vp(\pi),\vp(\pi)^2) 
    \end{equation*}
    by Corollary~\ref{gammamatrix}
   \end{proof}
 
   We will now show that we can adapt the arguments form Proposition~\ref{gmatrix*} to pass from $\vp(\pi)(1+\pi)n_i'$ to $\vp(\pi)^2(1+\pi)n_i'\mod (\varpi_E\vp(\pi),\vp(\pi)^3)$ for $i=1,2$.
 
   \begin{lemma}\label{gmatrixgeneral1*} 
    We have $(\gamma-1)\big[\vp(\pi)(1+\pi)n_i'\big]=\vp(\pi)^2(1+\pi)n_i'\mod (\varpi_E\vp(\pi),\vp(\pi)^3)$ for $i=1,2$.
   \end{lemma}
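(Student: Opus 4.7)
The idea is to apply the Leibniz rule and control each error term separately, using Proposition~\ref{gmatrix*} for the action of $\gamma$ on $(1+\pi)n_i'$ together with an explicit expansion of $\gamma(\vp(\pi))$. Writing $y = (1+\pi)n_i'$, Leibniz gives
\begin{equation*}
(\gamma-1)\bigl[\vp(\pi) y\bigr] = \bigl(\gamma(\vp(\pi))-\vp(\pi)\bigr) y + \gamma(\vp(\pi))\,(\gamma-1)(y).
\end{equation*}
Proposition~\ref{gmatrix*} rewrites the second summand as $\gamma(\vp(\pi))\cdot\bigl[\vp(\pi) y + E\bigr]$, where $E\in(\varpi_E\vp(\pi),\vp(\pi)^2)\cdot n_i'$. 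The plan is thus to expand each of the two summands and show that every piece other than $\vp(\pi)^2 (1+\pi) n_i'$ lies in $(\varpi_E\vp(\pi),\vp(\pi)^3)\cdot n_i'$.

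For the first summand, I would compute $\gamma(\vp(\pi)) = \vp(\gamma(\pi)) = (1+\vp(\pi))^{\chi(\gamma)} - 1$ and use $\chi(\gamma)=1+p$ to binomially expand
\begin{equation*}
 \gamma(\vp(\pi)) - \vp(\pi) = p\vp(\pi) + \tbinom{1+p}{2}\vp(\pi)^2 + \tbinom{1+p}{3}\vp(\pi)^3 + \cdots.
\end{equation*}
The key observation is that every $\binom{1+p}{k}$ for $k\geq 2$ is divisible by $p$, since its numerator contains the factor $p$. Hence $\gamma(\vp(\pi)) - \vp(\pi) \in p\vp(\pi)\,\Zp[[\vp(\pi)]] \subseteq \varpi_E\vp(\pi)\,\calO_E[[\vp(\pi)]]$, so $(\gamma(\vp(\pi))-\vp(\pi))\,y$ already lies in $\varpi_E\vp(\pi)\cdot n_i'$.

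For the second summand, write $\gamma(\vp(\pi)) = \vp(\pi) + \Delta$ with $\Delta \in \varpi_E\vp(\pi)\,\calO_E[[\vp(\pi)]]$ (from the computation above), and distribute:
\begin{equation*}
 \gamma(\vp(\pi))\,(\gamma-1)(y) = \vp(\pi)^2 y + \vp(\pi) E + \Delta\,\vp(\pi) y + \Delta E.
\end{equation*}
The first term is what we want. For the remaining three: $\vp(\pi)E$ lies in $(\varpi_E\vp(\pi)^2,\vp(\pi)^3)\cdot n_i' \subseteq (\varpi_E\vp(\pi),\vp(\pi)^3)\cdot n_i'$; the term $\Delta\vp(\pi) y$ lies in $\varpi_E\vp(\pi)^2 \cdot n_i' \subseteq \varpi_E\vp(\pi)\cdot n_i'$; and $\Delta E$ is trivially in $\varpi_E\vp(\pi)\cdot n_i'$. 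Combining everything gives the claimed congruence.

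The only mildly delicate point — and therefore where I would be most careful — is that the modulus $(\varpi_E\vp(\pi),\vp(\pi)^3)$ is \emph{not} contained in $(\varpi_E\vp(\pi),\vp(\pi)^2)$, so one cannot simply invoke the weaker congruence from Proposition~\ref{gmatrix*} for the error term $\Delta$; this is exactly why one needs the $p$-divisibility of $\binom{1+p}{k}$ for $k\geq 2$, which upgrades the naive bound $\Delta\in\vp(\pi)\Zp[[\vp(\pi)]]$ to $\Delta\in p\vp(\pi)\Zp[[\vp(\pi)]]$ and thus rescues the computation. All other bookkeeping is routine.
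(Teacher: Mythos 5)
Your overall strategy is the same computation the paper performs: expand $\gamma\big[\vp(\pi)(1+\pi)n_i'\big]$ using multiplicativity of $\gamma$, feed in Proposition~\ref{gmatrix*}, and control the correction coming from $\gamma(\vp(\pi))-\vp(\pi)$ (the paper writes $\gamma(\vp(\pi))=\vp(\pi)\big(1+\vp(q)+\vp(\pi)\vp(q)\big)$ and multiplies through rather than isolating a term $\Delta$, but it is the same bookkeeping). However, the step you yourself single out as the crux is false as stated. It is not true that $p\mid\binom{1+p}{k}$ for every $k\ge 2$: one has $\binom{p+1}{p}=p+1$ and $\binom{p+1}{p+1}=1$, both prime to $p$; your ``the numerator contains the factor $p$'' argument breaks down for $k\ge p$ because the denominator $k!$ then also contains a factor of $p$. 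Consequently the containment $\Delta:=\gamma(\vp(\pi))-\vp(\pi)\in p\vp(\pi)\Zp[[\vp(\pi)]]$ is wrong: indeed $\Delta=\vp^2(\pi)\big(1+\vp(\pi)\big)$, and $\vp^2(\pi)=(1+\vp(\pi))^p-1$ contains the monomial $\vp(\pi)^p$ with unit coefficient.

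The argument is easily repaired, because what is actually true (and all you need) is $\Delta\in\big(p\vp(\pi),\vp(\pi)^3\big)$: the coefficients of $\vp(\pi)$ and $\vp(\pi)^2$ in $\Delta$ are $p$ and $\binom{p+1}{2}=p(p+1)/2$, both divisible by $p$ since $p$ is odd, while every term of degree $\ge 3$ — in particular the offending $\vp(\pi)^p$ and $\vp(\pi)^{p+1}$, as $p\ge 3$ — is already a multiple of $\vp(\pi)^3$ and so lies in the modulus. Rerunning your bookkeeping with this weaker bound, the terms $\Delta y$, $\Delta\vp(\pi)y$ and $\Delta E$ all land in $(\varpi_E\vp(\pi),\vp(\pi)^3)$ times the span of $n_1',n_2'$ (note in passing that the error $E$ from Proposition~\ref{gmatrix*} need not be a multiple of $n_i'$ alone, since $G_\gamma$ mixes the two basis vectors, but this changes nothing), and $\vp(\pi)E\in(\varpi_E\vp(\pi)^2,\vp(\pi)^3)$ as you say; the lemma follows. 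So the plan is sound and essentially that of the paper, but the specific divisibility claim justifying your key bound on $\Delta$ had to be corrected.
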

   \begin{proof}
    We have $\gamma(\pi)=(1+\pi)(1+\vp(\pi))-1 = \pi+\vp(\pi)+\pi\vp(\pi)$, so
    \begin{align*} 
     \vp( \gamma(\pi)) & = \vp(\pi)(1+\vp(q)+\vp(\pi)\vp(q)) \\
     & \equiv\vp(\pi)\mod (\varpi_E\vp(\pi),\vp(\pi)^3).
    \end{align*}
    Therefore,
    \begin{align*} 
     \gamma\big[\vp(\pi)(1+\pi)n_1'\big] & \equiv  \vp(\pi)(1+\vp(q)+\vp(\pi)\vp(q))(1+\pi)(1+\vp(\pi))n_1'\mod (\varpi_E\vp(\pi)^2,\vp(\pi)^3)\\
      & \equiv \vp(\pi)(1+\pi)(1+\vp(\pi))n_1'\mod (\varpi_E\vp(\pi),\vp(\pi)^3)
    \end{align*}
    and hence
    \begin{equation}
     (\gamma-1)\big[\vp(\pi)(1+\pi)n_1'\big]\equiv \vp(\pi)^2(1+\pi)n_1'\mod (\varpi_E\vp(\pi),\vp(\pi)^3).
    \end{equation}
   \end{proof}
 
   The lemma generalizes as follows for arbitrary $r\geq 1$.
 
   \begin{proposition}\label{gmatrixgeneral*} 
    We have $(\gamma-1)\big[\vp(\pi)^r(1+\pi)n_i'\big]\equiv\vp(\pi)^{r+1}(1+\pi)n_i'\mod (\varpi_E\vp(\pi)^r,\vp(\pi)^{r+2})$ for $i=1,2$.
   \end{proposition}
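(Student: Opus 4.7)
My plan is to generalize the proof of Lemma~\ref{gmatrixgeneral1*} directly, rather than inductively, so that the structural role of the key congruence becomes clear. The first step is to lift the scalar congruence $\gamma(\vp(\pi)) \equiv \vp(\pi) \pmod{(\varpi_E\vp(\pi),\vp(\pi)^3)}$ from the proof of the previous lemma to its $r$-th power:
\[
 \gamma(\vp(\pi))^r \equiv \vp(\pi)^r \pmod{(\varpi_E\vp(\pi)^r,\vp(\pi)^{r+2})}.
\]
Writing $\gamma(\vp(\pi)) = \vp(\pi)(1+X)$ with $X = \vp(q)+\vp(\pi)\vp(q)$, the point is that $\vp(q)=p+\binom{p}{2}\vp(\pi)+\cdots+\vp(\pi)^{p-1}$, so $\vp(q) \equiv \vp(\pi)^{p-1} \pmod{\varpi_E}$ (all intermediate binomial coefficients being divisible by $p$). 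Since $p\geq 3$ this forces $X \in (\varpi_E,\vp(\pi)^2)$, and hence $X^j \in (\varpi_E,\vp(\pi)^2)$ for every $j\geq 1$. The binomial expansion of $(1+X)^r$ then gives $(1+X)^r \equiv 1 \pmod{(\varpi_E,\vp(\pi)^2)}$, and multiplying by $\vp(\pi)^r$ yields the displayed congruence.

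For the second step I combine this with Proposition~\ref{gmatrix*}, which I rewrite as $\gamma[(1+\pi)n_i'] = (1+\vp(\pi))(1+\pi)n_i' + \eta_i$ for some $\eta_i$ whose coefficients in $\{n_1',n_2'\}$ lie in the ideal $(\varpi_E\vp(\pi),\vp(\pi)^2)$ of $\EA$. Setting $\gamma(\vp(\pi))^r = \vp(\pi)^r + \epsilon$ with $\epsilon \in (\varpi_E\vp(\pi)^r,\vp(\pi)^{r+2})$, I then expand
\[
 \gamma[\vp(\pi)^r(1+\pi)n_i'] = (\vp(\pi)^r+\epsilon)\bigl((1+\vp(\pi))(1+\pi)n_i' + \eta_i\bigr).
\]
The main piece is $\vp(\pi)^r(1+\pi)n_i' + \vp(\pi)^{r+1}(1+\pi)n_i'$. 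Of the three cross terms, $\vp(\pi)^r\eta_i$ has coefficients in $(\varpi_E\vp(\pi)^{r+1},\vp(\pi)^{r+2}) \subset (\varpi_E\vp(\pi)^r,\vp(\pi)^{r+2})$; the term $\epsilon(1+\vp(\pi))(1+\pi)n_i'$ lies in the target ideal since $(1+\pi)(1+\vp(\pi))$ is a unit in $\EA$; and $\epsilon\eta_i$ is in a strictly smaller ideal. Subtracting $\vp(\pi)^r(1+\pi)n_i'$ gives the proposition.

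The main obstacle is purely bookkeeping, namely checking that each cross term lands in $(\varpi_E\vp(\pi)^r,\vp(\pi)^{r+2})\cdot\NN$ and not in something weaker. The delicate point is the strengthening from $X\in(\varpi_E,\vp(\pi))$ (which is trivial) to $X\in(\varpi_E,\vp(\pi)^2)$; this is exactly what upgrades the modulus from $\vp(\pi)^{r+1}$ to $\vp(\pi)^{r+2}$, and so exactly compensates for the weaker error $(\varpi_E\vp(\pi),\vp(\pi)^2)$ afforded by Proposition~\ref{gmatrix*}. Without the hypothesis $p\geq 3$ (which forces $\vp(\pi)^{p-1}\in(\vp(\pi)^2)$) this compensation would fail, explaining the role of the standing assumption $p$ odd.
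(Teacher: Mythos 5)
Your proof is correct and is essentially the paper's own argument: both rest on multiplicativity of $\gamma$, the congruence $\gamma\big[(1+\pi)n_i'\big]\equiv(1+\vp(\pi))(1+\pi)n_i'\bmod(\varpi_E\vp(\pi),\vp(\pi)^2)$ from Proposition~\ref{gmatrix*}, and the fact that $\vp(q)\in(\varpi_E,\vp(\pi)^2)$ (using $p\ge 3$), so that $\gamma(\vp(\pi))^r\equiv\vp(\pi)^r\bmod(\varpi_E\vp(\pi)^r,\vp(\pi)^{r+2})$. The only difference is that you spell out the cross-term bookkeeping which the paper compresses into ``by the same calculations as in Lemma~\ref{gmatrixgeneral1*}''.
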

   \begin{proof}
    Be the same calculations as in Lemma~\ref{gmatrixgeneral1*}, we have
    \begin{align*} 
     \gamma\big[\vp(\pi)^r(1+\pi)n_1'\big] & \equiv  \vp(\pi)^r(1+\vp(q)+\vp(\pi)\vp(q))^r(1+\pi)(1+\vp(\pi))n_1'\mod (\varpi_E\vp(\pi)^{r+1},\vp(\pi)^{r+2})\\
      & \equiv \vp(\pi)^r(1+\pi)(1+\vp(\pi))n_1'\mod (\varpi_E\vp(\pi)^r,\vp(\pi)^{r+2})
    \end{align*}  
   \end{proof}
 
   \begin{definition} 
    For all $r\geq 2$, denote by $I_r$ the ideal of $\vp(\calO_E\otimes_{\ZZ_p}\AA^+_{\QQ_p})$ generated by the elements 
    \[ \varpi_E^{r-1}\vp(\pi),\varpi_E^{r-2}\vp(\pi)^2,\dots, \varpi_E\vp(\pi)^{r-1},\vp(\pi)^{r+1},\] 
    and let $\mathfrak{I}_r=I_r(\vp^*\NN(T))^{\psi=0}$.
   \end{definition}
 
   Note that $\mathfrak{I}_r$ is stable under the action of $G_\infty$. 
 
   \begin{lemma}\label{submodules} 
    We have $(\gamma-1)\mathfrak{I}_r\subset \mathfrak{I}_{r+1}$.
   \end{lemma}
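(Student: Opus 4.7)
The plan is to apply the Leibniz-type identity
\[
(\gamma-1)(az) = (\gamma-1)(a)\cdot z + \gamma(a)\cdot(\gamma-1)(z)
\]
to arbitrary generators $a \in I_r$ and $z \in (\vp^*\NN(T))^{\psi=0}$ of $\mathfrak{I}_r$, and to show that both summands lie in $\mathfrak{I}_{r+1}$.

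For the first summand, the key input is the identity $\gamma(\vp(\pi)) = \chi(\gamma)\vp(\pi)\vp(v)$, where $v = \gamma(\pi)/(\chi(\gamma)\pi) \in 1+\pi\ZZ_p[[\pi]]$. Hence each generator $\varpi_E^{r-s}\vp(\pi)^s$ of $I_r$ (with $1 \le s \le r-1$, or $s=r+1$ with no $\varpi_E$) is carried by $\gamma$ to itself times a unit $\chi(\gamma)^s\vp(v)^s$. Since $\chi(\gamma) = 1+p$ and $p \in \varpi_E\calO_E$, this unit is congruent to $1$ modulo $(\varpi_E, \vp(\pi))\vp(\calO_E\otimes\AA^+_{\QQ_p})$; so $(\gamma-1)I_r \subseteq (\varpi_E, \vp(\pi))I_r$, and a direct check of the generators of $I_{r+1}$ shows $(\varpi_E, \vp(\pi))I_r \subseteq I_{r+1}$. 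Thus the first summand lies in $I_{r+1}(\vp^*\NN(T))^{\psi=0} = \mathfrak{I}_{r+1}$.

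For the second summand, observe that $\gamma(a) = a\cdot u \in I_r$ for a unit $u$, so it suffices to establish the key containment
\[
(\gamma-1)(\vp^*\NN(T))^{\psi=0} \subseteq \vp(\pi)(\vp^*\NN(T))^{\psi=0}.
\]
Granting this, another direct check of generators yields $\vp(\pi) I_r \subseteq I_{r+1}$, so that $\gamma(a)(\gamma-1)(z) \in I_r \cdot \vp(\pi)(\vp^*\NN(T))^{\psi=0} = \vp(\pi)\mathfrak{I}_r \subseteq \mathfrak{I}_{r+1}$. To prove the key containment I will invoke Theorem~\ref{Lambda}: there is a $\Lambda_{\calO_E}(G_\infty)$-basis of $(\vp^*\NN(T))^{\psi=0}$ of the form $e_i = (1+\pi)\vp(\tilde n_i \otimes \pi^{-m}e_m)$, where $\tilde n_i$ satisfies $(1-\gamma)\tilde n_i \in \pi^2\NN(T)$. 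A direct calculation using $(1+\pi)^{\chi(\gamma)} = (1+\pi)(1+\vp(\pi))$ shows $(\gamma-1)e_i \in \vp(\pi)\vp^*\NN(T)$, and the compatibility of $\psi$ with $\gamma$ upgrades this to $(\gamma-1)e_i \in \vp(\pi)(\vp^*\NN(T))^{\psi=0}$. Finally, $\vp(\pi)(\vp^*\NN(T))^{\psi=0}$ is $\Lambda$-stable — because $\gamma(\vp(\pi)w) = \vp(\pi)\vp(\gamma(\pi)/\pi)\gamma(w)$ with $\vp(\gamma(\pi)/\pi) \in \vp(\calO_E\otimes\AA^+_{\QQ_p})^\times$ — and so the key containment follows by $\Lambda$-linearity.

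The main obstacle is the key containment $(\gamma-1)(\vp^*\NN(T))^{\psi=0} \subseteq \vp(\pi)(\vp^*\NN(T))^{\psi=0}$, which relies on Theorem~\ref{Lambda} together with a careful verification of the $\Lambda$-stability of the submodule $\vp(\pi)(\vp^*\NN(T))^{\psi=0}$; once this is granted, the remainder is routine bookkeeping with the generators of $I_r$ and $I_{r+1}$.
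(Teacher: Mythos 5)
Your proof is correct, but it takes a genuinely different route from the paper. The paper proves the lemma entirely within the explicit Berger--Li--Zhu framework: it reduces to elements of the form $x\vp(\pi)^m(1+\pi)n_i'$ with $x$ a monomial generator of $I_r$ and applies the congruence of Proposition~\ref{gmatrixgeneral*}, $(\gamma-1)\big[\vp(\pi)^s(1+\pi)n_i'\big]\equiv\vp(\pi)^{s+1}(1+\pi)n_i'\bmod(\varpi_E\vp(\pi)^s,\vp(\pi)^{s+2})$, which was established earlier from the explicit shape of $G_\gamma$ for that particular basis. You instead split $(\gamma-1)(az)$ by the twisted Leibniz rule, dispose of the ideal part by the elementary identity $\gamma(\vp(\pi))=\chi(\gamma)\vp(\pi)\vp(v)$ together with the checks $(\varpi_E,\vp(\pi))I_r\subset I_{r+1}$, and reduce the module part to the containment $(\gamma-1)(\vp^*\NN(T))^{\psi=0}\subset\vp(\pi)(\vp^*\NN(T))^{\psi=0}$, which you get from the $\Lambda$-basis furnished by Theorem~\ref{Lambda}. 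This is sound: the statement of the lemma is basis-free, Theorem~\ref{Lambda} is proved earlier and independently of Section~\ref{ss}, so there is no circularity, and your key containment does hold (in fact your computation only needs $(\gamma-1)\tilde n_i\in\pi\NN(T)$, i.e.\ triviality of the $G_\infty$-action modulo $\pi$, which is automatic for any basis --- the $\pi^2$-refinement from Lemma~\ref{lem3} is not required; also note the $\psi$-upgrade uses that $\psi(\vp(\pi)w)=\pi\psi(w)$ and that multiplication by $\pi$ is injective, and full $\Lambda$-stability, as opposed to mere $G_\infty$-stability, of $\vp(\pi)(\vp^*\NN(T))^{\psi=0}$ needs the routine remark that this submodule is closed). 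The trade-off: your argument is shorter, more conceptual, and valid for an arbitrary Wach module, while the paper's argument deliberately avoids Theorem~\ref{Lambda} because the whole point of Section~\ref{ss} is to give a proof of Theorem~\ref{lambdabasis} for the specific Berger--Li--Zhu basis that is independent of Berger's general theorem; your route sacrifices that independence (and the uniform explicit congruences of Proposition~\ref{gmatrixgeneral*} are in any case still needed elsewhere, e.g.\ in Proposition~\ref{crucial} and the algorithm for $\J$), whereas the paper's keeps the section self-contained at the cost of relying on the earlier basis-specific computations.
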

   \begin{proof}
    It is enough to show that $(\gamma-1)\big[x\vp(\pi)^m(1+\pi)n_i'\big]\in\mathfrak{I}_{r+1}$ for any $m\ge0$, any $x\in I_r$ and $i=1,2$.
 
    Let $x=\varpi_E^{r-j}\vp(\pi)^j$ where $1\le j\le r-1$. By Proposition~\ref{gmatrixgeneral*}, we have 
    \begin{align*} 
     (\gamma-1)\big[\varpi_E^{r-j}\vp(\pi)^{m+j}(1+\pi)n_i'\big]\equiv & \varpi_E^{r-j}\vp(\pi)^{m+j+1}(1+\pi)n_i'\mod(\varpi_E^{r-j+1}\vp(\pi)^{m+j},\varpi_E^{r-j}\vp(\pi)^{m+j+2})\\
     \equiv&0\mod\mathfrak{I}_{r+1} 
    \end{align*}
    for all $m\geq 0$. Similarly, the same holds for $x=\vp(\pi)^{r+1}$. Hence the result. 
   \end{proof}

   \begin{proposition}\label{crucial}
    We have
    \[ (\gamma-1)^r\big[(1+\pi)n_i'\big]\equiv \vp(\pi)^r(1+\pi)n_i'\mod \mathfrak{I}_r\]
    for all $r\geq 2$.
   \end{proposition}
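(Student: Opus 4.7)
The plan is to prove this by induction on $r \geq 2$, iteratively applying $\gamma-1$ and using Proposition~\ref{gmatrixgeneral*} to identify the principal term $\vp(\pi)^j(1+\pi)n_i'$ at each stage while the error propagates inside the $\mathfrak{I}_r$-filtration via Lemma~\ref{submodules}.

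For the base case $r=2$, I would start from Proposition~\ref{gmatrix*}, which gives
\[
(\gamma-1)\bigl[(1+\pi)n_i'\bigr] = \vp(\pi)(1+\pi)n_i' + e_1,
\]
with $e_1$ lying in $(\varpi_E\vp(\pi),\vp(\pi)^2)(\vp^*\NN(T))^{\psi=0}$. Applying $\gamma-1$ once more, Lemma~\ref{gmatrixgeneral1*} handles the leading term: $(\gamma-1)[\vp(\pi)(1+\pi)n_i'] \equiv \vp(\pi)^2(1+\pi)n_i' \pmod{\mathfrak{I}_2}$, since $(\varpi_E\vp(\pi),\vp(\pi)^3)(\vp^*\NN(T))^{\psi=0}=\mathfrak{I}_2$. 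For the error, one verifies directly (exactly as in the proof of Lemma~\ref{submodules}, using Proposition~\ref{gmatrixgeneral*} on each monomial $\varpi_E\vp(\pi)^{m+1}(1+\pi)n_i'$ and $\vp(\pi)^{m+2}(1+\pi)n_i'$) that $(\gamma-1)e_1 \in \mathfrak{I}_2$.

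For the inductive step, suppose the claim holds for some $r\geq 2$, and write
\[
(\gamma-1)^r\bigl[(1+\pi)n_i'\bigr] = \vp(\pi)^r(1+\pi)n_i' + e_r,
\]
with $e_r\in \mathfrak{I}_r$. Then
\[
(\gamma-1)^{r+1}\bigl[(1+\pi)n_i'\bigr] = (\gamma-1)\bigl[\vp(\pi)^r(1+\pi)n_i'\bigr] + (\gamma-1)e_r.
\]
Proposition~\ref{gmatrixgeneral*} says the first summand equals $\vp(\pi)^{r+1}(1+\pi)n_i'$ modulo $(\varpi_E\vp(\pi)^r,\vp(\pi)^{r+2})(\vp^*\NN(T))^{\psi=0}$, and this submodule is contained in $\mathfrak{I}_{r+1}$ because $\varpi_E\vp(\pi)^r$ and $\vp(\pi)^{r+2}$ are both among the generators listed in the definition of $I_{r+1}$. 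The second summand lies in $\mathfrak{I}_{r+1}$ by Lemma~\ref{submodules}. Combining these yields the desired congruence at level $r+1$.

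The only genuinely delicate point is the base case, where $\mathfrak{I}_1$ is not defined and the error produced by Proposition~\ref{gmatrix*} sits in a slightly different ideal than the one controlled by Lemma~\ref{submodules}; however this is only a bookkeeping issue, resolved by repeating the generator-by-generator computation underlying Lemma~\ref{submodules}. Once $r=2$ is established, the induction proceeds mechanically.
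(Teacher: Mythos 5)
Your proof is correct and follows essentially the same route as the paper: induction on $r$, with the base case $r=2$ obtained from Proposition~\ref{gmatrix*} together with Lemma~\ref{gmatrixgeneral1*} and Proposition~\ref{gmatrixgeneral*}, and the inductive step combining Proposition~\ref{gmatrixgeneral*} for the leading term with Lemma~\ref{submodules} for the error term. Your extra care at the base case (checking $(\gamma-1)e_1\in\mathfrak{I}_2$ generator by generator, since $\mathfrak{I}_1$ is not defined) is exactly the computation implicit in the paper's terse treatment, so there is no substantive difference.
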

   \begin{proof}
    We proceed by induction on $r$. Let $r=2$. By Proposition~\ref{gmatrix*}, we have
    \[ (\gamma-1)\big[(1+\pi)n_i'\big]\equiv\vp(\pi)(1+\pi)n_i'\mod (\varpi_E\vp(\pi),\vp(\pi)^2).\]
    It therefore follows from Lemma~\ref{gmatrixgeneral1*} and Proposition~\ref{gmatrixgeneral*} that
    \[ (\gamma-1)^2\big[(1+\pi)n_i'\big]\equiv\vp(\pi)^2(1+\pi)n_i'\mod (\varpi_E\vp(\pi),\vp(\pi)^3).\]   
    Assume now that the result is true for $r-1\geq 2$, so 
    \[ (\gamma-1)^{r-1}\big[(1+\pi)n_i'\big]\equiv\vp(\pi)^{r-1}(1+\pi)n_i'\mod \mathfrak{I}_{r-1}.\]
    Now 
    \begin{align*} 
     (\gamma-1)\big[\vp(\pi)^{r-1}(1+\pi)n_i'\big]& \equiv \vp(\pi)^r(1+\pi)n_i'\mod (\varpi_E\vp(\pi)^{r-1},\vp(\pi)^{r+1}) \\
     & \equiv \vp(\pi)^r(1+\pi)n_i'\mod\mathfrak{I}_r
    \end{align*}
    by Proposition~\ref{gmatrixgeneral*}. The result therefore follows from Lemma~\ref{submodules}.  
   \end{proof}

   To simplify the notation, let $X=\vp(\calO_E\otimes_{\ZZ_p}\AA^+_{\QQ_p})(1+\pi)n_1'+ \vp(\calO_E\otimes_{\ZZ_p}\AA^+_{\QQ_p})(1+\pi)n_2'$.
 
   \begin{corollary}\label{basismodp} 
    For all $x\in X$, there exist $\omega_1,\omega_2\in \Lambda_{\calO_E}(\Gamma)$ such that 
    \[ \omega_1\big((1+\pi)n_1'\big)+\omega_2\big((1+\pi)n_2'\big)-x\in \varpi_E X.\]
   \end{corollary}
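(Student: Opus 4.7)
The plan is to construct $\omega_1,\omega_2$ by successive approximation, using Proposition~\ref{crucial} to trade each $\vp(\pi)^r$ for $(\gamma-1)^r$, at the cost of an error that gains one more power of $\vp(\pi)$ modulo $\varpi_E X$.

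First I would note that since $\vp(\calO_E\otimes\AA^+_{\QQ_p})=\calO_E[[\vp(\pi)]]$ and $(1+\pi)n_1',(1+\pi)n_2'$ are part of a $\Lambda_{\calO_E}(G_\infty)$-basis of $(\vp^*\NN(T))^{\psi=0}$ by Theorem~\ref{Lambda}, the module $X$ is the free $\calO_E[[\vp(\pi)]]$-module on these two generators; hence any $x\in X$ admits a unique expansion $x=\sum_{r\ge 0,\,i=1,2}a_{i,r}\vp(\pi)^r(1+\pi)n_i'$ with $a_{i,r}\in\calO_E$. Moreover, because $\chi(\gamma)=1+p$ gives $\gamma(1+\pi)=(1+\pi)(1+\vp(\pi))$, one has $\omega\cdot(1+\pi)\in\vp(\calO_E\otimes\AA^+_{\QQ_p})(1+\pi)$ for every $\omega\in\Lambda_{\calO_E}(\Gamma)$; hence $\omega(1+\pi)n_i'\in X$, so all approximants constructed below will remain in $X$.

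I would then iterate. Setting $y^{(0)}=x$, suppose inductively that $y^{(m)}\in X$ satisfies $y^{(m)}\equiv\sum_{r\ge m,\,i}b^{(m)}_{i,r}\vp(\pi)^r(1+\pi)n_i'\pmod{\varpi_E X}$ for some $b^{(m)}_{i,r}\in\calO_E$, and define
\[ \tau_i^{(m)}=\sum_{r\ge m}b^{(m)}_{i,r}(\gamma-1)^r\in(\gamma-1)^m\Lambda_{\calO_E}(\Gamma),\qquad y^{(m+1)}=y^{(m)}-\tau_1^{(m)}(1+\pi)n_1'-\tau_2^{(m)}(1+\pi)n_2'. \]
By Proposition~\ref{crucial}, the discrepancy $\sum_i\tau_i^{(m)}(1+\pi)n_i'-\sum_{r\ge m,\,i}b^{(m)}_{i,r}\vp(\pi)^r(1+\pi)n_i'$ lies in $\sum_{r\ge m}\mathfrak{I}_r$. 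Every generator of $I_r$ either carries a factor of $\varpi_E$ or equals $\vp(\pi)^{r+1}$, so modulo $\varpi_E X$ this discrepancy is concentrated in $\vp(\pi)$-order $\ge m+1$, giving $y^{(m+1)}\in\vp(\pi)^{m+1}X+\varpi_E X$ and continuing the induction.

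Finally, set $\omega_i=\sum_{m\ge 0}\tau_i^{(m)}\in\Lambda_{\calO_E}(\Gamma)$, which converges in the $(\gamma-1)$-adic topology since $\tau_i^{(m)}\in(\gamma-1)^m\Lambda_{\calO_E}(\Gamma)$ and $\Lambda_{\calO_E}(\Gamma)$ is $(\gamma-1)$-adically complete. Telescoping yields $x-\sum_{m=0}^M(\tau_1^{(m)}(1+\pi)n_1'+\tau_2^{(m)}(1+\pi)n_2')=y^{(M+1)}\in\vp(\pi)^{M+1}X+\varpi_E X$ for every $M$; combined with continuity of the $\Lambda_{\calO_E}(\Gamma)$-action, this forces $x-\omega_1(1+\pi)n_1'-\omega_2(1+\pi)n_2'\in\bigcap_{M\ge 0}(\vp(\pi)^{M+1}X+\varpi_E X)=\varpi_E X$, the final equality coming from the fact that $X/\varpi_E X$ is a free rank-$2$ module over the $\vp(\pi)$-adically separated ring $(\calO_E/\varpi_E)[[\vp(\pi)]]$. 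The main technical obstacle is verifying that the error in Proposition~\ref{crucial}, which a priori lies in the larger module $\mathfrak{I}_r=I_r(\vp^*\NN(T))^{\psi=0}$, actually lands inside $I_r\cdot X$; this requires strengthening Propositions~\ref{gmatrix*} and~\ref{gmatrixgeneral*} to note that the errors there already occur within $X$ (as the action of $\gamma$ preserves $X$ with all coefficients in $\vp(\calO_E\otimes\AA^+_{\QQ_p})$), and then propagating this refinement through the induction defining $\mathfrak{I}_r$.
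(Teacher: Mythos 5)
Your argument is correct and is essentially the paper's own proof made explicit: the paper deduces the corollary in one line from Proposition~\ref{crucial} together with the $(\varpi_E,\vp(\pi))$-adic completeness of $(\vp^*\NN(T))^{\psi=0}$ and the fact that the $\mathfrak{I}_r$ are neighbourhoods of zero, which is precisely your successive-approximation scheme. The ``technical obstacle'' you flag is automatic rather than a needed strengthening: each error term is a difference of elements of $X$, and since $(\vp^*\NN(T))^{\psi=0}=\bigoplus_{j=1}^{p-1}(1+\pi)^j\vp(\calO_E\otimes_{\ZZ_p}\AA^+_{\QQ_p})n_1'\oplus\bigoplus_{j=1}^{p-1}(1+\pi)^j\vp(\calO_E\otimes_{\ZZ_p}\AA^+_{\QQ_p})n_2'$ with $I_r\subset\vp(\calO_E\otimes_{\ZZ_p}\AA^+_{\QQ_p})$ respecting this decomposition, one has $\mathfrak{I}_r\cap X=I_rX$, so the errors already lie where you need them.
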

   \begin{proof}
    $(\vp^*\NN(T))^{\psi=0}$ is complete in the $(\varpi_E,\vp(\pi))$-adic topology, and the $ \mathfrak{I}_r$, ${r\geq 1}$ form a neighbourhood of zero in $(\vp^*\NN(T))^{\psi=0}$. Hence the result follows from Proposition~\ref{crucial}.
   \end{proof}

   Note that $(\vp^*\NN(T))^{\psi=0}$ is the $\Delta$-orbit of $X$. The previous corollary therefore implies the following result:

   \begin{theorem}\label{lambdabasis}
    $(\vp^*\NN(T))^{\psi=0}$ is a free $\Lambda_{\calO_K}(G_\infty)$-module of rank $2$, and a basis is given by $(1+\pi)n_1'$ and $(1+\pi)n_2'$.
   \end{theorem}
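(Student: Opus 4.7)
The plan is to establish generation by iterating Corollary~\ref{basismodp}, and then deduce freeness from the general Theorem~\ref{Lambda}.

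Let $S \subseteq (\vp^*\NN(T))^{\psi=0}$ denote the $\Lambda_{\calO_E}(G_\infty)$-submodule generated by $(1+\pi)n_1'$ and $(1+\pi)n_2'$. Since $(\vp^*\NN(T))^{\psi=0}$ is the $\Delta$-orbit of $X$ (as noted just before the theorem) and $\Delta \subset G_\infty$ acts $\Lambda_{\calO_E}(G_\infty)$-linearly, it suffices to prove $X \subset S$. Given $x \in X$, I apply Corollary~\ref{basismodp} iteratively to construct elements $\omega_1^{(j)}, \omega_2^{(j)} \in \Lambda_{\calO_E}(\Gamma)$ and $y_j \in X$, with $y_0 = x$, satisfying
\[
y_j \;=\; \omega_1^{(j)}(1+\pi)n_1' + \omega_2^{(j)}(1+\pi)n_2' + \varpi_E\, y_{j+1}.
\]
The series $\omega_i := \sum_{j\geq 0}\varpi_E^{\,j}\omega_i^{(j)}$ converges in the $\varpi_E$-adically complete ring $\Lambda_{\calO_E}(\Gamma) \subset \Lambda_{\calO_E}(G_\infty)$. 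Since $(\vp^*\NN(T))^{\psi=0}$ embeds in the $\varpi_E$-adically separated module $\NN(T)$, telescoping the relations above then gives $x = \omega_1(1+\pi)n_1' + \omega_2(1+\pi)n_2' \in S$.

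For freeness, I invoke Theorem~\ref{Lambda}, which asserts that $(\vp^*\NN(T))^{\psi=0}$ is already known to be a free $\Lambda_{\calO_E}(G_\infty)$-module of rank $2$. Our two generators therefore yield a surjection $\Lambda_{\calO_E}(G_\infty)^{\oplus 2} \twoheadrightarrow (\vp^*\NN(T))^{\psi=0}$ from a finite free module of rank $2$ onto another free module of rank $2$ over the same commutative Noetherian ring; any such surjection is automatically an isomorphism, so $(1+\pi)n_1', (1+\pi)n_2'$ form a $\Lambda_{\calO_E}(G_\infty)$-basis.

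The real obstacle was Corollary~\ref{basismodp}, which has already been secured through the delicate congruences of Propositions~\ref{gmatrix*}--\ref{crucial}. Given it, the argument above is essentially a routine exercise in $\varpi_E$-adic convergence combined with an invocation of the abstract freeness result.
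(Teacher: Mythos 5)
Your generation step coincides with the paper's: you iterate Corollary~\ref{basismodp} and sum the resulting $\varpi_E$-adically convergent series, which is exactly how the paper obtains that $(1+\pi)n_1',(1+\pi)n_2'$ generate $(\vp^*\NN(T))^{\psi=0}$ over $\Lambda_{\calO_E}(G_\infty)$. Where you diverge is the linear-independence half: you invoke Theorem~\ref{Lambda} (Berger's general freeness theorem) together with the fact that a surjective endomorphism of a finitely generated module over a commutative ring is injective, whereas the paper argues that $(\vp^*\NN(T))^{\psi=0}$ receives the injective map $1-\vp$ from $\NN(T)^{\psi=1}$, which is free of rank $2$ over the Iwasawa algebra by Perrin-Riou, so a two-element generating set cannot admit a nontrivial relation. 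Your route is logically sound, since Theorem~\ref{Lambda} is proved earlier in the paper by an independent argument of Berger; what it costs is that Theorem~\ref{lambdabasis} is explicitly advertised (see the discussion before Theorem~\ref{Lambda}) as an \emph{independent} and more explicit proof of the freeness statement for the Berger--Li--Zhu basis, and by quoting Theorem~\ref{Lambda} you forfeit that independence, while the paper's appeal to Perrin-Riou's freeness of $\NN(T)^{\psi=1}$ keeps the two proofs disjoint. Two minor caveats in your write-up: Theorem~\ref{Lambda} is stated for $\ZZ_p$-lattices in $\QQ_p$-linear representations, so you need the (asserted, not written out) extension to $\calO_E$-coefficients to get rank $2$ over $\Lambda_{\calO_E}(G_\infty)$; and $\vp^*\NN(T)$ need not embed in $\NN(T)$ when $T=T_{\bar{f}}(k-1)$ is a twist, since $\vp$ does not preserve $\NN(T)$ there --- but the separatedness you need is unaffected, because $\vp^*\NN(T)$ is itself a free $\calO_E\otimes_{\ZZ_p}\AA^+_{\QQ_p}$-module on $n_1',n_2'$, which is the completeness already used in the proof of Corollary~\ref{basismodp}.
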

   \begin{proof}
    Let $y\in (\vp^*\NN(T))^{\psi=0}$. It follows from Corollary~\ref{basismodp} and the fact that $\Lambda_{\calO_E}(G_\infty)$ is $p$-adically complete that there exists $\omega_1,\omega_2\in\Lambda_E(G_\infty)$ such that $y=\omega_1\big((1+\pi)n_1'\big)+\omega_2\big((1+\pi)n_2'\big)$. As shown in~\cite{perrinriou94}, $\NN(T)^{\psi=1}$ is a free $\Lambda_E(G_\infty)$-module of rank $2$, and the map $1-\vp:\NN(T)^{\psi=1}\rightarrow (\vp^*\NN(T))^{\psi=0}$ is injective since $V^{H_{\QQ_p}}=\{0\}$. Hence the result.  
   \end{proof}

   It therefore follows that after tensoring with $\QQ$, there is an isomorphism of $\Lambda_E(G_\infty)$-modules (the \emph{Iwasawa transform})
   \[
   \J:(\vp^*\NN(V))^{\psi=0}\rTo \Lambda_{E}(G_\infty)^{\oplus 2}
   \]
   which satisfies the following condition: if $y=y_1(1+\pi)n_1'+y_2(1+\pi)n_2'$ with $y_i\in \vp(E\otimes_{\QQ_p}\BB^+_{\QQ_p})$ (write $y=(y_1,y_2)$) and $(z_1,z_2)=\J(y_1,y_2)$, then $y=z_1[(1+\pi)n_1']+z_2[(1+\pi)n_2']$. In particular, $\J$ is additive and linear over $E$.


   \subsection{An algorithm for \texorpdfstring{$\J$}{J}}\label{algorithm}

    We now summarize the results of the previous section to give an explicit description of $\J$ when restricted to $\vp(E\otimes_{\QQ_p}\BB^+_{\QQ_p})(1+\pi)n_1'\oplus \vp(E\otimes_{\QQ_p}\BB^+_{\QQ_p})(1+\pi)n_1'\cong \vp(E\otimes_{\QQ_p}\BB^+_{\QQ_p})^{\oplus 2}$. For a non-zero $y=(y_1,y_2)\in\vp(E\otimes_{\QQ_p}\BB_{\Qp}^+)^{\oplus 2}$, we write 
    \[
    y_1=\sum_{n=0}^\infty a_n\vp(\pi)^n\qquad\text{and}\qquad y_2=\sum_{n=0}^\infty b_n\vp(\pi)^n.
    \]
    On multiplying by a power of $\varpi_E$, we may assume that $y\in\vp(\calO_E\otimes_{\ZZ_p}\AA_{\Qp}^+)^{\oplus 2}$ but $\varpi_E\nmid y$. For such a $y$, we define the order $\ord(y)$ of $y$ to be the minimum integer $n$ such that either $a_n$ or $b_n$ is a unit in $\calO_E$.

    \begin{proposition}\label{algo}
     For $y$ as above, there exists $z^{(n)}\in(\gamma-1)^n\Lambda_{\calO_E}(\Gamma)^2$ such that $y-\J^{-1}(z^{(n)})$ has order strictly greater than $n$.
    \end{proposition}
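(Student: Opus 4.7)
My plan is to write $z^{(n)}$ explicitly in terms of the leading coefficients of $y$, and then verify the order improvement by combining Proposition~\ref{crucial} with the explicit description of $I_n$.

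First I would take
\[
z^{(n)} := \bigl(a_n(\gamma-1)^n,\ b_n(\gamma-1)^n\bigr) \in (\gamma-1)^n \Lambda_{\calO_E}(\Gamma)^{\oplus 2},
\]
where $a_n, b_n \in \calO_E$ are the $\vp(\pi)^n$-coefficients of $y_1, y_2$. Since $n = \ord(y)$, at least one of $a_n, b_n$ is a unit in $\calO_E$, while for every $m < n$ both $a_m$ and $b_m$ lie in $\varpi_E\calO_E$.

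Applying Proposition~\ref{crucial} at $r = n$ and using $\calO_E$-linearity of $\J^{-1}$ would yield
\[
\J^{-1}(z^{(n)}) \equiv a_n \vp(\pi)^n(1+\pi)n_1' + b_n\vp(\pi)^n(1+\pi)n_2' \pmod{\mathfrak{I}_n}.
\]
Now I would inspect the generators of $I_n$: each $\varpi_E^{n-j}\vp(\pi)^j$ for $1 \le j \le n-1$ lies in $\varpi_E\vp(\calO_E \otimes \AA^+_{\QQ_p})$, and $\vp(\pi)^{n+1}$ lies in $\vp(\pi)^{n+1}\vp(\calO_E \otimes \AA^+_{\QQ_p})$. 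Writing $X := (\vp^*\NN(T))^{\psi=0}$, this gives the containment $\mathfrak{I}_n \subseteq \varpi_E X + \vp(\pi)^{n+1}X$. By the same token, the divisibility of $a_m, b_m$ by $\varpi_E$ for $m < n$ places each $y_i - a_{i,n}\vp(\pi)^n$ inside $\varpi_E\vp(\calO_E \otimes \AA^+_{\QQ_p}) + \vp(\pi)^{n+1}\vp(\calO_E \otimes \AA^+_{\QQ_p})$. Combining these,
\[
y - \J^{-1}(z^{(n)}) \in \varpi_E X + \vp(\pi)^{n+1} X,
\]
which, when expanded in the basis $(1+\pi)n_1', (1+\pi)n_2'$, forces every coefficient of $\vp(\pi)^m$ for $m \le n$ to lie in $\varpi_E\calO_E$ and hence be a non-unit. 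This gives the desired inequality $\ord(y - \J^{-1}(z^{(n)})) > n$.

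The most delicate point will be ensuring that the ambient containment $\mathfrak{I}_n \subseteq \varpi_E X + \vp(\pi)^{n+1}X$ really descends to coefficient-wise containment in the distinguished basis $(1+\pi)n_1', (1+\pi)n_2'$. This depends on the identification $\vp(\calO_E \otimes \AA^+_{\QQ_p})(1+\pi)n_1' \oplus \vp(\calO_E \otimes \AA^+_{\QQ_p})(1+\pi)n_2' \cong \vp(\calO_E \otimes \AA^+_{\QQ_p})^{\oplus 2}$ fixed at the beginning of Section~\ref{algorithm}, and on verifying that the error produced by Proposition~\ref{crucial} remains in this subspace rather than spreading across the full $\Lambda_{\calO_E}(G_\infty)$-module structure of $X$. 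Once this compatibility is in hand, the argument becomes formal.
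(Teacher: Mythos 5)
Your proposal is correct and is essentially the paper's own argument: the paper takes exactly the same element $z^{(n)}=(a_n(\gamma-1)^n,b_n(\gamma-1)^n)$ and deduces the statement directly from Proposition~\ref{crucial}. The compatibility you flag at the end does hold and is easy to check: since $z^{(n)}$ only involves $\Gamma$, and the module $\vp(\calO_E\otimes_{\ZZ_p}\AA^+_{\QQ_p})(1+\pi)n_1'\oplus\vp(\calO_E\otimes_{\ZZ_p}\AA^+_{\QQ_p})(1+\pi)n_2'$ is a $\Lambda_{\calO_E}(\Gamma)$-stable direct summand of $(\vp^*\NN(T))^{\psi=0}$ (which is free over $\vp(\calO_E\otimes_{\ZZ_p}\AA^+_{\QQ_p})$ on the $\Delta$-translates $(1+\pi)^i n_j'$), both $y-\J^{-1}(z^{(n)})$ and the error from Proposition~\ref{crucial} stay in that summand and membership in $\mathfrak{I}_n$ is coefficient-wise there, so the coefficient bound you want follows (for $n\le 1$ one substitutes Proposition~\ref{gmatrix*}, the $r=1$ analogue, for Proposition~\ref{crucial}).
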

    \begin{proof} 
     This is simply a reformulation of Proposition~\ref{crucial}. In particular, one could take 
     \[z^{(n)}=(a_n(\gamma-1)^n,b_n(\gamma-1)^n).\]
    \end{proof}

    \begin{corollary}\label{divisiblebyp}
     For $y$ as above, there exists a sequence $z^{(0)},z^{(1)},\ldots$ in $\Lambda_{\calO_E}(\Gamma)^{\oplus 2}$ such that $z^{(i)}\rightarrow0$ as $i\rightarrow\infty$ and
     \[
     y-\J^{-1}\left(\sum_{i=0}^\infty z^{(i)}\right)\in \varpi_E\vp(\calO_E\otimes_{\ZZ_p}\AA_{\Qp}^+)^2
     \]
    \end{corollary}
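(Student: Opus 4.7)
The plan is to iterate Proposition~\ref{algo} until divisibility by $\varpi_E$ is attained. Set $y^{(0)}:=y$ and, inductively at step $k\ge 0$, do the following: if $y^{(k)}\in\varpi_E\vp(\calO_E\otimes_{\ZZ_p}\AA_{\Qp}^+)^{\oplus 2}$, put $z^{(k)}:=0$ and $y^{(k+1)}:=y^{(k)}$; otherwise $\varpi_E\nmid y^{(k)}$, so Proposition~\ref{algo} applied to $y^{(k)}$ yields some $z^{(k)}\in(\gamma-1)^{n_k}\Lambda_{\calO_E}(\Gamma)^{\oplus 2}$, where $n_k:=\ord(y^{(k)})$, such that $y^{(k+1)}:=y^{(k)}-\J^{-1}(z^{(k)})$ has order strictly greater than $n_k$. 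Each $y^{(k+1)}$ stays in $\vp(\calO_E\otimes_{\ZZ_p}\AA_{\Qp}^+)^{\oplus 2}$ because $\J^{-1}(z^{(k)})\in(\vp^*\NN(T))^{\psi=0}$ is integral.

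If the iteration terminates at some stage $k_0$, then $z:=\sum_{i<k_0}z^{(i)}\in\Lambda_{\calO_E}(\Gamma)^{\oplus 2}$ is a finite sum and $y-\J^{-1}(z)=y^{(k_0)}$ is of the desired form. Otherwise the $n_k$ form a strictly increasing sequence of non-negative integers, so $n_k\ge k$, and consequently $z^{(k)}\in(\gamma-1)^k\Lambda_{\calO_E}(\Gamma)^{\oplus 2}$ tends to $0$ in the $(\gamma-1)$-adic topology. Since $\Lambda_{\calO_E}(\Gamma)^{\oplus 2}$ is complete, the series $z:=\sum_{i\ge 0}z^{(i)}$ converges.

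It remains, in the non-terminating case, to verify that $y-\J^{-1}(z)\in\varpi_E\vp(\calO_E\otimes_{\ZZ_p}\AA_{\Qp}^+)^{\oplus 2}$. Here I would invoke Proposition~\ref{crucial} to compare topologies: since that proposition locates $(\gamma-1)^{n_k}[(1+\pi)n_i']$ inside $\vp(\pi)^{n_k}X+\mathfrak{I}_{n_k}$, and every generator of $\mathfrak{I}_{n_k}$ is divisible by $\vp(\pi)$, the element $\J^{-1}(z^{(k)})$ has $\vp(\pi)$-adic valuation at least $n_k$. Therefore, for each fixed $j\ge 0$, the coefficient of $\vp(\pi)^j$ in $\J^{-1}\bigl(\sum_{i<k}z^{(i)}\bigr)$ stabilises as $k\to\infty$ to the corresponding coefficient of $\J^{-1}(z)$; subtracting from $y$, the $\vp(\pi)^j$-coefficient of $y-\J^{-1}(z)$ equals that of $y^{(k)}$ as soon as $n_k>j$, and the latter lies in $\varpi_E\calO_E$ by definition of $\ord$. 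Hence $y-\J^{-1}(z)$ has every $\vp(\pi)$-coefficient divisible by $\varpi_E$, which gives the result. The main obstacle is the interaction between the $(\gamma-1)$-adic topology on $\Lambda_{\calO_E}(\Gamma)$ and the $\vp(\pi)$-adic topology on the Wach-module side; this is exactly what Proposition~\ref{crucial} was engineered to bridge, which is why the argument goes through.
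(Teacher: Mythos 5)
Your overall strategy is the one the paper intends: Corollary~\ref{divisiblebyp} is stated without proof precisely because it is the evident iteration of Proposition~\ref{algo}, and your induction (the termination case, the strict growth of the orders $n_k$, and the convergence of $\sum_i z^{(i)}$ in the complete ring $\Lambda_{\calO_E}(\Gamma)$) is exactly that iteration.

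However, the step you use to pass to the limit is stated too strongly, and as written it is false. Proposition~\ref{crucial} only places $\J^{-1}(z^{(k)})$ in $\vp(\pi)^{n_k}X+\mathfrak{I}_{n_k}$ (with $X$ as in \S\ref{Iwasawatransform}), and the ideal $I_{n_k}$ has generators such as $\varpi_E^{n_k-1}\vp(\pi)$ whose $\vp(\pi)$-adic valuation is $1$; so $\J^{-1}(z^{(k)})$ need not have $\vp(\pi)$-valuation $\ge n_k$ — it does so only after reduction modulo $\varpi_E$. Consequently the $\vp(\pi)^j$-coefficients of $\J^{-1}\bigl(\sum_{i<k}z^{(i)}\bigr)$ do not literally stabilise: for fixed $j<n_k$ the degree-$j$ contribution of $\J^{-1}(z^{(k)})$ is divisible by $\varpi_E^{\,n_k-j}$ but need not vanish. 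The conclusion is unaffected because the assertion to be proved is itself a statement modulo $\varpi_E$: since every generator of $I_{n_k}$ except $\vp(\pi)^{n_k+1}$ is divisible by $\varpi_E$, one has $\J^{-1}(z^{(k)})\in \vp(\pi)^{n_k}X+\varpi_E X$, so your stabilisation argument becomes correct after reducing modulo $\varpi_E$; alternatively, observe that the $\vp(\pi)^j$-coefficient of $y^{(k)}$ lies in $\varpi_E\calO_E$ as soon as $n_{k-1}\ge j$, that these coefficients converge to those of $y-\J^{-1}\bigl(\sum_i z^{(i)}\bigr)$ by the compatibility of the $(\varpi_E,\gamma-1)$-adic and $(\varpi_E,\vp(\pi))$-adic topologies already invoked in Corollary~\ref{basismodp}, and that $\varpi_E\calO_E$ is closed in $\calO_E$. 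With this correction your argument is complete and agrees with the intended one.
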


    We write $y^{(0)}=y$ and $u^{(0)}$ for the infinite sum given by Corollary~\ref{divisiblebyp}. Define a sequence $y^{(n)}$ recursively: for $n\ge0$, let $y^{(n+1)}=(y^{(n)}-\J^{-1}(u^{(n)}))/\varpi_E$ where $u^{(n)}$ to be the sum given by Corollary~\ref{divisiblebyp} on applying it to $y^{(n)}$. Then, we have
    \[
    \J(y)=\sum_{i=0}^\infty \varpi_E^iu^{(i)}.
    \]
 
 
 \subsection{The image of \texorpdfstring{$\uCol_1$}{Col1}}\label{ucol1}

  Throughout this section, we assume that assumptions (B), (C) and (D) are satisfied. 

  \begin{definition}
   Let $\uCol=\J\circ{\bfCol}: \NN(T)^{\psi=1}\rightarrow \Lambda_{\calO_E}(G_\infty)^{\oplus 2}$, and for $i=1,2$, define
   \[ \uCol_i:\NN(T)^{\psi=1}\rTo \Lambda_{\calO_E}(G_\infty)\]
   as the composition $\pr_i\circ \uCol$, where $\pr_i$ is the projection from $\Lambda_{\calO_E}(G_\infty)^{\oplus 2}$ onto the $i$-th coefficient.
  \end{definition}

  By abuse of notation, we also write $\uCol_i$ for the natural extension $\NN(V)^{\psi=1}\rightarrow \Lambda_E(G_\infty)$. The aim of this section is to prove the following theorem. 
 
  \begin{theorem}\label{image1}
   The map $\uCol_1:\NN(V)^{\psi=1}\rightarrow \Lambda_E(G_\infty)$ is surjective. 
  \end{theorem}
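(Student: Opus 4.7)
The plan is to show that the image $J := \uCol_1(\NN(V)^{\psi=1})$---an ideal of $\Lambda_E(G_\infty)$ by the $\Lambda_E(G_\infty)$-linearity of $\uCol_1$---is not contained in any maximal ideal of $\Lambda_E(G_\infty)$. This will immediately force $J = \Lambda_E(G_\infty)$ and yield the theorem.

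The first step will be to establish, for every $x \in \NN(V)^{\psi=1}$ and $i=1,2$, the mod-$\vp(\pi)$ congruence
\[
 \rmCol_i(x) \equiv \mathfrak{M}(\uCol_i(x)) \pmod{\vp(\pi)(E \otimes \BB_{\QQ_p}^+)^{\psi=0}}.
\]
I would derive this by expanding $(1-\vp)(x)$ in two ways---as the scalar combination $\rmCol_1(x)n_1' + \rmCol_2(x)n_2'$ and as the $\Lambda_E(G_\infty)$-module combination $\uCol_1(x)\cdot (1+\pi)n_1' + \uCol_2(x)\cdot (1+\pi)n_2'$---then apply Lemma~\ref{cong} to each of the latter two summands and compare coefficients, using that $n_1', n_2'$ remain linearly independent modulo $\vp(\pi)$. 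This is precisely the argument that underlies Corollary~\ref{sameprop}, but now applied to an arbitrary $x$ rather than just $\kato$.

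Next, under the Mellin identification $\Lambda_E(G_\infty) \cong (E \otimes \AA_{\QQ_p}^+)^{\psi=0}$, the maximal ideals of $\Lambda_E(G_\infty)$ (after enlarging $E$ to contain $\mu_{p-1}$ if necessary) correspond to characters $\chi$ of $G_\infty$ of conductor dividing $p$, which in turn correspond to evaluation at $\pi = \zeta - 1$ for some $p$-th root of unity $\zeta$. Since $\vp(\pi)|_{\pi = \zeta - 1} = \zeta^p - 1 = 0$, the congruence of the previous step specializes to $\chi(\rmCol_1(x)) = \chi(\uCol_1(x))$ for every such $\chi$ and every $x$. Because $\rmCol_1$ is surjective onto $(E \otimes \BB_{\QQ_p}^+)^{\psi=0}$ by Proposition~\ref{surjectiveCol1} and $\chi$ defines a nonzero linear functional on that space, for each $\chi$ I can find an $x$ with $\chi(\uCol_1(x)) \neq 0$. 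Thus $J \not\subseteq \ker(\chi)$, and since every maximal ideal of $\Lambda_E(G_\infty)$ arises this way, I conclude $J = \Lambda_E(G_\infty)$.

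The main technical point will be the first step: one must check carefully that the mod-$\vp(\pi)$ comparison of Corollary~\ref{sameprop}, originally stated only for $\kato$, carries over verbatim to an arbitrary $x$, and that each individual coefficient $\rmCol_i(x) - \mathfrak{M}(\uCol_i(x))$ can be isolated modulo $\vp(\pi)$ by exploiting the freeness of $\vp^*\NN(V)$ over $E\otimes\BB_{\QQ_p}^+$ and the resulting linear independence of $n_1', n_2'$ in the quotient $\vp^*\NN(V)/\vp(\pi)\vp^*\NN(V)$.
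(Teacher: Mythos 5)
Your first step is fine: the congruence $\rmCol_i(x)\equiv\mathfrak{M}(\uCol_i(x))\bmod \vp(\pi)$ for arbitrary $x\in\NN(V)^{\psi=1}$ does follow from Lemma~\ref{cong} exactly as in Corollary~\ref{sameprop}, and combined with Proposition~\ref{surjectiveCol1} it shows that for every character $\chi$ of $G_\infty$ of conductor dividing $p$ there is an $x$ with $\chi(\uCol_1(x))\neq 0$. The gap is in the final step: it is not true that every maximal ideal of $\Lambda_E(G_\infty)$ comes from a character of conductor dividing $p$. Writing $\Lambda_E(G_\infty)\cong\bigoplus_\eta \calO_E[[X]]\otimes\QQ$ (one factor per character $\eta$ of $\Delta$, $X=\gamma-1$), the ring $\calO_E[[X]]\otimes\QQ$ is not local, and by Weierstrass preparation its maximal ideals are generated by the irreducible distinguished polynomials, i.e.\ they correspond to Galois orbits of points of the open unit disc, not just to the points $\zeta-1$ with $\zeta^p=1$ (nor even to all $\zeta_{p^n}-1$). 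Your test therefore only rules out containment in the finitely many maximal ideals $(\gamma-1)$ sitting over the conductor-$\le p$ characters; it says nothing about, e.g., the maximal ideal generated by $\gamma-1-p$, and indeed the proper ideal $(\gamma-1-p)\subsetneq\Lambda_E(\Gamma)$ passes your criterion (its image under evaluation at the trivial character is $-p\neq 0$). So what you actually prove is that $\image(\uCol_1)$ surjects onto $\Lambda_E(G_\infty)/(\gamma-1)$ in each $\Delta$-isotypic component, which is strictly weaker than surjectivity; the mod-$\vp(\pi)$ information cannot see the higher layers of the tower, and no argument using only specializations at finitely many (or even all finite-order) characters can force an ideal of $\Lambda_E(G_\infty)$ to be the unit ideal.

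The paper closes exactly this gap by working with the Iwasawa transform $\J$ rather than with specializations: using Propositions~\ref{crucial} and~\ref{approximation} and Corollary~\ref{ant}, one corrects an element $x$ with $\rmCol(x)=\varpi_E^m(1+\pi)n_1'+y_2n_2'$ (supplied by Proposition~\ref{surjectiveCol1}) by a convergent sequence of elements of $\NN(T)^{\psi=1}$ so as to kill the second coordinate entirely, obtaining $\uCol(x-\tilde{x})=(\varpi_E^m,0)+\sum_i(0,z^{(i)})$ and hence $\varpi_E^m\in\image(\uCol_1)$. Since $\varpi_E^m$ is a unit of $\Lambda_E(G_\infty)$ and $\uCol_1$ is $\Lambda_E(G_\infty)$-linear, surjectivity follows. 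If you want to keep your strategy, you would need this kind of $(\varpi_E,\vp(\pi))$-adic approximation (or some other argument producing an actual unit in the image), not just the mod-$\vp(\pi)$ congruence.
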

 
  The idea of the proof is to translate Proposition~\ref{surjectiveCol1} using the explicit description of $\J$ given in Section~\ref{algorithm}. Note that since $\J$ is a $\Lambda_E(G_\infty)$-homomorphism, it is sufficient to show that $\varpi_E^m\in \image(\uCol_1)$ for some $m\in\ZZ$.
 
  \begin{proposition}\label{approximation}
   Let $y_2\in \vp(\calO_E\otimes_{\ZZ_p}\AA^+_{\QQ_p})$. Then there exists a sequence $z^{(i)}\in \Lambda_{\calO_E}(\Gamma)$ tending to $0$ as $i\rightarrow +\infty$ and $\tilde{x}\in \NN(T)^{\psi=1}$ and $y_2'\in \vp(\calO_E\otimes_{\ZZ_p}\AA^+_{\QQ_p})$ such that 
   \[ \J(0,y_2)-\J\circ\rmCol(\tilde{x})=\sum_{i\geq 0}(0,z^{(i)})+ \varpi_E\J(0,y_2').\]
  \end{proposition}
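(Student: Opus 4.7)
The proof follows the two-step pattern suggested by combining Section~\ref{algorithm} with Proposition~\ref{surjectiveCol1}. First, I apply Corollary~\ref{divisiblebyp} to the element $(0,y_2)$. Because its first component is zero, the explicit corrections furnished by the proof of Proposition~\ref{algo} lie in $\{0\}\times\Lambda_{\calO_E}(\Gamma)$ at every stage, yielding a null-convergent sequence $z^{(i)}\in\Lambda_{\calO_E}(\Gamma)$ with
\[ (0,y_2)-\J^{-1}\Bigl(\sum_{i\ge 0}(0,z^{(i)})\Bigr)=\varpi_E w \]
for some $w=(w_1,w_2)\in\vp(\calO_E\otimes\AA_{\Qp}^+)^{\oplus 2}$. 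After applying $\J$ and rearranging, the desired identity reduces to the existence of $\tilde x\in\NN(T)^{\psi=1}$ and $y_2'\in\vp(\calO_E\otimes\AA_{\Qp}^+)$ satisfying, in $(\vp^*\NN(T))^{\psi=0}$,
\[ (1-\vp)\tilde x=\varpi_E w_1(1+\pi)n_1'+\varpi_E(w_2-y_2')(1+\pi)n_2'. \]
Equivalently, we need $\rmCol_1(\tilde x)=\varpi_E w_1(1+\pi)$ and $\rmCol_2(\tilde x)=\varpi_E(w_2-y_2')(1+\pi)$; the second equality will then \emph{define} $y_2'$ provided the required factorization exists.

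For the construction of $\tilde x$, I appeal to Proposition~\ref{surjectiveCol1} applied to the target $\varpi_E w_1(1+\pi)$, which lies in $(\calO_E\otimes\AA_{\Qp}^+)^{\psi=0}$ since $w_1\in\vp(\calO_E\otimes\AA_{\Qp}^+)$ and $\psi((1+\pi)\vp(a))=0$. Writing this target as a piece in $(\pi^{k-1}\calO_E\otimes\AA_{\Qp}^+)^{\psi=0}$ (to which Proposition~\ref{col1} applies) plus a correction $\vp(y')$ with $y'$ in the $E$-span of $(1+\pi),\dots,(1+\pi)^{k-1}$ (treated by the inductive argument at the end of the proof of Proposition~\ref{surjectiveCol1}, valid under Assumptions (B) and (D) because the diagonal factors $-p^{k-2-j}-p^j+a_p$ are non-zero), we obtain $\tilde x\in\NN(V_{\bar{f}}(k-1))^{\psi=1}$ with $\rmCol_1(\tilde x)=\varpi_E w_1(1+\pi)$. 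The $\varpi_E$ prefactor on the target is present precisely so that the denominators introduced by inverting the upper-triangular system are cleared, placing $\tilde x$ in the integral lattice $\NN(T)^{\psi=1}$. Tracking $\rmCol_2(\tilde x)$ through the same construction, using the explicit formula for $\psi$ from Lemma~\ref{psi} and the order relation from Lemma~\ref{firstcoefficient2}, shows that the required factorization of $\rmCol_2(\tilde x)$ holds, so $y_2':=w_2-\rmCol_2(\tilde x)/[\varpi_E(1+\pi)]$ lies in $\vp(\calO_E\otimes\AA_{\Qp}^+)$ as required.

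The main obstacle is the integrality verification in the second paragraph: one must check that the $\varpi_E$ twist on the target of $\rmCol_1$ is indeed enough to absorb all of the denominators produced by inverting the upper-triangular system in Proposition~\ref{surjectiveCol1}'s proof, and simultaneously that $\rmCol_2(\tilde x)$ factors as $\varpi_E(1+\pi)$ times an element of $\vp(\calO_E\otimes\AA_{\Qp}^+)$. Both points require $\varpi_E$-adic tracking through the inductive constructions of Sections~\ref{Col1} and~\ref{Col2}, with Assumptions (B), (C), and (D) playing the decisive role; once these are in hand, the rest of the argument is algorithmic bookkeeping.
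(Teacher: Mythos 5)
Your first step is where the argument breaks. You claim that, because $(0,y_2)$ has zero first component, iterating Proposition~\ref{algo} produces corrections lying in $\{0\}\times\Lambda_{\calO_E}(\Gamma)$ at every stage, so that the whole first-component remainder can be swept into $\varpi_E w$. But subtracting $\J^{-1}(0,b_n(\gamma-1)^n)=b_n(\gamma-1)^n\big[(1+\pi)n_2'\big]$ does \emph{not} preserve the second component: by Proposition~\ref{crucial} the congruence $(\gamma-1)^n[(1+\pi)n_2']\equiv\vp(\pi)^n(1+\pi)n_2'$ holds only modulo $\mathfrak{I}_n$, and the error generically has a nonzero $n_1'$-component. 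Since $\mathfrak{I}_n$ contains $\vp(\pi)^{n+1}(\vp^*\NN(T))^{\psi=0}$, that component need not be divisible by $\varpi_E$, so it can neither be ignored nor absorbed into $\varpi_E w$; and once it is present, the later corrections of the algorithm would have to involve nonzero first components, contradicting your claim. What \emph{is} true — and this is exactly the pivot of the paper's proof — is that this spurious first component is divisible by $\vp(\pi)^{k-1}$, because $G_\gamma$ is diagonal modulo $\pi^{k-1}$ for the Berger--Li--Zhu basis (Lemma~\ref{gamma}); hence at each stage it equals $(1-\vp)x_n$ for some $x_n\in\NN(T)^{\psi=1}$ by the (integral, assumption (C) only) argument of Proposition~\ref{col1}, i.e.\ the convergent series $\sum\vp^n(\cdot)$. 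The paper interleaves these subtractions of $\rmCol(x_n)$ with the $(0,z^{(n)})$ corrections; $\tilde x$ is $\sum_n x_n$, and $\varpi_E y_2'$ arises as the limit of the second components $y_2^{(n)}$, whose orders tend to infinity.

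Your second step, invoking Proposition~\ref{surjectiveCol1} to produce $\tilde x$, also does not deliver what the statement requires. That proposition is a statement over $V$: solving the system \eqref{findingx} introduces denominators governed by the valuations of $a_p-p^j-p^{k-2-j}$, which assumption (B) only guarantees are finite, not $\le 1$; a single factor $\varpi_E$ on the target therefore does not place the solution in $\NN(T)^{\psi=1}$, and the asserted factorization $\rmCol_2(\tilde x)=\varpi_E(1+\pi)\cdot\vp(\text{integral})$ is likewise unproved. The paper deliberately avoids this: in Proposition~\ref{approximation} only the $\pi^{k-1}$-divisible integral statement of Proposition~\ref{col1} is needed (because the leftover first components are automatically divisible by $\vp(\pi)^{k-1}$), and Proposition~\ref{surjectiveCol1} is deferred to the proof of Theorem~\ref{image1}, where it is enough to hit $\varpi_E^m$ for some unspecified $m\ge 0$, so no integral control of the denominators is required. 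As written, both the reduction to the equation for $\tilde x$ and the construction of $\tilde x$ itself have genuine gaps.
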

  \begin{proof}
   If $(0,y_2)=+\infty$, then $\varpi_E|y_2$ and we are done. 
   Assume that $\ord(y_2)=n$ and write $y_2=\sum_{r\ge0}b_r\vp(\pi)^r$. Then, by Lemma~\ref{algo}, 
   \begin{equation}\label{firststep}
    \J(0,y_2)=\J(y_1^{(1)},y_2^{(1)})+(0,b_n(\gamma-1)^n).
   \end{equation}
   where $y_2^{(i)}$ has order strictly greater than $n$. By applying $\J^{-1}$ to~\eqref{firststep}, we see that
   \[
    y_2(1+\pi)n_2'=y_1^{(1)}(1+\pi)n_1'+y_2^{(1)}(1+\pi)n_2'+b_n(\gamma-1)^n[(1+\pi)n_2'].
   \]
   Since $G_\gamma$ is diagonal $\mod \pi^{k-1}$, this implies that $y_1^{(1)}\equiv0\mod\vp(\pi)^{k-1}$. In particular, the proof of Proposition~\ref{col1} implies that there exists $x_1\in\NN(T)^{\psi=1}$ such that $(1-\vp)x_1=y_1^{(1)}(1+\pi)n_1'$. Hence, we have
   \[
    \J(0,y_2)-\J\circ\rmCol(x_1)=\J(0,y_2^{(1)})+(0,z^{(1)})
   \]
   where $z^{(1)}=b_n(\gamma-1)^n$.
   
   On applying the above to $y_2^{(1)}$ and repeat, we obtain sequences $\{x_n\in\NN(T)^{\psi=1}\}$, $\{z^{(n)}\in\Lambda_{\calO_E}(\Gamma)\}$ and $\{y_2^{(n)}\in\vp(\AA_{\Qp}^+)\}$ such that
   \[
    \J(0,y_2^{(n-1)})-\J\circ\rmCol(x_n)=\J(0,y_2^{(n)})+(0,z^{(n)}),
   \]
   the sequence $m_n=\ord(y_2^{(n)})$ is strictly increasing, $z^{(n)}\in(\gamma-1)^{m_{n-1}}\Lambda_{\calO_E}(\Gamma)$ and $\rmCol(x_n)+(y_2^{(n)}-y_2^{(n-1)})(1+\pi)n_2'=z^{(n)}[(1+\pi)n_2']$. Now $\rmCol(x_n)\in \vp(\calO_E\otimes_{\ZZ_p}\AA^+_{\QQ_p})(1+\pi)n_1'$, so (i) $x_n\rightarrow0$ and (ii) $(y_2^{(n)}-y_2^{(n-1)})\rightarrow 0$. By completeness, (ii) implies that $y_2^{(n)}$ converges to an element in $\varpi_E\vp(\calO_E\otimes_{\ZZ_p}\AA_{\Qp}^+)$. (The limit must be in $\varpi_E\vp(\calO_E\otimes_{\ZZ_p}\AA_{\Qp}^+)$ because the order of the limit is $+\infty$ by construction.)  Now, on taking sums, we have for all $n\ge1$,
   \[
    \J(0,y_2)-\sum_{i=1}^n\J\circ\rmCol(x_i)=\J(0,y_2^{(n)})+\sum_{i=1}^n(0,z^{(i)}).
   \]
   We obtain the result by letting $n\rightarrow\infty$.
  \end{proof}
 
  \begin{corollary}
   Let $y_2\in \vp(\calO_K\otimes_{\ZZ_p}\AA^+_{\QQ_p})$. Then there exists a sequence $z^{(i)}\in \Lambda_{\calO_E}(\Gamma)$ tending to $0$ as $i\rightarrow +\infty$ and $\tilde{x}\in \NN(T)^{\psi=1}$ such that 
   \[ \J(0,y_2)-\J\circ\rmCol(\tilde{x})=\sum_{i\geq 0}(0,z^{(i)}).\]  
  \end{corollary}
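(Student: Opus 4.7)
The plan is to iterate Proposition~\ref{approximation} and pass to the limit, using $\varpi_E$-adic completeness of $\NN(T)^{\psi=1}$ and of $\Lambda_{\calO_E}(\Gamma)$ to absorb the residual term $\varpi_E\J(0,y_2')$ that obstructs the conclusion at each single step.

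Setting $y_2^{(0)}=y_2$, I would apply Proposition~\ref{approximation} recursively to obtain, for each $n\ge 1$, an element $\tilde{x}_n\in\NN(T)^{\psi=1}$, a null sequence $\{z_n^{(i)}\}_{i\ge 0}$ in $\Lambda_{\calO_E}(\Gamma)$, and $y_2^{(n)}\in\vp(\calO_E\otimes_{\ZZ_p}\AA^+_{\QQ_p})$ such that
\[ \J(0,y_2^{(n-1)})-\J\circ\rmCol(\tilde{x}_n)=\sum_{i\ge 0}(0,z_n^{(i)})+\varpi_E\J(0,y_2^{(n)}). \]
Multiplying the $n$-th identity by $\varpi_E^{n-1}$ and summing from $n=1$ to $N$, the intermediate terms telescope to give
\[ \J(0,y_2)-\J\circ\rmCol\!\left(\sum_{n=1}^{N}\varpi_E^{n-1}\tilde{x}_n\right)=\sum_{n=1}^{N}\varpi_E^{n-1}\sum_{i\ge 0}(0,z_n^{(i)})+\varpi_E^{N}\J(0,y_2^{(N)}). \]

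Since $\NN(T)^{\psi=1}$ is $\varpi_E$-adically complete, and the $\tilde{x}_n$ produced by Proposition~\ref{approximation} have bounded denominators (they live in $\NN(T)^{\psi=1}$), the partial sums $\sum_{n=1}^{N}\varpi_E^{n-1}\tilde{x}_n$ converge to some $\tilde{x}\in\NN(T)^{\psi=1}$; continuity of $\rmCol$ and $\J$ passes the convergence through. The remainder $\varpi_E^{N}\J(0,y_2^{(N)})$ tends to $0$ as $N\to\infty$. Finally, I would re-index the double sum $\sum_{n\ge 1}\varpi_E^{n-1}\sum_{i\ge 0}(0,z_n^{(i)})$ as a single sequence $\{(0,z^{(j)})\}_{j\ge 0}$ by enumerating pairs $(n,i)$ in order of nondecreasing $n+i$; because each inner sequence is null and each row carries the extra factor $\varpi_E^{n-1}$, the resulting sequence tends to $0$ in $\Lambda_{\calO_E}(\Gamma)$.

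The main point requiring care is the convergence and rearrangement in the last step: I would check that the $\tilde{x}_n$ from Proposition~\ref{approximation} can indeed be chosen uniformly in $\NN(T)^{\psi=1}$ (so that the $\varpi_E^{n-1}$ prefactor forces convergence), and that the combined double series genuinely assembles into a null sequence rather than only a convergent one. Both claims are immediate from the explicit construction in the proof of Proposition~\ref{approximation}, so no new input beyond completeness is needed.
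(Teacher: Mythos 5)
Your proposal is correct and follows essentially the same route as the paper, whose proof is precisely to iterate Proposition~\ref{approximation} and invoke $\varpi_E$-adic completeness; you have simply written out the telescoping sum $\sum_n \varpi_E^{n-1}(\cdots)$ and the re-indexing of the double family $\varpi_E^{n-1}z_n^{(i)}$ into a single null sequence, both of which work as you describe since the $\tilde{x}_n$ and $z_n^{(i)}$ are integral and the relevant modules are compact. No new input is needed beyond what you state.
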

  \begin{proof}
   Iterate the result in Proposition~\ref{approximation} for $\J(0,y_2')$ etc. and use that both $\Lambda_{\calO_E}(\Gamma)$ and $\vp^*(\NN(T))^{\psi=0}$ are $p$-adically complete. 
  \end{proof}
 
  \begin{corollary}\label{ant}
   Let $y\in (\vp^*\NN(T))^{\psi=0}$ be of the form $y=y_2n_2'$ for some $y_2\in (\calO_E\otimes_{\ZZ_p}\AA^+_{\QQ_p})^{\psi=0}$. Then there exists a sequence $z^{(i)}\in \Lambda_{\calO_E}(G_\infty)$ tending to $0$ as $i\rightarrow +\infty$ and $\tilde{x}\in \NN(T)^{\psi=1}$ such that 
   \[ \J(y)-\J\circ\rmCol(\tilde{x})=\sum_{i\geq 0}(0,z^{(i)}).\]
  \end{corollary}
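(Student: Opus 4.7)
My plan is to reduce the claim to the preceding Corollary (which handles the case $y_2 \in \vp(\calO_E \otimes \AA^+_{\QQ_p})$) via Galois equivariance, and then to use Proposition~\ref{col1} to absorb the resulting $n_1'$-error through iteration.

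First, I would let $S \subset (\vp^*\NN(T))^{\psi=0}$ be the set of all $y$ for which a decomposition $\J(y) - \J\circ\rmCol(\tilde x) = \sum_i (0, z^{(i)})$ with $z^{(i)} \to 0$ exists. Since $\J$ and $\rmCol$ are $\Lambda_{\calO_E}(G_\infty)$-equivariant, $S$ is a $\Lambda_{\calO_E}(G_\infty)$-submodule, stable under $G_\infty$-action and closed under $(\varpi_E, \vp(\pi))$-adic limits. Using the decomposition $(\calO_E \otimes \AA^+_{\QQ_p})^{\psi=0} = \bigoplus_{i=1}^{p-1}(1+\pi)^i \vp(\calO_E \otimes \AA^+_{\QQ_p})$, I would write $y_2 = \sum_{i=1}^{p-1}(1+\pi)^i \vp(a_i)$ and reduce to showing $(1+\pi)^i \vp(a_i) n_2' \in S$ for each $i$. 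For $i = 1$ this is immediate from the preceding Corollary applied to $\vp(a_1)$. For $i \geq 2$, I would pick $\delta_i \in \Delta$ with $\chi(\delta_i) = i$, set $b_i := \delta_i^{-1}(a_i)$, and apply the preceding Corollary to $\vp(b_i)$ to obtain $w_i := (1+\pi)\vp(b_i) n_2' \in S$; by $\Delta$-equivariance, $\delta_i(w_i) \in S$ as well, and expanding gives
\[
\delta_i(w_i) = (1+\pi)^i \vp(a_i)\,\delta_i(n_2') = (1+\pi)^i \vp(a_i) n_2' + \epsilon_i,
\]
where $\epsilon_i := (1+\pi)^i \vp(a_i)\bigl(\delta_i(n_2') - n_2'\bigr)$. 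The task thus reduces to showing $\epsilon_i \in S$.

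To handle $\epsilon_i$, I would write $\epsilon_i = A_i n_1' + B_i n_2'$; the condition $\epsilon_i \in (\vp^*\NN(T))^{\psi=0}$, combined with the projection formula for $\psi$, forces $A_i$ and $B_i$ individually to lie in $(\calO_E \otimes \AA^+_{\QQ_p})^{\psi=0}$. The crucial input, already used in the proof of Proposition~\ref{approximation}, is that the BLZ matrix $G_{\delta_i}$ is diagonal modulo $\pi^{k-1}$; this implies $A_i \in (\pi^{k-1}\calO_E \otimes \AA^+_{\QQ_p})^{\psi=0}$ and that $B_i$ is divisible by $\vp(\pi)$. Proposition~\ref{col1} then produces $\tilde x_i' \in \NN(T)^{\psi=1}$ with $\rmCol(\tilde x_i')$ concentrated in the $n_1'$-direction and equal to $A_i n_1'$ (the explicit series $\tilde x_i' = \sum_n \vp^n(\cdot)$ from the proof of Proposition~\ref{col1} is in the $n_1'$-direction only), so subtracting $\rmCol(\tilde x_i')$ leaves a residual $B_i n_2'$ that is again of the form treated by the corollary, but with $\vp(\pi)$-adic valuation strictly greater than that of $y_2$.

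The hard part will be controlling the convergence of the resulting iteration. Applying the construction repeatedly produces a sequence of correction elements $\tilde x_i', \tilde x_i'', \ldots$ in $\NN(T)^{\psi=1}$ together with residuals of unboundedly increasing $\vp(\pi)$-adic valuation, and I will need to verify both that the sum $\tilde x := \sum \tilde x_i^{(\ast)}$ converges in $\NN(T)^{\psi=1}$ and that the $z^{(i)}$'s produced at each stage (images of elements of $\Lambda_{\calO_E}(\Gamma)$ under $\delta_i$, hence lying in $\Lambda_{\calO_E}(G_\infty)$) tend to $0$. Both assertions should follow from tracking $\vp(\pi)$-adic valuations through the algorithm of Section~\ref{algorithm} together with the $(\varpi_E, \vp(\pi))$-adic completeness of $\NN(T)^{\psi=1}$ and of $\Lambda_{\calO_E}(G_\infty)$; passing to the limit will then yield the required $\tilde x$ and null sequence $z^{(i)} \in \Lambda_{\calO_E}(G_\infty)$.
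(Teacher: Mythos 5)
Your argument is correct and is essentially the paper's own proof written out in full: the paper simply observes that $(\calO_E\otimes_{\ZZ_p}\AA^+_{\QQ_p})^{\psi=0}\,n_2'$ is the $\Delta$-orbit of $\vp(\calO_E\otimes_{\ZZ_p}\AA^+_{\QQ_p})(1+\pi)n_2'$ and declares the reduction to the preceding corollary (via $\Lambda_{\calO_E}(G_\infty)$-equivariance of $\J$ and $\rmCol$) immediate. The additional bookkeeping you carry out --- dealing with $\delta(n_2')\neq n_2'$ by splitting the error into an $n_1'$-component divisible by $\vp(\pi)^{k-1}$, absorbed by Proposition~\ref{col1}, and an $n_2'$-component divisible by $\vp(\pi)$, absorbed by iteration whose convergence is tracked exactly as in Proposition~\ref{approximation} (this quantitative tracking, rather than your unproved blanket claim that the set $S$ is closed under $(\varpi_E,\vp(\pi))$-adic limits, is what actually justifies passing to the limit) --- is precisely the detail the paper's one-line proof leaves implicit, so you are on the same route, only more careful.
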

  \begin{proof}
   Immediate from the previous corollary and the observation that $(\calO_E\otimes_{\ZZ_p}\AA^+_{\QQ_p})^{\psi=0}n_2'$ is the $\Delta$-orbit of $\vp(\calO_E\otimes_{\ZZ_p}\AA^+_{\QQ_p})(1+\pi)n_2'$.
  \end{proof}

  We can now prove Theorem~\ref{image1}. By Proposition~\ref{surjectiveCol1} there exists $x\in \NN(T)^{\psi=1}$ such that $\rmCol(x)=\varpi_E^m(1+\pi)n_1' + y_2n_2'$ for some $y_2\in (\calO_E\otimes_{\ZZ_p}\AA^+_{\QQ_p})^{\psi=0}$. It is clear that $\J(\varpi_E^m(1+\pi)n_1')=(\varpi_E^m,0)$. Also, we know by Corollary~\ref{ant} that there exists a sequence $z^{(i)}\in \Lambda_{\calO_E}(G_\infty)$ tending to $0$ as $i\rightarrow +\infty$ and $\tilde{x}\in \NN(T)^{\psi=1}$ such that 
  \[ \J(y_2n_2')-\J\circ\rmCol(\tilde{x})=\sum_{i\geq 0}(0,z^{(i)}).\] 
  Hence $\J\circ\rmCol(x)-\J\circ\rmCol(\tilde{x})=(\varpi_E^m,0)+\sum_{i\geq 0}(0,z^{(i)})$, i.e.
  \[ \J\circ\rmCol(x-\tilde{x})=(\varpi_E^m,0)+\sum_{i\geq 0}(0,z^{(i)}).\]
  
  \begin{remark}
   Alas so far we don't know how to translate Proposition~\ref{col2} into a statement about $\image{\uCol_2}$.
  \end{remark}
  
  \begin{remark}
   In a forthcoming paper~\cite{leiloefflerzerbes2}, we give a description of the images of the $\uCol_i$ using Perrin-Riou's $p$-adic regulator. 
  \end{remark}


 \section{Relations to existing work}

  \subsection{Fourier transforms}\label{appendix}
   In this section, we prove a compatibility result in $p$-adic Fourier theory (theorem \ref{david} below) which will allows us to relate divisibility of elements in $\HG$ and of their images in $(\Brig)^{\psi=0}$ under the Mellin tranform. This will allow us to compare our results above to the ones in \cite{kobayashi03}, \cite{lei09} and \cite{sprung09}. Throughout, $E$ is a complete extension of $\QQ_p$.

  \subsubsection{The Fourier transform for $\ZZ_p$ and $\ZZ_p^\times$}

   We recall some standard results of $p$-adic Fourier theory. These results are due to Amice \cite{amicevelu75}; see also \cite{colmez10} for a more modern account. We denote by $C^\la(\ZZ_p, E)$ the space of locally analytic $E$-valued functions on $\ZZ_p$, with the topology it acquires as the locally convex direct limit as $n \to \infty$ of the Banach algebras of functions analytic on cosets of $p^n \ZZ_p$. A \emph{distribution} on $\ZZ_p$ is a continuous $E$-linear functional $C^\la(\ZZ_p, E) \to E$; we write $D^{\la}(\ZZ_p, E)$ for the space of distributions.

   \begin{proposition}[{\cite[theorem 2.3]{colmez10}}]
    There is an isomorphism between $D^\la(\ZZ_p, E)$ and the subset of functions $f \in E[[T]]$ converging for all $T$ in the open unit disc of $\CC_p$, given by $\mu \mapsto F_\mu(T) = \sum_{n \ge 0} T^n \mu\left(\binom{x}{n}\right)$. The value of $F_\mu$ at a point $x \in E$ (with $|x| < 1$) is $\mu(\kappa_x)$, where $\kappa_x$ is the unique character of $\ZZ_p$ such that $\kappa(1) = 1 + x$.
   \end{proposition}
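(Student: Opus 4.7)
The plan is to recognise this as the classical Amice transform, whose main ingredient is a characterisation of locally analytic functions on $\ZZ_p$ via their Mahler expansion. The starting point will be Mahler's theorem: the binomial polynomials $\binom{x}{n}$ form an orthonormal Banach basis of $C(\ZZ_p,E)$, so every continuous $f$ has a unique expansion $f=\sum a_n\binom{x}{n}$ with $a_n=(\Delta^n f)(0)\to 0$. The refinement due to Amice, which I will invoke as the key input, is that $f\in C^\la(\ZZ_p,E)$ if and only if $\limsup_n|a_n|^{1/n}<1$; more precisely, for each $r>1$ the subspace $B_r=\{f:|a_n|r^n\text{ bounded}\}$ is a Banach space under $\|f\|_r=\sup_n|a_n|r^n$, and $C^\la(\ZZ_p,E)$ is the locally convex inductive limit of the $B_r$ as $r\searrow 1$.

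Dualising, a continuous linear functional $\mu$ on $C^\la(\ZZ_p,E)$ is characterised by the sequence $b_n=\mu\bigl(\binom{x}{n}\bigr)$, with the continuity constraint on each Banach piece translating to $|b_n|\le C_r r^n$ for every $r>1$; equivalently $\limsup_n|b_n|^{1/n}\le 1$. This is exactly the condition that $F_\mu(T)=\sum b_n T^n$ converges on the open unit disc of $\CC_p$, so the map $\mu\mapsto F_\mu$ is well-defined. Injectivity will follow from density of the polynomial span of $\{\binom{x}{n}\}$ in $C^\la(\ZZ_p,E)$; surjectivity from defining $\mu$ on this dense subspace by $\mu\bigl(\binom{x}{n}\bigr)=b_n$ and extending by continuity using the Banach-norm bounds. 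For the evaluation formula, I will use that the character $\kappa_x$ with $|x|<1$ is locally analytic, with Mahler expansion $\kappa_x(m)=(1+x)^m=\sum_n\binom{m}{n}x^n$; its Mahler coefficients are $x^n$, and these lie in $B_r$ for any $1<r<|x|^{-1}$, so the series $\sum_n x^n \binom{\bullet}{n}$ converges to $\kappa_x$ in $C^\la$. Continuity of $\mu$ then gives
\[
\mu(\kappa_x)=\sum_{n\ge 0}b_n x^n = F_\mu(x).
\]

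The main technical obstacle I anticipate is the Amice growth estimate itself. One must compare the $\sup$-norm of $f$ on each coset $a+p^h\ZZ_p$, viewed as an analytic function of the local parameter $(x-a)/p^h$, with the size of the Mahler coefficients $a_n$. Passing from the Taylor coefficients in such a local parameter to the Mahler coefficients involves dividing by $n!$, and via Legendre's formula one has to track the $p$-adic valuation $v_p(n!)=(n-s_p(n))/(p-1)$; a sharp two-sided comparison is needed in order to match the natural inductive family of Banach spaces. Once this estimate is in hand, everything else is formal functional analysis.
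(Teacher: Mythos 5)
Your argument is correct and is essentially the standard one: the paper gives no proof of this proposition, citing \cite[theorem 2.3]{colmez10}, and the route you describe --- Mahler expansion, Amice's characterisation of $C^\la(\ZZ_p,E)$ by the growth condition $\limsup_n|a_n|^{1/n}<1$ with its attendant inductive family of Banach spaces, dualisation to get the bounds $|b_n|\le C_r r^n$, and termwise evaluation of $\mu$ on the Mahler expansion of $\kappa_x$ --- is precisely the proof in that reference. As you note, the only genuine content is the two-sided comparison (via $v_p(n!)$) between the coset-analyticity Banach spaces used in the paper's definition of the topology on $C^\la(\ZZ_p,E)$ and your spaces $B_r$ of Mahler-coefficient decay, which is exactly Amice's theorem; everything else is formal.
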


   Thus we may identify $D^\la(\ZZ_p, E)$ with $E\otimes\Brig$. Under this identification, the subspace $D^\la(\ZZ_p^\times, E)$ of distributions supported in $\ZZ_p^\times$ corresponds to $(E\otimes\Brig)^{\psi=0}$ \cite[\S 2.4.5]{colmez10}.

   Suppose $p \ne 2$. An alternative description of $D^\la(\ZZ_p^\times, E)$ is given by the isomorphism $\ZZ_p^\times = (1 + p\ZZ_p) \times \Delta \cong \ZZ_p \times \Delta$, where $\Delta$ is the group of $(p-1)$st roots of unity in $\ZZ_p$. If we fix a topological generator $\gamma$ of $1 + p\ZZ_p$, we thus have an isomorphism
   \[ D^\la(\ZZ_p^\times, E) \cong E \otimes \HG,\]
   where as in section \ref{construction} above, $\HG$ is the ring of formal series $f(\gamma - 1)$, for $f \in \Qp[\Delta][[X]]$ converging for all $|X| < 1$.

   Thus for a distribution $\mu$ on $\ZZ_p^\times$, we obtain two power series 
   \[ F^{+}_\mu(\pi) \in (E\otimes\Brig)^{\psi=0}\]
   and
   \[ F^{\times}_\mu(X) \in E\otimes \HG. \]
   These are related by the Mellin transform of lemma \ref{lem2}: we have $\mathfrak{M}(F^\times_\mu(\gamma))=F^+_\mu$.

  \subsubsection{Step functions}

   Let $n \ge 0$ be an integer. We say a function $f: \ZZ_p \to E$ is a \emph{step function of order $n$} if it is constant on any coset $a + p^n \ZZ_p$; the space $\Step_n(\ZZ_p)$ of such functions is clearly a subspace of $C^{\la}(\ZZ_p, E)$ of dimension $p^n$.

   For each $n$ we have an inclusion $\Step_{n}(\ZZ_p) \to \Step_{n+1}(\ZZ_p)$. A section of this is given by the ``averaging'' map $I : \Step_{n+1}(\ZZ_p) \to \Step_{n}(\ZZ_p)$ defined by 
   \[ I(f)(x) = \frac{1}{p} \sum_{y \in \ZZ/p\ZZ} f(x + p^{n}y).\]
   For $n \ge 1$, we say a function $f \in \Step_{n}(\ZZ_p)$ is a \emph{primitive step function} if it is in the kernel of this map, and write $\PStep_n(\ZZ_p)$ for the space of such functions, which clearly has dimension $p^{n-1}(p-1)$. For consistency we take $\PStep_0(\ZZ_p) = \Step_0(\ZZ_p) = K$.

   \begin{lemma}\label{basis1}
    Let $n \ge 0$ and suppose $E$ contains a primitive $p^n$-th root of unity $\zeta_{p^n}$. Then a basis for $\Step_n(\ZZ_p)$ is given by the functions $x \mapsto (\zeta_{p^n})^{xt}$, as $t$ varies through $\ZZ/p^n\ZZ$. The subset corresponding to $t \in (\ZZ/p^n\ZZ)^\times$ is a basis for $\PStep_n(\ZZ_p)$.
   \end{lemma}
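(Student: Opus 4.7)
The plan is to view both parts as instances of finite Fourier analysis on the cyclic group $\ZZ/p^n\ZZ$, after identifying $\Step_n(\ZZ_p)$ with $E$-valued functions on this quotient.

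First I would observe that a function $f : \ZZ_p \to E$ lies in $\Step_n(\ZZ_p)$ if and only if it factors through $\ZZ_p / p^n\ZZ_p \cong \ZZ/p^n\ZZ$, so $\Step_n(\ZZ_p)$ is canonically identified with the $p^n$-dimensional $E$-algebra of functions on $\ZZ/p^n\ZZ$. The characters of the group $\ZZ/p^n\ZZ$, valued in $E$ (which contains $\zeta_{p^n}$ by hypothesis), are exactly the functions $x \mapsto \zeta_{p^n}^{xt}$ for $t \in \ZZ/p^n\ZZ$. These are linearly independent by the standard orthogonality relation
\[
\frac{1}{p^n}\sum_{x \in \ZZ/p^n\ZZ} \zeta_{p^n}^{x(t-s)} = \delta_{s,t},
\]
and since there are $p^n = \dim_E \Step_n(\ZZ_p)$ of them, they form a basis. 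This gives the first assertion.

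For the second assertion, I would compute $I(e_t)$ directly. For $e_t(x) = \zeta_{p^n}^{xt}$,
\[
I(e_t)(x) = \frac{1}{p}\sum_{y=0}^{p-1} \zeta_{p^n}^{(x + p^{n-1}y)t} = \zeta_{p^n}^{xt} \cdot \frac{1}{p}\sum_{y=0}^{p-1} \zeta_p^{yt},
\]
where the inner sum equals $p$ if $p \mid t$ and $0$ otherwise. Hence $e_t \in \ker(I) = \PStep_n(\ZZ_p)$ precisely when $t \in (\ZZ/p^n\ZZ)^\times$. Since the $e_t$ with $t \in (\ZZ/p^n\ZZ)^\times$ are linearly independent (being a subset of the basis from the first part), and their cardinality $p^{n-1}(p-1)$ matches $\dim_E \PStep_n(\ZZ_p)$, they form a basis of $\PStep_n(\ZZ_p)$.

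There is no serious obstacle here; the argument is pure finite Fourier theory, and the only mild subtlety is to interpret the averaging map correctly so that $\dim \PStep_n(\ZZ_p) = p^{n-1}(p-1)$ matches the number of units in $\ZZ/p^n\ZZ$.
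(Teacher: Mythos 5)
Your argument is correct and is essentially the paper's: both rest on finite Fourier analysis on $\ZZ/p^n\ZZ$, the paper deducing that the characters span $\Step_n(\ZZ_p)$ from the identity $\frac{1}{p^n}\sum_{t}(\zeta_{p^n})^{xt}(\zeta_{p^n})^{-at} = \mathbf{1}_{a+p^n\ZZ_p}(x)$, while you use orthogonality to get linear independence plus a dimension count, and your explicit computation of $I(e_t)$ for the second assertion is just the detail the paper leaves implicit.
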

   \begin{proof} 
    This follows immediately from the fact that $x \mapsto \frac{1}{p^n}\sum_{t \in \ZZ/p^n\ZZ} (\zeta_{p^n})^{xt} (\zeta_{p^n})^{-at}$ is the characteristic function of $a + p^n \ZZ_p$.
   \end{proof}

   We also have a ``multiplicative'' version. For $n \ge 1$, we define $\Step_n(\ZZ_p^\times)$ as the functions in $\Step_n(\ZZ_p)$ which are supported in $\ZZ_p^\times$. For $n \ge 2$ the averaging map restricts to a map $\Step_n(\ZZ_p^\times) \to \Step_n(\ZZ_p^\times)$, and we define $\PStep_n(\ZZ_p^\times)$ to be its kernel. We take $\PStep_1(\ZZ_p^\times) = \Step_1(\ZZ_p^\times)$, so for all $n \ge 1$ restriction to $\ZZ_p^\times$ defines a surjective map $\PStep_n(\ZZ_p) \to \PStep_n(\ZZ_p^\times)$.

   \begin{lemma}\label{basis2}
    A basis for $\Step_n(\ZZ_p^\times)$ is given by the Dirichlet characters modulo $p^n$. For $n \ge 2$ the subset of primitive characters modulo $p^n$ gives a basis for $\PStep_n(\ZZ_p^\times)$.
   \end{lemma}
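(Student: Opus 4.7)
My plan is to prove both statements by dimension-counting together with linear independence of characters, leveraging the standard theory of finite abelian group representations on the quotient $(\ZZ/p^n\ZZ)^\times$.

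First, I would note that $\Step_n(\ZZ_p^\times)$ has dimension equal to the number of cosets of $p^n \ZZ_p$ contained in $\ZZ_p^\times$, which is $\phi(p^n) = p^{n-1}(p-1)$. On the other hand, a Dirichlet character $\chi \bmod p^n$, extended by zero off $\ZZ_p^\times$, lies in $\Step_n(\ZZ_p^\times)$ because it factors through $(\ZZ/p^n\ZZ)^\times$. The number of such characters equals $|(\ZZ/p^n\ZZ)^\times| = \phi(p^n)$, matching the dimension, so it remains only to check linear independence. This is the standard orthogonality of characters on the finite abelian group $(\ZZ/p^n\ZZ)^\times$: any linear relation $\sum c_\chi \chi = 0$ forces each $c_\chi = 0$ by pairing with $\chi$. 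Hence the Dirichlet characters mod $p^n$ form a basis of $\Step_n(\ZZ_p^\times)$.

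For the second statement, I would first compute the dimension of $\PStep_n(\ZZ_p^\times)$: the averaging map $I:\Step_n(\ZZ_p^\times) \to \Step_{n-1}(\ZZ_p^\times) \subset \Step_n(\ZZ_p^\times)$ (it lands there because for $n \ge 2$ any $x \in p\ZZ_p$ has $x + p^{n-1}y \in p\ZZ_p$, so the average vanishes) is surjective onto $\Step_{n-1}(\ZZ_p^\times)$, since $I$ is a left inverse to the inclusion. Therefore $\dim \PStep_n(\ZZ_p^\times) = \phi(p^n) - \phi(p^{n-1}) = p^{n-2}(p-1)^2$, which equals the number of primitive Dirichlet characters modulo $p^n$.

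Next, I would show that every primitive character $\chi$ mod $p^n$ lies in $\ker I$. The key calculation is: for $x \in \ZZ_p^\times$,
\[
I(\chi)(x) = \frac{1}{p}\sum_{y \in \ZZ/p\ZZ} \chi(x + p^{n-1}y) = \frac{\chi(x)}{p}\sum_{y \in \ZZ/p\ZZ}\chi(1 + p^{n-1}y x^{-1}).
\]
The map $y \mapsto \chi(1+p^{n-1}y x^{-1})$ is a character of the group $\ZZ/p\ZZ \cong (1+p^{n-1}\ZZ_p)/(1+p^n\ZZ_p)$; its non-triviality is exactly equivalent to $\chi$ being non-trivial on $1 + p^{n-1}\ZZ_p$, i.e.\ to $\chi$ being primitive. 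Hence the inner sum vanishes, giving $I(\chi) = 0$. Combined with the first part (which guarantees linear independence of any subset of Dirichlet characters) and the dimension count, this shows the primitive characters form a basis of $\PStep_n(\ZZ_p^\times)$.

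The calculation is essentially routine — the only mild subtlety is correctly identifying the image of the averaging map and using the definition of primitivity in the right form ($\chi$ non-trivial on $1+p^{n-1}\ZZ_p$ rather than the more common formulation that $\chi$ does not factor through $(\ZZ/p^{n-1}\ZZ)^\times$); these two are equivalent because any character of $(\ZZ/p^n\ZZ)^\times$ trivial on $1+p^{n-1}\ZZ_p$ descends to the quotient.
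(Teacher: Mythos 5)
Your proof is correct and is essentially the paper's intended argument: the paper's proof is simply ``similar to the previous lemma'', i.e.\ orthogonality of characters of the finite quotient $(\ZZ/p^n\ZZ)^\times$ combined with a dimension count, which is exactly what you carry out, including the verification (left implicit in the paper) that the averaging map kills the primitive characters because $y\mapsto\chi(1+p^{n-1}y)$ is a nontrivial character of $\ZZ/p\ZZ$. The only cosmetic difference is that the paper's model proof of Lemma~\ref{basis1} establishes spanning directly by writing characteristic functions of cosets as explicit character sums, whereas you argue via linear independence plus a count of dimensions.
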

   \begin{proof} 
    Similar to the previous lemma.
   \end{proof}

  \subsubsection{Relating the additive and multiplicative transforms}

   We now suppose we are given a distribution $\mu \in D^\la(\ZZ_p^\times, E)$. Let $F_\mu^\times$ and $F_\mu^+$ be the corresponding transforms. 

   \begin{theorem}\label{david}
    For $n \ge 2$, the following are equivalent:
    \begin{enumerate}
     \item $F_\mu^+$ is divisible by the cyclotomic polynomial $\Phi_n(1 +\pi)$ in $\Brig$.
     \item $\mu$ annihilates $\PStep_n(\ZZ_p)$.
     \item $\mu$ annihilates $\PStep_n(\ZZ_p^\times)$.
     \item $F_\mu^\times(\chi)$ is zero for all primitive Dirichlet characters $\chi \bmod p^n$.
     \item $F_\mu^\times$ is divisible by $\Phi_{n-1}(1 + X)$ in $E\otimes\HG$.
    \end{enumerate}
    For $n = 1$, the same holds with the last two statements replaced by:
    \begin{enumerate}
     \item[(4')] $F_\mu^\times(\chi)$ is zero for all Dirichlet characters $\chi \bmod p$.
     \item[(5')] $F_\mu^\times$ is divisible by $X$ in $E \otimes \HG$.
    \end{enumerate}
   \end{theorem}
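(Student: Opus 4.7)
The plan is to establish the chain of equivalences $(1) \Leftrightarrow (2) \Leftrightarrow (3) \Leftrightarrow (4) \Leftrightarrow (5)$, with the appropriate modifications for $n = 1$. The key identity underlying everything is that $F_\mu^+(\zeta - 1) = \mu(x \mapsto \zeta^x)$ for any $p$-power root of unity $\zeta$, and $F_\mu^\times(\chi) = \mu(\chi)$ for any locally constant character $\chi$ of $\ZZ_p^\times$. After extending scalars to a finite extension of $E$ containing the relevant roots of unity --- which is harmless, since all five conditions are preserved and detected under such an extension (e.g.\ divisibility by the polynomial $\Phi_n(1+\pi)$ is preserved by Weierstrass division) --- I can factor cyclotomic polynomials into linear factors and translate divisibility statements into vanishing at roots.

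For $(1) \Leftrightarrow (2)$, recall that in this paper's convention $\Phi_n(X)$ denotes the $p^n$-th cyclotomic polynomial, so $\Phi_n(1+\pi) = \prod_{t \in (\ZZ/p^n\ZZ)^\times}\bigl(\pi - (\zeta_{p^n}^t - 1)\bigr)$ after enlarging $E$. Divisibility of $F_\mu^+$ by $\Phi_n(1+\pi)$ is then equivalent to $\mu(x \mapsto \zeta_{p^n}^{xt}) = 0$ for all $t \in (\ZZ/p^n\ZZ)^\times$, which by Lemma~\ref{basis1} is the vanishing of $\mu$ on a basis of $\PStep_n(\ZZ_p)$. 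For $(2) \Leftrightarrow (3)$, since $\mu$ is supported on $\ZZ_p^\times$ one has $\mu(f) = \mu(f|_{\ZZ_p^\times})$ (where $f|_{\ZZ_p^\times}$ is extended to $\ZZ_p$ in any way we please), so what matters is the surjectivity of the restriction map $\PStep_n(\ZZ_p) \to \PStep_n(\ZZ_p^\times)$. For $n \geq 2$ surjectivity holds via extension by zero: if $g \in \PStep_n(\ZZ_p^\times)$ and $\tilde g$ is its zero extension, then $I(\tilde g)(x) = 0$ for $x \in \ZZ_p^\times$ because $I(g) = 0$, and for $x \in p\ZZ_p$ every summand $\tilde g(x + p^{n-1}y)$ vanishes since $x + p^{n-1}y \in p\ZZ_p$ when $n - 1 \geq 1$. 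For $n = 1$ the zero extension need not lie in $\PStep_1(\ZZ_p)$; instead, given $g \in \PStep_1(\ZZ_p^\times) = \Step_1(\ZZ_p^\times)$, I would lift it by assigning on $p\ZZ_p$ the constant value $-\sum_{a \in (\ZZ/p\ZZ)^\times} g(a)$, which gives total sum zero and so produces a lift in $\PStep_1(\ZZ_p)$ restricting to $g$.

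For $(3) \Leftrightarrow (4)$ (or $(4')$ when $n = 1$), Lemma~\ref{basis2} identifies the relevant set of Dirichlet characters --- primitive mod $p^n$ for $n \geq 2$, all characters mod $p$ for $n = 1$ --- as a basis of $\PStep_n(\ZZ_p^\times)$, and $\mu(\chi) = F_\mu^\times(\chi)$ by definition. For $(4) \Leftrightarrow (5)$ (or $(4') \Leftrightarrow (5')$), decompose $F_\mu^\times = \sum_\eta F_\mu^{\times,\eta} e_\eta$ into $\Delta$-isotypical components; divisibility by $\Phi_{n-1}(1+X)$ in $E \otimes \HG$ is equivalent to divisibility of each $F_\mu^{\times,\eta}$ in $E \otimes \mathcal{H}(\Gamma)$. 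The roots of $\Phi_{n-1}(1+X)$ are $\zeta - 1$ as $\zeta$ runs over primitive $p^{n-1}$-th roots of unity, so the divisibility translates to $\mu(\eta \cdot \psi_\zeta) = 0$ for all $\eta \in \widehat{\Delta}$ and all such $\zeta$, where $\psi_\zeta(\gamma) = \zeta$; and $\eta \cdot \psi_\zeta$ ranges precisely over the primitive Dirichlet characters mod $p^n$. For $n = 1$, $\Phi_0(1+X) = X$ has its unique root at $X = 0$, corresponding to $\psi$ trivial, and the resulting characters $\eta$ run through the $p-1$ Dirichlet characters mod $p$, giving $(4') \Leftrightarrow (5')$. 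The main technical subtlety I expect is the case distinction in $(2) \Leftrightarrow (3)$ when $n = 1$: the natural extension-by-zero trick fails and one must hand-craft the lift; the remaining steps are transparent applications of the basis lemmas and the root structure of the $p^n$-th cyclotomic polynomials.
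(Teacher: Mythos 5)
Your proof is correct and follows essentially the same route as the paper: the same chain $(1)\Leftrightarrow(2)\Leftrightarrow(3)\Leftrightarrow(4)\Leftrightarrow(5)$ via Lemmas~\ref{basis1} and~\ref{basis2}, with the step $(4)\Leftrightarrow(5)$ done by decomposing $F_\mu^\times$ over $\Delta$ (your isotypic idempotents versus the paper's expansion $\sum_i [\tau(i)]F_i$ are the same argument). The only difference is that you spell out the surjectivity of restriction $\PStep_n(\ZZ_p)\twoheadrightarrow\PStep_n(\ZZ_p^\times)$, including the $n=1$ lift, which the paper simply records when setting up the definitions.
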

   \begin{proof}
    It is clear that $(1) \Leftrightarrow (2)$ for arbitrary $\mu \in D^\la(\ZZ_p, E)$ (not necessarily supported in $\ZZ_p^\times$), because of Lemma~\ref{basis1}. Since restriction of functions gives a surjective map $\PStep_n(\ZZ_p) \twoheadrightarrow \PStep_n(\ZZ_p^\times)$, we have $(2) \Leftrightarrow (3)$. The equivalence $(3) \Leftrightarrow (4)$ follows from Lemma~\ref{basis2}. 

    To show $(4) \Leftrightarrow (5)$ for $n \ge 2$, let us write $F_\mu^\times = \sum_{i = 1}^{p-1} [\tau(i)] F_i(X)$, where $F_i \in E[[X]]$ and $\tau(i) \in \Delta$ is the Teichm\"uller lift of $i$. For any primitive $p^{n-1}$st root of unity $\zeta$, there are exactly $p - 1$ primitive Dirichlet charcters modulo $p^n$ mapping $\gamma$ to $\zeta$, and their restrictions to $\Delta$ are given by $\tau(i) \mapsto \tau(i)^k$ for $k \in \ZZ / (p-1)\ZZ$. So $(4)$ is equivalent to
    \[ \sum_{i = 1}^{p-1}\tau(i)^k F_i(\zeta - 1) = 0 \]
    for all $k = 0 \dots p-2$ and all primitive $p^{n-1}$st roots of unity $\zeta$, which is equivalent to $F_i(\zeta - 1) = 0$ for each $i = 1, \dots, p-1$. In other words, each of the functions $F_i(X)$ vanishes at every root of the polynomial $\Phi_{n-1}(1 + X)$, which is clearly equivalent to $F_\mu^\times$ being divisible by $\Phi_{n-1}(1 + X)$ in $E\otimes\HG$. 

    (The only change necessary for $n = 1$ is to note that $\PStep_1(\ZZ_p^\times)$ is the linear span of all Dirichlet characters modulo $p$, not just the primitive ones.)
   \end{proof}

   We also have an accompanying result:
   \begin{lemma}
    Let $F \in (E\otimes\Brig)^{\psi=0}$. Then $\Phi_1(1 + \pi)$ divides $F(\pi)$ if and only if $\vp(\pi) = \pi \Phi_1(1 + \pi)$ divides $F(\pi)$.
   \end{lemma}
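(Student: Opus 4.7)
The plan is to reduce the lemma to showing the extra divisibility $\pi \mid F$, observing that $\Phi_1(1+\pi) = \frac{(1+\pi)^p-1}{\pi} = \frac{\vp(\pi)}{\pi} = q$, so $\vp(\pi) = \pi q$, and one implication is trivial. The task is therefore: if $F \in (E\otimes\Brig)^{\psi=0}$ is divisible by $q$, then $F(0) = 0$.

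The main tool I would use is the standard decomposition
\[ (E\otimes\Brig)^{\psi=0} = \bigoplus_{i=1}^{p-1} (1+\pi)^i \cdot \vp(E\otimes\Brig), \]
which follows from the fact that $\{(1+\pi)^i : 0 \le i \le p-1\}$ is a $\vp(E\otimes\Brig)$-basis of $E\otimes\Brig$, together with the formula $\vp\circ\psi(f)(\pi) = \tfrac{1}{p}\sum_{\zeta^p=1} f(\zeta(1+\pi)-1)$, which gives $\psi((1+\pi)^i) = 0$ for $0 < i < p$. Writing $F = \sum_{i=1}^{p-1}(1+\pi)^i \vp(G_i)$ with $G_i \in E\otimes\Brig$, and using $\vp(G_i)\bigl|_{\pi=\zeta-1} = G_i\bigl((1+\pi)^p-1\bigr)\bigl|_{\pi=\zeta-1} = G_i(0)$ for any $p$-th root of unity $\zeta$, we obtain
\[ F(\zeta-1) = \sum_{i=1}^{p-1} \zeta^i G_i(0). \]

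Under the hypothesis $q \mid F$, this vanishes for every primitive $p$-th root of unity $\zeta$. The polynomial $P(Z) = \sum_{i=1}^{p-1} G_i(0) Z^i$ of degree at most $p-1$ then vanishes at the $p-1$ primitive $p$-th roots of unity, and also at $Z=0$ (it has no constant term), giving $p$ distinct roots and forcing $P \equiv 0$; hence $G_i(0) = 0$ for all $i$. Consequently each $G_i$ is divisible by $\pi$ in $E\otimes\Brig$, so $\vp(G_i)$ is divisible by $\vp(\pi) = \pi q$, and therefore $\vp(\pi) \mid F$, as required.

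The only subtle step is the decomposition of $(E\otimes\Brig)^{\psi=0}$, but this is classical. As an alternative that bypasses it entirely, one could invoke Theorem~\ref{david} with $n=1$: the hypothesis $q \mid F$ says that the associated distribution $\mu$ on $\ZZ_p^\times$ annihilates $\PStep_1(\ZZ_p^\times) = \Step_1(\ZZ_p^\times)$, which since $\mu$ is supported in $\ZZ_p^\times$ forces $\mu\bigl(\chi_{a+p\ZZ_p}\bigr) = 0$ for all $a = 0, \dots, p-1$ and hence $\mu(1) = 0$; this translates to $F(0) = 0$, i.e.\ $\pi \mid F$, completing the argument.
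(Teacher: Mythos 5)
Your proof is correct, but it takes a different route from the paper's. The paper's argument is a one-liner: since $\psi(F)=0$, the trace relation $\vp\circ\psi(F)(\pi)=\tfrac1p\sum_{\zeta^p=1}F\bigl(\zeta(1+\pi)-1\bigr)$ evaluated at $\pi=0$ gives $F(0)+\sum_{\zeta\in\mu_p,\,\zeta\ne1}F(\zeta-1)=0$, so vanishing of $F$ at the primitive $p$-th roots of unity forces $F(0)=0$, i.e.\ the extra factor of $\pi$. You instead invoke the decomposition $(E\otimes\Brig)^{\psi=0}=\bigoplus_{i=1}^{p-1}(1+\pi)^i\vp(E\otimes\Brig)$, compute $F(\zeta-1)=\sum_i\zeta^i G_i(0)$, and use a degree count (a polynomial of degree $\le p-1$ with $p$ distinct roots is zero) to conclude $G_i(0)=0$ for all $i$; this is valid, and it yields slightly more than needed, namely $\vp(\pi)\mid\vp(G_i)$ componentwise, though it relies on the (classical, but not free) fact that the $\psi=0$ decomposition holds over $\Brig$ and not just over $\BB_{\Qp}$. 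Your alternative via Theorem~\ref{david} with $n=1$ also works and is not circular, since the lemma is stated in the paper only as a companion to that theorem; but it repackages the same Fourier-theoretic content, whereas the paper's single evaluation of the $\psi$-relation at $\pi=0$ is the most economical argument.
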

   \begin{proof}
    Since $\psi(F)(0) = 0$, we have
    \[ F(0) + \sum_{\substack{\zeta \in \mu_p\\ \zeta \ne 1}} F(\zeta - 1) = 0.\]
    Hence if $F$ vanishes at the points $\zeta - 1$ for primitive $\zeta \in \mu_p$, then it must also vanish at 0.
   \end{proof}

\subsection{The case \texorpdfstring{$a_p=0$}{ap = 0}}\label{leiswork}
   
   We now relate the construction of Coleman maps in this paper to the construction given in \cite{lei09} for modular forms with $a_p = 0$.

   \subsubsection{Construction of the Coleman maps}
   
    Consider $f$ a normalized new eigenform as in Section~\ref{Colemanmf} with $a_p=0$. To ease notation, we assume that $E=\QQ_p$. The plus and minus Coleman maps in~\cite{lei09} are constructed as follows. 

     Let $u=\chi(\gamma)$. In~\cite{pollack03}, Pollack defines the following elements of $\HG$:    
    \begin{align*}
     \log^+_{p,k} & =\prod_{j=0}^{k-2}\frac{1}{p}\prod_{n=1}^{+\infty}\frac{\Phi_{2n}(u^{-j}\gamma)}{p} \\
     \log^-_{p,k} & =\prod_{j=0}^{k-2}\frac{1}{p}\prod_{n=1}^{+\infty}\frac{\Phi_{2n-1}(u^{-j}\gamma)}{p}.
    \end{align*}
     
    Let $\nu^-=\bar{\nu}_1$, $\nu^+=\bar{\nu}_2$ be the basis of $\Dcris(V_{\bar{f}})$ as in Section~\ref{Colemanmf} and let $\eta^\pm=(1+\pi)\otimes \nu^\pm\in (\BB^+_{\rig,\QQ_p})^{\psi=0}\otimes\DD_{\cris}(V)$. Let
    \[ \mathcal{L}_{1,\eta^\pm}: H^1_{\Iw}(\QQ_p,V_{\bar{f}}(k-1))\rTo \HG\]
    be the map defined by \eqref{lmap}.

    \begin{lemma}
     $\log^\pm_{p,k}\mid \mathcal{L}_{1,\eta^\pm}(z)$ for any $z\in H^1_{\Iw}(\QQ_p, V_{\bar{f}}(k-1))$. 
    \end{lemma}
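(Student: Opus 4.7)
The plan is to derive the divisibility from Proposition~\ref{samemaps} by combining an explicit computation of the matrix $M$ in the $a_p=0$ case with the Fourier-theoretic comparison in Section~\ref{appendix}.

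First, I would use Proposition~\ref{samemaps} (together with the Poincar\'e duality identification $\Dcris(V_f(1))^\vee \cong \Dcris(V_{\bar f}(k-1))$ implicit in the pairing $[\cdot,\cdot]$) to write
\[ \mathcal{L}_{1,\eta^\pm}(z) = \pm\mathfrak{M}^{-1}\bigl(y_1 m_{1\star} + y_2 m_{2\star}\bigr),\]
where $x \in \NN(V_{\bar f}(k-1))^{\psi=1}$ corresponds to $z$ under the Fontaine isomorphism, $y_i = \rmCol_i(x)$, and the column index $\star \in \{1,2\}$ is determined by whether we are pairing against $\nu^+$ or $\nu^-$. This reduces the problem to proving that the appropriate entries of the matrix $M = (t/\pi q)^{k-1}P^T M'^{-1}$ in $M_2(\BB^+_{\rig,\QQ_p})$ are divisible by $\lp^{k-1}$ (respectively $\lm^{k-1}$).

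Next, I would compute $M$ in the $a_p = 0$ case. By the Berger--Li--Zhu construction (Section~\ref{ss}), $P$ reduces to the anti-diagonal form $\begin{pmatrix} 0 & -1 \\ q^{k-1} & 0 \end{pmatrix}$, so $P^T$ has entries $q^{k-1}$ and $-1$. The identity $t/\pi = \lp\cdot\lm$ (which follows from $\vp(t)=pt$ and $\vp(\pi) = \pi q$) then gives $(t/\pi q)^{k-1} q^{k-1} = (t/\pi)^{k-1} = (\lp\cdot\lm)^{k-1}$ and $(t/\pi q)^{k-1} = (\lp)^{k-1}(\lm/q)^{k-1}$. Because $\vp^2 = -p^{k-1}$ acts as a scalar on both $\nu^\pm$, the matrix $M'$ respects the corresponding decomposition, so the off-diagonal entries of $M$ are divisible precisely by $\lp^{k-1}$ and $\lm^{k-1}$ in the positions relevant to pairing against $\nu^+$ and $\nu^-$ respectively. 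Thus $\mathfrak{M}(\mathcal{L}_{1,\eta^\pm}(z))$ is divisible by $(\lp)^{k-1}$ or $(\lm)^{k-1}$ inside $(\BB^+_{\rig,\QQ_p})^{\psi=0}$.

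Finally, I would translate this divisibility across the Mellin transform $\mathfrak{M}$. Observe that $\lp = \prod_{n\ge 1,\, n \text{ even}} \Phi_n(1+\pi)/p$, so divisibility of a $\psi=0$ element by $\lp$ is governed cyclotomic-factor by cyclotomic-factor; Theorem~\ref{david} equates divisibility by $\Phi_n(1+\pi)$ in $(\BB^+_{\rig,\QQ_p})^{\psi=0}$ with divisibility by $\Phi_{n-1}(1+X)$ in $\HG$. Performing this translation and keeping track of the twist by $\chi^{-j}$ for each $j = 0,\dots,k-2$ -- which is built into the $(k-1)$-fold copy of the infinite product in Pollack's definition $\log^+_{p,k} = \prod_{j=0}^{k-2}\tfrac{1}{p}\prod_{n\ge 1}\Phi_{2n}(u^{-j}\gamma)/p$ -- yields precisely divisibility by $\log^+_{p,k}$. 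The argument for $\log^-_{p,k}$ is symmetric.

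The hard part will be the bookkeeping in the last step: the power $(\lp)^{k-1}$ that falls out of $M$ must be repackaged as the $(k-1)$-fold product over Tate twists $j=0,\dots,k-2$ appearing in Pollack's $\log^+_{p,k}$. This hinges on the fact that the change-of-basis $M'$ carries the information of the de Rham filtration, producing one factor of $(t/\pi)$ for each Hodge--Tate weight between $0$ and $k-1$; each such factor, after applying $\mathfrak{M}^{-1}$, introduces one of the twists $\chi^{-j}$ in the Pollack formula, precisely matching the desired product.
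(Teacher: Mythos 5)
Your strategy is viable, and it is genuinely different from what the paper does for this lemma: the paper's proof is simply a citation to \cite[Lemma 2.2]{lei09}, where the divisibility is obtained directly from the interpolation properties of $\Omega_{V_f(1),1}$ (vanishing of the relevant pairing values at characters of the appropriate conductor parity, using that $\vp^2=-p^{k-1}$ on $\Dcris(V_f)$ and that $\exp^*$ lands in $\Fil^0$). What you propose is instead the Wach-module route, and it is essentially the computation the paper itself performs immediately \emph{after} the lemma for the purpose of comparing $\uCol_i$ with $\rmCol^\pm$: the explicit matrix $M=\left(\begin{smallmatrix}0&(\lp)^{k-1}\\-(\lm/q)^{k-1}&0\end{smallmatrix}\right)$ coming from the Berger--Li--Zhu basis, together with Lemma~\ref{amicetransforms} (which rests on Theorem~\ref{david}); combined with Proposition~\ref{samemaps} (or \eqref{prpairing} and Proposition~\ref{pmtransform}) this does yield a self-contained internal proof, at no real extra cost since assumption (C) is automatic when $a_p=0$.

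However, the bookkeeping that you yourself flag as the hard part is stated incorrectly, twice, in compensating ways. First, by \eqref{eqnu1}--\eqref{eqnu2} with the explicit $M$, the pairing built from $\nu_1$ (i.e.\ $\eta^-$ in the paper's labelling) produces $y_1 m_{12}+y_2m_{22}=y_1(\lp)^{k-1}$, while the one built from $\nu_2$ (i.e.\ $\eta^+$) produces $y_2(\lm/q)^{k-1}$; this is the opposite of your assignment of $\lp$ to $\nu^+$ and $\lm$ to $\nu^-$. Second, the Mellin transform \emph{swaps} the signs: by Theorem~\ref{david}, divisibility by $\Phi_n(1+\pi)$ with $n$ even corresponds to divisibility by $\Phi_{n-1}(1+X)$ with $n-1$ odd, so $(\lp)^{k-1}$-divisibility translates into $\log^-_{p,k}$-divisibility and $(\lm/q)^{k-1}$-divisibility into $\log^+_{p,k}$-divisibility -- this is exactly Lemma~\ref{amicetransforms}, and it is the opposite of your ``$\lp$ yields $\log^+_{p,k}$''. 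The two slips cancel, so your final statement is the correct one, but as written both intermediate claims are false. Two further small points: the relevant entry is $(\lm/q)^{k-1}$, not $\lm^{k-1}$, and the removal of the factor $q=\Phi_1(1+\pi)$ matters because the $n=1$ case of Theorem~\ref{david} is different; and your justification that $M'$ is diagonal (``$\vp^2$ acts as a scalar, so $M'$ respects the decomposition'') is only heuristic -- the shape $M'=\mathrm{diag}\big((\lp)^{k-1},(\lm)^{k-1}\big)$ should be taken from the explicit Berger--Li--Zhu construction of Section~\ref{ss}, or derived from \eqref{relatingphi} together with $M'|_{\pi=0}=I$, as the paper does.
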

    \begin{proof}
     See~\cite[Lemma 2.2]{lei09}.
    \end{proof}
    
    One can therefore define
    \begin{align}
     \rmCol^\pm: H^1_{\Iw}(\QQ_p,V_{\bar{f}}(k-1)) & \rTo \Lambda_{\QQ_p}(G_\infty) \label{pmcoleman} \\
     z & \rMapsto \frac{\mathcal{L}_{1,\eta^\pm}(z)}{\log^\pm_{p,k}}. \notag
    \end{align}

    In this setting, we can work out the matrix $M$ in \eqref{M} explicitly. As in section \ref{ss} above, we let $n_1, n_2$ be the basis of $\NN(V_{\bar{f}})$ constructed in \cite{bergerlizhu04}. The results of \textit{op.cit.}~imply that the $\EA$-span of $n_1, n_2$ is $\NN(T_{\bar{f}})$ for a $G_{\Qp}$-stable $\calO_E$-lattice $T_{\bar{f}} \subset V_{\bar{f}}$.

    Recall that $M\in M_2(\vp(\BB^+_{\rig,\QQ_p}))$ is the matrix satisfying $\begin{pmatrix} \vp(n_1) \\ \vp(n_2)\end{pmatrix}=M\begin{pmatrix} \nu_1 \\ \nu_2\end{pmatrix}$.
    \begin{lemma}
    The matrix $M$ is given by
     \[
     \begin{pmatrix}
     0&(\lp)^{k-1}\\
     -(\lm/q)^{k-1} & 0
     \end{pmatrix}.
     \]
     \end{lemma}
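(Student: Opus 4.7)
The plan is to determine $M'$ explicitly from the compatibility equation \eqref{relatingphi}, then substitute into the formula $M = (t/\pi q)^{k-1} P^T M'^{-1}$ from \eqref{M}. When $a_p = 0$, the Berger--Li--Zhu matrix specializes to $P = \begin{pmatrix} 0 & -1 \\ q^{k-1} & 0 \end{pmatrix}$ (since $\delta = 0$), and $A_\vp = \begin{pmatrix} 0 & -1 \\ p^{k-1} & 0 \end{pmatrix}$. Writing $M' = \begin{pmatrix} a & b \\ c & d \end{pmatrix}$, the identity $\vp(M') P^T = A_\vp^T M'$ unfolds into the four coupled relations
\[ b = -\vp(c) q^{k-1}, \qquad d = \vp(a)(q/p)^{k-1}, \qquad \vp(b) = -p^{k-1} c, \qquad \vp(d) = a. \]

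Eliminating variables yields the self-referential equations $a = \vp^2(a) \cdot (\vp(q)/p)^{k-1}$, and analogously for $b,c,d$. Since $\vp$ strictly increases $\pi$-adic valuations on the maximal ideal of $\BB^+_{\rig,\QQ_p}$, iterating these equations determines the entries uniquely from their values at $\pi = 0$. The normalization $M'|_{\pi = 0} = I$ then forces $b = c = 0$ and gives the convergent infinite products
\[ a = \prod_{n \ge 0} \left(\frac{\vp^{2n+1}(q)}{p}\right)^{k-1} = (\lp)^{k-1}, \qquad d = (\lm)^{k-1}, \]
where the second equality follows from the telescoping identity $\vp(\lp)\cdot (q/p) = \lm$. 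As a consistency check, $\det(M') = (\lp \lm)^{k-1} = (t/\pi)^{k-1}$, matching \eqref{Mprime}. Substituting the diagonal $M' = \operatorname{diag}((\lp)^{k-1}, (\lm)^{k-1})$ into the formula for $M$ and simplifying off-diagonal entries via $t/\pi = \lp \lm$ immediately yields the claimed expression.

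The main (mild) obstacle is justifying the uniqueness step: one must verify that $(\lp)^{k-1}$ really is the unique solution in $\BB^+_{\rig,\QQ_p}$ of $a = \vp^2(a)(\vp(q)/p)^{k-1}$ with $a(0) = 1$, and that the corresponding infinite product converges in the Fr\'echet topology. Both of these reduce to the observation that each factor $\vp^{2n+1}(q)/p$ has constant term $1$ and its nonconstant part has $\pi$-adic valuation growing with $n$, so the partial products form a Cauchy sequence and $\vp^{2n}$ applied to any element of $\pi \BB^+_{\rig,\QQ_p}$ converges to zero.
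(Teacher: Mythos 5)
Your proof is correct, and it reaches the result by a genuinely different route from the paper. The paper's own proof quotes from \cite{bergerlizhu04} the explicit matrices of both $\vp$ and of $\gamma\in G_\infty$ for the Berger--Li--Zhu basis, asserts the identifications $\bar{\nu}_1=(\lp)^{k-1}n_1$ and $\bar{\nu}_2=(\lm)^{k-1}n_2$ (so that $M'=\operatorname{diag}\big((\lp)^{k-1},(\lm)^{k-1}\big)$), and then computes $M=\left(\frac{t}{\pi q}\right)^{k-1}P^TM'^{-1}$. You instead use only the matrix $P$ (with $\delta=0$), the compatibility relation \eqref{relatingphi}, and the normalization $M'|_{\pi=0}=I$ from \eqref{Mprime}, characterizing $M'$ as the unique solution of the resulting functional equations. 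Your four relations are exactly what \eqref{relatingphi} gives; the eliminations yield $a=\vp^2(a)(\vp(q)/p)^{k-1}$ and $c=\vp^2(c)(\vp(q)/p)^{k-1}$ (for $b$ and $d$ the multiplier is $(q/p)^{k-1}$ rather than $(\vp(q)/p)^{k-1}$, but the structure is the same); the telescoping identities $\vp^2(\lp)\cdot\vp(q)/p=\lp$ and $\vp(\lp)\cdot q/p=\lm$ show that $a=(\lp)^{k-1}$, $d=(\lm)^{k-1}$, $b=c=0$ solve them, and the final substitution into \eqref{M}, using $t/\pi=\lp\cdot\lm$, gives the stated matrix. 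What your route buys is an actual proof of the identification of the good basis with $(\lp)^{k-1}n_1$, $(\lm)^{k-1}n_2$, which the paper merely asserts, and it never invokes the matrix of the $G_\infty$-action; the cost is that you must justify uniqueness and convergence, which the paper's shortcut avoids.

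One caveat on that uniqueness step: the justification that ``$\vp$ strictly increases $\pi$-adic valuations'' (and that the nonconstant part of $\vp^{2n+1}(q)/p$ has $\pi$-adic valuation growing with $n$) is not literally true: $\vp(\pi)=(1+\pi)^p-1$ vanishes to order exactly $1$ at $\pi=0$, so $\vp$ preserves the order of vanishing, and $\vp^{m}(q)/p-1$ always vanishes to order exactly $1$. The argument is easily repaired. For instance, since $\vp(\pi)=p\pi+O(\pi^2)$, if $c\in\pi\Brig$ is nonzero with leading term $c_v\pi^v$ ($v\ge1$), comparing leading coefficients in $c=\vp^2(c)(\vp(q)/p)^{k-1}$ gives $c_v=p^{2v}c_v$, a contradiction, so $c=0$ (and likewise $b=0$). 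Alternatively, your correct observation that $\vp^{2n}(f)\to0$ in $\Brig$ whenever $f(0)=0$, applied to the iterate $a=\vp^{2n}(a)\prod_{j=0}^{n-1}\vp^{2j}\big((\vp(q)/p)^{k-1}\big)$, pins down $a=(\lp)^{k-1}$ and handles the convergence of the product at the same time; the factors tend to $1$ uniformly on closed subdiscs of the open unit disc (their coefficients become highly $p$-divisible), rather than vanishing to higher and higher order at $\pi=0$.
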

     \begin{proof}
      With respect to the basis $n_1, n_2$ of $\NN(V_{\bar{f}})$ over $\BB_{\QQ_p}^+$, as chosen in \cite{bergerlizhu04}, the matrices of $\vp$ and $\gamma\in G_\infty$ are given by
    \begin{align}
     P=\begin{pmatrix} 0 & -1 \\ q^{k-1} & 0 \end{pmatrix}\qquad\text{and}\qquad\begin{pmatrix} \left(\frac{\lp}{\gamma(\lp)}\right)^{k-1} & 0 \\ 0 & \left(\frac{\lm}{\gamma(\lm)}\right)^{k-1}\end{pmatrix}
    \end{align}
    respectively. Then,
    \[
     \bar{\nu}_1=(\lp)^{k-1}n_1\qquad\text{and}\qquad \bar{\nu}_2=(\lm)^{k-1}n_2,
    \]
    so the base-change matrix $M'$ (defined in \eqref{Mprime}) is given by
    \begin{equation}\label{M'}
     \begin{pmatrix}(\lp)^{k-1}&0\\0& (\lm)^{k-1}\end{pmatrix}
    \end{equation}
    and the result follows from explicit calculations, using that $M=\left(\frac{t}{\pi q}\right)^{k-1}P^TM'^{-1}$.
   \end{proof}

  \begin{lemma}\label{philog}
     We have $\vp(\log^-(1+\pi))=\log^+(1+\pi)$ and $\vp(\log^+(1+\pi))=\frac{p}{q}\log^-(1+\pi)$.
    \end{lemma}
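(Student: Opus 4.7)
The plan is to unwind both identities directly from the infinite-product definitions
\[
\lp=\prod_{n\geq 0}\frac{\vp^{2n+1}(q)}{p},\qquad \lm=\prod_{n\geq 0}\frac{\vp^{2n}(q)}{p},
\]
reindexing the products after applying $\vp$. No deeper input is required; the substance lies entirely in checking that these products converge in $\BB^+_{\rig,\QQ_p}$ so that the manipulations are legitimate.

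First I would verify convergence. Since $q\equiv p \pmod{\pi}$, each factor $\vp^{k}(q)/p$ is a unit in $\BB^+_{\rig,\QQ_p}$ congruent to $1$ modulo $\vp^{k}(\pi)$, and the valuations of $\vp^{k}(\pi)$ on the successive annuli that exhaust the open unit disc tend to infinity. So the two infinite products define honest elements of $\BB^+_{\rig,\QQ_p}$, and the continuous Frobenius $\vp$ can be applied factor by factor.

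Applying $\vp$ termwise to $\lm$ gives $\vp(\lm)=\prod_{n\geq 0}\vp^{2n+1}(q)/p=\lp$, which is the first identity. For the second, applying $\vp$ termwise to $\lp$ yields $\vp(\lp)=\prod_{n\geq 0}\vp^{2n+2}(q)/p$; on the other hand, peeling off the $n=0$ factor of $\lm$ gives
\[
\lm=\frac{q}{p}\prod_{n\geq 1}\frac{\vp^{2n}(q)}{p}=\frac{q}{p}\prod_{m\geq 0}\frac{\vp^{2m+2}(q)}{p}=\frac{q}{p}\,\vp(\lp),
\]
after the index shift $m=n-1$. Rearranging gives $\vp(\lp)=(p/q)\lm$, as required.

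There is no real obstacle here: the only thing to be careful about is the reindexing of an infinite product, which is justified by the convergence observation above. The lemma is essentially bookkeeping from the explicit formulas for $\log^\pm(1+\pi)$.
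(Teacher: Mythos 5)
Your argument is correct and is exactly the computation the paper has in mind — its proof is simply the word ``Immediate'', meaning the termwise application of $\vp$ and the index shift you carry out, with the convergence of the products $\prod_{n\ge0}\vp^{2n+1}(q)/p$ and $\prod_{n\ge0}\vp^{2n}(q)/p$ in $\BB^+_{\rig,\QQ_p}$ taken as standard. Your extra remark justifying convergence (each factor is $1+O(\vp^{k}(\pi))$, so the products converge and the continuous Frobenius may be applied factor by factor) is a harmless elaboration of the same route.
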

    \begin{proof}
     Immediate.
    \end{proof}

    \begin{lemma}\label{amicetransforms}
     For $i \in \{1, \dots, p-1\}$ we have
     \[
      \mathfrak{M}^{-1} \left( (1 + \pi)^i \log^+(1 + \pi)^{k-1} \cdot \vp(\Brig)\right) = \tau(i) \log_{p, k}^-(\gamma) \cdot \mathcal{H}(\Gamma)
     \]
     and
     \[
      \mathfrak{M}^{-1} \left( (1 + \pi)^i \log^-(1 + \pi)^{k-1} / q^{k-1} \cdot \vp(\Brig)\right) = \tau(i) \log_{p,k}^+(\gamma) \cdot \mathcal{H}(\Gamma),
     \]
     where $\tau(i) \in \Delta$ is the Teichmuller lift of $i$.
    \end{lemma}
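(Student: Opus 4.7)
I would prove the two statements together by treating them symmetrically under the $\vp$-involution provided by Lemma~\ref{philog}, which gives $\vp(\log^-(1+\pi))=\log^+(1+\pi)$ and tells us that $\vp(\log^+(1+\pi))$ equals $\log^-(1+\pi)/q$ up to a unit scalar.  Consequently the LHS of the first statement equals $(1+\pi)^i\,\vp(\log^-(1+\pi)^{k-1}\Brig)$, and that of the second equals $(1+\pi)^i\,\vp(\log^+(1+\pi)^{k-1}\Brig)$ up to a unit in $\vp(\Brig)^\times$.  Both sit inside $(1+\pi)^i\vp(\Brig)$ and are characterized by divisibility (in $\Brig$) of the quotient by $(1+\pi)^i$ by $\log^+(1+\pi)^{k-1}$, respectively $\vp(\log^+(1+\pi)^{k-1})$.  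The two claims are formally symmetric, so it suffices to establish the first.

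Next I would invoke the standard Mellin identification.  Writing $\tau_0(i)\in\mu_{p-1}\subset\Zp^\times$ for the Teichm\"uller lift of $i$, the fact that $\tau_0(i)-i\in p\Zp$ gives $(1+\pi)^{\tau_0(i)}=(1+\pi)^i\cdot(1+\vp(\pi))^{(\tau_0(i)-i)/p}$, so $(1+\pi)^{\tau_0(i)}$ differs from $(1+\pi)^i$ by a unit in $\vp(\Brig)^\times$.  Restriction of $\mathfrak{M}$ to $\tau(i)\mathcal{H}(\Gamma)\subset\HG$ then gives an isomorphism of $\mathcal{H}(\Gamma)$-modules onto $(1+\pi)^i\vp(\Brig)$, sending $\tau(i)g(\gamma)$ to $g(\gamma)\cdot(1+\pi)^{\tau_0(i)}$.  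Under this identification the claim becomes: $g(\gamma)\cdot(1+\pi)^{\tau_0(i)}$ is divisible by $\log^+(1+\pi)^{k-1}$ in $\Brig$ if and only if $g\in\log^-_{p,k}(\gamma)\cdot\mathcal{H}(\Gamma)$.

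The heart of the argument is a higher-multiplicity extension of Theorem~\ref{david}: for $F\in(\Brig)^{\psi=0}$ with Mellin transform $G=\mathfrak{M}^{-1}(F)\in\HG$, integers $m\ge 2$ and $r\ge 1$, $F$ is divisible by $\Phi_m(1+\pi)^r$ in $\Brig$ iff $G$ is divisible by $\prod_{j=0}^{r-1}\Phi_{m-1}(\chi(\gamma)^{-j}\gamma)$ in $\HG$.  I would establish this by observing that $\partial^j F(\pi)=\int_{\ZZ_p^\times}x^j(1+\pi)^x\,d\mu_F(x)$, where $\partial=(1+\pi)\,d/d\pi$, and combining with the Gauss-sum expansion of primitive Dirichlet characters modulo $p^m$ to obtain
\[(\eta\chi^j)(G)=\tau(\eta^{-1})^{-1}\sum_{a\in(\ZZ/p^m)^\times}\eta(a)\,\partial^j F(\zeta_{p^m}^a-1).\]
By orthogonality, $F$ vanishing to order $\ge r$ at every primitive $p^m$-th root of unity (minus $1$) is equivalent to $(\eta\chi^j)(G)=0$ for every primitive $\eta$ mod $p^m$ and every $0\le j\le r-1$, which in turn is equivalent to divisibility of $G$ by $\prod_{j=0}^{r-1}\Phi_{m-1}(\chi(\gamma)^{-j}\gamma)$, whose zero set in $\gamma$ is exactly $\{\chi(\gamma)^j\zeta:\zeta\text{ primitive }p^{m-1}\text{-th root of unity},\ 0\le j\le r-1\}$.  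Applying this factor-by-factor to the convergent product $\log^+(1+\pi)^{k-1}=\prod_{m\ge 1}(\Phi_{2m}(1+\pi)/p)^{k-1}$ yields divisibility (on the $\HG$ side) by $\prod_{j=0}^{k-2}\prod_{m\ge 1}\Phi_{2m-1}(\chi(\gamma)^{-j}\gamma)$, which after absorbing the normalization constants $1/p$ is precisely $\log^-_{p,k}(\gamma)$ up to a unit in $\mathcal{H}(\Gamma)^\times$.

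\textbf{Main obstacle.}  The principal difficulty lies in the higher-multiplicity step: converting the $r$-fold vanishing of $F$ at a single cyclotomic point into the vanishing of $G$ at $r$ \emph{distinct} twisted characters $\eta\chi^j$ for $j=0,\dots,r-1$.  This requires a careful interplay between the derivation $\partial$ on $\Brig$ (which corresponds under the Amice transform to the operator $x\mapsto x\cdot$ on $D^\la(\Zp^\times,E)$) and the twist-by-$\chi$ operator on $\HG$, together with the orthogonality relations for primitive Dirichlet characters modulo $p^m$.  One also needs to confirm that the infinite products defining $\log^\pm(1+\pi)^{k-1}$ and $\log^\pm_{p,k}(\gamma)$ converge compatibly and that the normalization factors of $1/p$ match without residual constants outside $\mathcal{H}(\Gamma)^\times$.
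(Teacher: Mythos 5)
Your proposal is correct and takes essentially the same route as the paper's proof: identify the $\tau(i)$-component via distributions supported on $i+p\ZZ_p$, use Theorem~\ref{david} for the multiplicity-one ($k=2$) case, and handle order-$(k-1)$ vanishing by passing to $\partial^j F$ for $0\le j\le k-2$ and transferring through the twist relation $\mathfrak{M}^{-1}(\partial^j f)(z)=\mathfrak{M}^{-1}(f)\big(\chi(\gamma)^j(1+z)-1\big)$. Your ``higher-multiplicity extension of Theorem~\ref{david}'' is just a repackaging of this mechanism (the paper applies Theorem~\ref{david} to each $\partial^j f$ directly), and the worries you raise at the end are harmless: scalars such as the $1/p$ normalizations are units in $\HG$, and divisibility by the infinite products is checked one closed sub-disc (hence finitely many cyclotomic factors) at a time.
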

    
    \begin{proof}
     Let us suppose first that $k = 2$. Any element $f \in (1 + \pi)^i \log^+(1 + \pi) \cdot \vp(\Brig)$ is $F_\mu^+$ for some distribution $\mu$ on $\ZZ_p$, supported in $i + p\ZZ_p \subseteq \ZZ_p^\times$; hence we have a corresponding multiplicative Fourier transform $F_\mu^\times = \mathfrak{M}^{-1}(f)$, lying in $\tau(i)\mathcal{H}(\Gamma)$. Moreover, we have the implications
     \[\begin{aligned}
      &\log^+(1 + \pi) \mid f \text{ in $\Brig$}\\
      \iff &\Phi_{n}(1 + \pi) \mid f \text{ for all even $n \ge 2$}\\
      \iff &\Phi_{n}(1 + X) \mid \mathfrak{M}^{-1}(f) \text{ for all odd $n \ge 1$ (by theorem \ref{david})}\\
      \iff &\log^-(1 + X) \mid \mathfrak{M}^{-1}(f).
     \end{aligned}\]
     The second statement is similar, noting that $q = \Phi_1(1 + \pi)$ and hence $\log^-(1 + \pi) / q$ divides $f$ if and only if $f$ vanishes at the primitive $p^n$-th roots of unity for all odd $n \ge 3$.

     For general $k \ge 2$, we note that $f \in \Brig$ vanishes to order $k - 1$ at a point $z$ if and only if $\partial^j f$ vanishes at $z$ for $j = 0, \dots, k-2$, where $\partial$ is the differential operator $(1 + \pi) \frac{\mathrm{d}}{\mathrm{d}\pi}$ introduced in \S \ref{crysrepsandwachmods}. Applying the preceding argument to each of the functions $\partial^j f$, we see that $\log^+(1 + \pi) \mid f$ if and only if $\mathfrak{M}^{-1}(\partial^j f)$ is divisible by $\log^-(1 + X)$ for $0 \le j \le k-2$. Since $\mathfrak{M}^{-1}(\partial^j f)(z) = \mathfrak{M}^{-1}(f)(u^j(1 + z) - 1)$ where $u=\chi(\gamma)$, this is equivalent to the divisibility of $f$ by $\log^-_{p, k}$. Again, the second statement follows very similarly to the first.
    \end{proof}

    \begin{proposition}\label{pmtransform}
     There exists $a^\pm\in\Lambda_E(G_\infty)^\times$ such that
      \[
       \underline{M}=\begin{pmatrix}0&-a^-\log_{p,k}^-\\a^+\log_{p,k}^+&0\end{pmatrix}.
      \]
    \end{proposition}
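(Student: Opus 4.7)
The plan is to compute $\underline{M}$ directly from the explicit form of $M$ given in the preceding lemma, and then verify the unit condition using Lemma~\ref{amicetransforms} together with an analysis of growth orders and zero divisors. Since that lemma gives
\[
 M = \begin{pmatrix} 0 & (\log^+(1+\pi))^{k-1} \\ -(\log^-(1+\pi)/q)^{k-1} & 0 \end{pmatrix},
\]
multiplying by $(1+\pi)$ and applying $\mathfrak{M}^{-1}$ entrywise yields
\[
 \underline{M} = \begin{pmatrix} 0 & \mathfrak{M}^{-1}\bigl((1+\pi)(\log^+(1+\pi))^{k-1}\bigr) \\ -\mathfrak{M}^{-1}\bigl((1+\pi)(\log^-(1+\pi)/q)^{k-1}\bigr) & 0 \end{pmatrix}.
\]

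Next, apply Lemma~\ref{amicetransforms} with $i=1$ (so $\tau(i)=1$) and $g=1\in\vp(\Brig)$: the $(1,2)$-entry then lies in $\log_{p,k}^-\cdot\HG$ and the $(2,1)$-entry lies in $\log_{p,k}^+\cdot\HG$. This defines elements $a^\pm\in\HG$ by $\underline{m}_{12}=-a^-\log_{p,k}^-$ and $\underline{m}_{21}=a^+\log_{p,k}^+$, giving the shape of $\underline{M}$ claimed in the proposition.

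It remains to show $a^\pm\in\Lambda_E(G_\infty)^\times$, which requires checking boundedness and non-vanishing. For boundedness, observe that $(1+\pi)(\log^\pm(1+\pi))^{k-1}\in\Brig$ and $\log_{p,k}^\mp\in\HG$ all have order of growth $k-1$: each factor $\log^\pm$ or $\log_{p,k}^\mp$ contributes order $1$, while $(1+\pi)$ is bounded, and the factor $q^{-(k-1)}$ on the $(2,1)$-side is a unit in $\Brig$ away from primitive $p$-th roots of unity that is already accounted for in the divisor comparison below. Since the Mellin transform preserves the order of growth in the sense of Amice--V\'elu theory (compatible with the distribution-theoretic framework used in~\cite{pollack03}), the quotients $a^\pm$ are of order $0$, hence lie in $\Lambda_E(\Gamma)\subset\Lambda_E(G_\infty)$. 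For non-vanishing, the divisor of $(1+\pi)(\log^+(1+\pi))^{k-1}$ on $\Brig$ consists of zeros of order exactly $k-1$ at $\zeta-1$ for $\zeta$ a primitive $p^n$-th root of unity with $n$ even. The argument in the proof of Lemma~\ref{amicetransforms}, applied to each $\partial^j f$ for $j=0,\dots,k-2$ in combination with Theorem~\ref{david}, shows that this divisor corresponds exactly to the divisor of $\log_{p,k}^-$ on characters of $\Gamma$. Hence $a^-$ has no zeros, and the argument for $a^+$ is identical.

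The main obstacle will be confirming rigorously that Mellin preserves growth orders in the required sense and that the zero divisors match exactly, without any leftover cyclotomic factors on either side; both ultimately follow from Pollack's analysis of $\log_{p,k}^\pm$ in~\cite{pollack03} together with the characterisation of cyclotomic divisibility given in Lemma~\ref{amicetransforms}.
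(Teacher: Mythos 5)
Your computation of the shape of $\underline{M}$ is fine: with $M=\begin{pmatrix}0&(\lp)^{k-1}\\-(\lm/q)^{k-1}&0\end{pmatrix}$ and $\underline{M}=\mathfrak{M}^{-1}[(1+\pi)M]$, Lemma~\ref{amicetransforms} (with $i=1$) indeed places $\underline{m}_{12}$ in $\log^-_{p,k}\cdot\HG$ and $\underline{m}_{21}$ in $\log^+_{p,k}\cdot\HG$, so elements $a^\pm\in\HG$ with the stated matrix form exist. The gap is in the unit property. Your non-vanishing step asserts that ``the zero divisors match exactly, hence $a^\pm$ has no zeros'', but this does not follow from Lemma~\ref{amicetransforms} or Theorem~\ref{david}: those results are equivalences about divisibility by \emph{whole} cyclotomic factors, i.e.\ vanishing simultaneously at all primitive characters of a fixed conductor on the multiplicative side versus divisibility by $\Phi_n(1+\pi)$ on the additive side. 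The Mellin transform is not a pointwise change of variable, so knowing the additive divisor of $(1+\pi)(\lp)^{k-1}$ exactly gives no control over possible zeros of $a^\pm$ at points of the open disc outside the special orbits (e.g.\ at the trivial character, or at any non-classical point), nor over higher-order vanishing of $\mathfrak{M}^{-1}((1+\pi)(\lp)^{k-1})$ at the zeros of $\log^-_{p,k}$. ``No leftover factors'' is precisely the content of the proposition, not something you may read off from the cited lemmas. The growth-order step is also shaky as written: each of $\lp$, $\lm$, $\log^\pm_{p,k}$ has growth order $\tfrac{k-1}{2}$ (each factor contributes $\tfrac12$, not $1$), and passing from ``$F=a\cdot\log^\pm_{p,k}$ with both of growth $\tfrac{k-1}{2}$'' to ``$a$ is $O(1)$'' requires a division lemma \`a la Pollack, not just preservation of growth by $\mathfrak{M}$; in any case boundedness alone would not give invertibility.

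The paper closes this gap with a module-theoretic, not analytic, argument, and this is the essential input you are missing: by Theorem~\ref{lambdabasis} (the explicit $\Lambda$-basis statement for $(\vp^*\NN(T))^{\psi=0}$ with the Berger--Li--Zhu basis, a special case of Theorem~\ref{Lambda}), the element $(1+\pi)\vp(\log^\pm(1+\pi))^{k-1}$ is a $\Lambda_E(G_\infty)$-module generator of $(1+\pi)\vp(\log^\pm(1+\pi))^{k-1}\cdot\vp(\BB^+_{\QQ_p})$; since finitely generated submodules of $\HG$ are closed, it is then an $\HG$-generator of the closure $X^\pm=(1+\pi)\vp(\log^\pm(1+\pi))^{k-1}\cdot\vp(\Brig)$, so its image under $\mathfrak{M}^{-1}$ must generate $Y^\pm=\log^\pm_{p,k}\cdot\HG$, which forces $a^\pm$ to be a unit (and a unit of $\HG$ is automatically a unit of $\Lambda_E(G_\infty)$). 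In other words, the exact matching of divisors is a \emph{consequence} of the freeness result, not something deducible from the Fourier-theoretic lemmas alone; to repair your proof you should invoke Theorem~\ref{lambdabasis} (or Theorem~\ref{Lambda}) at this point.
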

    \begin{proof}
     By Lemma~\ref{amicetransforms}, $\mathfrak{M}$ restricts to an isomorphism of $\HG$-modules between the subspaces $X^{\pm} = (1 + \pi) \vp(\log^\pm(1 + \pi))^{k-1} \cdot \vp(\Brig)$ and $Y^{\pm} = \log^\pm_{p, k} \cdot \HG$. In particular, there exist $a^\pm\in \HG$ such that
     \[
      \mathfrak{M}^{-1}((1+\pi)\vp(\log^\pm(1+\pi))^{k-1})=a^\pm\log_{p,k}^\pm.
     \]
     Furthermore, $(1 + \pi) \vp(\log^\pm(1 + \pi))^{k-1}$ are $\Lambda_E(G_\infty)$-module generators of $(1 + \pi) \vp(\log^\pm(1 + \pi))^{k-1} \cdot \vp(\BB^+_{\Qp})$, by Proposition~\ref{lambdabasis}. Since any finitely-generated submodule of $\HG$ is closed, they must be $\HG$-module generators of the closures of these spaces, which are clearly $X^\pm$. Therefore the images of $(1 + \pi) \vp(\log^\pm(1 + \pi))^{k-1}$ under $\mathfrak{M}^{-1}$ must be generators of $Y^\pm$, so the factors $a^\pm$ are units.
    \end{proof}

    Therefore, by \eqref{prpairing},  we have:
    \begin{corollary}
     Let $a^\pm$ be as in Proposition~\ref{pmtransform}, then $a^-\uCol_1=\rmCol^-$ and $a^+\uCol_2=\rmCol^+$.
    \end{corollary}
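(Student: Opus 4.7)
The proof is essentially an exercise in unpacking equation~\eqref{prpairing} using the explicit form of the matrix $\underline{M}$ furnished by Proposition~\ref{pmtransform}, and then recognizing the right-hand side as the numerator in the definition~\eqref{pmcoleman} of $\rmCol^\pm$.

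First I would write out equation~\eqref{prpairing} applied to an arbitrary $x\in H^1_{\Iw}(\QQ_p, V_{\bar f}(k-1))$ (viewed via Fontaine's isomorphism as an element of $\NN(V_{\bar f}(k-1))^{\psi=1}$):
\[
\begin{pmatrix}\uCol_1(x)&\uCol_2(x)\end{pmatrix}\underline{M}=\begin{pmatrix}-\LL_{1,\nu_2\otimes(1+\pi)}(x)&\LL_{1,\nu_1\otimes(1+\pi)}(x)\end{pmatrix}.
\]
By Proposition~\ref{pmtransform}, the matrix $\underline{M}$ in this case has the antidiagonal shape
\[
\underline{M}=\begin{pmatrix}0&-a^{-}\log_{p,k}^{-}\\ a^{+}\log_{p,k}^{+}&0\end{pmatrix},
\]
so direct multiplication gives the two scalar identities
\[
a^{+}\log_{p,k}^{+}\,\uCol_2(x)=-\LL_{1,\nu_2\otimes(1+\pi)}(x),\qquad
-a^{-}\log_{p,k}^{-}\,\uCol_1(x)=\LL_{1,\nu_1\otimes(1+\pi)}(x).
\]

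Next I would compare these with the definition of $\rmCol^\pm$ in~\eqref{pmcoleman}. Since $\eta^{\pm}=(1+\pi)\otimes\nu^{\pm}$ and (after the identification of $\Dcris(V_{\bar f})$ with $\Dcris(V_f)$ via the duality pairing $[\,,\,]$ of~\eqref{crispairing}, under which $\bar\nu_1\leftrightarrow\nu_1$ and $\bar\nu_2\leftrightarrow\nu_2$ up to the fixed sign convention $[\nu_1,\bar\nu_2]=-[\nu_2,\bar\nu_1]=1$) one has
\[
\rmCol^{-}(x)=\LL_{1,\nu_1\otimes(1+\pi)}(x)/\log_{p,k}^{-},\qquad
\rmCol^{+}(x)=\LL_{1,\nu_2\otimes(1+\pi)}(x)/\log_{p,k}^{+}.
\]
Dividing both displayed equations by the corresponding $\log_{p,k}^{\pm}$ (which is legitimate because both sides are \textit{a priori} divisible by it, by the remark preceding~\eqref{pmcoleman}) yields $a^{-}\uCol_1=\rmCol^{-}$ and $a^{+}\uCol_2=\rmCol^{+}$, up to the sign absorbed into the choice of $\eta^{\pm}$.

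The only substantive verification is that no additional scalar arises from the identification of $\nu^{\pm}\in\Dcris(V_{\bar f})$ with $\nu_1,\nu_2\in\Dcris(V_f)$ used in~\eqref{prpairing}; this is precisely the normalization chosen just before Proposition~\ref{samemaps}, where $[\nu_1,\bar\nu_2]=-[\nu_2,\bar\nu_1]=1$ and $[\nu_i,\bar\nu_i]=0$ are imposed, and absorbs the lone minus sign coming from~\eqref{eqnu2}. The rest of the argument is purely formal. The main conceptual point to highlight is that the antidiagonal shape of $\underline{M}$, which is the key input, is ultimately a consequence of the fact that $\vp$ swaps $n_1$ and $n_2$ (up to a power of $q$) when $a_p=0$, as reflected in the matrix $P=\begin{pmatrix}0&-1\\ q^{k-1}&0\end{pmatrix}$ and in the computation of $M$ carried out via Lemma~\ref{philog} and Lemma~\ref{amicetransforms}.
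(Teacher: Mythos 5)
Your argument is correct and is exactly the paper's proof: the corollary is obtained by substituting the antidiagonal matrix of Proposition~\ref{pmtransform} into \eqref{prpairing} and comparing with the definition \eqref{pmcoleman} of $\rmCol^\pm$, which is what you spell out. The residual sign you mention is genuinely just a normalization of the units $a^\pm$ (the paper itself is loose about it between the statement and proof of Proposition~\ref{pmtransform}), and since $a^\pm\in\Lambda_E(G_\infty)^\times$ it is harmless for all later uses.
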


    
    \subsubsection{Description of the kernels}
    
     The aim of this section is to give a simple description of $\ker(\rmCol_i)$ for $i=1,2$. Recall that the basis $\bar{\nu}_1,\bar{\nu}_2$ of $\DD_{\cris}(V_{\bar{f}})$ determines a basis of $\DD_{\cris}(V_{\bar{f}}(k-1))$ via the map $\bar{\nu}_i\mapsto\bar{\nu}_i\otimes e_{k-1}t^{1-k}$. We first need to know a bit more about $\NN(V_{\bar{f}})$. As stated in~\cite[Section II.3]{berger03}, we have a comparison isomorphism
     \[ \iota: \BB^+_{\rig,\QQ_p}[t^{-1}]\otimes_{\BB^+_{\QQ_p}}\NN(V_{\bar{f}}(k-1)) \cong \BB^+_{\rig,\QQ_p}[t^{-1}]\otimes_{\QQ_p}\DD_{\cris}(V_{\bar{f}}(k-1)).\]
     
     By \eqref{M} and \eqref{xoverDcris}, if $x\in\DD(V_{\bar{f}}(k-1))^{\psi=1}$, then we can write $\iota(x)=x_1(\bar{\nu}_1\otimes e_{k-1}t^{1-k})+x_2(\bar{\nu}_2\otimes e_{k-1}t^{1-k})$ where 
    \begin{align*}
     x_1 &= x_1'(\lm)^{k-1}\\
     x_2 &= x_2'(\lp)^{k-1}
    \end{align*}
    for some $x_1',x_2'\in \BB_{\Qp}^+$.

    We will need the following auxiliary lemma.
   
    \begin{lemma}\label{sameconstantterm}
     Let $x$ be as above. Then $p^{k-2}\theta(x_1)+\theta(x_2)=0$.
    \end{lemma}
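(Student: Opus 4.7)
The plan is to unwind the statement via the explicit description of the comparison isomorphism $\iota$ and then exploit the relation $\psi(x)=x$. Writing $x=\pi^{1-k}(x_1'n_1+x_2'n_2)\otimes e_{k-1}$ with $x_1',x_2'\in E\otimes \AA^+_{\QQ_p}$, I will combine the change-of-basis formula $\bar{\nu}_1=(\lp)^{k-1}n_1$, $\bar{\nu}_2=(\lm)^{k-1}n_2$ (which is the explicit form of $M'$ computed in Section \ref{leiswork}) with the product formula $t/\pi=\lp\cdot\lm$. A short calculation then gives
\[ \iota(x) = (\lm)^{k-1}x_1'\cdot\bar{\nu}_1\otimes t^{1-k}e_{k-1} + (\lp)^{k-1}x_2'\cdot\bar{\nu}_2\otimes t^{1-k}e_{k-1},\]
so that $x_1=(\lm)^{k-1}x_1'$ and $x_2=(\lp)^{k-1}x_2'$. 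In particular this verifies the claim about the factorization of $x_1,x_2$ stated just before the lemma, and since $\Phi_n(1)=p$ for every $n\ge 1$, each factor $\Phi_n(1+\pi)/p$ specializes to $1$ at $\pi=0$, so $\theta(\lp)=\theta(\lm)=1$ and therefore $\theta(x_i)=\theta(x_i')$ for $i=1,2$.

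Next I will translate the $\psi=1$ condition into relations between $x_1'$ and $x_2'$. Using Lemma \ref{psi} with $a_p=0$ (so $\delta z=0$) the identity $\psi(x)=x$ is equivalent to
\[ x_1' = \psi(x_2'),\qquad x_2' = -\psi(q^{k-1}x_1').\]
The second of these is the one I need: it suffices to show that $\theta\circ\psi(q^{k-1}x_1') = p^{k-2}\theta(x_1')$, because then $\theta(x_2)=\theta(x_2')=-p^{k-2}\theta(x_1')=-p^{k-2}\theta(x_1)$.

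To finish, I will compute $\theta\circ\psi$ directly. Applying $\theta$ to the defining relation $\vp\circ\psi(f)=\tfrac{1}{p}\sum_{\zeta^p=1}f(\zeta(1+\pi)-1)$ and observing that $\theta\circ\vp=\theta$ (since $\vp(\pi)|_{\pi=0}=0$) yields
\[ \theta(\psi(f)) = \tfrac{1}{p}\sum_{\zeta^p=1}f(\zeta-1) \]
for any $f\in E\otimes\AA^+_{\QQ_p}$. Taking $f=q^{k-1}x_1'$, the terms with $\zeta$ a primitive $p$-th root of unity vanish since $q=\Phi_p(1+\pi)$ has $\zeta-1$ as a root, leaving only the contribution from $\zeta=1$, which is $\tfrac{1}{p}\,q(0)^{k-1}\theta(x_1')=p^{k-2}\theta(x_1')$. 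This gives exactly the needed identity, and the lemma follows.

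The main conceptual step is step one (turning the abstract $\iota$ into the explicit coordinates), which is essentially bookkeeping once Section \ref{leiswork} is in hand; the remaining steps are a short computation. No serious obstacle is expected, since the key fact $\Phi_p(\zeta)=0$ for primitive $p$-th roots of unity is exactly what makes the $\zeta\neq 1$ contributions vanish.
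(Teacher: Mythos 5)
Your proof is correct, but it follows a genuinely different route from the paper's. The paper disposes of the lemma in one stroke of $p$-adic Hodge theory: by Berger's formula, $\exp^*_{\Qp,V_{\bar f}(k-1)}\bigl(h^1_{\Qp,V_{\bar f}(k-1)}(x)\bigr)=(1-p^{-1}\vp^{-1})\partial_V(x)$, and since the image of the dual exponential lies in $\Fil^0\Dcris(V_{\bar f}(k-1))=E\,\bar\nu_1\otimes t^{1-k}e_{k-1}$, the $\bar\nu_2$-coefficient of $(1-p^{-1}\vp^{-1})(\theta(x_1)\bar\nu_1+\theta(x_2)\bar\nu_2)$, namely $p^{k-2}\theta(x_1)+\theta(x_2)$, must vanish. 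You instead argue at the level of the Wach module: from $\psi(x)=x$ and the Berger--Li--Zhu basis you extract $x_2'=-\psi(q^{k-1}x_1')$, then use $\theta\circ\psi(f)=\tfrac1p\sum_{\zeta^p=1}f(\zeta-1)$ together with $q(\zeta-1)=0$ for $\zeta\neq 1$ and $q(0)=p$ to get $\theta(x_2')=-p^{k-2}\theta(x_1')$, and finally pass from the primed to the unprimed coordinates via $\theta(\lp)=\theta(\lm)=1$. All of these steps check out (and your first step in fact supplies a verification of the factorization $x_1=x_1'(\lm)^{k-1}$, $x_2=x_2'(\lp)^{k-1}$, which the paper asserts without proof); moreover your $\theta$-of-the-trace-formula manipulation is exactly the technique the paper itself deploys later in the proof of Proposition~\ref{evalofkernel}. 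What the paper's route buys is brevity and a conceptual identification of $p^{k-2}\theta(x_1)+\theta(x_2)$ as the $\bar\nu_2$-component of $\exp^*$, which is basis-independent and is reused in Corollary~\ref{evalofkernel2}; what yours buys is a self-contained, purely computational argument that avoids invoking \cite[Theorem II.6]{berger03} and the filtration altogether. One cosmetic point: you write $q=\Phi_p(1+\pi)$, whereas in the paper's convention $\Phi_n$ is the $p^n$-th cyclotomic polynomial, so this is $\Phi_1(1+\pi)$; the mathematics is unaffected.
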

    \begin{proof}
     By \cite[Theorem II.6]{berger03}, we have 
     \begin{equation}\label{valuedualexp}
      \exp^*_{\Qp,V_{\bar{f}}(k-1)}\big(h^1_{\Qp,V_{\bar{f}}(k-1)}(x)\big)=(1-p^{-1}\varphi^{-1})\partial_V(x). 
     \end{equation}
     Since $\partial_V(x)=\theta(x_1)\bar{\nu}_1\otimes e_{k-1}t^{1-k}+\theta(x_2)\bar{\nu}_2\otimes e_{k-1}t^{1-k}$, we have
     \[ (1-p^{-1}\varphi^{-1})\partial_V(x) =\big(\theta(x_1)-p^{-1}\theta(x_2)\big)\nu_1 + \big(p^{k-2}\theta(x_1)+\theta(x_2)\big)\nu_2.\]
     The image of $\exp^*_{\Qp,V_{\bar{f}}(k-1)}$ is contained in $\Fil^0\DD_{\cris}(V_{\bar{f}}(k-1))$, which implies that $p^{k-2}\theta(x_1)+\theta(x_2)=0$.
    \end{proof}

    \begin{lemma}
     Let $x\in\DD(V_{\bar{f}}(k-1))^{\psi=1}$, and write $\iota(x)= x_1(\bar{\nu}_1\otimes e_{k-1}t^{1-k})+x_2(\bar{\nu}_2\otimes e_{k-1}t^{1-k})$ as above. Then
     \begin{itemize}
      \item[(i)] $x\in\ker(\rmCol_1)$ if and only if $\vp(x_1)=-p^{k-1}\psi(x_1)$;
      \item[(ii)] $x\in\ker(\rmCol_2)$ if and only if $\vp(x_2)=-p^{k-1}\psi(x_2)$.
     \end{itemize}    
    \end{lemma}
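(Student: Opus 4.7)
The plan is to work in the Berger--Li--Zhu basis $n_1, n_2$ of $\NN(V_{\bar f})$ and translate the kernel conditions to the $\bar{\nu}_i$-basis using the change-of-basis identity $t = \pi \cdot \log^+(1+\pi) \cdot \log^-(1+\pi)$ combined with Lemma~\ref{philog}. Writing $x = \pi^{1-k}(y_1 n_1 + y_2 n_2) \otimes e_{k-1}$, the discussion preceding the lemma (together with $\bar{\nu}_1 = (\lp)^{k-1} n_1$, $\bar{\nu}_2 = (\lm)^{k-1} n_2$ from~\eqref{M'} and the factorisation $t^{1-k} = \pi^{1-k}(\lp \lm)^{1-k}$) gives
\[ x_1 = y_1 (\lm)^{k-1}, \qquad x_2 = y_2 (\lp)^{k-1}. \]

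Since $a_p = 0$, formulas~\eqref{explicit1}--\eqref{explicit2} specialise to $\rmCol_1(x) = y_2 - \vp(y_1)$ and $\rmCol_2(x) = -q^{k-1} y_1 - \vp(y_2)$. Moreover, Lemma~\ref{psi} applied with $\delta = 0$ together with the hypothesis $\psi(x) = x$ yields the two relations
\[ y_1 = \psi(y_2), \qquad y_2 = -\psi(q^{k-1} y_1). \]
Combining $\rmCol_1(x) = 0$ with the second relation produces the equivalent condition $\vp(y_1) = -\psi(q^{k-1} y_1)$, and similarly $\rmCol_2(x) = 0$ combined with the first relation yields $\vp(y_2) = -q^{k-1} \psi(y_2)$.

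To convert these statements to the $x_i$, I would use Lemma~\ref{philog} in the two forms $\vp((\lm)^{k-1}) = (\lp)^{k-1}$ and $(\lm)^{k-1} = q^{k-1} \vp((\lp)^{k-1})/p^{k-1}$, together with the projection formula $\psi(\vp(g) f) = g \psi(f)$. For (i), one computes $\vp(x_1) = (\lp)^{k-1} \vp(y_1)$ directly, and
\[ \psi(x_1) = \psi\bigl(y_1 q^{k-1} \vp((\lp)^{k-1})/p^{k-1}\bigr) = (\lp)^{k-1} \psi(q^{k-1} y_1) / p^{k-1}, \]
so $\vp(x_1) + p^{k-1} \psi(x_1) = (\lp)^{k-1}\bigl(\vp(y_1) + \psi(q^{k-1} y_1)\bigr)$. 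For (ii), using $(\lp)^{k-1} = \vp((\lm)^{k-1})$ one gets $\psi(x_2) = (\lm)^{k-1} \psi(y_2)$ and $\vp(x_2) = p^{k-1} (\lm)^{k-1} \vp(y_2)/q^{k-1}$, whence $\vp(x_2) + p^{k-1} \psi(x_2) = p^{k-1} (\lm)^{k-1} q^{1-k} \bigl(\vp(y_2) + q^{k-1} \psi(y_2)\bigr)$.

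Since $(\lp)^{k-1}$ and $(\lm)^{k-1}/q^{k-1}$ are non-zero-divisors in $\BB^+_{\rig, \QQ_p}$, both identities establish genuine equivalences in both directions. There is no serious obstacle here; the only real work is bookkeeping the various $(\log^\pm)^{k-1}$ factors and carefully tracking how $\psi$ interacts with a factor that is not itself in the image of $\vp$ (here replaced by $\vp((\lp)^{k-1})$ up to the known $q^{k-1}/p^{k-1}$ correction).
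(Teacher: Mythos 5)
Your proposal is correct and follows essentially the same route as the paper: both arguments rest on the explicit formulas \eqref{explicit1}--\eqref{explicit2} (equivalently Lemma~\ref{psi}) in the Berger--Li--Zhu basis, the relations $x_1=x_1'(\lm)^{k-1}$, $x_2=x_2'(\lp)^{k-1}$ coming from $t=\pi\lp\lm$, and Lemma~\ref{philog} to move the $(\log^\pm)^{k-1}$ factors through $\vp$ and $\psi$. The only difference is organisational: the paper translates $\psi(x)=x$ directly into $\psi(x_1)=-p^{1-k}x_2$, $\psi(x_2)=x_1$ and concludes at once, whereas you keep the $\psi=1$ condition in the Wach coordinates $y_i$ and convert the kernel condition back to the $x_i$ at the end, which amounts to the same computation.
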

    \begin{proof}
     We will prove the proposition for $\rmCol_1$; the proof for $\rmCol_2$ is analogous. Note that the condition that $\psi(x)=x$ translates as $\psi(x_1)=-p^{1-k}x_2$ and $\psi(x_2)=x_1$. By Lemma \ref{philog}, $\rmCol_1(x)=x_2'-\vp(x_1')=0$ if and only if $x_2=\vp(x_1)$. Hence, $\rmCol_1(x)=0$ if and only if $\vp(x_1)=-p^{k-1}\psi(x_1)$.  
    \end{proof}
    
    \begin{proposition}\label{evalofkernel}
     Let $x$ be as above, and write $x_i=f_i(\pi)$ with $f_i(X)\in \QQ_p[[X]]$. Then
      \begin{itemize}
       \item[(i)] $x\in\ker(\rmCol_1)$ if and only if 
        \begin{align}
         \Tr_{\Qpn/ \QQ_{p,n-1}}\big(f_1(\zeta_{p^n}-1)\big) &= -p^{2-k}f_1(\zeta_{p^{n-2}}-1)\label{firstkernel}\text{ for all $n\geq 2$, and}\\
                \Tr_{\QQ_{p,1}/\Qp} ( f_1(\zeta_p-1) )&= -(1+p^{2-k})f_1(0);\label{firstkernel2}
        \end{align}
     
       \item[(ii)] $x\in\ker(\rmCol_2)$ if and only if 
        \begin{align*}
         \Tr_{\Qpn/ \QQ_{p,n-1}}\big(f_2(\zeta_{p^n}-1)\big) & = -p^{2-k}f_2(\zeta_{p^{n-2}}-1) \text{ for all $n\geq 2$, and}\\
         \Tr_{\QQ_{p,1}/\Qp} ( f_2(\zeta_p-1) ) & = -(1+p^{2-k})f_2(0).
        \end{align*}
      \end{itemize}
    \end{proposition}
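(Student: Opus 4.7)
The plan is to translate the algebraic identity $\varphi(x_i) = -p^{k-1}\psi(x_i)$ (which by the preceding lemma characterizes $\ker(\rmCol_i)$) into pointwise equations at the roots of unity $\zeta_{p^n}-1$. The key input is the explicit formula
\[
 \varphi\circ\psi\bigl(f(\pi)\bigr)=\frac{1}{p}\sum_{\zeta^p=1}f\bigl(\zeta(1+\pi)-1\bigr),
\]
recalled in \S\ref{crysrepsandwachmods}. Since $\varphi$ is injective on $\BB^+_{\rig,\QQ_p}$, the kernel condition $\varphi(x_i)=-p^{k-1}\psi(x_i)$ is equivalent to the a priori weaker statement
\begin{equation}\label{keyidentity}
 \varphi^2(x_i)=-p^{k-2}\sum_{\zeta^p=1}f_i\bigl(\zeta(1+\pi)-1\bigr),
\end{equation}
obtained by applying $\varphi$ to both sides. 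I will prove (i); the argument for (ii) is identical.

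For the \emph{only if} direction, I would evaluate \eqref{keyidentity} at $\pi=\zeta_{p^n}-1$. Using $\varphi(\pi)=(1+\pi)^p-1$ twice, the left-hand side becomes $f_1(\zeta_{p^{n-2}}-1)$ for $n\geq 2$, and becomes $f_1(0)$ for $n\in\{0,1\}$. For the right-hand side: when $n\geq 2$, the map $\zeta\mapsto\zeta\zeta_{p^n}$ identifies $\mu_p$ with the $\Gal(\QQ_{p,n}/\QQ_{p,n-1})$-orbit of $\zeta_{p^n}$, so the sum equals $\Tr_{\QQ_{p,n}/\QQ_{p,n-1}}f_1(\zeta_{p^n}-1)$, which gives \eqref{firstkernel} after dividing by $-p^{k-2}$. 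For $n=0$ or $n=1$, the set $\{\zeta\zeta_{p^n}:\zeta\in\mu_p\}$ is all of $\mu_p$, so the sum equals $f_1(0)+\Tr_{\QQ_{p,1}/\QQ_p}f_1(\zeta_p-1)$; the resulting equation rearranges to \eqref{firstkernel2}.

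For the \emph{if} direction, suppose both trace identities hold. By the computations just given, these identities say precisely that the element
\[
 D(\pi):=\varphi^2(x_1)+p^{k-2}\sum_{\zeta^p=1}f_1\bigl(\zeta(1+\pi)-1\bigr)\;\in\;\BB^+_{\rig,\QQ_p}
\]
vanishes at $\pi=\zeta_{p^n}-1$ for every $n\geq 0$. Since $\zeta_{p^n}-1\to 0$ in the open unit disc as $n\to\infty$ and $D$ is a power series in $\pi$ converging on this disc, the identity principle for analytic functions forces $D\equiv 0$, i.e.\ \eqref{keyidentity} holds. Injectivity of $\varphi$ then yields $\varphi(x_1)=-p^{k-1}\psi(x_1)$, and the preceding lemma gives $x\in\ker(\rmCol_1)$.

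The only mild obstacle I foresee is verifying that the $n=0$ and $n=1$ evaluations give the same equation (so that one of them is redundant and the proposition's statement is consistent), but this is a direct check: both cases produce the sum $\sum_{\zeta\in\mu_p} f_1(\zeta-1)=f_1(0)+\Tr_{\QQ_{p,1}/\QQ_p}f_1(\zeta_p-1)$ on the right. Everything else is routine manipulation once the translation \eqref{keyidentity} is in place.
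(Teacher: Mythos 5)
Your ``only if'' direction is fine and is essentially the paper's argument: applying $\vp$ to $\vp(x_1)=-p^{k-1}\psi(x_1)$ and evaluating at $\zeta_{p^n}-1$ (the paper phrases the evaluation as applying $\theta\circ\vp^{-n}$) yields exactly \eqref{firstkernel} and \eqref{firstkernel2}, and your observation that the $n=0$ and $n=1$ evaluations coincide is correct.

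The converse, however, has a genuine gap. Your appeal to an ``identity principle'' is not valid here, for two reasons. First, $\zeta_{p^n}-1$ does \emph{not} tend to $0$: one has $|\zeta_{p^n}-1|_p = p^{-1/p^{n-1}(p-1)}\to 1$, so these points accumulate only at the boundary of the unit disc. Second, and decisively, an element of $\Brig$ may very well be nonzero and still vanish at \emph{all} the points $\zeta_{p^n}-1$: the function $t=\log(1+\pi)$ does exactly this, and Pollack's factors $\lp$, $\lm$ vanish at infinitely many of them. A nonzero rigid-analytic function on the open unit disc has only finitely many zeros on each closed subdisc, but can have infinitely many zeros creeping up to the boundary, which is precisely the situation for your $D(\pi)$. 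So vanishing of $D$ at every $\zeta_{p^n}-1$ does not force $D\equiv 0$, and your argument stops there. The paper closes this hole by using the specific shape of the coordinates in this $a_p=0$ setting: one has $x_1=x_1'(\lm)^{k-1}$ with $x_1'\in\BB^+_{\Qp}$ a \emph{bounded} power series, and Lemma~\ref{philog} gives the factorization $\vp(x_1)+p^{k-1}\psi(x_1)=\big(\vp(x_1')+\psi(q^{k-1}x_1')\big)(\lp)^{k-1}$. The trace conditions then force the bounded factor $\vp(x_1')+\psi(q^{k-1}x_1')\in\QQ\otimes\Zp[[X]]$ to have infinitely many zeros in the open disc, and for bounded power series (by Weierstrass preparation) this does imply vanishing. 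Without isolating a bounded factor in this way — i.e.\ without using the divisibility of $x_1,x_2$ by $(\lm)^{k-1},(\lp)^{k-1}$ coming from the comparison with the Wach-module basis — the ``if'' direction does not follow from the pointwise identities alone.
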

    \begin{proof}
     We prove the proposition for $\rmCol_1$. Recall that 
     \[
     \vp\psi(x_1)=p^{-1}\sum_{\zeta^p=1}f_1\big(\zeta(1+\pi)-1\big).
     \]
     Hence, $\vp(x_1)=-p^{k-1}\psi(x_1)$ implies that
     \begin{equation}\label{psiequation} 
      \sum_{\zeta^p=1}f_1\big(\zeta(1+\pi)-1\big)=-p^{2-k}\varphi^2\big(f_1(\pi)\big).
     \end{equation}
     Let $n\geq 2$. On applying $\theta\circ\vp^{-n}$ to~\eqref{psiequation} implies that
     \[
      \Tr_{\Qpn/ \QQ_{p,n-1}}\big(f_1(\zeta_{p^n}-1)\big) =\sum_{\zeta^p=1}f_1(\zeta\zeta_{p^n}-1) 
       =-p^{2-k}f_1(\zeta_{p^{n-2}}-1).
     \]
     Similarly, we obtain the second condition by applying $\theta$ to \eqref{psiequation}.

     Conversely, assume that \eqref{firstkernel} holds for all $n\ge2$, then $\vp(f_1)+p^{k-1}\psi(f_1)=0$ at $\zeta_{p^{n}}-1$. Recall that $x_1=x_1'(\lm)^{k-1}$ where $x_1'\in\BB_{\Qp}^+$. By Lemma~\ref{philog},
     \[
     \vp(x_1)+p^{k-1}\psi(x_1)=(\vp(x_1')+\psi(q^{k-1}x_1'))(\lp)^{k-1}.
     \]
     Hence, the power series in $\QQ\otimes\Zp[[X]]$ corresponding to $(\vp(x_1')+\psi(q^{k-1}x_1'))$ has infinitely many zeros, so it must be zero itself and we are done.  
    \end{proof}

    As a corollary, we obtain the following descriptions of $\ker(\rmCol_i)$. 

    \begin{corollary}\label{evalofkernel2}
     For $x\in\DD(V_{\bar{f}}(k-1))^{\psi=1}$, write $e_n(x)=\exp^*_{n,V_{\bar{f}(k-1)}}\circ\Pr_n\circ h^1_{\Qp,\Iw}(x)$ where $\Pr_n$ is the projection from $H^1_{\Iw}(\Qp,V_{\bar{f}}(k-1))$ to $H^1(\Qpn,V_{\bar{f}}(k-1))$. Let $i=1$ (respectively $i=2$), then 
     \[
     \ker(\rmCol_i)=\{x\in\DD(V_{\bar{f}}(k-1))^{\psi=1}:e_0(x)=0\text{ and }e_{n+1}(x)=p^{-1}e_{n}(x)\forall\text{ odd (respectively even) } n\ge1 \}. 
     \]
    \end{corollary}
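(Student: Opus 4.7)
We prove the equivalence for $\rmCol_1$; the case of $\rmCol_2$ is analogous, with the parities of the levels interchanged. The idea is to translate the trace identities of Proposition~\ref{evalofkernel} into the recursion for the dual exponentials $e_n(x)$ via Berger's explicit reciprocity law.

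Writing $\iota(x) = f_1 \bar{\nu}_1' + f_2\bar{\nu}_2'$ with $\bar{\nu}_i' = \bar{\nu}_i \otimes e_{k-1}t^{1-k}$ as in the proof of Proposition~\ref{evalofkernel}, I would first invoke the level-$n$ generalisation of the formula used in Lemma~\ref{sameconstantterm}---namely Berger's Th\'eor\`eme~II.13 of~\cite{berger03}. For $n \geq 1$, this expresses $e_n(x)$ as the $\Fil^0$-component of $p^{-n}\vp^{-n}(\iota(x))$, where the constant-term specialisation of $\vp^{-n}(\BB^+_{\rig,\QQ_p})$ is given by evaluation at $\pi = \zeta_{p^n}-1$ through the embedding $\vp^{-n}(\BB^+_{\rig,\QQ_p}) \hookrightarrow \BB^+_{\dR}$ recalled in Section~\ref{crysrepsandwachmods}. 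The level-$0$ case is exactly the formula appearing in Lemma~\ref{sameconstantterm}.

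The hypothesis $a_p = 0$ then enters through $\vp^2 = -p^{k-1}$ on $\Dcris(V_{\bar f})$, so that $\vp^{-2}$ acts as $-p^{k-1}$ on $\Dcris(V_{\bar f}(k-1))$. Since $\vp$ interchanges $\bar{\nu}_1'$ and $\bar{\nu}_2'$ up to explicit scalars and $\Fil^0 \Dcris(V_{\bar f}(k-1)) = E\bar{\nu}_1'$, the $\Fil^0$-projection picks out only the contribution of $f_1(\zeta_{p^n}-1)$ when $n$ is even and only the contribution of $f_2(\zeta_{p^n}-1)$ when $n$ is odd. A short computation thus reduces the condition $e_{n+1}(x) = p^{-1}e_n(x)$ for odd $n \geq 1$ to an explicit identity relating $f_2(\zeta_{p^n}-1)$ and $f_1(\zeta_{p^{n+1}}-1)$.

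Finally, translating the $\psi = 1$ identity $f_2 = -\psi(q^{k-1}f_1)$ via the standard formula $\psi(f)(\zeta_{p^m}-1) = \tfrac{1}{p}\Tr_{\QQ_{p,m+1}/\QQ_{p,m}}\bigl(f(\zeta_{p^{m+1}}-1)\bigr)$ yields an expression for $f_2(\zeta_{p^n}-1)$ as a trace of $q^{k-1}f_1$ at level $n+1$. Combining this with the preceding step, the relation $e_{n+1}(x) = p^{-1}e_n(x)$ becomes the trace identity of Proposition~\ref{evalofkernel}(i) at level $n+2$; the boundary condition $e_0(x) = 0$ corresponds to the level-$1$ identity $\Tr_{\QQ_{p,1}/\QQ_p}f_1(\zeta_p-1) = -(1+p^{2-k})f_1(0)$ via the extra factor $(1-p^{-1}\vp^{-1})$ at level $0$ together with Lemma~\ref{sameconstantterm}. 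The equivalence then follows from Proposition~\ref{evalofkernel}. The main obstacle is the careful bookkeeping of signs and $p$-power factors produced by the $2$-periodic action $\vp^{-2} = -p^{k-1}$, and checking that no additional condition is picked up or lost at the $n=0, 1$ boundary, where the $(1-p^{-1}\vp^{-1})$ correction at level $0$ must be matched against both the $\psi = 1$ relation and the level-$1$ part of the recursion.
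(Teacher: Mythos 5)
Your overall strategy is the one the paper follows: use the explicit reciprocity law to write $e_n(x)$ in terms of the $\Dcris$-coordinates $f_1,f_2$ of $\iota(x)$ evaluated at $\zeta_{p^n}-1$ (the paper quotes \cite[Th\'eor\`eme IV.2.1]{cherbonniercolmez99} for $n\geq 1$ and \eqref{valuedualexp} for $n=0$, but the content is what you invoke), exploit $\vp^2=-p^{k-1}$ and $\Fil^0\Dcris(V_{\bar f}(k-1))=E\,\bar\nu_1\otimes t^{1-k}e_{k-1}$, and then match against Proposition~\ref{evalofkernel}. However, the matching step, which is the entire content of the corollary, does not work as you describe. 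First, a coordinate slip: the relation $x_2=-\psi(q^{k-1}x_1)$ holds for the \emph{Wach} coordinates; for the $\Dcris$-coordinates $f_1,f_2$ that you are using it reads $f_2=-p^{k-1}\psi(f_1)$ (equivalently $\psi(f_1)=-p^{1-k}f_2$ and $\psi(f_2)=f_1$). Second, and more seriously, the identity you land on is not Proposition~\ref{evalofkernel}(i). For odd $n$, unwinding $e_{n+1}(x)=p^{-1}e_n(x)$ via the formulas for $e_m$ gives $f_2(\zeta_{p^n}-1)=-p^{k-1}f_1(\zeta_{p^{n+1}}-1)$; substituting $f_1=\psi(f_2)$ turns this into the identity of Proposition~\ref{evalofkernel}\emph{(ii)} (the $f_2$-identity) at level $n+2$, not the part-(i) identity. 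Moreover part (i) at the odd level $n+2$ is vacuous: since $f_1=x_1'(\lm)^{k-1}$, both sides of \eqref{firstkernel} vanish identically at odd levels, so "reducing" to it proves nothing.

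The divisibilities $f_1\in(\lm)^{k-1}\BB^+_{\Qp}$ and $f_2\in(\lp)^{k-1}\BB^+_{\Qp}$, which you never invoke, are exactly what the paper's proof runs on: they kill the second $\Dcris$-component in the formulas for $e_m$, they make half of the identities \eqref{firstkernel} automatic, and they collapse \eqref{firstkernel2} to $f_1(0)=0$, which is what matches $e_0(x)=(1+p^{k-3})f_1(0)\,\bar\nu_1\otimes t^{1-k}e_{k-1}=0$ (this is where Lemma~\ref{sameconstantterm} enters; your appeal to the factor $1-p^{-1}\vp^{-1}$ alone does not close the boundary case). The parity bookkeeping is not cosmetic: done correctly, the relations $e_{n+1}=p^{-1}e_n$ for odd $n$ are equivalent to the vanishing of $\rmCol_2(x)$ at the even-level cyclotomic points (Proposition~\ref{evalofkernel}(ii)), while the non-vacuous identities of part (i) live at even levels $m$ and correspond to the relations $e_{m-1}(x)=p^{-1}e_{m-2}(x)$ at odd levels, i.e.\ to the vanishing of $\rmCol_1(x)$ at odd-level points. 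So as written your reduction identifies the stated family of conditions with the kernel of the wrong Coleman map and the argument does not close; a correct write-up has to pin down which parity of levels goes with which $\rmCol_i$ (and reconcile this with the statement being proved), treat the $m=0,1$ boundary via the automatic vanishing just mentioned, and only then quote Proposition~\ref{evalofkernel} for the equivalence.
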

    \begin{proof}
     Again, we only prove this for $i=1$. By \cite[Th\'eor\`eme IV.2.1]{cherbonniercolmez99}, we have $e_n(x)=p^{-n}\partial_V(\vp^{-n}(x))$ for all $n\geq 1$. But $\vp^{-2}$ is the multiplication by $-p^{k-1}$ on $\Dcris(V_{\bar{f}}(k-1))$. Using again that $\image(\exp^*_{n,V_{\bar{f}(k-1)}})\subset\Fil^0\DD_{\cris}(V)$, we see that
     \begin{align*}
      e_{2n}(x) &= p^{-2n}\cdot (-p)^{n(k-1)}f_1(\zeta_{p^{2n}}-1)\bar{\nu}_1\otimes t^{1-k}e_{k-1}\\
      e_{2n+1}(x) &= p^{-2n-1}\cdot(-p)^{n(k-1)}f_2(\zeta_{p^{2n+1}}-1)\bar{\nu}_1\otimes t^{1-k}e_{k-1}
     \end{align*}
     and $f_2(\zeta_{p^{2n}}-1)=f_1(\zeta_{p^{2n-1}}-1)=0$ for all $n\geq 1$. Therefore, \eqref{firstkernel} holds for $2n-1$ and for $2n$ if and only if $e_{2n}(x)=\Tr_{F_{2n+1}/F_{2n}}(e_{2n+1}(x))=p^{-1}e_{2n-1}(x)$.

     Now $e_0(x)=(f_1(0)-p^{-1}f_2(0))\bar{\nu}_1\otimes t^{1-k}e_{k-1}$ by~\eqref{valuedualexp} and $p^{k-2}f_1(0)+f_2(0)=0$ by Lemma \ref{sameconstantterm}, so 
     \[e_0(x)=(1+p^{k-3})f_1(0)\bar{\nu}_1\otimes t^{1-k}e_{k-1}=-(p^{2-k}+p^{-1})f_2(0)\bar{\nu}_1\otimes t^{1-k}e_{k-1}\]
     The condition \eqref{firstkernel2} is therefore equivalent to $f_1(0)=0$, which in turns is equivalent to $e_0(x)=0$.
    \end{proof}

    In the rest of this section, we will relate Corollary~\ref{evalofkernel2} to the description of $\ker(\rmCol^\pm)$ in~\cite[Section 2.2]{lei09}. Recall that $H^1_f(\Qpn,T_f(1))^\pm$ is defined by
    \[\left\{x\in H^1_f(\Qpn,T_f(1)):\mathrm{cor}_{n/m+1}x\in H^1_f(\QQ_{p,m},T_f(1))\ \forall m\mathrm{\ even\ (odd)},m<n\right\}.\]
    Denote by $H^1_\pm(\Qpn,T_{\bar{f}}(k-1))$ the annihilator of $H^1_f(\Qpn,T_f(1))^\pm$ under the pairing
    \begin{equation}\label{pairing}
     [,]_n:H^1(\Qpn,T_f(1))\times H^1(\Qpn,T_{\bar{f}}(k-1))\rightarrow \ZZ_p.
    \end{equation}
    As shown in~\cite[Section 2.2.4]{lei09}, we have $\ker(\rmCol^\pm)=\varprojlim_n H^1_\pm(\Qpn,T_{\bar{f}}(k-1))$. Hence, we can identify the kernels described in Corollary~\ref{evalofkernel2} with $\ker(\rmCol^\pm)$ described in \cite{lei09} via the isomorphism $h^1_{\Iw,V_{\bar{f}}(k-1)}$: 

    \begin{proposition}
     For any $x\in H^1(\Qpn,T_{\bar{f}}(k-1))$ and $m\le n$, let $e_m(x)=\exp^*_{m,V_{\bar{f}}(k-1)}(\mathrm{cor}_{n/m}(x))$. Then, $H^1_\pm(\Qpn,T_{\bar{f}}(k-1))$ coincides with the following set:
     \[
     \{x\in H^1(\Qpn,T_{\bar{f}}(k-1)):e_0(x)=0 \text{ and } e_{m}(x)=p^{-1}e_{m-1}(x) \forall m\text{ odd (even)},m\le n\}
     \]
    \end{proposition}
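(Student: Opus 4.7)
The strategy is to characterise $H^1_\pm(\Qpn, T_{\bar f}(k-1))$ directly at finite level via Bloch--Kato duality. By its definition (Section~2.2.4 of \cite{lei09}), $H^1_\pm(\Qpn, T_{\bar f}(k-1))$ is the annihilator of $H^1_f(\Qpn, T_f(1))^\pm$ under the Tate pairing $[\cdot,\cdot]_n$ of \eqref{pairing}. It therefore suffices to check that $x$ lies in this annihilator if and only if the stated conditions on $e_m(x)$ hold.

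First, I would translate the plus condition into a condition on $\Dcris$. Since $V_f(1)$ has no crystalline invariants, the Bloch--Kato exponential gives an isomorphism $\exp_m: \QQ_{p,m} \otimes \Dcris(V_f(1))/\Fil^0 \cong H^1_f(\QQ_{p,m}, V_f(1))$ for each $m$, compatible with corestriction on the left and trace on the right. Writing $y = \exp_n(a)$, the condition $\mathrm{cor}_{n/m+1}(y) \in H^1_f(\QQ_{p,m}, V_f(1))$ converts to $\Tr_{\Qpn/\QQ_{p,m+1}}(a) \in \QQ_{p,m} \otimes \Dcris(V_f(1))/\Fil^0$, which, using $\Tr_{\QQ_{p,m+1}/\QQ_{p,m}}|_{\QQ_{p,m}} = p$, is in turn equivalent to $\Tr_{\Qpn/\QQ_{p,m}}(a) = p \cdot \Tr_{\Qpn/\QQ_{p,m+1}}(a)$. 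So $H^1_f(\Qpn, T_f(1))^+$ corresponds precisely to those $a$ satisfying this trace identity for all even $m < n$, and analogously for the minus case.

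Second, I would dualise. The Bloch--Kato reciprocity formula (cf.~\cite[Theorem II.6]{berger03}) yields
\[
[y, x]_n = \Tr_{\Qpn/\Qp}\bigl[a,\, \exp^*_{V_{\bar f}(k-1),n}(x)\bigr]_{\cris}
\]
for $y = \exp_n(a)$, with the pairing \eqref{crispairing} on the right. Setting $\xi = \exp^*_n(x)$, the vanishing of $[y, x]_n$ for all $y \in H^1_f(\Qpn, V_f(1))^+$ becomes a system of vanishing conditions on the partial traces $\mathrm{cor}_{n/m}(\xi) = \exp^*_m(\mathrm{cor}_{n/m}(x)) = e_m(x)$, dual to the trace identities of the previous paragraph. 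A direct calculation, using the projection formula for the trace pairing and the nondegeneracy of \eqref{crispairing}, shows that the system is equivalent to the pair of conditions $e_0(x) = 0$ and $e_m(x) = p^{-1} e_{m-1}(x)$ for each odd $m \le n$.

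The principal technical obstacle lies in the normalisation: pinning down the powers of $p$ that arise from the trace (via $[\QQ_{p,m+1}:\QQ_{p,m}] = p$), from the compatibility between $\exp^*$ and corestriction, and from the crystalline pairing. The $m = 0$ case is special, since the condition cannot be expressed as $e_0(x) = p^{-1} e_{-1}(x)$; here one has to use the relation $\vp^2 = -p^{1-k}$ on $\Dcris(V_{\bar f}(k-1))$, together with the image of $\exp^*$ lying in $\Fil^0$ (as exploited in Lemma~\ref{sameconstantterm}), to obtain the standalone identity $e_0(x) = 0$ separately from the inductive pattern at higher $m$.
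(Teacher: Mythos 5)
Your overall route is the same as the paper's: view $H^1_\pm(\Qpn,T_{\bar f}(k-1))$ as the annihilator of $H^1_f(\Qpn,T_f(1))^\pm$ under \eqref{pairing}, transport everything to $\Dcris$ via the compatibility $[\exp_{n,V_f(1)}(a),x]_n=\Tr_{n/0}[a,\exp^*_{n,V_{\bar f}(k-1)}(x)]'_n$, translate the $\pm$-conditions on the exponential side into trace conditions on $a$, and finish with linear algebra for the bilinear form $(u,v)\mapsto\Tr_{n/0}(uv)$ on $\Qpn$; in the paper this last step is made explicit as the computation of $(\Qpn^\pm)^\perp$, which is exactly the ``direct calculation'' you leave unspecified. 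One small slip in your second paragraph: for $m=0$ the degree $[\QQ_{p,1}:\QQ_p]$ is $p-1$, not $p$, so the identity $\Tr_{n/0}(a)=p\,\Tr_{n/1}(a)$ is wrong at that step; the membership formulation $\Tr_{n/m+1}(a)\in\QQ_{p,m}\otimes\Dcris(V_f(1))/\Fil^0$, which you state first, is the one to carry forward.

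The genuine problem is your plan for the condition $e_0(x)=0$. You propose to obtain it from the relation $\vp^2=-p^{1-k}$ on $\Dcris(V_{\bar f}(k-1))$ together with Lemma~\ref{sameconstantterm}. That cannot work here: Lemma~\ref{sameconstantterm} (and any use of $a_p=0$, i.e.\ $\vp^2=-p^{k-1}$) applies to elements of $\DD(V_{\bar f}(k-1))^{\psi=1}$, whose two coordinates are linked by the $\psi=1$ condition; in the present proposition $x$ is an arbitrary class in $H^1(\Qpn,T_{\bar f}(k-1))$ with no such structure, and in fact neither $\vp$ nor $a_p$ enters the statement at all --- you are conflating this result with Corollary~\ref{evalofkernel2}, where those inputs genuinely occur. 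The correct source of $e_0(x)=0$ is simply that the constants lie in $\Qpn^\pm$: for $c\in\QQ_p$ one has $\Tr_{n/m+1}(c)\in\QQ_p\subset\QQ_{p,m}$ for every $m$, so (after clearing denominators) the classes $\exp_{n,V_f(1)}(c\otimes\nu)$, with $\nu$ spanning $\Dcris(V_f(1))/\Fil^0$, belong to $H^1_f(\Qpn,T_f(1))^\pm$; pairing $x$ against them gives $c\,[\nu,e_0(x)]$, since $e_0(x)=\Tr_{n/0}(\exp^*_{n,V_{\bar f}(k-1)}(x))$, and the nondegeneracy of \eqref{crispairing} between $\Dcris(V_f(1))/\Fil^0$ and $\Fil^0\Dcris(V_{\bar f}(k-1))$ (where $e_0(x)$ lives) forces $e_0(x)=0$. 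This is exactly the condition $\Tr_{n/0}=0$ appearing in the paper's description of $(\Qpn^\pm)^\perp$; with this replacement, your argument coincides with the paper's proof.
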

    \begin{proof}
     On the one hand, \eqref{pairing} factors through
     \[ H^1_f(\Qpn,T_f(1))\times \frac{H^1(\Qpn,T_{\bar{f}}(k-1))}{H^1_f(\Qpn,T_{\bar{f}}(k-1))}\rightarrow \ZZ_p. \]
     On the other hand, the pairing
     \[
     [\sim,\sim]'_n:\Big(\Qpn\otimes\Dcris(V_f(1))\Big)\times\Big(\Qpn\otimes\Dcris(V_{\bar{f}}(k-1))\Big)\rightarrow\Qpn\stackrel{\Tr_{n/0}}{\longrightarrow} \QQ_p
     \]
     factors through
     \[
     \Big(\Qpn\otimes\Dcris(V_f(1))/\Dcris^0(V_f(1))\Big)\times\Big(\Qpn\otimes\Dcris^0(V_{\bar{f}}(k-1))\Big)\rightarrow \QQ_p.
     \]
     Hence, the compatibility of the two pairings, namely $[\exp_{n,V_f(1)}(\sim),\sim]_n=\Tr_{n/0}[\sim,\exp^*_{n,V_{\bar{f}}(k-1)}(\sim)]'_n$, implies that $H^1_\pm(\Qpn,T_{\bar{f}}(k-1))$ is the $\exp^*_{n,V_{\bar{f}}(k-1)}$-preimage of $\Big(\Qpn^{\pm}\otimes\Dcris(V_f(1))/\Dcris^0(V_f(1))\Big)^\bot$ where
     \[
     \Qpn^{\pm}=\{x\in \Qpn:\Tr_{n/m+1}(x)\in\QQ_{p,m}\ \forall m\mathrm{\ even\ (odd)},m<n\}.
     \]
     But we have:
     \[
     \Big(\Qpn^{\pm}\otimes\Dcris(V_f(1))/\Dcris^0(V_f(1))\Big)^\bot=\Big(\Qpn^\pm\Big)^\bot\otimes\Dcris^0(V_{\bar{f}}(k-1))
     \] 
     where $\Big(\Qpn^\pm\Big)^\bot$ is the orthogonal complement of $\Qpn^\pm$ under the pairing
     \begin{align*}
      \Qpn\times\Qpn &\rightarrow \Qp\\
      (x,y) &\mapsto \Tr_{n/0}(xy).
     \end{align*}
     By simple linear algebra, we have
     \[
     \Big(\Qpn^\pm\Big)^\bot=\{x\in \Qpn:\Tr_{n/0}(x)=0\text{ and }\Tr_{n/m+1}(x)\in\QQ_{p,m}\ \forall m\mathrm{\ odd\ (even)},m<n\},
     \]
     hence the lemma.
    \end{proof}


   \subsection{Elliptic curves with \texorpdfstring{$a_p = 0$}{ap = 0}}\label{elliptic}
   
   We now specialize to the case when $f$ corresponds to an elliptic curve $E$ over $\QQ$ with $a_p = 0$. Then $V_{\bar{f}}(k-1) = V_f(1) = \Qp \otimes_{\ZZ_p} T$, where $T = T_p(E)$. Furthermore, $E[p]$ is irreducible as a mod $p$ representation of $G_{\Qp}$; thus $T$ is the unique $G_{\Qp}$-stable lattice in $\Qp \otimes_{\ZZ_p} T_p(E)$ up to scaling, and in particular we may take the lattice $T_f(1)$ constructed in \cite{bergerlizhu04} (which is only defined up to scaling) to coincide with $T$.

   In this situation, we can recover results of Kobayashi~\cite{kobayashi03} which give a precise description of the images $\DD(T)^{\psi=1}$ under the Coleman maps. Recall that if $x\in\DD(V)^{\psi=1}$, say $x=(x_1n_1+x_2n_2)\otimes\pi^{-1}e_1$, then we have
   \begin{align*}
    \rmCol_1(x) & =x_2-\varphi(x_1) \\
    \rmCol_2(x) & =qx_1+\vp(x_1)
   \end{align*}
   where we have replaced $\rmCol_2$ by $-\rmCol_2$ for simplicity.
   
   \begin{proposition}\label{plusimage}
    The map $\rmCol_{1}:\DD(T)^{\psi=1}\rightarrow (\AA_{\QQ_p}^+)^{\psi=0}$ is surjective.
   \end{proposition}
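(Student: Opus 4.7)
The strategy parallels the proof of Proposition~\ref{surjectiveCol1}, specialised to $k = 2$ and $a_p = 0$, with careful attention to integrality. Writing $x = (x_1 n_1 + x_2 n_2) \otimes \pi^{-1} e_1$, Lemma~\ref{psi} identifies $\psi(x) = x$ with the conditions $\psi(x_2) = x_1$ and $\psi(q x_1) = -x_2$, so combined with $\rmCol_1(x) = x_2 - \varphi(x_1)$ the task reduces to solving $\varphi(x_1) + \psi(q x_1) = -y_1$ for $x_1 \in \AA^+_{\QQ_p}$, given $y_1 \in (\AA^+_{\QQ_p})^{\psi = 0}$.

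The plan has two steps. First, Proposition~\ref{col1} (with $E = \QQ_p$, $k = 2$; the hypothesis is trivially satisfied since $a_p = 0$) gives $(\pi \AA^+_{\QQ_p})^{\psi = 0} \subseteq \image(\rmCol_1)$. Second, it then suffices to produce one element $x_0 \in \DD(T)^{\psi = 1}$ with $\rmCol_1(x_0) = 1 + \pi$: any $y_1 \in (\AA^+_{\QQ_p})^{\psi = 0}$ decomposes as $y_1 = y_1(0) \cdot (1+\pi) + r$ with $r := y_1 - y_1(0)(1+\pi) \in \pi \AA^+_{\QQ_p}$, and since $\psi(1+\pi) = 0$ we have $r \in (\pi \AA^+_{\QQ_p})^{\psi = 0} \subseteq \image(\rmCol_1)$, so $y_1 \in \image(\rmCol_1)$.

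To construct $x_0$ I would follow the recipe of Proposition~\ref{surjectiveCol1} applied to the target $y_1 := 1 + \pi$: take $y' := -(1+\pi)$, so $y_1 + \varphi(y') = (1+\pi) - (1+\pi)^p$ lies in $\pi \AA^+_{\QQ_p}$. Form the $\varphi$-iteration $x := \sum_{n \geq 0} \varphi^n(y)$ applied to $y := \pi^{-1}\{(1+\pi) - (1+\pi)^p\}\, n_2 \otimes e_1 \in \NN(T(1))$; this converges integrally because $\varphi$ sends $\pi \AA^+_{\QQ_p}$ into $\pi q \cdot \AA^+_{\QQ_p}$, absorbing the denominator at each stage. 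Then add the correction $c := \pi^{-1}\{-(1+\pi)\, n_1 + (1+\pi)\, n_2\} \otimes e_1 \in \NN(T(1))$, obtaining $x' := x + c$. Using the key identity $\psi(q) = 1$ (which yields $\psi(q(1+\pi)) = \psi(q + \varphi(\pi)) = 1 + \pi$, and hence $\psi^2(q(1+\pi)) = \psi(1+\pi) = 0$), one verifies directly that $\psi(x') = x'$ and computes $\rmCol_1(x') = 2(1+\pi)$. Since $p$ is odd, $x_0 := x'/2$ remains in $\DD(T)^{\psi = 1}$ and satisfies $\rmCol_1(x_0) = 1 + \pi$.

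The main obstacle is the integrality bookkeeping: rational surjectivity onto $(\BB^+_{\QQ_p})^{\psi = 0}$ is already supplied by Proposition~\ref{surjectiveCol1}, but the integral refinement requires that the $\varphi$-iteration stay inside $\NN(T(1))$ rather than just $\NN(V(1))$, and that the final division by $2$ preserve the integral lattice---the latter being precisely where the odd-prime hypothesis enters essentially.
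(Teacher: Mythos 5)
Your proposal is correct and follows essentially the same route as the paper: the paper's own argument likewise first shows $(\pi\AA^+_{\QQ_p})^{\psi=0}\subseteq\image(\rmCol_1)$ via a convergent $\varphi$-series (the content of Proposition~\ref{col1} specialised to $k=2$, $a_p=0$) and then exhibits an explicit integral element whose image is congruent to $1$ modulo $\pi$, using exactly the identities $\psi(q(1+\pi))=1+\pi$, $\psi^2(q(1+\pi))=0$ and the invertibility of $2$ in $\ZZ_p$. Your element $x'/2$ coincides, up to bookkeeping in the basis, with the paper's $z_1,z_2$ construction (where the factor $\tfrac12$ is built into the series from the start), so the two proofs differ only in presentation.
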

   \begin{proof}
    We first show that $\big(\pi\AA^+_{\QQ_p}\big)^{\psi=0}\subset\image(\rmCol_{1})$. If $y\in \big(\pi\AA^+_{\QQ_p}\big)^{\psi=0}$, then the series $\sum_{i\geq 1}(-1)^i\frac{\varphi^{2i-1}(y)}{q\dots \varphi^{2i-2}(q)}$ and  $\sum_{i\geq 0}(-1)^i\frac{\varphi^{2i}(y)}{\varphi(q)\dots\varphi^{2i-1}(q)}$ converge in $\AA^+_{\QQ_p}$ to elements $x_1$ and $x_2$, respectively, and it is easy to see that $\psi(qx_2)=-x_1$ and $\psi(x_1)=x_2$. It follows that if we let $x=x_1\log^-(1+\pi)\nu_1+x_2\log^+(1+\pi)\nu_2$, then $x\in\DD(T)^{\psi=1}$, and moreover $\rmCol_1(x)=x_2-\varphi(x_1) = y$.
     
     In order to prove surjectivity of $\rmCol_{1}$, it is hence sufficient to show that there exists $y\in\image(\rmCol_{1})$ with $y\equiv1\mod\pi$. Let $y\in \AA^{+,\psi=0}_{\QQ_p}$ such that $\pi\mid((1+\pi)^p+y)$. As above, the series $\sum_{i\geq 1}(-1)^i\frac{\varphi^{2i-1}(y)+\varphi^{2i}(1+\pi)}{q\dots \varphi^{2i-2}(q)}$ and $\sum_{i\geq 0}(-1)^i\frac{\varphi^{2i}(y)+\varphi^{2i+1}(1+\pi)}{\varphi(q)\dots\varphi^{2i-1}(q)}$converge in $\AA^+_{\QQ_p}$. Let 
     \begin{align*}
      z_1 & =\frac{1}{2}\left((1+\pi)+\sum_{i\geq 1}(-1)^i\frac{\varphi^{2i-1}(y)+\varphi^{2i}(1+\pi)}{q\dots \varphi^{2i-2}(q)}\right),\\
      z_2 & = \frac{1}{2}\left(-\psi(q(1+\pi))+\sum_{i\geq 0}(-1)^i\frac{\varphi^{2i}(y)+\varphi^{2i+1}(1+\pi)}{\varphi(q)\dots\varphi^{2i-1}(q)}\right).
     \end{align*}
     It is easy to see that $\psi^2(q(1+\pi))=0$, so $\psi(qz_1)=-z_2$ and $\psi(z_2)=z_1$. It follows that if we let $x=z_1\log^-(1+\pi)\nu_1+z_2\log^+(1+\pi)\nu_2$, then $x\in\DD(T)^{\psi=1}$, and moreover
     \[ \rmCol_{1}(x)=z_2-\varphi(z_1) = 1\mod\pi. \] 
   \end{proof}
   
   \begin{corollary}\label{plusimage*}
    The map $\uCol_{1}:\DD(T)^{\psi=1}\rightarrow \Lambda(G_\infty)$ is surjective.
   \end{corollary}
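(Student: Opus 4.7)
Proof proposal: My plan is to combine Proposition \ref{plusimage} with a Nakayama-style argument, using a congruence that relates $\rmCol_1$ and $\uCol_1$ modulo $\vp(\pi)$ (respectively modulo $(\gamma-1)$ on the Iwasawa side).

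The key input is the identity
\[ \rmCol_i(x) \equiv \mathfrak{M}(\uCol_i(x)) \pmod{\vp(\pi)} \qquad (i=1,2) \]
for every $x \in \DD(T)^{\psi=1}$. This is the general fact extracted in the proof of Corollary \ref{sameprop}: writing $(1-\vp)(x)$ in the two bases $\{\vp(\pi^{-1} n_i \otimes e_1)\}$ (with $\BB^+$-coefficients $\rmCol_i(x)$) and $\{(1+\pi)\vp(\pi^{-1} n_i \otimes e_1)\}$ (with $\Lambda$-coefficients $\uCol_i(x)$), and applying Lemma \ref{cong}, yields the congruence for arbitrary $x$, not just for $\kato$. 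Under the Mellin isomorphism $\mathfrak{M}:\Lambda(G_\infty)\xrightarrow{\sim}(\AA_{\QQ_p}^+)^{\psi=0}$ of Lemma \ref{lem2}, the submodule $\vp(\pi)(\AA_{\QQ_p}^+)^{\psi=0}$ corresponds to $p_1\Lambda(G_\infty) = (\gamma-1)\Lambda(G_\infty)$.

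Given any $z\in\Lambda(G_\infty)$, Proposition \ref{plusimage} supplies $x\in\DD(T)^{\psi=1}$ with $\rmCol_1(x) = \mathfrak{M}(z)$, and then the congruence gives $\uCol_1(x)\equiv z \pmod{(\gamma-1)\Lambda(G_\infty)}$. Thus, writing $I = \image(\uCol_1)\subseteq\Lambda(G_\infty)$, we obtain $I + (\gamma-1)\Lambda(G_\infty) = \Lambda(G_\infty)$, so the quotient $Q:=\Lambda(G_\infty)/I$ satisfies $(\gamma-1)Q = Q$.

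To conclude, note that $Q$ is cyclic, hence finitely generated over the Noetherian ring $\Lambda(G_\infty)$, which decomposes as a product of copies of $\ZZ_p[[\Gamma]]$ indexed by characters of $\Delta$; its Jacobson radical contains $(p,\gamma-1)$, so in particular contains $(\gamma-1)$. Nakayama's lemma then forces $Q=0$, i.e.\ $\image(\uCol_1) = \Lambda(G_\infty)$. The only real work is the congruence in the first step, and this is already established in Corollary \ref{sameprop} (the argument using Lemma \ref{cong} is insensitive to the particular class $x$); everything else is formal Iwasawa-theoretic bookkeeping.
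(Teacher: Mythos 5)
Your argument is correct, and it takes a genuinely different route from the paper. The paper deduces the corollary by feeding the element $x$ with $\rmCol_1(x)=1+\pi$ from Proposition~\ref{plusimage} into the machinery of Sections~\ref{algorithm}--\ref{ucol1}: the explicit successive-approximation algorithm for $\J$ (Proposition~\ref{approximation}, Corollary~\ref{ant}), built on the congruences for the $\gamma$-action on the Berger--Li--Zhu basis, is used to correct the second component and exhibit an element with $\uCol_1=1$, after which $\Lambda(G_\infty)$-linearity finishes the job. You instead use only three soft inputs: integral surjectivity of $\rmCol_1$ (Proposition~\ref{plusimage}), the congruence $\rmCol_i(x)\equiv\mathfrak{M}(\uCol_i(x))\bmod\vp(\pi)$ — which, as you say, is exactly the computation in Corollary~\ref{sameprop} via Lemma~\ref{cong} and is valid for every $x\in\DD(T)^{\psi=1}$, with the comparison of coefficients happening in the free module $\vp^*\NN(T(1))/\vp(\pi)$ — and Lemma~\ref{lem2}, which identifies $\vp(\pi)(\AA^+_{\QQ_p})^{\psi=0}$ with $(\gamma-1)\Lambda(G_\infty)$ under $\mathfrak{M}$. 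The resulting identity $\image(\uCol_1)+(\gamma-1)\Lambda(G_\infty)=\Lambda(G_\infty)$ plus Nakayama (legitimate here: the cokernel is cyclic, $\image(\uCol_1)$ is an ideal since $\uCol_1$ is $\Lambda(G_\infty)$-equivariant, and $\gamma-1$ lies in the Jacobson radical of the semilocal ring $\Lambda(G_\infty)\cong\prod_\eta\ZZ_p[[\Gamma]]$) gives surjectivity. Your proof is shorter and avoids the basis-specific congruence analysis entirely; what the paper's heavier approach buys is generality and extra output: in the setting of Theorem~\ref{image1} one only has surjectivity of $\rmCol_1$ rationally, and $\gamma-1$ is \emph{not} in the Jacobson radical of $\Lambda_E(G_\infty)$, so your Nakayama shortcut does not transfer there, and the same machinery also yields the finer description of $\image(\uCol_2)$ in Proposition~\ref{minusimage*}, which the soft argument does not produce.
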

   \begin{proof}
    By Proposition~\ref{plusimage}, there exists $x\in \NN(T)^{\psi=1}$ such that $\rmCol_1(x)=1+\pi$. The result therefore follows by precisley the same argument as in the proof of Theorem~\ref{image1}.
   \end{proof}
    
    \begin{proposition}\label{minusimage}
     The image of $\rmCol_{2}:\DD(T)^{\psi=1}\rightarrow \big(\AA_{\QQ_p}^+\big)^{\psi=0}$ is equal to $\big(\AA^{+,\psi=0}_{\QQ_p}\big)^{\Delta}+\varphi(\pi)\AA^{+,\psi=0}_{\QQ_p}$.
    \end{proposition}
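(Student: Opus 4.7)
The plan is to translate the claim into Iwasawa-algebra language via the Mellin transform and then reduce to two easy computations modulo $\vp(\pi)$. First, using Lemma~\ref{lem2} the identification $\mathfrak M:\Lambda(G_\infty)\xrightarrow{\sim}(\AA^+_{\QQ_p})^{\psi=0}$ takes $(1-\gamma)\Lambda(G_\infty)$ onto $\vp(\pi)(\AA^+_{\QQ_p})^{\psi=0}$. Writing $\Lambda(G_\infty)=\Lambda(\Gamma)[\Delta]$, the $\Delta$-invariants for left multiplication are $\Lambda(\Gamma)\cdot e_\Delta$ where $e_\Delta=\sum_{\delta\in\Delta}\delta$, and this matches the Galois-theoretic $\Delta$-action on $(\AA^+_{\QQ_p})^{\psi=0}$; hence under $\mathfrak M$ the $\Delta$-invariants correspond to $\Lambda(\Gamma)\cdot T$ where $T:=\sum_\delta(1+\pi)^{\chi(\delta)}$. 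Since $\Lambda(\Gamma)/(\gamma-1)\Lambda(\Gamma)=\ZZ_p$, we get the clean reformulation
\[ (\AA^{+,\psi=0}_{\QQ_p})^\Delta+\vp(\pi)(\AA^{+,\psi=0}_{\QQ_p})=\ZZ_p\cdot T+\vp(\pi)(\AA^{+,\psi=0}_{\QQ_p}). \]
So it suffices to show that $\rmCol_2$ takes values in this set, and that its image contains both $\vp(\pi)(\AA^{+,\psi=0}_{\QQ_p})$ and a unit multiple of $T$ modulo $\vp(\pi)$.

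For the containment $\subseteq$, I take $x\in\DD(T)^{\psi=1}$ written as $x=\pi^{-1}(x_1n_1+x_2n_2)\otimes e_1$ and reduce $\rmCol_2(x)=qx_1+\vp(x_2)$ modulo $\vp(\pi)=\pi q$. Since $\vp(f)\equiv f(0)\pmod{\vp(\pi)}$ and $q\cdot f\equiv f(0)\,q\pmod{\vp(\pi)}$, I get $\rmCol_2(x)\equiv x_1(0)\,q+x_2(0)\pmod{\vp(\pi)}$. The $\psi$-condition in Lemma~\ref{psi} reads $\psi(qx_1)=-x_2$; applying $\vp\circ\psi$ and evaluating at $\pi=0$ using $q(0)=p$ and $q(\zeta-1)=0$ for primitive $p$-th roots of unity gives $x_2(0)=-x_1(0)$. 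Thus $\rmCol_2(x)\equiv x_1(0)(q-1)\pmod{\vp(\pi)}$. Finally I must identify $q-1\equiv T\pmod{\vp(\pi)}$; since $\vp(\pi)=(1+\pi)^p-1$ is a distinguished polynomial in $\ZZ_p[[\pi]]$, it suffices to check that $q-1-T$ vanishes at $\pi=\zeta-1$ for every $p$-th root of unity $\zeta$. At $\zeta=1$ both sides equal $p-1$, and at each primitive $\zeta$ one has $(q-1)(\zeta-1)=-1$ and $T(\zeta-1)=\sum_{a=1}^{p-1}\zeta^a=-1$, so Weierstrass division gives the divisibility.

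For $\supseteq$ I handle the two pieces separately. (a) Given $y=\vp(\pi)y'\in\vp(\pi)(\AA^{+,\psi=0}_{\QQ_p})$, I look for $x$ with $\rmCol_1(x)=0$ (forcing $x_2=\vp(x_1)$) and $\rmCol_2(x)=qx_1+\vp^2(x_1)=y$; iterating gives the formal solution $x_1=\sum_{n\ge0}(-1)^n\vp^{2n}(y)/(q\vp^2(q)\cdots\vp^{2n}(q))$, each term lying in $\pi\AA^+_{\QQ_p}$ (since $\vp(\pi)/q=\pi$ telescopes through the product), with convergence by the same Frobenius-decay argument as Proposition~\ref{plusimage}. (b) By that same proposition there exists $x\in\DD(T)^{\psi=1}$ with $\rmCol_1(x)=1+\pi$, so $x_2(0)-x_1(0)=1$; combined with $x_2(0)=-x_1(0)$ this gives $x_1(0)=-\tfrac12\in\ZZ_p^\times$ (valid since $p$ is odd), whence by the previous step $\rmCol_2(x)\equiv-T/2\pmod{\vp(\pi)}$. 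Scaling and using (a) to absorb the error then puts $\ZZ_p\cdot T$ into $\image(\rmCol_2)$, completing the proof.

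The main conceptual obstacle is the identification $q-1\equiv T\pmod{\vp(\pi)}$: this is what forces the $\Delta$-invariants to appear and explains the shape of the image set. Once one notices that both elements are $\psi$-trivial lifts of the same distribution on $\ZZ_p^\times$ supported at the $p$-th root level, the verification becomes a one-line Weierstrass-division argument; everything else is linear algebra and a standard convergent-series construction.
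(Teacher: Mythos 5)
Your proof is correct, and it parallels the paper's argument in most respects: the reduction of $\rmCol_2(x)=qx_1+\vp(x_2)$ modulo $\vp(\pi)$ together with the constant-term relation $x_2(0)=-x_1(0)$ (which you rederive directly from $\psi(qx_1)=-x_2$, rather than quoting Lemma~\ref{sameconstantterm}), the convergent-series construction hitting $\vp(\pi)\AA^{+,\psi=0}_{\QQ_p}$ (you solve the recursion $qx_1+\vp^2(x_1)=y$ in coordinates, the paper solves $(1-\vp)x=y$ by the geometric series in $\vp$ — same mechanism), and the use of the surjectivity of $\rmCol_1$ to produce an element whose image has unit coefficient modulo $\vp(\pi)$ (you pin down $x_1(0)=-\tfrac12$ from $\rmCol_1(x)=1+\pi$, the paper argues by contradiction). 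The genuine difference is the treatment of the $\Delta$-invariants: the paper invokes Fontaine's description $(\AA^+_{\QQ_p})^\Delta=\ZZ_p[[\pi_0]]$, checks $\psi(\pi_0)=1-p$, and proves by an explicit averaging computation (claim (2)) that every element of $\bigl(\AA^{+,\psi=0}_{\QQ_p}\bigr)^\Delta$ is $\equiv c(\pi_0+p-1)\bmod\vp(\pi)$; you obtain the same structure from Lemma~\ref{lem2} and the group-ring norm element, identifying $\bigl(\AA^{+,\psi=0}_{\QQ_p}\bigr)^\Delta=\Lambda(\Gamma)\cdot T$ with $T=\sum_{\delta\in\Delta}(1+\pi)^{\chi(\delta)}$ (which is precisely $\pi_0+p-1$) and $(\gamma-1)\Lambda(G_\infty)$ with $\vp(\pi)\AA^{+,\psi=0}_{\QQ_p}$, and your congruence $q-1\equiv T\bmod\vp(\pi)$, checked at the $p$-th roots of unity, replaces the paper's $\pi_0\equiv -p+q$. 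Your route makes the appearance of the $\Delta$-invariants conceptually transparent, and it also states and proves the inclusion $\image(\rmCol_2)\subseteq\bigl(\AA^{+,\psi=0}_{\QQ_p}\bigr)^\Delta+\vp(\pi)\AA^{+,\psi=0}_{\QQ_p}$ explicitly, which the printed proof leaves implicit in its claim (1). Two one-line checks should be recorded to make the write-up airtight: in part (a) the constructed element must be verified to lie in $\DD(T)^{\psi=1}$, i.e. $\psi(x_2)=x_1$ (automatic from $x_2=\vp(x_1)$) and $\psi(qx_1)=-x_2$, which via $qx_1=y-\vp^2(x_1)$ reduces to $\psi(y)=0$ and holds exactly because $y\in\vp(\pi)\AA^{+,\psi=0}_{\QQ_p}$; and since your congruences are a priori modulo $\vp(\pi)\AA^+_{\QQ_p}$, one should note that $\AA^{+,\psi=0}_{\QQ_p}\cap\vp(\pi)\AA^+_{\QQ_p}=\vp(\pi)\AA^{+,\psi=0}_{\QQ_p}$ (from $\psi(\vp(\pi)g)=\pi\psi(g)$) to land in the stated set. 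Neither affects correctness.
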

    \begin{proof}
     A similar argument to the one in the proof of Proposition~\ref{col1} shows that $\varphi(\pi)\AA^{+,\psi=0}_{\QQ_p}\subset\image(\rmCol_{2})$. In~\cite{fontaine90}, Fontaine shows that $\big(\AA^+_{\QQ_p}\big)^{\Delta}=\ZZ_p[[\pi_0]]$, where $\pi_0=-p+\sum_{a\in\FF_p}[\varepsilon]^{[a]}$. Note that $\theta(\pi_0)=0$ and $\theta\circ\varphi^{-1}(\pi_0)=-p$, so $\pi_0=-p+\alpha q$ for some $\alpha\in\AA^+_{\QQ_p}$ satisfying $\alpha=1\mod\pi$. Now $\{ [\varepsilon]^{[a]}\}_{a\in\FF_p^\times}$ is a basis for $\AA^+_{\QQ_p}$ over $\varphi(\AA^+_{\QQ_p})$, so $\psi(\pi_0)=1-p$, and hence $\pi_0+p-1\in\AA_{\QQ_p}^{+,\psi=0}$. In order to prove that $\big(\AA^{+,\psi=0}_{\QQ_p}\big)^{\Delta}\subset \image(\rmCol_{2})$, it is therefore sufficient to prove the following results:
     
     \begin{enumerate}
      \item \label{claim1} $\pi_0+p-1\in\image(\rmCol_{2})$;
      \item \label{claim2} If $y\in \big(\AA^{+,\psi=0}_{\QQ_p}\big)^{\Delta}$, then $y=c(\pi_0+p-1)\mod\varphi(\pi)$ for some $c\in {\ZZ_p}$.
     \end{enumerate}

     \begin{proof}[Proof of claim \ref{claim1}] 
      Note that since $\pi_0+p-1=-1+q\mod\varphi(\pi)$, (a) is equivalent to showing that there exists $y\in \image(\rmCol_{2})$ such that $y=-1+q\mod\varphi(\pi)$. If $i(x)=x_1\log^-(1+\pi)\nu_1+x_2\log^+(1+\pi)\nu_2$ for some $x\in \DD(T)^{\psi=1}$, then $\rmCol_{2}(x)=qx_1+\varphi(x_2)$. As shown in Lemma~\ref{sameconstantterm}, we have $\theta(x_1)=-\theta(x_2)$, so 
      \[ \rmCol_{2}(x)\equiv \theta(x_2)(1-q)\mod\varphi(\pi).\]
      Suppose now that $\theta(x_2)=0$ for all $x\in\DD(T)^{\psi=1}$. Then, the fact that $\rmCol_{1}(x)\equiv\theta(x_2)-\theta(x_1)\mod\pi$ implies that $\rmCol_1(x)\in \pi\AA^+_{\QQ_p}$ for all $x\in\DD(T)^{\psi=1}$, which contradicts the surjectivity of $\rmCol_1$.
     \end{proof}
     
     \begin{proof}[Proof of claim \ref{claim2}] 
      We will show that if $y\in \big(\AA^{+,\psi=0}_{\QQ_p}\big)^{\Delta}$, then 
      \begin{equation}\label{congruence}
       y=c(-1+q)\mod\varphi(\pi)
      \end{equation}
      for some $c\in {\Zp}$. Write $y=f(\pi_0)=g(\pi)$. In order to show~\eqref{congruence}, it is sufficient to prove that $g(0)=-(p-1)g(\zeta_p-1)$.  The condition that $y\in\ker(\psi)$ translates as
      \[ \frac{1}{p}\sum_{\xi^p=1}f\big(-p+\sum_{a\in\FF_p}\xi^a(\pi+1)^{[a]}\big)=0. \]
      Evaluating this condition at $\pi=0$ shows that $f(0)+(p-1)f(-p)=0$. By definition, we have $\pi_0=-p+\sum_{a\in\FF_p}(\pi+1)^{[a]}$, so $g(0)=f(0)$ and $g(\zeta_p-1)=f(-p)$, which finishes the proof.
    \end{proof}

    This completes the proof of proposition \ref{minusimage}. \end{proof}
    
    Let $\eta:\Delta\rightarrow (\ZZ/ p\ZZ)^\times$ be a tame character. For a $\Lambda(G_\infty)$-module $A$, denote by $A^\eta$ the $\Lambda(G_\infty)$-submodule of $A$ on which $\Delta$ acts via $\eta$. The following result is an immediate consequence of Proposition~\ref{minusimage}.
    
    \begin{corollary}\label{deltacomponents}
     We have
     \[ \image(\rmCol_{2})^\eta = 
      \begin{cases}
       \big(\AA^{+,\psi=0}_{\QQ_p}\big)^{\Delta} & \text{if $\eta=1$} \\
       \big(\varphi(\pi)\AA^{+,\psi=0}_{\QQ_p}\big)^\eta & \text{otherwise}
      \end{cases}
     \]
    \end{corollary}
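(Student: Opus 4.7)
The plan is simply to take the $\eta$-isotypical component of both sides of the decomposition in Proposition~\ref{minusimage}. Since isotypical projection is exact, applying it to the sum $(\AA^{+,\psi=0}_{\QQ_p})^{\Delta}+\varphi(\pi)\AA^{+,\psi=0}_{\QQ_p}$ yields $\big((\AA^{+,\psi=0}_{\QQ_p})^{\Delta}\big)^{\eta}+\big(\varphi(\pi)\AA^{+,\psi=0}_{\QQ_p}\big)^{\eta}$. So the whole claim reduces to two elementary observations about the first summand, one for each case.

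First I would handle $\eta = 1$. Here I note that $(\AA^{+,\psi=0}_{\QQ_p})^{\Delta}$ is by definition the $\eta=1$ component already, so its $\eta=1$ projection is itself. Moreover $\varphi(\pi)\AA^{+,\psi=0}_{\QQ_p}\subseteq \AA^{+,\psi=0}_{\QQ_p}$, so taking $\Delta$-invariants gives $\big(\varphi(\pi)\AA^{+,\psi=0}_{\QQ_p}\big)^{\Delta}\subseteq (\AA^{+,\psi=0}_{\QQ_p})^{\Delta}$, and the sum collapses to $(\AA^{+,\psi=0}_{\QQ_p})^{\Delta}$.

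Next, for $\eta\ne 1$, the key point is that $(\AA^{+,\psi=0}_{\QQ_p})^{\Delta}$ consists of $\Delta$-fixed elements, hence its $\eta$-component vanishes (since $|\Delta|=p-1$ is invertible in $\ZZ_p$ and the projectors onto distinct isotypical components are orthogonal). Thus the first summand contributes nothing and we are left with $\big(\varphi(\pi)\AA^{+,\psi=0}_{\QQ_p}\big)^{\eta}$, as claimed.

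There is no substantive obstacle here; the only thing to be careful about is to observe that the decomposition of Proposition~\ref{minusimage} is a decomposition inside the ambient $\Delta$-module $\AA^{+,\psi=0}_{\QQ_p}$, so that taking $\eta$-components commutes with the sum. This is automatic because $\Delta$ acts on $\AA^{+,\psi=0}_{\QQ_p}$ and both summands on the right-hand side are $\Delta$-stable (the first tautologically, and the second because $\varphi(\pi)$ lies in $(\AA^{+}_{\QQ_p})^{\Delta}$ as $\varphi$ commutes with the $G_\infty$-action).
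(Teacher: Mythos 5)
Your proposal is correct and is exactly the paper's (unwritten) argument: the corollary is stated as an immediate consequence of Proposition~\ref{minusimage}, obtained by projecting the decomposition $\image(\rmCol_2)=\big(\AA^{+,\psi=0}_{\QQ_p}\big)^{\Delta}+\varphi(\pi)\AA^{+,\psi=0}_{\QQ_p}$ onto $\eta$-isotypical components, noting that the first summand is purely in the trivial component and that $\big(\varphi(\pi)\AA^{+,\psi=0}_{\QQ_p}\big)^{\Delta}\subseteq\big(\AA^{+,\psi=0}_{\QQ_p}\big)^{\Delta}$.

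One small slip in your justification of $\Delta$-stability of the second summand: $\varphi(\pi)$ does \emph{not} lie in $\big(\AA^{+}_{\QQ_p}\big)^{\Delta}$, since for $\delta\in\Delta$ one has $\delta(\varphi(\pi))=\varphi(\delta(\pi))=(1+\varphi(\pi))^{\chi(\delta)}-1\neq\varphi(\pi)$ in general. The correct reason is that $\delta(\pi)=\pi\cdot u_\delta$ with $u_\delta$ a unit of $\AA^{+}_{\QQ_p}$, hence $\delta(\varphi(\pi))=\varphi(\pi)\varphi(u_\delta)$ is a unit multiple of $\varphi(\pi)$, so the ideal $\varphi(\pi)\AA^{+,\psi=0}_{\QQ_p}$ is $\Delta$-stable even though its generator is not fixed. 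With that repair the argument is complete.
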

    
    We can translate Proposition~\ref{minusimage} and Corollary~\ref{deltacomponents} into a statement about $\image(\uCol_2)$.
    
   \begin{proposition}\label{minusimage*}
    The image of $\uCol_{2}:\DD(T)^{\psi=1}\rightarrow \Lambda(G_\infty)$ is equal to $\big(\sum_{i=1}^{p-1}\delta^i\big)\Lambda(G_\infty)+(\gamma-1)\Lambda(G_\infty)$.
   \end{proposition}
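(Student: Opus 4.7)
The plan is to translate Proposition~\ref{minusimage} into a statement about $\uCol_2$ via the Iwasawa transform $\J$, paralleling the deduction of Corollary~\ref{plusimage*} from Proposition~\ref{plusimage}. The key preliminary observation is that the two summands of $\image(\rmCol_2)$ correspond under $\mathfrak{M}^{-1}$ to the two summands of the claimed image: Lemma~\ref{lem2} gives $\mathfrak{M}^{-1}(\vp(\pi)\AA^{+,\psi=0}_{\QQ_p}) = (\gamma-1)\Lambda(G_\infty)$, and a direct computation using $\mathfrak{M}(\delta) = (1+\pi)^{\chi(\delta)}$ yields $\mathfrak{M}(\sum_{i=1}^{p-1}\delta^i) = \sum_{a\in\FF_p^\times}(1+\pi)^{[a]} = \pi_0 + p - 1$, so that $\mathfrak{M}^{-1}((\AA^{+,\psi=0}_{\QQ_p})^\Delta) = (\sum_{i=1}^{p-1}\delta^i)\Lambda(G_\infty)$ (using $|\Delta| = p-1 \in \ZZ_p^\times$).

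For the inclusion $\supseteq$, I would apply the same mechanism to both summands. Given $y_2 \in \image(\rmCol_2)$, the first half of the proof of Proposition~\ref{minusimage} furnishes $x_0 \in \DD(T)^{\psi=1}$ with $\rmCol_1(x_0) = 0$ and $\rmCol_2(x_0) = y_2$, so that $(1-\vp)(x_0) = y_2 n_2'$; Corollary~\ref{ant} then produces $\tilde x \in \NN(T)^{\psi=1}$ with $\uCol(x_0 - \tilde x) = (0, \sum_i z^{(i)})$, where the sum of the $z^{(i)}$ equals $\mathfrak{M}^{-1}(y_2)$. Applying this first with $y_2 \in \vp(\pi)\AA^{+,\psi=0}_{\QQ_p}$ (capturing the $(\gamma-1)\Lambda(G_\infty)$-summand), and then with $y_2 = \pi_0 + p - 1$, whose preimage in $\DD(T)^{\psi=1}$ is supplied by claim~(\ref{claim1}) in the proof of Proposition~\ref{minusimage} (capturing the $(\sum_i\delta^i)\Lambda(G_\infty)$-summand), exhibits generators of both summands in $\image(\uCol_2)$; $\Lambda(G_\infty)$-equivariance of $\uCol_2$ then gives the full inclusion.

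For the reverse inclusion, given any $x \in \DD(T)^{\psi=1}$, decompose $(1-\vp)(x) = \rmCol_1(x) n_1' + \rmCol_2(x) n_2'$ and track the second coordinate of $\J$ via the algorithm of Section~\ref{algorithm}. The contribution of $\rmCol_1(x) n_1'$ to the second coordinate lies in $(\gamma-1)\Lambda(G_\infty)$, since any $n_2'$-contamination introduced by the algorithm arises from the $(\gamma-1)^n$-corrections of Proposition~\ref{crucial} with $n \ge 1$; the contribution of $\rmCol_2(x) n_2'$ equals $\mathfrak{M}^{-1}(\rmCol_2(x))$ modulo $(\gamma-1)\Lambda(G_\infty)$ by the same algorithm, and so lies in $(\sum_{i=1}^{p-1}\delta^i)\Lambda(G_\infty) + (\gamma-1)\Lambda(G_\infty)$ by the preliminary observation.

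The main obstacle will be the precise identification of the ``$z^{(i)}$-sum'' produced by Corollary~\ref{ant} with the Mellin transform $\mathfrak{M}^{-1}(y_2)$. This requires unwinding the iteration in the proof of Proposition~\ref{approximation}: at each stage, the correction $b_n(\gamma-1)^n$ extracted from the coefficient of $\vp(\pi)^n$ in the remainder must be shown to coincide with the corresponding coefficient in the $(\gamma-1)$-adic expansion of $\mathfrak{M}^{-1}(y_2)$, which should follow from the compatibility of Proposition~\ref{crucial} with the Mellin transform recalled in Lemma~\ref{lem2} and Section~\ref{appendix}.
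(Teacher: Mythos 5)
Your overall route -- translating Proposition~\ref{minusimage} through the explicit description of $\J$, with the two generators $\vp(\pi)(1+\pi)$ and $\pi_0+p-1$ and the identifications $\mathfrak{M}^{-1}\big(\vp(\pi)\AA^{+,\psi=0}_{\QQ_p}\big)=(\gamma-1)\Lambda(G_\infty)$ and $\mathfrak{M}^{-1}\big((\AA^{+,\psi=0}_{\QQ_p})^\Delta\big)=\big(\sum_{i=1}^{p-1}\delta^i\big)\Lambda(G_\infty)$ -- is the paper's route, and your third paragraph (the reverse inclusion) is in substance the paper's argument. However, your forward inclusion has two genuine gaps. First, you assume that every $y_2\in\image(\rmCol_2)$, in particular $y_2=\pi_0+p-1$, admits $x_0$ with $\bfCol(x_0)=(0,y_2)$, i.e.\ $(1-\vp)x_0=y_2n_2'$. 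The convergence construction in the first half of the proof of Proposition~\ref{minusimage} produces such an $x_0$ only when $\vp(\pi)\mid y_2$, and claim~(1) of that proof gives an $x$ with $\rmCol_2(x)\equiv(\text{unit})(q-1)\bmod\vp(\pi)$ with no control whatsoever on $\rmCol_1(x)$; so for the $\Delta$-invariant generator the identity $(1-\vp)x_0=y_2n_2'$ is not available, and Corollary~\ref{ant} cannot be combined with $\uCol(x_0)$ as you propose. Second, the identity you flag as the main obstacle, $\sum_i z^{(i)}=\mathfrak{M}^{-1}(y_2)$, is false as an exact statement: in Proposition~\ref{approximation} each extracted coefficient is a coefficient of the successively corrected remainder, not of $y_2$, and Proposition~\ref{crucial} only asserts that $(\gamma-1)^n\big[(1+\pi)n_2'\big]$ equals $\vp(\pi)^n(1+\pi)n_2'$ modulo the nonzero ideals $\mathfrak{I}_n$ (the action of $\gamma$ on $n_2'$ is nontrivial beyond leading order), so $\sum_i z^{(i)}$ agrees with $\mathfrak{M}^{-1}(y_2)$ only up to higher-order corrections; already for $y_2=\vp(\pi)(1+\pi)$ one obtains a unit multiple of $\gamma-1$, not $\gamma-1$ itself.

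The repair is to demand only congruences, which is what the paper does. For $y_2=\vp(\pi)(1+\pi)$, the element $x$ with $\bfCol(x)=(0,y_2)$ satisfies $\uCol_2(x)\in(\gamma-1)\Lambda(G_\infty)$ (since $\J$ induces an isomorphism of $(\vp^*\NN(T))^{\psi=0}/\vp(\pi)$ with $\big(\Lambda(G_\infty)/(\gamma-1)\big)^{\oplus2}$, by the rank count as at the end of the proof of Theorem~\ref{Lambda}) and $\uCol_2(x)\equiv\gamma-1\bmod(p,(\gamma-1)^2)$ by the algorithm, hence is a unit times $\gamma-1$; its $\Lambda(G_\infty)$-span is therefore all of $(\gamma-1)\Lambda(G_\infty)$. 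For the other generator one takes any $x'$ with $\rmCol_2(x')=\pi_0+p-1$, with no condition on $\rmCol_1(x')$: reducing modulo $\vp(\pi)$, the $n_1'$-component contributes nothing to the second coordinate, so $\uCol_2(x')\equiv\sum_i\delta^i\bmod(\gamma-1)$, and since $(\gamma-1)\Lambda(G_\infty)$ already lies in the image this yields $\sum_i\delta^i\in\image(\uCol_2)$. The same mod-$(\gamma-1)$ computation for arbitrary $x$, combined with claim~(2) of Proposition~\ref{minusimage}, gives the reverse inclusion. So the fix is close to what you wrote in your last paragraph, but as stated your forward inclusion rests on a false exact identity and on a preimage that the cited results do not supply.
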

   \begin{proof}
    Let $y_2=\vp(\pi)(1+\pi)\in\image(\rmCol_2)$. As shown in the proof of Proposition~\ref{minusimage}, $y=(0,y_2)\in\image(\rmCol)$; more precisely, there exists $x\in\NN(T)^{\psi=1}$ such that $\rmCol(x)=y$. Applying the algorithm for $\J$ (see Section~\ref{algorithm}) to $y$ shows that $\uCol_2(x)=(\gamma-1) \mod (p,(\gamma-1)^2)$, so the $\Lambda(G_\infty)$-submodule of $\Lambda(G_\infty)$ generated by $\uCol_2(x)$ is equal to the ideal generated by $(\gamma-1)$.
    
    Furthermore, $y_2'\sum_{i=1}^{p-1}(\pi+1)^i\in \image(\rmCol_2)$ by Proposition~\ref{minusimage}, and every $y\in \image(\rmCol_2)$ is congruent to a scalar multiple of $y_2'$ mod $\vp(\pi)$. If $x'\in \NN(T)^{\psi=1}$ satisfies $\rmCol_2(x')=y_2'$, then again the algorithm for $\J$ implies that $\uCol_2(x)=\sum_{i=1}^{p-1}\delta^i \mod (\gamma-1)$. This finishes the proof. 
   \end{proof}
   
   \begin{corollary}\label{deltacomponents*}
     We have
     \[ \image(\uCol_{2})^\eta = 
      \begin{cases}
       \Lambda(G_\infty)^{\Delta}=\big(\sum_{i=1}^{p-1}\delta^i\big)\Lambda(G_\infty) & \text{if $\eta=1$} \\
       \big((\gamma-1)\Lambda(G_\infty)\big)^\eta & \text{otherwise}
      \end{cases}
     \]    
   \end{corollary}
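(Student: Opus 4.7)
The plan is to obtain the corollary directly from Proposition~\ref{minusimage*} by applying the idempotents associated to the characters of $\Delta$. Since $p$ is odd, the order $p-1$ of $\Delta$ is a unit in $\ZZ_p$, so for each character $\eta:\Delta\to\ZZ_p^\times$ the idempotent $e_\eta=\tfrac{1}{p-1}\sum_{\delta\in\Delta}\eta(\delta)^{-1}\delta$ lies in $\Lambda(G_\infty)$, and taking $\eta$-components is the same as multiplying by $e_\eta$. In particular, since $\uCol_2$ is $\Lambda(G_\infty)$-equivariant, $\image(\uCol_2)^\eta=e_\eta\image(\uCol_2)$. Writing $N_\Delta=\sum_{i=1}^{p-1}\delta^i=\sum_{\sigma\in\Delta}\sigma$ for the norm element (noting that $\delta^{p-1}=1$), Proposition~\ref{minusimage*} gives
\[ \image(\uCol_2)=N_\Delta\Lambda(G_\infty)+(\gamma-1)\Lambda(G_\infty).\]

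Now I would treat the two cases separately. If $\eta=1$, then $e_1N_\Delta=N_\Delta$, and since $N_\Delta=(p-1)e_1$ with $p-1$ a unit, $N_\Delta\Lambda(G_\infty)=e_1\Lambda(G_\infty)=\Lambda(G_\infty)^\Delta$. Moreover $e_1(\gamma-1)\Lambda(G_\infty)=(\gamma-1)\Lambda(G_\infty)^\Delta\subset\Lambda(G_\infty)^\Delta$, so the sum collapses to $\Lambda(G_\infty)^\Delta$, which is the same as $N_\Delta\Lambda(G_\infty)$. If $\eta\neq 1$, then $e_\eta N_\Delta=0$ because $\eta$ is non-trivial on $\Delta$, so the first summand dies and we are left with $e_\eta(\gamma-1)\Lambda(G_\infty)=((\gamma-1)\Lambda(G_\infty))^\eta$.

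There is essentially no obstacle: the only thing to check is the standard fact that $e_\eta$ annihilates $N_\Delta$ for $\eta\neq 1$ and preserves it for $\eta=1$, together with the commutation of $e_\eta$ with $(\gamma-1)$ (which is automatic since $\gamma\in\Gamma$ commutes with all of $\Delta$). This yields exactly the case distinction in the statement.
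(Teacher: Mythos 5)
Your argument is correct and is exactly the (implicit) argument the paper has in mind: the corollary is stated as an immediate consequence of Proposition~\ref{minusimage*}, obtained by applying the idempotents $e_\eta$ to the two-generator description of $\image(\uCol_2)$, using that $\uCol_2$ is $\Lambda(G_\infty)$-equivariant and that $p-1$ is a unit in $\ZZ_p$. Nothing is missing.
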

    
   Note that the results of Corollaries~\ref{plusimage*} and~\ref{deltacomponents*} are equivalent to Theorem 6.2 in~\cite{kobayashi03}.
  
   
  \subsection{The case \texorpdfstring{$k=2$}{k = 2}}

  In this section we consider the case of modular forms which have weight 2 and are non-ordinary at $p$. For modular forms with trivial character and coefficients in $\QQ$ (hence corresponding to elliptic curves), but with $a_p \ne 0$, this case was studied in detail by Sprung. 

  \subsubsection{Coleman maps via the Perrin-Riou pairing}    
   
   We first review Sprung's construction of the Coleman maps for elliptic curves over $\QQ$ with $p \mid a_p$ but $a_p \ne 0$, and explain how we can rewrite these Coleman maps using Perrin-Riou's pairing. 

   Let $f$ be a modular form as in Section~\ref{Colemanmf} with $k=2$. Define for $n\ge1$
   \[
   \begin{pmatrix}\Theta_n^1 & \Upsilon_n^1\\ \Theta_n^0 & \Upsilon_n^0\end{pmatrix}=\begin{pmatrix}0 & \Phi_{n}(\gamma)\\ -1 & a_p\end{pmatrix}\cdots\begin{pmatrix}0 & \Phi_1(\gamma)\\ -1 & a_p\end{pmatrix}\in M_2(\HG).
   \]
   Then, we have:
   \begin{lemma}
    Let $i\in\ZZ$ and write \[A_n^i=\begin{pmatrix}0 & p\\ -1 & a_p\end{pmatrix}^i\begin{pmatrix}\Theta_n^1 & \Upsilon_n^1\\ \Theta_n^0 & \Upsilon_n^0\end{pmatrix}.\] Then, $A_n^{i-n}$ converges in $M_2(\HG)$ as $n\rightarrow\infty$ for a fixed $i$. Write $A_\infty^i$ for the limit, then all entries of $A_\infty^i$ are $O(\log_p^{1/2})$. Moreover, if $\eta$ is a character on $G_\infty$ which factors through $G_n$ but not $G_{n-1}$, then $\eta(A_\infty^i)=\eta(A_m^{i-m})$ for any $m\ge n-1$.
   \end{lemma}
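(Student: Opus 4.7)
The plan is to diagonalise the matrix $C = \begin{pmatrix}0 & p\\ -1 & a_p\end{pmatrix}$ over a suitable extension of $E$ and reduce the whole statement to a tractable product expansion. Let $\alpha, \beta$ be the roots of $X^2 - a_p X + p$ (both of $p$-adic valuation $1/2$, since $f$ has weight $2$ and is supersingular at $p$), and choose $U \in \GL_2(E(\alpha))$ with $U^{-1} C U = \Lambda := \diag(\alpha,\beta)$. Writing $D_k := \begin{pmatrix}0 & \Phi_k(\gamma)\\ -1 & a_p\end{pmatrix}$ and $\epsilon_k := \Phi_k(\gamma) - p$, we have $D_k = C + \begin{pmatrix}0 & \epsilon_k\\ 0 & 0\end{pmatrix}$, so $\tilde D_k := U^{-1} D_k U = \Lambda + \tilde E_k$ where every entry of $\tilde E_k$ is a scalar multiple of $\epsilon_k$. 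The task is then to analyse $U^{-1} A_n^{i-n} U = \Lambda^{i-n}\tilde D_n\cdots \tilde D_1$.

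I would prove the third assertion first, as it is essentially a direct computation. For $\eta$ of conductor exactly $p^n$, the value $\eta(\gamma)$ is a primitive $p^{n-1}$-st root of unity, and the identity $\Phi_k(X) = \sum_{j=0}^{p-1} X^{j p^{k-1}}$ gives $\eta(\Phi_k(\gamma)) = p$ for every $k \ge n$, that is $\eta(D_k) = C$ in that range. Hence for any $m \ge n-1$,
\[ \eta(A_m^{i-m}) \;=\; C^{i-m}\Bigl(\prod_{k=n}^{m} C\Bigr)\prod_{k=1}^{n-1}\eta(D_k) \;=\; C^{i-n+1}\prod_{k=1}^{n-1}\eta(D_k), \]
which is independent of $m$ (the product over $k \in [n,m]$ collapses to $C^{m-n+1}$ for $m \ge n$ and is empty for $m = n-1$). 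Once the limit $A_\infty^i$ is shown to exist, this proves (3) by continuity of the specialisation map at $\eta$.

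For the convergence in $M_2(\HG)$, I would expand $\prod_{k=n}^{1}(\Lambda + \tilde E_k)$ as a sum over subsets $S \subseteq \{1,\dots,n\}$. The key estimate is the divisibility $\omega_{k-1} := \gamma^{p^{k-1}}-1 \mid \epsilon_k$ in $\HG$, which follows at once from expanding $\Phi_k(\gamma) = \sum_{j=0}^{p-1}(1+\omega_{k-1})^j$ and noting that each summand minus $1$ is a multiple of $\omega_{k-1}$. Since $\omega_{k-1}$ vanishes at all characters of conductor at most $p^{k-1}$ and is uniformly small on any closed subdisc of the open unit disc in $\CC_p$, the terms in the expansion involving $\epsilon_k$'s with $k$ large vanish to high cyclotomic order, while the $\Lambda$-contributions have valuation tracked by the powers of $\alpha,\beta$. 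The forced compatibility with the character evaluations from step two pins the limit, and verifying that the partial sums are Cauchy in every Fr\'echet seminorm on $\HG$ is the main technical step.

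Finally, for the tempered bound in (2), I would recognise the entries of $A_\infty^i$, in the diagonalised basis, as the natural matrix analogue of Pollack's $\log_{p,2}^\pm$ functions $\prod_{n} \Phi_{\text{even/odd}}(\gamma)/p$ (cf.\ Section~\ref{leiswork}), rescaled by bounded quantities involving $\alpha^a\beta^b$ with $|\alpha|_p = |\beta|_p = p^{-1/2}$. Standard Perrin-Riou/Amice-style temperedness estimates then place each entry in $\calH^{\temp}_{1/2}(G_\infty)$, i.e.\ give the $O(\log_p^{1/2})$ bound. I expect the main obstacle to be the bookkeeping in the convergence step, balancing the $\omega_{k-1}$-divisibilities against the $\alpha,\beta$-valuations uniformly in $i \in \ZZ$ and in the Fr\'echet seminorm; this is the matrix-valued analogue of Sprung's scalar analysis in~\cite{sprung09}, and making it work cleanly for all $i$ simultaneously is where the real labour lies.
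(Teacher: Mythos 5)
The paper offers no argument here at all: it simply quotes Sprung's Lemma~3.21 from \cite{sprung09}, so your sketch is a different route only in the formal sense that you reconstruct the proof directly; in substance it is the same mechanism as Sprung's. Your two inputs are the right ones: the eigenvalues $\alpha,\beta$ of $C=\begin{pmatrix}0&p\\-1&a_p\end{pmatrix}$ both have valuation $\tfrac12$ (note this uses the standing hypothesis $v_p(a_p)\ge\tfrac12$ of this subsection, not merely supersingularity, and your diagonalisation tacitly assumes $\alpha\ne\beta$; for weight $2$ this is harmless, and a Jordan block would only cost a polynomial factor), and $\Phi_k(\gamma)-p$ is divisible by $\gamma^{p^{k-1}}-1$, hence has small sup norm on the closed disc of radius $|\zeta_{p^N}-1|$ as soon as $k>N$. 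Your treatment of the third assertion is complete and correct: $\eta(\Phi_k(\gamma))=p$ for $k\ge n$, so $\eta(D_k)=C$ there and the tail telescopes, and continuity of evaluation at $\eta$ transfers this to the limit. For the analytic part your subset expansion does work, and in fact it yields both remaining claims at once: bounding each factor of the (conjugated) product by $\max\bigl(p^{-1/2},\,c\,\|\Phi_k(\gamma)-p\|_r\bigr)$ gives $\|A_m^{i-m}\|_r\lesssim p^{N/2}$ uniformly in $m$ on discs $r\approx|\zeta_{p^N}-1|$, which is exactly the $O(\log_p^{1/2})$ bound, while the consecutive differences $A_{m+1}^{i-m-1}-A_m^{i-m}=C^{i-m-1}(D_{m+1}-C)D_m\cdots D_1$ carry the extra small factor $\Phi_{m+1}(\gamma)-p$ and so are summable in each Fr\'echet seminorm, giving convergence. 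The one loose point is your final step: when $a_p\ne0$ the entries of $A_\infty^i$ are not bounded multiples of Pollack-type half-logarithms, so you should not argue via such an identification; the correct statement is just the sup-norm growth estimate above (this is also how Sprung proceeds). What the citation buys the paper is brevity; what your direct argument buys is transparency about where the exponent $\tfrac12$ comes from and why the specialisations at finite-order characters stabilise.
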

   \begin{proof}
    \cite[Lemma~3.21]{sprung09}
   \end{proof}
   
   \begin{proposition}
   For any $ \mathbf{z}\in H^1_{\Iw} (V_{\bar{f}}(1))$ and $0\ne\omega\in\Dcris^1(V_f)$, the entries of the row vector 
    \[
    \begin{pmatrix}\frac{1}{p}\LL_{1,(1+\pi)\otimes\vp(\omega)}(z)&-\LL_{1,(1+\pi)\otimes\omega}(z)\end{pmatrix}A_\infty^{-1}
    \]
   are both divisible by $\log_p(\gamma)/(\gamma-1)$.
    \end{proposition}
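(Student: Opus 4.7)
Since $\Dcris^1(V_f) = E\nu_1$, by linearity I may take $\omega = \nu_1$; then $\vp(\omega) = p\nu_2$ by the normalization of \S\ref{Colemanmf}, and the row vector becomes $\begin{pmatrix}\LL_{1,(1+\pi)\otimes\nu_2}(\mathbf{z}) & -\LL_{1,(1+\pi)\otimes\nu_1}(\mathbf{z})\end{pmatrix}$. Identity \eqref{prpairing} rewrites this as $-\begin{pmatrix}\uCol_1(x) & \uCol_2(x)\end{pmatrix}\underline{M}$, where $x = (h^1_{\Qp,\Iw})^{-1}(\mathbf{z}) \in \NN(V_{\bar{f}}(1))^{\psi=1}$ and the $\uCol_i(x) \in \Lambda_E(G_\infty)$ are bounded. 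Writing $\log_{p,2} := \log_p(\gamma)/(\gamma-1)$, the proposition thus reduces to the matrix divisibility
\[
\underline{M}\, A_\infty^{-1} \in \log_{p,2} \cdot M_2(\HG),
\]
where $A_\infty^{-1} = \lim_n (A_\vp^T)^{-1-n} A_n^0$ in the notation of the stated lemma.

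\textbf{Matching the two iterated-Frobenius constructions.} I would prove the reduced statement by paralleling the two matrix constructions term-by-term. Unfolding, $\underline{M} = \mathfrak{M}^{-1}((1+\pi)M)$ with $M = (t/\pi q)P^T M'^{-1}$, and the Berger--Li--Zhu basis of \S\ref{ss}, specialized to $k=2$ (where $\delta z = a_p$ since $z_0 = 1$ and $m=0$), gives $P^T = \begin{pmatrix}0 & q \\ -1 & a_p\end{pmatrix}$. The identities $q = \Phi_p(1+\pi)$ and $\vp^{j-1}(q) = \Phi_{p^j}(1+\pi)$ show that iterating $\vp$ on $P^T$ is the $\Brig^+$-analogue, under the Mellin correspondence, of Sprung's recursive factor $\begin{pmatrix}0 & \Phi_j(\gamma) \\ -1 & a_p\end{pmatrix}$. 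The matrix $M'^{-1}$, whose elementary divisors are $1$ and $\pi/t$, realises the $\vp$-trivialization of the Wach basis over the cyclotomic tower, paralleling the stabilization $A_\infty^{-1} = \lim_n (A_\vp^T)^{-1-n} A_n^0$. The residual prefactor $\mathfrak{M}^{-1}((1+\pi)(t/\pi q))$ identifies with $\log_{p,2}$ up to a unit in $\Lambda_E(\Gamma)$, via $t/\pi = \prod_{n\ge 1}\Phi_{p^n}(1+\pi)/p$ and $q = \Phi_p(1+\pi)$. Assembling these identifications gives $\underline{M}\,A_\infty^{-1} = \log_{p,2}\cdot U$ with $U \in M_2(\Lambda_E(G_\infty))$, proving the divisibility.

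\textbf{Sanity check and main obstacle.} As a sanity check, consider $a_p = 0$: there $M' = \mathrm{diag}(\log^+, \log^-)$ and direct computation gives $M = \begin{pmatrix}0 & \log^+ \\ -\log^-/q & 0\end{pmatrix}$; applying Lemma \ref{amicetransforms} and Proposition \ref{pmtransform} yields $\underline{M} = \begin{pmatrix}0 & a^-\log^-_{p,2} \\ -a^+\log^+_{p,2}/p & 0\end{pmatrix}$ with units $a^\pm \in \Lambda_E(\Gamma)^\times$, while the Sprung recursion produces $A_\infty^{-1} = \begin{pmatrix}0 & -p\log^-_{p,2} \\ \log^+_{p,2} & 0\end{pmatrix}$, and the product is $\underline{M}\,A_\infty^{-1} = (\log_{p,2}/p^2)\,\mathrm{diag}(a^-, a^+)$, visibly divisible by $\log_{p,2}$ with unit coefficient. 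The main obstacle for general $a_p \ne 0$ is the bookkeeping: one must carefully track signs, powers of $p$, and the ordering of Frobenius factors in Sprung's product versus the iterated expansion of $M$, so that exactly one factor of $\log_{p,2}$ survives. A cleaner alternative for execution is a specialization argument: since $\log_{p,2}$ vanishes precisely at characters $\eta$ with $\eta|_\Gamma$ nontrivial of finite order, it suffices to verify that every entry of $\underline{M}\,A_\infty^{-1}$ vanishes at each such $\eta$, using $\eta(A_\infty^{-1}) = \eta((A_\vp^T)^{-1-n} A_n^0)$ (from the stated lemma) and Mellin-transform evaluation of $\eta(\underline{M}_{ij})$ at the corresponding root of unity.
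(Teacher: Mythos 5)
Your opening reduction is legitimate: since $\Dcris^1(V_f)=E\nu_1$ and $\vp(\nu_1)=p\nu_2$, identity \eqref{prpairing} does rewrite the row vector as $-\begin{pmatrix}\uCol_1(x)&\uCol_2(x)\end{pmatrix}\underline{M}$ with bounded $\uCol_i(x)$, so the proposition would follow from the matrix divisibility $\underline{M}A_\infty^{-1}\in \frac{\log_p(\gamma)}{\gamma-1}M_2(\HG)$. The genuine gap is that this matrix divisibility is never proved. Your ``matching of the two iterated-Frobenius constructions'' treats $\mathfrak{M}^{-1}$ as if it were multiplicative: $\mathfrak{M}$ is only a $\Lambda_E(G_\infty)$-module isomorphism onto $(\Brig)^{\psi=0}$, so you cannot peel off the ``residual prefactor'' $(1+\pi)t/\pi q$ and identify its transform with $\log_p(\gamma)/(\gamma-1)$ separately from the rest of $\underline{M}$; moreover, as your own $a_p=0$ check shows, the individual entries of $\underline{M}$ are divisible only by $\log^{\pm}_{p,2}$, so the full divisibility is a property of the \emph{product} with $A_\infty^{-1}$, which is exactly the point needing proof when $a_p\neq 0$ and the factors no longer decompose diagonally. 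Note also that you cannot import the relation $\underline{M}A_\infty^{-1}=\frac{\log_p(\gamma)}{p(\gamma-1)}A$ with $A\in GL_2(\Lambda_E(G_\infty))$: in the paper this is deduced \emph{from} the present proposition (via \eqref{sprungrelation} and Sprung's surjectivity results), so invoking it here would be circular. Even your ``cleaner alternative'' of specializing at finite-order characters, routed through $\underline{M}$, still needs the unproved identification of the Wach-module matrices with Sprung's matrices at roots of unity (this is the content of Lemma \ref{Changes} and Corollary \ref{describesprung}, which you neither prove nor cite), plus care with the index shift $\Phi_n(1+\pi)\leftrightarrow\Phi_{n-1}(\gamma)$ of Theorem \ref{david}.

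For comparison, the paper's proof bypasses $\underline{M}$ and the Coleman maps entirely. It evaluates both entries at an arbitrary character $\eta$ of conductor $p^n$ with $n>1$ (these exhaust the zeros of $\log_p(\gamma)/(\gamma-1)$), using the explicit formula $\eta\big(\LL_{1,(1+\pi)\otimes v}(\mathbf{z})\big)=\tau(\eta^{-1})^{-1}\sum_{\sigma\in G_n}\eta^{-1}(\sigma)[\vp^{-n}(v),\exp^*_{n,1}(z_n^\sigma)]_n$, the stability $\eta(A_\infty^{-1})=\eta(A_m^{-1-m})$, and the closed form of $\begin{pmatrix}0&p\\-1&a_p\end{pmatrix}^n$ in terms of $u_n=(\alpha^n-\beta^n)/(\alpha-\beta)$; a two-by-two identity then shows the resulting $\Dcris$-vector lies in $\Dcris^1(V_f)$, which pairs to zero against the image of $\exp^*$, giving the vanishing and hence the divisibility. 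If you want to salvage your approach, you must either carry out this character-by-character computation, or first establish Lemma \ref{Changes}/Corollary \ref{describesprung} and then convert the resulting identity $M=\vp(B_\infty^{-1})^c$ into the divisibility of the product by a genuine argument (for instance via Theorem \ref{david}); as written, the central step is a heuristic, not a proof.
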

    \begin{proof} 
     For $n\in\ZZ$, write $u_{n}=(\alpha^{n}-\beta^{n})/(\alpha-\beta)$ where $\alpha$ and $\beta$ are the roots of $X^2-a_pX+p$. Then, $\vp^{n}=u_{n}\vp-pu_{n-1}$ and
     \[
     \begin{pmatrix}0 & p\\ -1 & a_p\end{pmatrix}^{n}=\begin{pmatrix}-pu_{n-1}&pu_{n} \\ -u_{n} &u_{n+1}\end{pmatrix}.
     \]
     Therefore, if $n>1$ and $\eta$ is a character of $G_\infty$ which factors through $G_n$ but not $G_{n-1}$ (so $\eta(\gamma)$ is a primitive $p^{n-1}$-th root of unity), we have 
     \[
     \eta(A_\infty^{-1})=\begin{pmatrix}-pu_{-n-1}&pu_{-n} \\ -u_{-n} &u_{-n+1}\end{pmatrix}\begin{pmatrix}0&0\\-1&a_p\end{pmatrix}\eta\begin{pmatrix}\Theta_{n-2}^1 & \Upsilon_{n-2}^1\\ \Theta_{n-2}^0 & \Upsilon_{n-2}^0\end{pmatrix}
     \]
     where the last matrix is the identity if $n=2$.

     By \cite[Section 1.1.4]{lei09}, we have
     \[
     \eta(\LL_{1,(1+\pi)\otimes v}(\mathbf{z}))=\frac{1}{\tau(\eta^{-1})}\sum_{\sigma\in G_n}\eta^{-1}(\sigma)[\vp^{-n}(v),\exp^*_{n,1}(z_n^\sigma)]_n
     \] 
     for any $v\in\Dcris(V_f)$ and $z\in H^1_{\Iw} (V_{\bar{f}}(1))$. Hence, if $\omega\in\Dcris^1(V_f)$, then
     \[
     \eta\left(\begin{pmatrix}\frac{1}{p}\LL_{1,(1+\pi)\otimes\vp(\omega)}(z)&-\LL_{1,(1+\pi)\otimes\omega}(z)\end{pmatrix}A_\infty^{-1}\right)=0
     \]
     because 
     \[
     \begin{pmatrix}\frac{1}{p}u_{-n+1}&-u_{-n}\end{pmatrix}\begin{pmatrix}-pu_{-n-1}&pu_{-n} \\ -u_{-n} &u_{-n+1}\end{pmatrix}\begin{pmatrix}0&0\\-1&a_p\end{pmatrix}= 0,
     \]
     which implies that
     \[
     \begin{pmatrix}\frac{1}{p}\vp^{-n+1}(\omega)&-\vp^{-n}(\omega)\end{pmatrix}\begin{pmatrix}-pu_{-n-1}&pu_{-n} \\ -u_{-n} &u_{-n+1}\end{pmatrix}\begin{pmatrix}0&0\\-1&a_p\end{pmatrix}\equiv 0\mod\Dcris^1(V_f).
     \]
    \end{proof}

    By \cite{perrinriou94}, the image of $\LL_{1,(1+\pi)\otimes v}$ is $O(\log_p^{1/2})$ for any $v\in\Dcris(V_f)$, so we obtain two Coleman maps:
    
    \begin{definition}\label{sprungcoleman}
     Fix a non-zero element $\omega\in\Dcris^1(V_{f})$. For $*=\vartheta,\upsilon$ and $z\in H^1_{\Iw} (V_{\bar{f}}(1))$, $\rmCol^*(z)\in\Lambda_E(G_\infty)$ is defined by
     \begin{equation}\label{sprungsdefinition}
      \begin{pmatrix}\rmCol^\vartheta(z)& \rmCol^\upsilon(z)\end{pmatrix}\cdot\log_p(\gamma)/p(\gamma-1)=\begin{pmatrix}\frac{1}{p}\LL_{(1+\pi)\otimes \vp(\omega)}(z)&-\LL_{(1+\pi)\otimes \omega}(z)\end{pmatrix}A_\infty^{-1}.
     \end{equation}
     In particular, we can define two $p$-adic $L$-functions
     \[
     \tilde{L}_p^*=\rmCol^*(\kato)\in\Lambda_E(G_\infty)
     \]
     where $\kato$ is the localization of the Kato zeta element and $*=\vartheta,\upsilon$.
    \end{definition}

    \begin{remark}
     The results above hold for any modular forms with $k=2$, $p\nmid N$ and $v_p(a_p) \ge 1/2$. This setting is slightly more general than that in \cite{sprung09}.
    \end{remark}


   \subsubsection{Compatibility of Coleman maps}

    Since condition (C) holds and $k=2$, with respect to the canonical basis of $\NN(V_f)$ given above, $P$ is simply
    \begin{equation}
     \begin{pmatrix}0&-1\\ q & a_p\end{pmatrix}.\label{simpleP}
    \end{equation}

    Write $B_\infty^i$ (respectively $B_n^i$) for the matrix obtained from $A_\infty^i$ (respectively $A_n^i$) by replacing $\Phi_m(\gamma)$ by $\vp^{m-1}(q)$ for all $m$. Then, we have:
    
    \begin{lemma}\label{Changes}
     Under the notation above, $M'=B_\infty^0$.
    \end{lemma}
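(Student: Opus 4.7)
The plan is to show that $M'$ and $B_\infty^0$ satisfy the same characterising conditions in $M_2(\Brig)$, namely $X|_{\pi=0}=I$ together with the functional equation $\vp(X)\,P^T = A_\vp^T\,X$. By \eqref{simpleP} we have $P^T=\begin{pmatrix}0 & q\\ -1 & a_p\end{pmatrix}$ and correspondingly $A_\vp^T=\begin{pmatrix}0 & p\\ -1 & a_p\end{pmatrix}$; these two conditions determine $M'$ uniquely by the discussion surrounding \eqref{relatingphi}.

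Set $C_n:=\vp^{n-1}(P^T)\cdots\vp(P^T)\,P^T$, so that $B_n^{-n}=(A_\vp^T)^{-n}C_n$. Since $\vp^{m-1}(q)|_{\pi=0}=p$, one has $C_n|_{\pi=0}=(A_\vp^T)^n$, giving $B_n^{-n}|_{\pi=0}=I$ for every $n$; and because $A_\vp^T$ has constant entries, a direct computation yields $\vp(B_n^{-n})\,P^T=A_\vp^T\,B_{n+1}^{-(n+1)}$. Granting convergence of $B_n^{-n}$, passing to the limit shows that $B_\infty^0$ inherits both defining properties, and hence equals $M'$.

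The heart of the proof is therefore the convergence claim, which I would establish by producing an explicit closed form for $B_n^{-n}$. Applying $\vp^n$ to $\vp(M')=A_\vp^T\,M'(P^T)^{-1}$ yields $\vp^{n+1}(M')=A_\vp^T\,\vp^n(M')\,\vp^n(P^T)^{-1}$, and a straightforward induction on $n$ starting from $C_0=I$ gives
\[
C_n=\vp^n(M')^{-1}\,(A_\vp^T)^n\,M',\qquad\text{so}\qquad B_n^{-n}=(A_\vp^T)^{-n}\,\vp^n(M')^{-1}\,(A_\vp^T)^n\,M'.
\]
Since $M'|_{\pi=0}=I$, the entries of $\vp^n(M')^{-1}-I$ lie in the ideal generated by $\vp^n(\pi)=(1+\pi)^{p^n}-1$, which tends to zero in every Fr\'echet seminorm $\|\cdot\|_r$ on $\Brig$ because $\vp$ is a strict contraction on closed discs of radius $r<1$. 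The crucial observation is that conjugation by $(A_\vp^T)^n$ has operator norm bounded uniformly in $n$: in the supersingular setting $A_\vp^T$ has distinct eigenvalues $\alpha,\beta$ with $|\alpha|=|\beta|=p^{-1/2}$, so after diagonalising over $E(\alpha)$ the conjugation multiplies off-diagonal entries by the scalars $(\alpha/\beta)^{\pm n}$, all of absolute value $1$. Combining these two estimates yields $B_n^{-n}\to M'$.

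The main obstacle is precisely this last convergence point. A naive bound gives $\|(A_\vp^T)^{\pm n}\|\sim p^{n/2}$ while $\vp^n(M')^{-1}-I$ decays only like $|p|^n$ on a given disc, so the product of norms is merely bounded; it is essential to recognise that \emph{conjugation}---as opposed to plain multiplication---sees only the ratio $\alpha/\beta$, which is a unit precisely because $|\alpha|=|\beta|$ in the supersingular case.
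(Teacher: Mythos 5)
Your argument is correct in substance, but it takes a genuinely different route from the paper's proof. The paper never works with the $\vp$-relation \eqref{relatingphi} directly: it forms the matrices $\Gn=(B_n^{-n})^T\cdot\gamma\big((B_n^{-n})^T\big)^{-1}$, checks the recursion $P\,\vp(\Gn)\,\gamma(P)^{-1}=\Gnn$, identifies the limit $G_\gamma$ with the matrix of the $G_\infty$-action on $\NN(V_f)$ constructed in \cite{bergerlizhu04}, and then concludes from $B_\infty^0\left(\begin{smallmatrix}n_1\\ n_2\end{smallmatrix}\right)\in\big((\EB)\otimes\NN(V_f)\big)^{G_\infty}=\Dcris(V_f)$ together with $B_\infty^0|_{\pi=0}=I$ that $M'=B_\infty^0$. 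That route buys the identification without any norm estimates on $(A_\vp^T)^{\pm n}$, at the price of invoking the uniqueness built into the Berger--Li--Zhu construction of the $\gamma$-action. Your telescoping identity $B_n^{-n}=(A_\vp^T)^{-n}\vp^n(M')^{-1}(A_\vp^T)^n M'$ (which is correct, by iterating \eqref{relatingphi} exactly as you say) has the advantage that it actually \emph{proves} the convergence of $B_n^{-n}$ --- something the paper essentially takes for granted when it passes to the limit --- and it identifies the limit as $M'$ directly; in particular your opening appeal to a uniqueness principle ``from the discussion surrounding \eqref{relatingphi}'', which the paper nowhere establishes, is superfluous and can be dropped.

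Two points in your convergence step need tightening. First, the entries of $\vp^n(M')^{-1}-I$ do \emph{not} lie in $\vp^n(\pi)\Brig$, because $M'^{-1}$ is not a matrix over $E\otimes\Brig$: $\det M'$ equals $t/\pi$ up to a unit, so $M'^{-1}$ has poles at the points $\zeta_{p^m}-1$. What is true, and suffices, is the disc estimate: $\vp^n$ maps $\{|x|\le r\}$ into a disc of radius $r_n\to 0$, so for $n$ large these image discs avoid all poles of $M'^{-1}$, and since $M'^{-1}-I$ is analytic near $0$ and vanishes at $\pi=0$ one gets $\|\vp^n(M')^{-1}-I\|_r\le C\,r_n\to 0$ for each fixed $r<1$; this is the statement your conjugation argument should be fed. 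Second, $|\alpha|=|\beta|=p^{-1/2}$ is equivalent to $v_p(a_p)\ge 1/2$, which for $k=2$ is strictly stronger than assumption (C); it is, however, the standing hypothesis of this section (see the remark following Definition \ref{sprungcoleman}, and it is also what makes Sprung's convergence lemma, hence $A_\infty^i$ itself, available), so you should state explicitly that you are using it --- it is precisely what makes conjugation by $(A_\vp^T)^n$ uniformly bounded, and your argument would indeed break down if the two Frobenius eigenvalues had distinct valuations.
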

    \begin{proof}
     By (\ref{simpleP}), $(B_n^{-n})^T=P\vp(P)\cdots\vp^{n-1}(P)A_\vp^{-n}$. For $\gamma\in G_\infty$, we write $\Gn=(B_n^{-n})^T\cdot\gamma\left((B_n^{-n})^T\right)^{-1}$. Then,
     \[
     P\cdot\vp\left(\Gn\right)\cdot\gamma(P)^{-1}=\Gnn.
     \]
     Hence, if we write $G_\gamma$ for the limit of $\Gn$ as $n\rightarrow\infty$, then 
     \[
     P\cdot\vp\left(G_\gamma\right)\cdot\gamma(P)^{-1}=G_\gamma,
     \]
     It is easy to check that $G_\gamma$ satisfies $G_{\gamma_1\gamma_2}=G_{\gamma_1}\cdot\gamma_1(G_{\gamma_2})$ for any $\gamma_1,\gamma_2\in G_\infty$. Hence, we recover the action of $G_\infty$ on the Wach module $\NN(V_f)$. In other words, $G_\gamma$ is the matrix of $\gamma$ with respect to the basis $n_1$, $n_2$ chosen in \cite{bergerlizhu04}. Since $G_\gamma=(B_\infty^0)^T\cdot\gamma\left((B_\infty^0)^T\right)^{-1}$ and $G_\gamma|_{\pi=0}=I$, we have
     \[
     B_\infty^0\begin{pmatrix}n_1\\n_2\end{pmatrix}\in \Big((\EB)\otimes\NN(V_f)\Big)^{G_\infty}=\Dcris(V_f)
     \]
     and $M'=B_\infty^0$.
    \end{proof}

    We write $A^c=\det(A)A^{-1}$ if $A$ is an invertible matrix, then we have:

    \begin{corollary}\label{describesprung} 
     The matrix $M$ can be obtained from $(A_\infty^{-1})^c$ by replacing $\Phi_m$ by $\vp(q)^m$.     
    \end{corollary}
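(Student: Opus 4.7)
The plan is to unwind both sides of the asserted equality using the Frobenius compatibility relation~\eqref{relatingphi} together with Lemma~\ref{Changes}. For $k=2$ one has $M=(t/\pi q)P^TM'^{-1}$ and $M'=B_\infty^0$. Let $\tilde A_\infty^i$ denote the matrix obtained from $A_\infty^i$ by the substitution $\Phi_m\mapsto\vp^m(q)$ prescribed in the corollary (interpreting $\vp(q)^m$ as $\vp^m(q)$). Comparing with the definition of $B_\infty^i$, which uses $\vp^{m-1}(q)$, I would first observe that $\tilde A_\infty^i=\vp(B_\infty^i)$, since $\vp$ fixes the common left factor $A_\vp^T=\begin{pmatrix}0 & p\\ -1 & a_p\end{pmatrix}$ and shifts every entry $\vp^{m-1}(q)$ to $\vp^m(q)$.

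Next, from the factorization $B_\infty^{-1}=(A_\vp^T)^{-1}B_\infty^0$ built into the definition of the $B$-matrices, I would read off $\tilde A_\infty^{-1}=(A_\vp^T)^{-1}\vp(M')$. Taking the classical adjoint, the identity $(\tilde A_\infty^{-1})^c=M$ then reduces to verifying
\[
\det\tilde A_\infty^{-1}=t/(\pi q)\qquad\text{and}\qquad \vp(M')^{-1}A_\vp^T=P^TM'^{-1},
\]
the latter being an immediate rearrangement of~\eqref{relatingphi}.

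The determinant identity is the one step requiring a bit of extra care, since Section~\ref{crysrepsandwachmods} guarantees $\det M'=(t/\pi)^{k-1}$ only up to a unit. In weight $2$ the diagonal form $M'=\mathrm{diag}(\lp,\lm)$ recorded in Section~\ref{leiswork} pins this unit down on the nose: the product $\lp\cdot\lm=\prod_{n\geq 0}\vp^n(q)/p$ equals $t/\pi$ exactly, and combined with $\det A_\vp=p$ this yields $\det\tilde A_\infty^{-1}=t/(\pi q)$ precisely, with no stray unit. I expect no genuine obstacle beyond this; the main work lies in the book-keeping needed to recognize the substitution $\Phi_m\mapsto\vp^m(q)$ as a single application of $\vp$ to $B_\infty^{-1}$, after which the remainder is formal matrix algebra using~\eqref{relatingphi}.
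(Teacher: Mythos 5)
Your matrix manipulations are sound and, in fact, follow the same route as the paper's own proof: you write $\tilde A_\infty^{-1}=\vp(B_\infty^{-1})=(A_\vp^T)^{-1}\vp(M')$, use \eqref{relatingphi} to identify $\vp(M')^{-1}A_\vp^T$ with $P^TM'^{-1}$, and reduce the corollary to the single identity $\det\tilde A_\infty^{-1}=t/\pi q$; the paper phrases the same computation as $M=\vp\big((A_\vp^T)^{-1}B_\infty^0\big)^c=\vp(B_\infty^{-1})^c$. Your reading of the substitution as $\Phi_m\mapsto\vp^m(q)$ (rather than a literal power $\vp(q)^m$) is also the intended one.

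The one flaw is precisely at the step you flagged as delicate. You justify $\det M'=t/\pi$ by appealing to the diagonal form $M'=\mathrm{diag}\big((\lp)^{k-1},(\lm)^{k-1}\big)$ from Section~\ref{leiswork}, but that description of $M'$ is special to $a_p=0$, whereas the present corollary lives in Sprung's setting $k=2$ with $a_p\neq 0$ allowed (indeed that is its whole point), and there $M'=B_\infty^0$ is not diagonal. The identity you need is nevertheless true and follows from Lemma~\ref{Changes}, which you already invoke: each factor $\left(\begin{smallmatrix}0&\Phi_m(\gamma)\\-1&a_p\end{smallmatrix}\right)$ has determinant $\Phi_m(\gamma)$, so after the substitution $\Phi_m\mapsto\vp^{m-1}(q)$ one gets $\det B_n^{-n}=\prod_{j=0}^{n-1}\vp^{j}(q)/p$, and passing to the limit yields $\det M'=\det B_\infty^0=\prod_{n\ge0}\vp^n(q)/p=t/\pi$ exactly — this is how the paper argues. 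With that repair (and making explicit that your final step uses $\vp(t/\pi)=pt/\pi q$, i.e.\ $\vp(t)=pt$ and $\vp(\pi)=\pi q$, together with $\det A_\vp=p$), your argument coincides with the paper's proof.
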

    \begin{proof}
     Recall that 
      \[
        M=\frac{t}{\pi q}P^T(M')^{-1}=\frac{t}{\pi q}\vp(M'^{-1})A_\vp^T.
      \]
     By Lemma~\ref{Changes}, $\det(M')=\det(B_\infty^0)=\prod_{n\ge0}\frac{\vp^n(q)}{p}=t/\pi$. But $\det A_{vp}=p$ and $A_{\infty}^{i+1}=A_\vp^TA_{\infty}^i$ for all $i$. Hence, we have
      \[
        M=\vp\left((A_{\vp}^T)^{-1}B_\infty^0\right)^c=\vp(B_\infty^{-1})^c
      \]
     and we are done.
    \end{proof}

    On setting $\nu_1=-\omega$ in \eqref{sprungsdefinition}, \eqref{prpairing} implies that
    \begin{equation}\label{sprungrelation}
     \begin{pmatrix}\uCol_1&\uCol_2\end{pmatrix}\underline{M}A_{\infty}^{-1}=\begin{pmatrix}\rmCol^\vartheta\circ h^1_{\Iw} & \rmCol^\upsilon\circ h^1_{\Iw}\end{pmatrix}\log_p(\gamma)/p(\gamma-1).
    \end{equation}

    By \cite{sprung09}, 
    \[
    \image(\rmCol^\vartheta)=\image(\rmCol^\upsilon)=\Lambda_E(G_\infty)\]
    and \eqref{sprungrelation} implies that the matrix $\underline{M}A_{\infty}^{-1}$ defines a $\Lambda_E(G_\infty)$-linear map from $\Lambda_E(G_\infty)^{\oplus2}$ onto $(\log_p(\gamma)/p(\gamma-1)\Lambda_E(G_\infty))^{\oplus2}$. Hence, there exists $A\in GL_2(\Lambda_E(G_\infty))$, $\underline{M}A_{\infty}^{-1}=[\log_p(\gamma)/p(\gamma-1)]A$. This implies
    \[
    \begin{pmatrix}\uCol_1&\uCol_2\end{pmatrix}A=\begin{pmatrix}\rmCol^\vartheta\circ h^1_{\Iw} & \rmCol^\upsilon\circ h^1_{\Iw}\end{pmatrix}.
    \]
    We also see that $\underline{M}$ and $(A_{\infty}^{-1})^c$ agree up to an element in $GL_2(\Lambda_E(G_\infty))$ which is a generalization of Proposition~\ref{pmtransform} because of the description of $M$ in Corollary~\ref{describesprung}.


\section{Main conjectures}\label{mainconjecture}

\subsection{Kato's main conjecture}\label{katozeta}

In general, if $V$ is a $p$-adic representation of $G_\QQ$ unramified outside a finite set of primes, and $T$ is a $\ZZ_p$-lattice in $V$ stable under $G_\QQ$, we write
\begin{align*}
\Hh^i(T) &= \varprojlim_n H^i_{\text{\'et}}\left(\operatorname{Spec} \ZZ[\zeta_{p^n}, \tfrac{1}{p}], j_* T\right),\\
\Hh^i(V) &= \Qp \otimes_{\ZZ_p} \Hh^i(T).
\end{align*}
for $i=1,2$; see \cite[\S\S 8.2 \& 12.2]{kato04}. Here $j$ is the natural map $\operatorname{Spec} \QQ(\zeta_{p^n}) \to \operatorname{Spec} \ZZ[\zeta_{p^n}, \tfrac{1}{p}]$. Note that $\Hh^i(V)$ is independent of the choice of lattice $T$.

We now continue under the notation of Section~\ref{Colemanmf} and Section~\ref{ordinary}. Fix a uniformizer $\varpi$ of $\calO_E$. Let $\ZZ(T_f)\subset \Hh^1(T_f)$ denote the $\Lambda_{\calO_E}(G_\infty)$-module generated by the Kato zeta elements as defined in \cite[Theorem 12.5]{kato04} and write $\ZZ(V_f)=\ZZ(T_f)\otimes\QQ$. The following assumption will be needed for some of the results below.

\begin{itemize}
\item\textbf{Assumption (E)}: there exists a basis of $T_f$ for which the image of $\Gal(\overline{\QQ} / \QQ_\infty)$ in $\operatorname{GL}_2(\OO_E)$ contains $\operatorname{SL}_2(\ZZ_p)$.
\end{itemize}

\begin{theorem}[{\cite[theorem 12.5]{kato04}}]\label{kato}Let $\eta:\Delta\rightarrow\Zp^\times$ be a character, then:
\begin{itemize}
\item[(a)] $\Hh^2(T_f)$ is a torsion $\Lambda_{\calO_E}(G_\infty)$-module.
\item[(b)] $\Hh^1(T_f)$ is a torsion free $\Lambda_{\calO_E}(G_\infty)$-module and $\Hh^1(V_f)$ is a free $\Lambda_{E}(G_\infty)$-module of rank 1.
\item[(c)] The quotient $\Hh^1(V_f)/\ZZ(V_f)$ is a torsion $\Lambda_{E}(G_\infty)$-module.
\item[(d)] $\Char_{\Lambda_E(\Gamma)}(\Hh^1(V_f)^\eta/\ZZ(V_f)^\eta)\subset\Char_{\Lambda_E(\Gamma)}(\Hh^2(V_f)^\eta)$.
\item[(e)] If assumption (E) holds, then $\ZZ(T_f)\subset\Hh^1(T_f)$. Moreover, $\Hh^1(T_f)$ is a free $\Lambda_{\calO_E}(G_\infty)$-module of rank 1 and
\[\Char_{\Lambda_{\calO_E}(\Gamma)}(\Hh^1(T_f)^\eta/\ZZ(T_f)^\eta)\subset\Char_{\Lambda_{\calO_E}(\Gamma)}(\Hh^2(T_f)^\eta).\] 
\end{itemize}
\end{theorem}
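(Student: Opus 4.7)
The statement is essentially a citation of Kato's fundamental theorem, so the plan is to reconstruct the architecture of Kato's proof rather than to re-derive it from scratch; every ingredient below is drawn from \cite{kato04} but it is worth laying out the logical skeleton.

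The plan is to proceed by exploiting the Euler system structure of $\ZZ(T_f)$. First I would establish the formal properties of $\Hh^i$ coming from the $\Lambda$-adic Poitou--Tate exact sequence and the localization triangle. Using the Tate global Euler characteristic formula together with the fact that $V_f$ is a self-dual twist (up to a Tate twist) of its contragredient, one deduces that $\Hh^1(V_f)$ has $\Lambda_E(G_\infty)$-rank exactly $1$, which gives half of (b). The vanishing of the co-rank of $\Hh^2$ (i.e.\ part (a)) is deduced from the non-vanishing of a single specialization of $\ZZ(V_f)$: once one knows that the image of a suitable zeta element in a fibre at some character is non-zero --- which follows from Kato's explicit reciprocity law identifying specializations with critical $L$-values of $f$ (or its twists), combined with non-vanishing results such as Rohrlich's theorem --- the general non-vanishing statement (c) follows automatically.

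Next I would address the torsion-freeness in (b). The key point is that $\Hh^1(T_f)$ has no pseudo-null $\Lambda_{\calO_E}(G_\infty)$-submodule: this follows from the irreducibility of the residual representation (assumption (E) or a weaker hypothesis) together with the standard argument that for an irreducible $G_\QQ$-representation the module of universal norms is torsion-free over the Iwasawa algebra. The freeness of $\Hh^1(V_f)$ over $\Lambda_E(G_\infty)$ then results from combining rank $1$, torsion-freeness, and the observation that $\Lambda_E(G_\infty)$-projective of rank $1$ implies free in this semi-local setting.

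The central work is in (d), which is the Euler system divisibility. Here I would invoke Kato's Kolyvagin-type derivative construction: starting from the norm-compatible family of zeta elements indexed by squarefree products of primes $\ell \equiv 1 \pmod{p^n}$, one builds derivative classes annihilated by the dual Selmer group modulo $p^n$, and a Chebotarev argument combined with the Cassels--Tate pairing converts the resulting annihilators into the characteristic ideal divisibility $\Char(\Hh^1/\ZZ) \subset \Char(\Hh^2)$ after inverting $p$. Finally, (e) upgrades this to integral coefficients under assumption (E): the largeness of the image of Galois ensures that the Kolyvagin derivatives actually lie in $\Hh^1(T_f)$ rather than just in $\Hh^1(V_f)$, and that the relevant local cohomology groups are $\calO_E$-free, so the characteristic ideal containment holds integrally.

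The hardest step is (d), i.e.\ the Kolyvagin-type descent: one must control simultaneously the local behaviour at the auxiliary primes, the global duality, and the compatibility of derivative classes under norms in the cyclotomic tower. The subtle point is handling the finitely many primes where the Euler factor degenerates, and more seriously extracting the integral statement in (e), where one must verify that the image of Galois really does contain enough semisimple elements to perform the Chebotarev-style separation argument needed for the Kolyvagin system to produce integrally primitive annihilators.
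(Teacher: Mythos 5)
This theorem is not proved in the paper at all: it is quoted verbatim, with proof deferred entirely to \cite[Theorem 12.5]{kato04}, so there is no internal argument to compare your proposal against. Your sketch is a fair outline of Kato's actual Euler-system proof (explicit reciprocity law plus Rohrlich-type non-vanishing for (a)--(c), the Kolyvagin-derivative/Chebotarev descent for (d), and the big-image hypothesis (E) for the integral statement (e)), with only the minor caveat that torsion-freeness of $\Hh^1(T_f)$ in (b) needs no residual-irreducibility or image hypothesis, only the irreducibility of $V_f$ itself.
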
 

Kato's main conjecture states that:

\begin{conjecture}\label{katoMC}
Let $\eta:\Delta\rightarrow\Zp^\times$ be a character, then $\ZZ(T_f)^{\eta}\subset\Hh^1(T_f)^\eta$ and
\[\Char_{\Lambda_{\calO_E}(\Gamma)}(\Hh^1(T_f)^\eta/\ZZ(T_f)^\eta)=\Char_{\Lambda_{\calO_E}(\Gamma)}(\Hh^2(T_f)^\eta).\]
\end{conjecture}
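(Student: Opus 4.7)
The plan is to split the conjectural equality into the two containments and attack each with the machinery built up in the preceding sections, since one direction is essentially in hand and the other is the source of the real difficulty.

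First, the inclusion $\ZZ(T_f)^\eta \subset \Hh^1(T_f)^\eta$ together with the divisibility $\Char_{\Lambda_{\calO_E}(\Gamma)}(\Hh^1(T_f)^\eta/\ZZ(T_f)^\eta) \mid \Char_{\Lambda_{\calO_E}(\Gamma)}(\Hh^2(T_f)^\eta)$ is precisely Theorem~\ref{kato}(e), due to Kato, once assumption (E) is imposed. So no new work is required here; the substantive content of the conjecture is the opposite divisibility $\Char(\Hh^2) \mid \Char(\Hh^1/\ZZ)$.

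For that opposite divisibility, I would first pass through the equivalent reformulation provided by Corollary~\ref{Katoequivalence}: under (A) or (A') together with (E), the conjecture is equivalent to
\[ \Char_{\Lambda_{\calO_E}(\Gamma)}(\Sel_p^i(f/\QQ_\infty)^{\vee,\eta})=\Char_{\Lambda_{\calO_E}(\Gamma)}(\image(\uCol_i)^\eta/(\tilde{L}_{p,i}^\eta)) \]
for either $i=1$ or $i=2$. Theorem~\ref{signedSelmergroups} already supplies one containment (up to a power of a uniformiser), namely $\varpi^{n_i}\tilde{L}_{p,i}^\eta \in \Char(\Sel_p^i(f/\QQ_\infty)^{\vee,\eta})$, so the job reduces to (a) sharpening this to remove the $\varpi^{n_i}$ and (b) proving the reverse containment, which amounts to bounding $\Char(\Sel_p^i(f/\QQ_\infty)^{\vee,\eta})$ from below by $\tilde{L}_{p,i}^\eta$ modulo the controlled factor $\image(\uCol_i)^\eta$. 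It is natural to pick $i=1$, since in the Berger--Li--Zhu regime covered by Section~\ref{ss} the map $\uCol_1$ is surjective (Theorem~\ref{image1}), so the factor $\image(\uCol_1)^\eta$ drops out and the reformulation becomes cleanest.

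The hard part will be (b). In the good ordinary case under (A'), the most promising route is to adapt the Eisenstein congruence strategy of Skinner--Urban: construct cohomology classes in the dual Selmer group from congruences on a suitable unitary or symplectic group, then use Poitou--Tate duality, with the decomposition~\eqref{ordinarydecomposition} translating statements about the classical Selmer group and $\tilde{L}_{p,\alpha}$ into statements about $\Sel_p^1$ and $\tilde{L}_{p,1}$ via the invertibility of the constant $\bar u$. In the supersingular case under (A), no analogous Eisenstein construction is available, so the plausible path is via $p$-adic families: deform $f$ in a Coleman family whose generic member is ordinary, prove (or assume) the ordinary signed main conjecture along that family, and specialise back using the interpolation of the matrix $\underline{\mathcal{M}}$ of Theorem~\ref{Ldecomposition} together with the fact that the Coleman maps $\uCol_i$ and the Wach modules vary well in families. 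I expect the specialisation step, and in particular controlling the signed Selmer groups in families across the ordinary/supersingular boundary, to be the genuine obstacle.
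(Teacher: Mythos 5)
The statement you are asked about is Conjecture~\ref{katoMC}, and the paper does not prove it: it is Kato's main conjecture, which the paper only \emph{formulates} and then relates to the signed Selmer groups. What the paper actually establishes is (i) Kato's one-sided divisibility, Theorem~\ref{kato}(e), under assumption (E); (ii) Theorem~\ref{torsionSelmer}, giving $\varpi^{n_i}\tilde{L}_{p,i}^\eta\in\Char_{\Lambda_{\calO_E}(\Gamma)}(\Sel_p^i(f/\QQ_\infty)^{\vee,\eta})$ under (A) or (A'); and (iii) Corollary~\ref{Katoequivalence}, an \emph{equivalence} between the conjecture and the signed main conjecture, valid only under (A)/(A') and (E). Your proposal correctly reproduces exactly these ingredients, but everything beyond them is a research programme rather than a proof, so there is a genuine gap: the conjecture remains unproved at the end of your argument, just as it does in the paper.

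Concretely, your step (a) (removing the factor $\varpi^{n_i}$) is asserted but no mechanism is offered; the $n_i$ in \eqref{equivalentMC} comes from comparing the lattice generated by the chosen Wach-module basis with an integral structure, and nothing in the paper controls it. Your step (b) is the entire content of the conjecture and is only named, not argued: in the ordinary case an Eisenstein-congruence construction would (at best) address the main conjecture for the classical Selmer group and $\tilde{L}_{p,\alpha}$, and passing to $\Sel_p^1$ and $\tilde{L}_{p,1}$ through \eqref{ordinarydecomposition} is not a formal consequence, since the relevant matrix has the non-unit entry $-\alpha\log_{p,k}$ and an undetermined entry $*$, so divisibilities do not transfer componentwise; in the supersingular case the proposed variation of signed Selmer groups and Coleman maps in families across the ordinary locus is not available from anything in this paper. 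Note also that the surjectivity of $\uCol_1$ (Theorem~\ref{image1}) that you invoke to discard $\image(\uCol_1)^\eta$ requires assumptions (B), (C) and (D), and the equivalence of Corollary~\ref{Katoequivalence} requires (A)/(A') and (E), whereas Conjecture~\ref{katoMC} is stated for arbitrary $\eta$ with no such hypotheses; so even as a reduction your route only addresses the conjecture under additional restrictions. In short, your write-up is a correct summary of the partial results already in the paper plus a speculative plan, not a proof of the statement.
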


\begin{remark}
 The above formulation of the conjecture can be found in \cite[\S 5]{kobayashi03}; it is more convenient for our purposes than the original formulation (Conjecture 12.10 of \cite{kato04}).
\end{remark}


\subsection{Reformulation of Kato's main conjecture}

Let $K$ be a number field. The $p$-Selmer group of $f$ over $K$ is defined by  
\[
\Sel_p(f/K)=\ker\left(H^1(K,V_f/T_f(1))\rightarrow\prod_\nu\frac{H^1(K_\nu,V_f/T_f(1))}{H^1_f(K_\nu,V_f/T_f(1))}\right)
\]
where $\nu$ runs through all the places of $K$.

We choose a ``good basis'' $\nu_1, \nu_2$ of $\Dcris(V_{\bar{f}})$ in the sense of subsection \ref{Colemanmf}. Lemma \ref{choice} shows that we may find a lift $n_1, n_2$ of this to a basis of $\NN(V_{\bar{f}})$ such that $(1 + \pi) \vp(\pi^{1-k} n_1 \otimes e_{k-1}), (1 + \pi) \vp(\pi^{1-k} n_2 \otimes e_{k-1})$ is a $\Lambda_E$-basis of $\NN(V_{\bar{f}}(k-1))$. We choose such basis $(n_1, n_2)$.

With respect to this basis, we write $H^1_f(\Qpn,V_f/T_f(1))^i$ for the annihilator of the projection of $\ker(\uCol_i)$ in $H^1(\Qpn,T_{\bar{f}}(k-1))$ under the pairing
\[
H^1(\Qpn,T_{\bar{f}}(k-1))\times H^1(\Qpn,V_f/T_f(1))\rightarrow E/\calO_E.
\]
This enables us to make the following definition:

\begin{definition}\label{definitionselmer}
\begin{align*}
\Sel_p^i(f/\QQ(\mu_{p^n})) &= \ker\left(\Sel_p(f/\QQ(\mu_{p^n}))\rightarrow\frac{H^1(\Qpn,V_f/T_f(1))}{H^1_f(\Qpn,V_f/T_f(1))^i}\right) \\
\Sel_p^i(f/\QQ_\infty) &= \varinjlim_n\ \Sel_p^i(f/\QQ(\mu_{p^n})).
\end{align*}
\end{definition}

By the Poitou-Tate exact sequence (see \cite[Section 7]{kobayashi03} and \cite[Section 4]{lei09}), we have
\begin{equation}\label{poitoutate}
 \Hh^1(T_{\bar{f}}(k-1))\rightarrow\image(\uCol_i)\rightarrow\Sel_p^i(f/\QQ_\infty)^\vee\rightarrow\Hh^2(T_{\bar{f}}(k-1))\rightarrow0
\end{equation}
where $(\cdot)^\vee$ denotes the Pontryagin dual.

\begin{theorem}\label{torsionSelmer}
Under assumption (A) (if $f$ is supersingular at $p$) or assumption (A') (if $f$ is ordinary at $p$), $\Sel_p^i(f/\QQ_\infty)$ is $\Lambda_{\calO_E}(G_\infty)$-cotorsion. Moreover, there exist some $n_i\ge0$ such that
\[
\varpi^{n_i}\tilde{L}_{p,i}^\eta\in\Char_{\Lambda_{\calO_E}(\Gamma)}(\Sel_p^i(f/\QQ_\infty)^{\vee,\eta})
\]
where $\eta$ is any character on $\Delta$ when $i=1$ and it is the trivial character when $i=2$.
\end{theorem}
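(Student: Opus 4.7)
The plan is to derive both conclusions of the theorem from the Poitou--Tate exact sequence~\eqref{poitoutate} together with Kato's divisibility (Theorem~\ref{kato}(d)) and the structure of $\Hh^1$. Since $|\Delta|=p-1$ is invertible in $\calO_E$, taking $\eta$-isotypical components is exact and compatible with formation of characteristic ideals, so I would work with one fixed $\eta$ throughout. Moreover, because the desired statement already carries a factor $\varpi^{n_i}$, it suffices to argue over the discrete valuation ring $\Lambda_{E(\eta)}(\Gamma)$, in which every non-zero ideal is principal, and to absorb any $\varpi$-denominators only at the very end.

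The first step is to record $\tilde L_{p,i}^\eta\neq 0$ in $\Lambda_E(G_\infty)^\eta$ in each case covered by the statement: for $i=1$ and arbitrary $\eta$ this follows from Corollary~\ref{sameprop} in the supersingular case (and from the analogous computation in the ordinary case in~\S\ref{ordinary}), both using $L(f_{\eta^{-1}},1)\neq 0$; for $i=2$ with $\eta$ trivial it follows from Proposition~\ref{nonzero} or its ordinary analogue. Since $\uCol_i(\mathrm{loc}_p(\Hh^1(T_{\bar f}(k-1))))^\eta\subseteq\image(\uCol_i)^\eta\subseteq\Lambda_{\calO_E}(\Gamma)^\eta$ and the smaller module contains $\tilde L_{p,i}^\eta\neq 0$, both submodules have rank one and their quotient is $\Lambda$-torsion; combined with $\Hh^2(T_{\bar f}(k-1))^\eta$ being torsion by Theorem~\ref{kato}(a), the exact sequence~\eqref{poitoutate} shows that $\Sel_p^i(f/\QQ_\infty)^{\vee,\eta}$ is torsion, which gives the cotorsion assertion.

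For the divisibility, the crucial observation is that $\uCol_i\circ\mathrm{loc}_p\colon\Hh^1(V_{\bar f}(k-1))^\eta\to\image(\uCol_i)^\eta$ is \emph{injective}: the source is free of rank one over $\Lambda_{E(\eta)}(\Gamma)$ by Theorem~\ref{kato}(b), and its image contains the non-zero element $\tilde L_{p,i}^\eta$, so the kernel is simultaneously torsion and torsion-free over a domain, hence vanishes. This induces an isomorphism
\[
\Hh^1(V_{\bar f}(k-1))^\eta\big/\ZZ(V_{\bar f}(k-1))^\eta\;\cong\;\uCol_i(\Hh^1)^\eta\big/(\tilde L_{p,i}^\eta).
\]
Writing $\image(\uCol_i)^\eta=(h)$ and $\uCol_i(\Hh^1)^\eta=(g)$ as principal ideals in $\Lambda_{E(\eta)}(\Gamma)$, and factoring $\tilde L_{p,i}^\eta = g\cdot w$, the sequence~\eqref{poitoutate} yields
\[
\Char\bigl(\Sel_p^i(f/\QQ_\infty)^{\vee,\eta}\bigr)\;=\;(g/h)\cdot\Char\bigl(\Hh^2(T_{\bar f}(k-1))^\eta\bigr),
\]
while the displayed isomorphism identifies $\Char(\Hh^1/\ZZ)^\eta = (w)$.

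Kato's divisibility~\ref{kato}(d), applied to $V_{\bar f}(k-1)$ via its duality with $V_f$, now tells us that $\Char(\Hh^2^\eta)$ divides $w$; hence $\Char(\Sel^{\vee,\eta})$ divides $(g/h)\cdot w = gw/h$, which in turn divides $gw = \tilde L_{p,i}^\eta$ (with quotient $h$). This proves $\tilde L_{p,i}^\eta \in \Char_{\Lambda_E(\Gamma)}(\Sel^{\vee,\eta})$, and multiplying by an appropriate $\varpi^{n_i}$ clears the denominators arising from passage between $\Lambda_E$ and $\Lambda_{\calO_E}$ and from the fact that the integral version of Kato's divisibility (Theorem~\ref{kato}(e)) is only available under assumption~(E). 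The main obstacle I expect is not in the characteristic-ideal manipulation, which is formal inside a DVR, but rather in the careful verification of $\tilde L_{p,i}^\eta\neq 0$ in the ordinary case under~(A'), where it rests on the interpolation formula~\eqref{interpolate} for $\tilde L_{p,\beta}$ and the non-vanishing of $L(f_{\eta^{-1}},1)$, together with the bookkeeping necessary to extract an explicit $\varpi^{n_i}$.
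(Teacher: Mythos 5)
Your proposal is correct and follows essentially the same route as the paper: non-vanishing of $\tilde{L}_{p,i}^\eta$ together with the Poitou--Tate sequence \eqref{poitoutate} and Theorem \ref{kato} gives the cotorsion statement, and the injectivity of the first arrow (which the paper quotes from Kobayashi, and which you prove directly from the freeness in Theorem \ref{kato}(b)) turns \eqref{poitoutate} into the four-term sequence \eqref{equivalentMC}, after which the containment follows from Theorem \ref{kato}(d) up to a power of $\varpi$. The only slips are cosmetic and harmless: $\Lambda_E(\Gamma)$ is a principal ideal domain but not a discrete valuation ring, and $\image(\uCol_i)^\eta$ need only lie in $\Lambda_E(G_\infty)$ rather than $\Lambda_{\calO_E}(G_\infty)$ since the chosen basis need not be integral.
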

\begin{proof}
Assume $f$ is supersingular at $p$. By Corollary \ref{notzero}, assumption (A) implies that $\tilde{L}^\eta_{p,i}\ne0$. Hence, the cokernel of the first map in (\ref{poitoutate}) is $\Lambda_{\calO_E}(G_\infty)$-torsion. But $\Hh^2(T_{\bar{f}}(k-1))$ is also $\Lambda_{\calO_E}(G_\infty)$-torsion by \cite{kato04}, so $\Sel_p^i(f/\QQ_\infty)^\vee$ is $\Lambda_{\calO_E}(G_\infty)$-torsion, too. 

As in \cite[Theorem 7.3]{kobayashi03}, the first arrow of (\ref{poitoutate}) is now injective and there exist $n\ge0$ such that
\begin{equation}\label{equivalentMC}
0\rightarrow\Hh^1(T_{\bar{f}}(k-1))/\ZZ(T_{\bar{f}}(k-1))\rightarrow\image(\uCol_i)/(\varpi^{n_i}\tilde{L}_{p,i})\rightarrow\Sel_p^i(f/\QQ_\infty)^\vee\rightarrow\Hh^2(T_{\bar{f}}(k-1))\rightarrow0.
\end{equation}
Hence, the second part of the theorem follows from Theorem \ref{kato}(d) on taking $\eta$-components. The proof for the ordinary case is analogous.
\end{proof}

\begin{corollary}\label{Katoequivalence}
Let $\eta$ be a character on $\Delta$ as above. If assumptions (A) (or (A') depending on whether $f$ is supersingular or ordinary at $p$) and (E) hold, then Kato's main conjecture is equivalent to
\[
\Char_{\Lambda_{\calO_E}(\Gamma)}(\Sel_p^i(f/\QQ_\infty)^{\vee,\eta})=\Char_{\Lambda_{\calO_E}(\Gamma)}(\image(\uCol_i)^\eta/(\tilde{L}_{p,i}^\eta)).
\]
\end{corollary}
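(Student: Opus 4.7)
The plan is to derive the equivalence directly from the four-term exact sequence
\[
0\to\Hh^1(T_{\bar{f}}(k-1))/\ZZ(T_{\bar{f}}(k-1))\to\image(\uCol_i)/(\varpi^{n_i}\tilde{L}_{p,i})\to\Sel_p^i(f/\QQ_\infty)^\vee\to\Hh^2(T_{\bar{f}}(k-1))\to 0
\]
established in the proof of Theorem \ref{torsionSelmer}. The first step is to invoke assumption (E) to eliminate the factor $\varpi^{n_i}$. Under (E), Theorem \ref{kato}(e) asserts that $\ZZ(T_f)\subset\Hh^1(T_f)$, and via the natural identification between Iwasawa cohomology for $T_f$ and that for $T_{\bar{f}}(k-1)$ (passing through complex conjugation of the coefficient field combined with a cyclotomic twist), this integrality transfers to $\ZZ(T_{\bar{f}}(k-1))\subset\Hh^1(T_{\bar{f}}(k-1))$. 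Because the basis of $\NN(V_{\bar f})$ used to build $\uCol_i$ may be chosen to descend from a basis of $\NN(T_{\bar f})$ (cf.\ Remark \ref{integrality}), the map $\uCol_i$ is integral on integral elements, and therefore $\tilde{L}_{p,i}=\uCol_i(\kato)\in\Lambda_{\calO_E}(G_\infty)$. Consequently we may take $n_i=0$ throughout.

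The second step is to take $\eta$-components of the sequence. Since $\eta$ is a character of the group $\Delta$ of order prime to $p$, the $\eta$-projector is defined over $\calO_E$ (after, if necessary, enlarging by the values of $\eta$), and exactness is preserved. Under assumption (A) (or (A')) together with (E), all four terms become torsion $\Lambda_{\calO_E}(\Gamma)$-modules: the first by Theorem \ref{kato}(c); the second because $\tilde{L}_{p,i}^\eta\ne 0$ (by Corollary \ref{notzero} or its ordinary analogue) and $\image(\uCol_i)$ is finitely generated; the third by Theorem \ref{torsionSelmer}; and the fourth by Theorem \ref{kato}(a). Applying multiplicativity of characteristic ideals to this four-term exact sequence of torsion $\Lambda_{\calO_E}(\Gamma)$-modules yields
\[
\Char\!\left(\tfrac{\Hh^1(T_{\bar{f}}(k-1))^\eta}{\ZZ(T_{\bar{f}}(k-1))^\eta}\right)\cdot\Char\!\left(\Sel_p^i(f/\QQ_\infty)^{\vee,\eta}\right)=\Char\!\left(\tfrac{\image(\uCol_i)^\eta}{(\tilde{L}_{p,i}^\eta)}\right)\cdot\Char\!\left(\Hh^2(T_{\bar{f}}(k-1))^\eta\right).
\]
Since $\Lambda_{\calO_E}(\Gamma)$ is a regular local ring (so in particular a UFD), principal ideals can be cancelled, and this single equality of ideals shows that the reformulated main conjecture
\[
\Char\!\left(\Sel_p^i(f/\QQ_\infty)^{\vee,\eta}\right)=\Char\!\left(\image(\uCol_i)^\eta/(\tilde{L}_{p,i}^\eta)\right)
\]
is equivalent to
\[
\Char\!\left(\Hh^1(T_{\bar{f}}(k-1))^\eta/\ZZ(T_{\bar{f}}(k-1))^\eta\right)=\Char\!\left(\Hh^2(T_{\bar{f}}(k-1))^\eta\right).
\]

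The final step is to identify this last equality with Kato's main conjecture (Conjecture \ref{katoMC}) as stated for $T_f$. The passage from $T_{\bar{f}}(k-1)$ back to $T_f$ uses the compatibility of Kato's zeta elements with complex conjugation of the coefficient field and with cyclotomic twists, together with the corresponding transformations on characteristic ideals of $\eta$-components (the twist merely permutes the characters $\eta$ of $\Delta$). Assumption (E) is manifestly invariant under these operations since it concerns the image of Galois in $\operatorname{GL}(V_f)$, which is unchanged by twisting and by complex conjugation. I expect the main obstacle to be precisely this last bookkeeping step: one must verify carefully that the two formulations of the main conjecture are genuinely equivalent (not merely equivalent up to a sign or a shift of character), and that the equality of characteristic ideals over $\Lambda_{\calO_E}(\Gamma)$ rather than $\Lambda_{\calO_E}(G_\infty)$ accommodates this transfer. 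Everything else reduces to an essentially formal manipulation of the four-term exact sequence.
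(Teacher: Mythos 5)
Your argument is correct and is essentially the paper's own (very terse) proof: the corollary is deduced directly from the exact sequence \eqref{equivalentMC} by taking $\eta$-components, applying multiplicativity of characteristic ideals of torsion $\Lambda_{\calO_E}(\Gamma)$-modules, and cancelling, with assumption (E) entering through Theorem \ref{kato}(e) ($\ZZ(T)\subset\Hh^1(T)$) so that one may take $n_i=0$. One small caveat: your intermediate claim that the basis can be chosen to descend from $\NN(T_{\bar f})$ so that $\uCol_i$ is integral and $\tilde{L}_{p,i}\in\Lambda_{\calO_E}(G_\infty)$ is not justified for the basis of Lemma \ref{choice} and is explicitly not assumed in the remark following the corollary; but this claim is also unnecessary, since the removal of $\varpi^{n_i}$ rests only on the integrality of the zeta element, and $\image(\uCol_i)^\eta/(\tilde{L}_{p,i}^\eta)$ is a torsion $\Lambda_{\calO_E}(\Gamma)$-module whether or not $\image(\uCol_i)$ lies in $\Lambda_{\calO_E}(G_\infty)$.
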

\begin{proof}
It follows immediately from (\ref{equivalentMC}).
\end{proof}

\begin{remark}
 We do \emph{not} assume that $n_1, n_2$ is an $\EA$-basis for $\NN(T_{\bar{f}})$. Hence $\image(\uCol_i)$ need not be contained in $\Lambda_{\calO_E}(G_\infty)$; but it is still clearly a $\Lambda_{\calO_E}(G_\infty)$-submodule of $\Lambda_{E}(G_\infty)$.
\end{remark}

By Theorem~\ref{image1}, if $f$ is supersingular at $p$, then assumptions (B), (C) and (D) imply that $\image(\uCol_1)=\Lambda_{E}(G_\infty)$. Therefore, we can reformulate Kato's main conjecture in the following form:

\begin{corollary}
If $f$ is supersingular at $p$ and assumptions (A)-(D) all hold, then Kato's main conjecture (after tensoring by $\QQ$) is equivalent to the assertion that $\Char_{\Lambda_{E}(\Gamma)}(\Sel_p^1(f/\QQ_\infty)^{\vee,\eta})$ is generated by $\tilde{L}_{p,1}^\eta$.
\end{corollary}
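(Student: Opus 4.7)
The plan is to combine the image computation of Theorem~\ref{image1} with the reformulation of Kato's main conjecture given in Corollary~\ref{Katoequivalence}, observing that after tensoring with $\QQ$ the assumption (E) is not needed for the reformulation. First, I would invoke Theorem~\ref{image1} which, under assumptions (B), (C), (D), gives $\image(\uCol_1) = \Lambda_E(G_\infty)$; taking $\eta$-components this specializes to $\image(\uCol_1)^\eta = \Lambda_E(\Gamma)$. Assumption (A) in turn ensures via Corollary~\ref{notzero} (and Corollary~\ref{sameprop}) that $\tilde{L}_{p,1}^\eta \ne 0$, so that the quotient $\image(\uCol_1)^\eta / (\tilde{L}_{p,1}^\eta)$ is a torsion $\Lambda_E(\Gamma)$-module with characteristic ideal exactly $(\tilde{L}_{p,1}^\eta)$.

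Next, I would run the Poitou--Tate argument from the proof of Theorem~\ref{torsionSelmer} rationally: the four-term exact sequence \eqref{equivalentMC}, after inverting $p$ (equivalently, replacing all Iwasawa modules by their tensor products with $\QQ$), becomes
\[
0 \to \Hh^1(V_{\bar{f}}(k-1))/\ZZ(V_{\bar{f}}(k-1)) \to \image(\uCol_1)/(\tilde{L}_{p,1}) \to \Sel_p^1(f/\QQ_\infty)^\vee \otimes \QQ \to \Hh^2(V_{\bar{f}}(k-1)) \to 0
\]
with no need for the $p$-integral containment $\ZZ(T_f) \subset \Hh^1(T_f)$ that required assumption (E). Taking $\eta$-components and characteristic ideals over $\Lambda_E(\Gamma)$, and using the multiplicativity of characteristic ideals in short exact sequences of torsion $\Lambda_E(\Gamma)$-modules, we obtain
\[
\Char\bigl(\image(\uCol_1)^\eta/(\tilde{L}_{p,1}^\eta)\bigr) \cdot \Char\bigl(\Hh^2(V_{\bar f}(k-1))^\eta\bigr) = \Char\bigl(\Hh^1(V_{\bar f}(k-1))^\eta/\ZZ(V_{\bar f}(k-1))^\eta\bigr) \cdot \Char\bigl(\Sel_p^1(f/\QQ_\infty)^{\vee,\eta}\bigr).
\]

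Finally, substituting $\image(\uCol_1)^\eta = \Lambda_E(\Gamma)$ into this identity collapses the left-hand factor to $(\tilde{L}_{p,1}^\eta) \cdot \Char(\Hh^2(V_{\bar f}(k-1))^\eta)$. Kato's main conjecture (rationally) asserts equality of $\Char(\Hh^1(V_{\bar f}(k-1))^\eta/\ZZ(V_{\bar f}(k-1))^\eta)$ and $\Char(\Hh^2(V_{\bar f}(k-1))^\eta)$, which, in light of the displayed identity, is equivalent to $\Char(\Sel_p^1(f/\QQ_\infty)^{\vee,\eta}) = (\tilde{L}_{p,1}^\eta)$. There is no serious obstacle here: the only point that requires a moment of care is to notice that the rational version of Corollary~\ref{Katoequivalence} goes through without assumption (E), since Theorem~\ref{kato}(c),(d) already give the $\Lambda_E(G_\infty)$-torsion property of $\Hh^1(V_f)/\ZZ(V_f)$ and the divisibility in characteristic ideals unconditionally.
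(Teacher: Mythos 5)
Your proposal is correct and takes essentially the same route the paper intends: the corollary is stated there as an immediate consequence of Theorem~\ref{image1} (which gives $\image(\uCol_1)=\Lambda_E(G_\infty)$ under (B)--(D)) together with the rational form of Corollary~\ref{Katoequivalence}, i.e.\ the sequence \eqref{equivalentMC} tensored with $\QQ$, where Theorem~\ref{kato}(a)--(d) supply the inputs so that assumption (E) is no longer needed. Your write-up merely makes explicit the characteristic-ideal bookkeeping (multiplicativity along the four-term sequence and cancellation of the nonzero factors) that the paper leaves to the reader.
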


 
 \section*{Acknowledgements} We would like to thank John Coates and Tony Scholl for their interest and encouragement and Laurent Berger for sending us his proof of Theorem~\ref{Lambda} and for answering our questions about Wach modules. The ideas of the paper were developed while the third author was visiting the Newton Institute in Cambridge in autumn 2009. She would like to thank the organizers of the programme on `Non-abelian fundamental groups' for their hospitality. We are also grateful to the anonymous referee for his or her extremely careful reading of the paper and numerous helpful comments.

\providecommand{\bysame}{\leavevmode\hbox to3em{\hrulefill}\thinspace}

\end{document}